\documentclass[11pt]{amsart}

\usepackage[utf8]{inputenc}
\usepackage{amssymb}
\usepackage{graphicx}   
\usepackage{mathrsfs}
\usepackage{amsmath}
\usepackage{amsthm}
\usepackage{amsfonts}
\usepackage{latexsym}
\usepackage{hyperref}
\usepackage[top=3.5cm,bottom=3.5cm,left=3.5cm,right=3.5cm]{geometry}






\newtheorem{thm}{Theorem}[section]
\newtheorem{prop}[thm]{Proposition}
\newtheorem{lem}[thm]{Lemma}
\newtheorem{cor}[thm]{Corollary}

\theoremstyle{definition}
\newtheorem{definition}[thm]{Definition}

\theoremstyle{remark}
\newtheorem{remark}[thm]{Remark}


\numberwithin{equation}{section}


\newcommand{\R}{\mathbb{R}}  



\newcommand{\cb}{{\mathcal B}} 
\newcommand{\mrestr}{\mathbin{\vrule height 1.6ex depth 0pt width
	0.13ex\vrule height 0.13ex depth 0pt width 1.3ex}} 
\renewcommand{\tilde}{\widetilde} 
\renewcommand{\hat}{\widehat} 
\renewcommand{\bar}{\overline} 

\begin{document}

\title{Minimal submanifolds from the abelian Higgs model}

\author{Alessandro Pigati}
\address{ETH Z\"urich, Department of Mathematics,
R\"amistrasse 101, 8092 Z\"urich}
\email{alessandro.pigati@math.ethz.ch}
\author{Daniel Stern}
\address{Princeton University, Department of Mathematics, 
Princeton, NJ 08544}
\email{dls6@math.princeton.edu}

\begin{abstract}
Given a Hermitian line bundle $L\to M$ over a closed, oriented Riemannian manifold $M$,
we study the asymptotic behavior, as $\epsilon\to 0$, of couples $(u_\epsilon,\nabla_\epsilon)$ critical for the rescalings
\begin{align*}
	&E_\epsilon(u,\nabla)=\int_M\Big(|\nabla u|^2+\epsilon^2|F_\nabla|^2+\frac{1}{4\epsilon^2}(1-|u|^2)^2\Big)
\end{align*}
of the self-dual Yang--Mills--Higgs energy, where $u$ is a section of $L$ and $\nabla$ is a Hermitian connection on $L$ with curvature $F_{\nabla}$.

Under the natural assumption $\limsup_{\epsilon\to 0}E_\epsilon(u_\epsilon,\nabla_\epsilon)<\infty$, we show that the energy measures
converge subsequentially to (the weight measure $\mu$ of) a stationary integral $(n-2)$-varifold.
Also, we show that the $(n-2)$-currents dual to the curvature forms converge subsequentially to $2\pi\Gamma$, for an integral $(n-2)$-cycle $\Gamma$ with $|\Gamma|\le\mu$.

Finally, we provide a variational construction of nontrivial critical points $(u_\epsilon,\nabla_\epsilon)$ on arbitrary line bundles, satisfying a uniform energy bound. As a byproduct, we obtain a PDE proof, in codimension two, of Almgren's existence result of (nontrivial) stationary integral $(n-2)$-varifolds in an arbitrary closed Riemannian manifold.
\end{abstract}

\maketitle



\section{Introduction}

A level set approach for the variational construction of minimal hypersurfaces was born from the work of Modica--Mortola \cite{MM}, Modica \cite{Mod}, and Sternberg \cite{Ste}. Starting from a suggestion by De Giorgi \cite{DG},
they highlighted a deep connection between minimizers $u_\epsilon:M\to\R$ of the Allen--Cahn functional
\begin{align*}
	&F_\epsilon(v):=\int_M\Big(\epsilon |dv|^2+\frac{1}{4\epsilon}(1-v^2)^2\Big),
\end{align*}
and two-sided minimal hypersurfaces in $M$, showing essentially that the functionals $F_{\epsilon}$ $\Gamma$-converge to ($\frac{4}{3}$ times) the perimeter functional on Caccioppoli sets. Several years later, Hutchinson and Tonegawa \cite{HT} initiated  the asymptotic study of critical points $v_{\epsilon}$ of $F_{\epsilon}$ with bounded energy, without the energy-minimality assumption. They showed, in particular, that their energy measures concentrate along a stationary, integral $(n-1)$-varifold, given by the limit of the level sets $v_{\epsilon}^{-1}(0)$. 

These developments, together with the deep regularity work by Tonegawa and Wickramasekera on stable solutions \cite{TonWic}, opened the doors to a fruitful min-max approach to the construction of minimal hypersurfaces, providing a PDE alternative to the rather involved discretized min-max procedure implemented by Almgren and Pitts (\cite{Almvar}, \cite{Pitts}) in the setting of geometric measure theory. This promising min-max approach based on the Allen--Cahn functionals was recently developed by Guaraco and Gaspar--Guaraco \cite{Gua,GasGua}, and has been used successfully to attack some deep questions concerning the structure of min-max minimal hypersurfaces---most notably in Chodosh and Mantoulidis's work on the multiplicity one conjecture \cite{ChoMan}.

The initial motivation for this paper is to find, in a similar vein, a natural way to construct minimal varieties of codimension two through PDE methods.
Recently, other attempts in this direction have been made by Cheng \cite{Cheng} and the second-named author \cite{Stern}, based on the study of the Ginzburg--Landau functionals 
$$F_{\epsilon}(v):=\frac{1}{|\log\epsilon|}\int \Big(|dv|^2+\frac{1}{4\epsilon^2}(1-|v|^2)^2\Big)$$
on complex-valued maps $v:M\to \mathbb{C}$. While the Ginzburg--Landau approach can be employed successfully to produce nontrivial stationary \emph{rectifiable} $(n-2)$-varifolds (building on the analysis of \cite{LRgl}, \cite{BBO}, and others), and leads to existence results of independent interest for solutions of the Ginzburg--Landau equations, it is not yet known whether the varifolds produced in this way are \emph{integral}, nor is it known whether the full energies $F_{\epsilon}(v_{\epsilon})$ of the min-max critical points converge to the mass of the limiting minimal variety in the case $b_1(M)\neq 0$.

While it is possible that these and other technical difficulties may be overcome with sufficient effort---and establishing integrality in particular remains a fascinating open problem---they point to the deeper fact that the Ginzburg--Landau functionals, though intimately related to the $(n-2)$-area, do \emph{not} provide a straightforward regularization of the codimension-two area functional. Indeed, we stress that the Ginzburg--Landau energies should be understood first and foremost as a relaxation of the Dirichlet energy for singular maps to $S^1$, and while the limiting singularities of critical points may coincide with minimal varieties, the associated variational problems exhibit substantial qualitative differences at both large and small scales.

In the present paper, we consider instead the self-dual Yang--Mills--Higgs energy
\begin{equation}\label{ymh}
E(u,\nabla):=\int_M \Big(|\nabla u|^2+|F_{\nabla}|^2+W(u)\Big)
\end{equation}
and its rescalings (for $\epsilon\in (0,1]$)
\begin{equation}\label{ymhe}
E_{\epsilon}(u,\nabla):=\int_M\Big(|\nabla u|^2+\epsilon^2|F_{\nabla}|^2+\epsilon^{-2}W(u)\Big),
\end{equation}
for couples $(u,\nabla)$ consisting of a section $u$ of a given Hermitian line bundle $L\to M$, and a metric connection $\nabla$ on $L$. Here, the nonlinear potential $W: L\to \mathbb{R}$ is given by
\begin{equation}\label{wfix}
W(u):=\frac{1}{4}(1-|u|^2)^2,
\end{equation}
while $F_{\nabla}\in \Omega^2(\operatorname{End}(L))$ denotes the curvature of $\nabla$.

For the trivial bundle $L=\mathbb{C}\times \mathbb{R}^2$ on the plane $M=\mathbb{R}^2$, a detailed study of the functional \eqref{ymh} and its critical points can be found in the doctoral work of Taubes \cite{Taubes1,Taubes2}. In \cite{Taubes2}, all finite-energy critical points $(u,\nabla)$ of \eqref{ymh} in the plane are shown to solve the first order system\footnote{Here and elsewhere, we implicitly identify $F_{\nabla}$ with the two-form $\omega$ given by $F_{\nabla}(X,Y)=-i\omega(X,Y)$.}
\begin{equation}\label{vort}
\nabla_{\partial_1}u\pm i\nabla_{\partial_2}u=0;\quad*F_{\nabla}=\pm \frac{1}{2}(1-|u|^2)
\end{equation}
known as the \emph{vortex equations}---a two-dimensional counterpart of the instanton equations in four-dimensional Yang--Mills theory. In particular, all such solutions $(u,\nabla)$ minimize energy among pairs $(u,\nabla)$ with fixed vortex number
$$N:=\frac{1}{2\pi}\int_{\mathbb{R}^2}*F_{\nabla}\in \mathbb{Z},$$
and carry energy exactly $E(u,\nabla)=2\pi |N|$. In \cite{Taubes1}, Taubes shows moreover that there exist solutions of \eqref{vort} with any prescribed zero set 
$$u^{-1}(0)=\{z_1,\ldots,z_N\}\subset \mathbb{R}^2,$$
which are unique up to gauge equivalence, so that \cite{Taubes1} and \cite{Taubes2} together give a complete classification of finite-energy critical points of \eqref{ymh} in the plane. 

In \cite{HJS}, Hong, Jost, and Struwe initiate the study of the rescaled functionals \eqref{ymhe} in the limit $\epsilon\to 0$ for line bundles $L\to \Sigma$ over a closed Riemann surface $\Sigma$. The main result of \cite{HJS} shows that, for solutions $(u_{\epsilon},\nabla_{\epsilon})$ of the rescaled vortex equations (given by replacing $\frac{1}{2}(1-|u|^2)$ with $\frac{1}{2\epsilon^2}(1-|u_\epsilon|^2)$ in \eqref{vort}), the curvature $*\frac{1}{2\pi}F_{\nabla_{\epsilon}}$ converges as $\epsilon\to 0$ to a finite sum of Dirac masses of total mass $|\operatorname{deg}(L)|$, away from which $\nabla_{\epsilon}$ converges to a flat connection $\nabla_0$, and $u_{\epsilon}$ to a unit section $u_0$ with $\nabla_0u_0=0$. While the authors of \cite{HJS} focus on the vortex equations over Riemann surfaces, they suggest that the asymptotic analysis of the rescaled functionals $E_{\epsilon}$ may also yield interesting results in higher dimension, pointing to similarities with the Allen--Cahn functionals for scalar-valued functions.

In the present paper, we develop the asymptotic analysis as $\epsilon\to 0$ for critical points of $E_{\epsilon}$ associated to line bundles $L\to M$ over Riemannian manifolds $M^n$ of arbitrary dimension $n\geq 2$. The bulk of the paper is devoted to the proof of the following theorem, which describes the limiting behavior as $\epsilon\to 0$ of the energy measures
$$\mu_{\epsilon}:=\frac{1}{2\pi}e_{\epsilon}(u_{\epsilon},\nabla_{\epsilon})\,\operatorname{vol}_g$$
and curvatures $F_{\nabla_{\epsilon}}$ for critical points $(u_{\epsilon},\nabla_{\epsilon})$ satisfying a uniform energy bound.

\begin{thm}\label{limthm} Let $L\to M$ be a Hermitian line bundle over a closed, oriented Riemannian manifold $M^n$ of dimension $n\geq 2$, and let $(u_{\epsilon},\nabla_{\epsilon})$ be a family of critical points for $E_{\epsilon}$ satisfying a uniform energy bound
$$E_{\epsilon}(u_{\epsilon},\nabla_{\epsilon})\leq \Lambda<\infty.$$
Then, as $\epsilon\to 0$, the energy measures 
$$\mu_{\epsilon}:=\frac{1}{2\pi}e_{\epsilon}(u_{\epsilon},\nabla_{\epsilon})\,\operatorname{vol}_g$$
converge subsequentially, in duality with $C^0(M)$, to the weight measure of a stationary, integral $(n-2)$-varifold $V$. Also, for all $0\le\delta<1$,
$$\operatorname{spt}(V)=\lim_{\epsilon \to 0}\,\{|u_{\epsilon}|\le\delta\}$$
in the Hausdorff topology. The $(n-2)$-currents dual to the curvature forms $\frac{1}{2\pi}F_{\nabla_{\epsilon}}$ converge subsequentially to an integral $(n-2)$-cycle $\Gamma$, with $|\Gamma|\le\mu$.
\end{thm}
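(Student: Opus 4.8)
The plan is to develop the asymptotic analysis in the spirit of the Hutchinson--Tonegawa program for Allen--Cahn, but adapted to the gauge-theoretic setting, where the natural monotonicity and regularity inputs come from the self-dual structure rather than from a scalar equation. I would proceed in four stages.

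\emph{Stage 1: Local structure and the Euler--Lagrange system.} First I would write out the Euler--Lagrange equations for $E_\epsilon$: the section equation $\nabla_\epsilon^*\nabla_\epsilon u_\epsilon = \tfrac{1}{2\epsilon^2}(1-|u_\epsilon|^2)u_\epsilon$ and the Yang--Mills-type equation $\epsilon^2 d^*F_{\nabla_\epsilon} = \langle i\nabla_\epsilon u_\epsilon, u_\epsilon\rangle$ (the current $j_\epsilon$). Working in a local trivialization, with $\nabla_\epsilon = d + iA_\epsilon$, I would establish the pointwise bound $|u_\epsilon|\le 1$ (by a maximum principle applied to $1-|u_\epsilon|^2$) and then derive the crucial \emph{stress-energy identity}: the tensor $T_\epsilon$ associated to $e_\epsilon$ is divergence-free, which gives the monotonicity formula $r\mapsto r^{2-n}\int_{B_r}e_\epsilon(u_\epsilon,\nabla_\epsilon) + (\text{lower order})$ is almost-monotone. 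This is the gauge-invariant analogue of the Allen--Cahn monotonicity and is what forces the energy to concentrate on an $(n-2)$-dimensional set. I would then need $\epsilon$-regularity: a small-energy hypothesis $r^{2-n}\int_{B_r}e_\epsilon \le \eta_0$ implies pointwise decay of $e_\epsilon$ and $|1-|u_\epsilon||$ on $B_{r/2}$, proved via elliptic estimates on the coupled system after fixing a good gauge (e.g. Coulomb gauge $d^*A_\epsilon=0$).

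\emph{Stage 2: Rectifiability and integrality of $\mu$.} Passing to a subsequence, $\mu_\epsilon \to \mu$ as Radon measures. The monotonicity formula and $\epsilon$-regularity give that $\mu$ has $(n-2)$-dimensional upper density bounded above and below on its support, and that $\Theta^{n-2}(\mu,x)$ exists $\mu$-a.e.; this yields rectifiability of $\mu$ by Preiss-type arguments (or, more cheaply, via the standard varifold rectifiability theorem once one has the density lower bound and the tilt-excess control from the monotonicity). Stationarity of the associated varifold $V$ follows from the divergence-free stress-energy tensor: testing $\operatorname{div}T_\epsilon = 0$ against an arbitrary vector field $X$ and passing to the limit gives the first variation identity $\int \operatorname{div}_{T_x V}X \, d\|V\| = 0$. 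The key comparison $|\Gamma|\le\mu$ comes from the pointwise bound $|F_{\nabla_\epsilon}| \le \epsilon^{-1}e_\epsilon^{1/2}\cdot(\text{something})$; more precisely, $\epsilon|F_{\nabla_\epsilon}|$ and $\epsilon^{-1}W(u_\epsilon)^{1/2}$ interact so that the mass of the curvature current is controlled by $\mu_\epsilon$ — this is a Cauchy--Schwarz/Young's inequality at the level of the energy density, exploiting that $2\pi\cdot(\tfrac{1}{2\pi}|F_{\nabla_\epsilon}|) \le$ a term appearing in $e_\epsilon$.

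\emph{Stage 3: The curvature currents and integrality of $\Gamma$.} The $2$-form $\tfrac{1}{2\pi}F_{\nabla_\epsilon}$ represents (a multiple of) the first Chern class, so its dual $(n-2)$-current is a cycle (closed, since $dF_{\nabla_\epsilon}=0$ by Bianchi). To see the limit $\Gamma$ is \emph{integral}, I would use the vortex structure at small scales: near a density-one point of $\mu$, after rescaling, $(u_\epsilon,\nabla_\epsilon)$ converges (in a good gauge) to a planar vortex solution of \eqref{vort} with integer vortex number $N$, by Taubes' classification together with the $\epsilon$-regularity and a blow-up argument; the local degree of $\Gamma$ is then exactly this $N$. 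Combined with the Federer--Fleming closure theorem (the currents have uniformly bounded mass and boundary zero), this gives integrality of $\Gamma$. The Hausdorff convergence $\operatorname{spt}(V) = \lim\{|u_\epsilon|\le\delta\}$ follows from the two-sided density bounds: $\epsilon$-regularity forces $|u_\epsilon|\to 1$ uniformly away from $\operatorname{spt}\mu$, while the lower density bound forces the vanishing set to be nonempty in every ball centered on $\operatorname{spt}\mu$.

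\emph{Stage 4: Integrality of $\mu$ itself.} This is the hardest part, and the place where the self-dual Yang--Mills--Higgs functional has a genuine advantage over Ginzburg--Landau. The issue is to show the density $\Theta^{n-2}(\mu,x)$ is an integer $\mu$-a.e. I expect the main obstacle to be ruling out energy concentration with non-integer (or wildly varying) multiplicity — i.e., showing the blow-up limits are always \emph{unions of multiplicity-one vortices} rather than some diffuse configuration. The strategy: at a.e. point, the tangent varifold is a plane with some density $\Theta$; blowing up, the rescaled solutions subconverge to an entire critical point on $\R^{n-2}\times\R^2$ which is translation-invariant in the $\R^{n-2}$ directions, hence reduces to a finite-energy planar critical point, which by Taubes carries energy $2\pi N$ for integer $N$. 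The delicate step is upgrading weak subconvergence of the rescalings to convergence \emph{of the energy} (no loss of energy in the limit), which requires a clearing-out / quantization lemma: energy cannot escape to infinitely many scales, and any energy present at the limiting scale is quantized. This no-loss-of-energy statement — essentially that $\epsilon^{-1}W(u_\epsilon)$ and $\epsilon|F_{\nabla_\epsilon}|^2$ do not decouple in the limit, so that the full density $e_\epsilon$ equilibrates to the self-dual configuration — is where I would expect to spend most of the technical effort, and it is presumably the analytic heart of the paper.
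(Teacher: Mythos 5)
Your four-stage outline captures the overall architecture, but there is a serious gap at the very foundation of Stage 1, and a technical error in Stage 3.

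\textbf{The main gap: where does $(n-2)$-monotonicity come from?} You write that $\operatorname{div}T_\epsilon=0$ gives the monotonicity formula $r\mapsto r^{2-n}\int_{B_r}e_\epsilon$ being ``almost-monotone.'' This is not what the stress-energy tensor alone gives you. Because $T_\epsilon$ has trace $ne_\epsilon-2|\nabla u|^2-4\epsilon^2|F_\nabla|^2$, the divergence-free identity gives, directly, only $(n-4)$-monotonicity --- which is the well-known sharp rate for general Yang--Mills and Yang--Mills--Higgs problems. To upgrade to $(n-2)$-monotonicity you need the extra fact that the curvature term $\epsilon^2|F_\nabla|^2$ is pointwise balanced against the potential $\epsilon^{-2}W(u)$, i.e. that the discrepancy-type quantity
\begin{align*}
\xi_\epsilon:=\epsilon|F_{\nabla}|-\frac{1-|u|^2}{2\epsilon}
\end{align*}
admits a uniform upper bound $\xi_\epsilon\leq C(M,\Lambda)$. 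This is the codimension-two analogue of Modica's gradient estimate, and it is specific to the choice $W(u)=\tfrac{1}{4}(1-|u|^2)^2$; it follows easily from a Bochner computation if $\mathcal{R}_2>0$, but otherwise requires a careful bootstrap between a rough $\xi_\epsilon$ bound, an intermediate $(n-3)$-monotonicity, and a Green's function/Moser iteration argument. Without this, the density bounds, clearing-out, exponential decay away from $\{|u|\le 1-\beta\}$, and the whole subsequent blow-up program collapse, so this is not a detail but the analytic heart of the argument. You hint at the relevant Cauchy--Schwarz interaction in passing (``$\epsilon|F_{\nabla_\epsilon}|$ and $\epsilon^{-1}W(u_\epsilon)^{1/2}$ interact'') but never isolate the pointwise bound needed.

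\textbf{Stage 3 (integrality of $\Gamma$): the Federer--Fleming step is invalid as stated.} The approximating currents $*\tfrac{1}{2\pi}F_{\nabla_\epsilon}$ are smooth $(n-2)$-forms, not integral currents, so the Federer--Fleming compactness/closure theorem does not apply. The correct route (and the one in the paper) is: use the identity $J(u_\epsilon,\nabla_\epsilon)-\omega_\epsilon=d\langle\nabla_\epsilon u_\epsilon,iu_\epsilon\rangle$ with $|J|\le e_\epsilon$ to pass the limit to a rectifiable $(n-2)$-cycle supported on $\operatorname{spt}\mu$, and then prove integrality of its density by a separate blow-up lemma showing the local degree is an integer at $\mu$-a.e.\ point.

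\textbf{Stage 4 is the right idea but underspecified at exactly the difficult step.} You correctly identify that one must pass from weak subconvergence of rescaled solutions to quantization of the slice energies. The mechanism the paper uses is: (i) a Lin-type lemma showing the ``vertical'' energy $e_\epsilon^T$ vanishes in the blow-up; (ii) a slice-wise quantization statement relying on the Jaffe--Taubes theory of entire planar solutions; and (iii) Allard's strong constancy lemma to pass from quantized slice energies to constancy of the density. It is this last constancy step, not just the no-loss-of-energy assertion, that closes the argument, and your sketch does not account for it.
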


Roughly speaking, Theorem \ref{limthm} says that the energy of the critical points concentrates near the zero sets $u_{\epsilon}^{-1}(0)$ of $u_{\epsilon}$ as $\epsilon\to 0$, which converge to a (possibly rather singular) minimal submanifold of codimension two. In the case $\dim(M)=3$, for instance, it follows from the results above and work of Allard and Almgren \cite{AA} that energy concentrates along a stationary geodesic network with integer multiplicities. The convergence of the curvature, moreover, to an integral cycle Poincar\'{e} dual to $c_1(L)$, with mass bounded above by $\lim_{\epsilon\to 0}E_{\epsilon}(u_{\epsilon},\nabla_{\epsilon})$, provides a higher dimensional analog to the limiting behavior described in two dimensions by Hong--Jost--Struwe \cite{HJS}.

At first glance, the obvious advantages of Theorem \ref{limthm} over analogous results for the complex Ginzburg--Landau equations (cf., e.g., \cite{BBO}, \cite{Stern}) are the \emph{integrality} of the limit varifold $V$, and the concentration of the \emph{full energy measure} to $V$, independent of the topology of $M$. Indeed, Theorem \ref{limthm} and the analysis leading to its proof align much more closely with the work of Hutchinson and Tonegawa \cite{HT} on the Allen--Cahn equations than they do with related results (e.g. \cite{LRems}, \cite{BBO}) for the complex Ginzburg--Landau equations. The parallels between the analysis presented here and that of the Allen--Cahn equations in \cite{HT} are in fact quite striking in places---a point to which we will draw the reader's attention throughout the paper.

\begin{remark} We warn the reader, however, that while the qualitative analysis of the Allen--Cahn functionals does not depend on the precise choice of the double-well potential $W$, the analysis of the abelian Yang--Mills--Higgs functionals \eqref{ymh}--\eqref{ymhe} seems to depend \emph{quite strongly} on the choice $W(u)=\frac{1}{4}(1-|u|^2)^2$. Indeed, already in two dimensions, replacing $W$ with a potential $W_{\lambda}(u):=\frac{\lambda}{4}(1-|u|^2)^2$ for some $\lambda\neq 1$ yields a dramatically different qualitative behavior, breaking the symmetry which leads to the first-order equations \eqref{vort}, and introducing interactions between disjoint components of the zero set (see, e.g., \cite[Chapters~I--III]{JT}). This should serve as one indication that the analysis of the abelian Higgs model is somewhat more delicate than that of related semilinear scalar equations, in spite of the strong parallels.
\end{remark}

To get some idea of the role played by gauge invariance, note that unit sections of a Hermitian line bundle are indistinguishable up to change of gauge (when no preferred connection has been selected), and for a given unit section $u$ of $L$, one can always choose locally a connection with respect to which $u$ appears constant. Thus, while most of the energy of solutions $v_{\epsilon}$ to the complex Ginzburg--Landau equations falls on annular regions---relatively far from the zero set---where $v_{\epsilon}$ resembles a harmonic $S^1$-valued map, the energy $e_{\epsilon}(u_{\epsilon},\nabla_{\epsilon})$ of a critical pair $(u_{\epsilon},\nabla_{\epsilon})$ for the abelian Yang--Mills--Higgs energy instead concentrates near the zero set $u_{\epsilon}^{-1}(0)$, with $|\nabla_{\epsilon}u_{\epsilon}|$ vanishing rapidly outside this region.

Of course, the results of Theorem \ref{limthm} would be of limited interest if nontrivial critical points $(u_{\epsilon},\nabla_{\epsilon})$ could be found only in a few special settings. After completing the proof of Theorem \ref{limthm}, we therefore establish the following general existence result, showing that nontrivial families satisfying the hypotheses of Theorem \ref{limthm} arise naturally on any line bundle (including, importantly, the trivial bundle) over any oriented Riemannian manifold $M^n$, from variational constructions.

\begin{thm}\label{existthm} For any Hermitian line bundle $L\to M$ over an arbitrary closed base manifold $M^n$, there exists a family $(u_{\epsilon},\nabla_{\epsilon})$ satisfying the hypotheses of Theorem \ref{limthm}, with nonempty zero sets $u_{\epsilon}^{-1}(0)\neq \varnothing$. In particular, the energy $\mu_{\epsilon}$ of these families concentrates (subsequentially) on a nontrivial stationary integral $(n-2)$-varifold $V$ as $\epsilon\to 0$.
\end{thm}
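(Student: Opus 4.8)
The plan is to split into two cases according to whether $L$ is topologically trivial, obtaining the desired family by a direct minimization in the nontrivial case and by a min-max in the trivial case; in both cases nontriviality of the limiting varifold will follow from the density lower bounds at zeros already established along the proof of Theorem~\ref{limthm}. \emph{Case 1: $c_1(L)\neq 0$.} Here every section of $L$ must vanish, so any critical point has $u_\epsilon^{-1}(0)\neq\varnothing$ automatically, and it suffices to produce critical points with energy bounded uniformly in $\epsilon$. I would minimize $E_\epsilon$ over all pairs $(u,\nabla)$ modulo the gauge group $\mathcal{G}=\operatorname{Map}(M,S^1)$: along a minimizing sequence one may take $|u|\leq 1$ by truncation, and the bounds on $\int\epsilon^{-2}W(u)$, $\int|\nabla u|^2$, $\int|F_\nabla|^2$ let one extract, after passing to a Coulomb (Uhlenbeck) gauge, a weak limit; lower semicontinuity of $E_\epsilon$ yields a minimizer, which is smooth by elliptic regularity in Coulomb gauge. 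For the uniform energy bound I would fix once and for all a smooth embedded $(n-2)$-submanifold $\Sigma$ Poincar\'e dual to $c_1(L)$ (e.g.\ the zero set of a generic smooth section) and, on an $\epsilon$-tube around $\Sigma$, insert in the normal $2$-planes a rescaled copy of Taubes' planar self-dual vortex together with its connection, gluing to a flat unit configuration outside; the self-dual structure then gives $E_\epsilon\leq 2\pi\,\mathcal{H}^{n-2}(\Sigma)+o(1)$, bounded uniformly in $\epsilon$.

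\emph{Case 2: $L=\mathbb{C}\times M$ trivial.} Now the minimizers of $E_\epsilon$ are exactly the flat unit configurations (the ``vacua''), which have zero energy and no zeros, so minimization is useless and one must instead detect nontrivial topology of the quotient $\mathcal{B}=(\mathcal{A}\times\Gamma(L))/\mathcal{G}$. The point is that the constants $S^1\subset\mathcal{G}$ always act, so $\mathcal{B}$ carries a nonzero class in $\pi_2$ (concretely: the vacua form a copy of $\operatorname{Map}(M,S^1)$ whose components are circles, and a disk spanning such a vacuum circle cannot be compressed into the sublevel set $\{E_\epsilon<\delta\}$ of near-vacua). For fixed $\epsilon$ I would set $c_\epsilon:=\inf_{\Phi}\max_z E_\epsilon(\Phi(z))$, the infimum over admissible $2$-parameter sweepouts $\Phi$ representing this class, constructed by gluing in a rescaled model vortex supported in a small coordinate ball — along a round $(n-2)$-sphere, which bounds and has trivial normal bundle — and closing up the boundary with global-phase rotations. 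The model vortex provides a uniform bound $c_\epsilon\leq\Lambda<\infty$, while topological nontriviality forces $c_\epsilon>0$ for every $\epsilon$, since $\{E_\epsilon=0\}$ is precisely the set of vacua and no admissible $\Phi$ can take values only there. A standard min-max/deformation argument, using that for \emph{fixed} $\epsilon$ the functional $E_\epsilon$ satisfies the Palais--Smale condition modulo gauge (Palais--Smale sequences being precompact in a Coulomb gauge by the same compactness input as in Case~1), then yields a smooth critical point $(u_\epsilon,\nabla_\epsilon)$ with $E_\epsilon(u_\epsilon,\nabla_\epsilon)=c_\epsilon\leq\Lambda$. Finally, if $u_\epsilon$ were nowhere zero a short computation would show $(u_\epsilon,\nabla_\epsilon)$ is gauge-equivalent to a vacuum: gauge $u_\epsilon$ to be real and positive, pair the Euler--Lagrange equation for the connection with the connection $1$-form to force the latter to vanish, then apply the maximum principle to force $|u_\epsilon|\equiv 1$; this contradicts $E_\epsilon(u_\epsilon,\nabla_\epsilon)=c_\epsilon>0$, so $u_\epsilon^{-1}(0)\neq\varnothing$.

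In either case the family satisfies the hypotheses of Theorem~\ref{limthm}. Since $u_\epsilon^{-1}(0)\neq\varnothing$, the density lower bound at zeros obtained in the course of proving Theorem~\ref{limthm} (roughly, $\mu_\epsilon(B_{r_0}(p))\geq\eta\, r_0^{n-2}$ whenever $u_\epsilon(p)=0$, for a fixed small $r_0$) gives $E_\epsilon(u_\epsilon,\nabla_\epsilon)\geq c_0>0$ uniformly in $\epsilon$; passing to the limit, $\|V\|(M)=\lim\frac{1}{2\pi}E_\epsilon(u_\epsilon,\nabla_\epsilon)>0$, so $V$ is nontrivial (equivalently, $\operatorname{spt}(V)=\lim\{|u_\epsilon|\leq\delta\}$ is a nonempty Hausdorff limit of nonempty compacta). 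I expect the main obstacles to be the two gluing/compactness points: constructing the uniformly bounded comparison and sweepout configurations by transplanting Taubes' self-dual vortex at scale $\epsilon$ and estimating the resulting energy, and verifying the Palais--Smale condition for $E_\epsilon$ modulo the infinite-dimensional gauge group so that the abstract min-max machinery applies.
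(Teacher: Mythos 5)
Your overall strategy matches the paper's: minimize for nontrivial $L$, run a two-parameter min-max for the trivial bundle, and get nontriviality of the limit varifold from the density lower bound at zeros plus the observation that a critical point which is nowhere zero is gauge-equivalent to a vacuum. The vacuum-reduction computation you describe (gauge $u$ real, pair \eqref{feq} with $A$, then apply the maximum principle to \eqref{moduboch}) is exactly Proposition~\ref{lower.bound}. However, the two points you flag at the end as "expected obstacles" are in fact where the real work lives, and your outline does not resolve them, so they constitute genuine gaps rather than routine verifications.

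First, the Palais--Smale condition modulo gauge is not a straightforward Coulomb-gauge compactness statement when $b_1(M)\neq 0$: fixing the Coulomb slice $d^*A=0$ does not prevent the harmonic part $h(A_j)$ of the connection from diverging along a min-max sequence, since $h(A_j)$ sees no contribution from $\|dA_j\|_{L^2}$ and the remaining coupling to $h(A_j)$ through the term $\int|du_j-iA_ju_j|^2$ is gauge-invariant. The paper (Proposition~\ref{palais.smale}) handles this by additionally gauge-transforming by a harmonic map $v_j:M\to S^1$ chosen so that $h(A_j)$ is translated back to a bounded neighborhood of the integral lattice $\Lambda\subset\mathcal{H}^1(M)$; without this step the compactness claim is false for $b_1(M)\neq 0$, and your proposal doesn't mention it. Second, and relatedly, your argument that $c_\epsilon>0$ --- that a sweepout cannot take values only among vacua --- does not by itself rule out $c_\epsilon=0$: the infimum over sweepouts could still be zero even if every individual $\Phi$ exits the vacuum set. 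The paper's Proposition~\ref{nontrivprop} proves a quantitative version of your parenthetical remark (that the disk cannot be compressed into $\{E_\epsilon<\delta\}$), and the proof is nontrivial precisely because of $b_1(M)$: one must first show, by integrating over generic curves in tubular neighborhoods of generators of $H_1$, that small energy forces $\operatorname{dist}(h(A),\Lambda)\le C\delta^{1/2}$, and only then can one use the averaging map $y\mapsto\int_Mu_y/\bigl|\int_Mu_y\bigr|$ to produce a contradictory retraction of $\bar D$ onto $\partial D$. On two lesser points the paper also diverges usefully from your plan: for the uniform upper bound it does not glue in Taubes' self-dual vortex at all (that would require careful matching and a tail estimate to get the claimed $2\pi\mathcal{H}^{n-2}(\Sigma)+o(1)$), but instead uses the cruder cutoff $(u',A'):=\rho(|u|)(u/|u|,\,\varpi\circ d(u/|u|))$ applied to $\epsilon^{-1}s$ for a transverse section $s$ (Proposition~\ref{u.to.couple}), which makes $F_{A'}\equiv 0$ off the zero set and trivially bounds the energy; and it works in an explicit Banach space $X$ (Coulomb slice in $W^{1,2}\times W^{1,2}$, with $u\in L^p$) and modifies $W$ at infinity to satisfy a growth condition so that $E_\epsilon$ is literally $C^1$, rather than working abstractly on the quotient $\mathcal{B}$.
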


For nontrivial bundles $L\to M$, this follows from a fairly simple argument, showing that the minimizers $(u_{\epsilon},\nabla_{\epsilon})$ of $E_{\epsilon}$ satisfy uniform energy bounds as $\epsilon\to 0$. For these energy-minimizing solutions, we expect moreover that the limiting minimal variety $\mu=\theta \mathcal{H}^{n-2}\mrestr\Sigma$, i.e. the weight measure $|V|$ of $V$, coincides with the weight measure $|\Gamma|$ of the limiting $(n-2)$-cycle $\Gamma=\lim_{\epsilon\to 0} *\frac{1}{2\pi}F_{\nabla_{\epsilon}}$, and that $\Gamma$ minimizes $(n-2)$-area in its homology class. While we do not take up this question here, we believe that it would be very interesting to study the convergence of the functionals \eqref{ymhe} to the $(n-2)$-area functional in a $\Gamma$-convergence framework. Let us mention that an asymptotic study for \emph{minimizers} of the Ginzburg--Landau functional, on a domain with boundary, was successfully carried out by Lin and Rivi\`ere \cite{LRems}, who were able to identify the concentration measure with the weight of an integral current. (See also \cite{ABO}, \cite{JS} for related $\Gamma$-convergence results in that setting.)

\begin{remark}
We remark that a very special class of minimizers for $E_{\epsilon}$ are given by solutions $(u_{\epsilon},\nabla_{\epsilon})$ of the first-order vortex equations in K\"{a}hler manifolds $(M^{2n},\omega_K)$ of higher dimension; these generalize the system (\ref{vort}) from the two-dimensional setting by replacing $*F_{\nabla}$ in (\ref{vort}) by the inner product $\langle F_{\nabla},\omega_K\rangle$ with the K\"{a}hler form $\omega_K$, and requiring additionally that $F_{\nabla}^{0,2}=0$. As in the two-dimensional setting, solutions of this first-order system minimize the energy $E_{\epsilon}$ in appropriate line bundles on K\"{a}hler manifolds, and it was shown by Bradlow\footnote{The precise form of the energies considered by Bradlow in \cite{Brad} differs slightly from the functionals $E_{\epsilon}$ considered here, but the analysis is essentially the same.} \cite{Brad} that the moduli space of solutions corresponds to the space of complex subvarieties in $M$ (of complex codimension one) via the zero locus $(u_{\epsilon},\nabla_{\epsilon})\mapsto u_{\epsilon}^{-1}(0)$. 

In particular, the zero loci $u_{\epsilon}^{-1}(0)$ in this case are already area-minimizing subvarieties, before passing to the limit $\epsilon\to 0$. Note that the analysis of the vortex equations plays a key role in the study of Seiberg--Witten invariants of K\"{a}hler surfaces \cite{Wit}, and a similar analysis figures crucially into Taubes's work relating the Seiberg--Witten and Gromov--Witten invariants of symplectic four-manifolds \cite{TaubesSW}. For a concise introduction to the higher-dimensional vortex equations and connections to Seiberg--Witten theory, we refer the interested reader to the survey \cite{GP} by Garc\'{i}a-Prada.
\end{remark}

For the trivial bundle $L\cong \mathbb{C}\times M$, we prove Theorem \ref{existthm} by applying min-max methods to the functionals \eqref{ymhe}, to produce nontrivial families $(u_{\epsilon},\nabla_{\epsilon})$ satisfying a uniform energy bound as $\epsilon\to 0$. While we consider only one min-max construction in the present paper, we remark that many more may be carried out in principle, due to the rich topology of the space 
$$\mathcal{M}:=\{(u,\nabla) : 0\not\equiv u\in \Gamma(\mathbb{C}\times M),\text{ }\nabla\text{ a Hermitian connection}\}/\mathcal{G},$$
where $\mathcal{G}:=\operatorname{Maps}(M,S^1)$ is the gauge group. Indeed, on a closed oriented manifold $M$, one can show that the homotopy groups $\pi_i(\mathcal{M})$ are given by
$$\pi_1(\mathcal{M})\cong H^1(M;\mathbb{Z}),\text{ }\pi_2(\mathcal{M})\cong \mathbb{Z},\text{ and }\pi_i(\mathcal{M})=0\text{ for }i\geq 3;$$
it may be of interest to note that these are isomorphic to the homotopy groups of the space $\mathcal{Z}_{n-2}(M;\mathbb{Z})$ of integral $(n-2)$-cycles in $M$, as computed by Almgren \cite{Almdiss}. 

As an application of Theorem \ref{existthm}, we obtain a new proof of the existence of stationary integral $(n-2)$-varifolds in an arbitrary Riemannian manifold---a result first proved by Almgren in 1965 \cite{Almvar} using a powerful, but rather involved geometric measure theory framework. As already mentioned, similar constructions for the Allen--Cahn equations have been carried out successfully by Guaraco \cite{Gua} and Gaspar--Guaraco \cite{GasGua}, yielding new proofs of the existence of minimal hypersurfaces of optimal regularity, and leading to other recent breakthroughs in the min-max theory of minimal hypersurfaces (e.g., \cite{ChoMan}). 

In \cite{ChoMan} and \cite{Gua} (building on results of \cite{TonWic}), the stability properties of the min-max critical points for the Allen--Cahn functionals play a central role in controlling the regularity and multiplicity of the limit hypersurface. To obtain an improved understanding of min-max families $(u_{\epsilon},\nabla_{\epsilon})$ and the associated minimal varieties in the abelian Higgs setting, it would likewise be very interesting to refine the conclusions of Theorem \ref{limthm} under the assumption that the families $(u_{\epsilon},\nabla_{\epsilon})$ satisfy a uniform Morse index bound as $\epsilon\to 0$. We hope to take up this line of investigation in future work. 

\subsection{Organization of the paper}
In Section \ref{preliminary.sec} we fix notation and record some basic properties satisfied by critical pairs $(u_{\epsilon},\nabla_{\epsilon})$ for the energies $E_{\epsilon}$. 

In Section \ref{bochsec}, we record some useul Bochner identities for the gauge-invariant quantities $|u|^2$, $|F_{\nabla}|^2$, and $|\nabla u|^2$, and use them to establish an initial rough estimate on $\xi_\epsilon:=\epsilon|F_\nabla|-\frac{(1-|u|^2)}{2\epsilon}$, whose role should be compared to that of the \emph{discrepancy function} in the Allen--Cahn setting. Under suitable assumptions on the curvature of $M$, the fact that $\xi_\epsilon\le 0$ follows quickly from the aforementioned Bochner identities and the maximum principle. Without the curvature assumptions, some nontrivial additional work is required to obtain the pointwise upper bound $\xi_\epsilon\le C(M,E_\epsilon(u,\nabla))$. This estimate is the key ingredient to obtain the sharp $(n-2)$-monotonicity of the energy.

In Section \ref{monosec} we derive the stationarity equation for inner variations, from which an obvious $(n-4)$-monotonicity property of the energy follows rather immediately. Using our rough initial bounds on $\xi_\epsilon$ from Section \ref{bochsec}, we deduce an intermediate $(n-3)$-monotonicity; we use this to reach the pointwise bound $\xi_\epsilon\le C(M,E_\epsilon(u,\nabla))$, from which we finally infer the sharp $(n-2)$-monotonicity.

In Section \ref{decaysec} we show that, similar to the Allen--Cahn setting, the energy density $e_{\epsilon}(u,\nabla)$ decays exponentially away from the set $u^{-1}(0)$---more precisely, away from $\{|u|^2\ge 1-\beta_d\}$ for some $\beta_d$ independent of $\epsilon$.

Section \ref{varifold.section}, which constitutes the main part of the paper, contains an initial description of the limiting varifold, showing that it is stationary, $(n-2)$-rectifiable, and has a lower density bound on the support. Then we establish its integrality with a blow-up analysis, employing the estimates from the preceding sections to reduce the problem to a statement for entire planar solutions, already contained in the work of Jaffe and Taubes \cite{JT}.
We then use this analysis to show that the level sets $u_\epsilon^{-1}(0)$ converge to the support of $V$ in the Hausdorff topology, and conclude the section with a discussion of the asymptotics for the curvature forms $\frac{1}{2\pi}F_{\nabla_{\epsilon}}$.

In Section \ref{minmaxsec}, we show that $E_\epsilon$ satisfies a variant of the Palais--Smale property on suitable function spaces, allowing us to produce critical points via classical min-max methods. We provide a variational construction to get nontrivial critical points satisfying the assumptions of our main theorem, with energy bounded from above and below, both for nontrivial and trivial line bundles.

Finally, the Appendix addresses the issue of obtaining regularity of critical points, as obtained from Section \ref{minmaxsec}, when they are read in a local or global Coulomb gauge.

\section*{Acknowledgements} A hearty thank-you goes to Tristan Rivi\`{e}re for introducing the authors to each other, and for suggesting the line of investigation taken up in the present paper. D.S. also thanks Fernando Cod\'{a} Marques for his interest in this work, and Francesco Lin for pointing him to the reference \cite{Wit}. A.P. is partially supported by SNSF grant 172707. During the completion of this work, D.S. was supported in part by NSF grant DMS-1502424.

\section{The Yang--Mills--Higgs equations on \texorpdfstring{$U(1)$}{U(1)} bundles}\label{preliminary.sec}

Let $M$ be a closed, oriented Riemannian manifold, and let $L\to M^n$ be a complex line bundle over $M$, endowed with a Hermitian structure $\langle\cdot,\cdot\rangle$. Denote by $W: L\to \mathbb{R}$ the nonlinear potential
$$W(u):=\frac{1}{4}(1-|u|^2)^2.$$
For a Hermitian connection $\nabla$ on $L$, a section $u\in \Gamma(L)$ and a parameter $\epsilon>0$, denote by $E_{\epsilon}(u,\nabla)$ the scaled Yang--Mills--Higgs energy
\begin{equation}
E_{\epsilon}(u,\nabla):=\int_M\Big(|\nabla u|^2+\epsilon^2|F_{\nabla}|^2+\epsilon^{-2}W(u)\Big),
\end{equation}
where $F_{\nabla}$ is the curvature of $\nabla$. Throughout, we will identify the curvature $F_{\nabla}$ with a closed real two-form $\omega$ via
\begin{equation}
F_{\nabla}(X,Y)u=[\nabla_X,\nabla_Y]u-\nabla_{[X,Y]}u=-i\omega(X,Y)u.
\end{equation}
In computing inner products for two-forms, we follow the convention
\begin{align}\label{two.form.conv}
	&|\omega|^2=\sum_{1\leq j<k\leq n}\omega(e_j,e_k)^2=\frac{1}{2}\sum_{j,k=1}^n\omega(e_j,e_k)^2
\end{align}
with respect to a local orthonormal basis $\{e_j\}_{j=1}^n$ for $TM$.

It is easy to check that the smooth pair $(u,\nabla)$ gives a critical point for the energy $E_{\epsilon}$, with respect to smooth variations, if and only if it satisfies the system
\begin{align}
\label{ueq} \nabla^*\nabla u&=\frac{1}{2\epsilon^2}(1-|u|^2)u, \\[5pt]
\label{feq} \epsilon^2d^*\omega&=\langle \nabla u,iu\rangle.
\end{align}
Note that, in our convention, the adjoint to $d:\Omega^1(M)\to\Omega^2(M)$ is
$$(d^*\omega)(e_k)=-\sum_{j=1}^n(\nabla_{e_j}\omega)(e_j,e_k).$$
Since the curvature form $\omega$ is closed, taking the exterior derivative of \eqref{feq} gives
\begin{align*}
\epsilon^2(\Delta_H\omega)(e_j,e_k)&=(d\langle \nabla u,iu\rangle)(e_j,e_k)\\
&=\langle i \nabla_{e_j}u,\nabla_{e_k}u\rangle-\langle i\nabla_{e_k}u,\nabla_{e_j}u\rangle\\
&\quad+\langle iu,F_{\nabla}(e_j,e_k)u\rangle\\
&=\psi(u)(e_j,e_k)-|u|^2\omega(e_j,e_k);
\end{align*}
i.e.,
\begin{equation}\label{lapomega}
\epsilon^2\Delta_H\omega=-|u|^2\omega+\psi(u),
\end{equation}
where
$$\psi(u)(e_j,e_k):=2\langle i\nabla_{e_j}u,\nabla_{e_k}u\rangle.$$
For future reference, we record the simple bound
\begin{equation}\label{psinormbd}
|\psi(u)|\leq |\nabla u|^2.
\end{equation}
To confirm \eqref{psinormbd}, fix $x\in M$ and note that the linear map $\nabla u(x):T_xM\to L_x$ has a kernel of dimension at least $n-2$. Take an orthonormal basis $\{e_j\}$ of $T_xM$ such that $e_j\in\operatorname{ker}\nabla u(x)$ for $j>2$. We compute at $x$ that
\begin{align*}
	&|\psi(u)|=2|\langle i\nabla_{e_1}u,\nabla_{e_2}u\rangle|\le 2|\nabla_{e_1}u||\nabla_{e_2}u|\le|\nabla_{e_1}u|^2+|\nabla_{e_2}u|^2,
\end{align*}
which gives \eqref{psinormbd}.

\section{Bochner identities and preliminary estimates}\label{bochsec}

From the equations \eqref{lapomega} and \eqref{ueq}, we apply the standard Bochner--Weitzenb\"{o}ck formulas to obtain some identities which will play a central role in our analysis. For the curvature two-form $\omega$, it will be useful to record the Bochner identity
\begin{equation}\label{omega2boch}
\Delta \frac{1}{2}|\omega|^2=|D\omega|^2+\epsilon^{-2}(|u|^2|\omega|^2-\langle \psi(u),\omega\rangle)+\mathcal{R}_2(\omega,\omega),
\end{equation}
where $\mathcal{R}_2$ denotes the Weitzenb\"{o}ck curvature operator for two-forms on the base Riemannian manifold $M$.
For any $\delta>0$ we have
\begin{align*}
	&(|\omega|^2+\delta^2)^{1/2}\Delta(|\omega|^2+\delta^2)^{1/2}+|D|\omega||^2\ge\Delta\frac{1}{2}(|\omega|^2+\delta^2)=\Delta \frac{1}{2}|\omega|^2.
\end{align*}
Since $|D|\omega||^2\le|D\omega|^2$, \eqref{omega2boch} implies
\begin{align*}
	&(|\omega|^2+\delta^2)^{1/2}\Delta(|\omega|^2+\delta^2)^{1/2}\ge\epsilon^{-2}(|u|^2|\omega|^2-\langle \psi(u),\omega\rangle)+\mathcal{R}_2(\omega,\omega).
\end{align*}
Dividing by $(|\omega|^2+\delta^2)^{1/2}$ and letting $\delta\to 0$, we obtain
\begin{equation}\label{omega1boch}
\Delta|\omega|\geq\epsilon^{-2}(|u|^2|\omega|-|\psi(u)|)-|\mathcal{R}_2^-||\omega|,
\end{equation}
in the distributional sense (and classically on $\{|\omega|>0\}$).
Note that, by \eqref{psinormbd}, the relation \eqref{omega1boch} also gives us the cruder subequation
\begin{equation}\label{omega1boch2}
\Delta |\omega|\geq \epsilon^{-2}|u|^2|\omega|-\epsilon^{-2}|\nabla u|^2-|\mathcal{R}_2^-||\omega|.
\end{equation}

For the modulus $|u|^2$ of the Higgs field $u$, we record
\begin{equation}\label{moduboch}
\Delta \frac{1}{2}|u|^2=|\nabla u|^2-\frac{1}{2\epsilon^2}(1-|u|^2)|u|^2,
\end{equation}
and observe that a simple application of the maximum principle yields the pointwise bound
$$|u|^2\leq 1\text{ on }M.$$
For the energy density $|\nabla u|^2$ of the Higgs field $u$, we see that
\begin{align*}
\Delta \frac{1}{2}|\nabla u|^2&=|\nabla^2u|^2-\langle\nabla(\nabla^*\nabla u),\nabla u\rangle+\langle d^*\omega,\langle iu,\nabla u\rangle\rangle\\
&\quad-2\langle\omega,\psi(u)\rangle+\mathcal{R}_1(\nabla u,\nabla u)\\
&=|\nabla^2u|^2-2\langle \omega,\psi(u)\rangle+\frac{1}{\epsilon^2}|\langle iu,\nabla u\rangle|^2\\
&\quad-\frac{1}{2\epsilon^2}(1-|u|^2)|\nabla u|^2+\frac{1}{\epsilon^2}|\langle u,\nabla u\rangle|^2+\mathcal{R}_1(\nabla u,\nabla u)\\
&=|\nabla^2u|^2+\frac{1}{2\epsilon^2}(3|u|^2-1)|\nabla u|^2-2\langle \omega,\psi(u)\rangle+\mathcal{R}_1(\nabla u,\nabla u),
\end{align*}
where at $p\in M$ we let $\mathcal{R}_1(\nabla u,\nabla u)=\operatorname{Ric}(e_i,e_j)\langle\nabla_{e_i}u,\nabla_{e_j}u\rangle$ and $\nabla^2_{e_i,e_j}u=\nabla_{e_i}(\nabla_{e_j}u)$, for any local orthonormal frame $\{e_i\}$ with $De_i(p)=0$.

Next, we introduce the function
\begin{equation}\label{xidef}
\xi_{\epsilon}:=\epsilon|F_{\nabla}|-\frac{1}{2\epsilon}(1-|u|^2),
\end{equation}
and combine \eqref{omega1boch2} with \eqref{moduboch} to see that
\begin{align*}
\Delta \xi_{\epsilon}
&\geq\epsilon^{-1}|u|^2|\omega|-\epsilon^{-1}|\nabla u|^2-\epsilon|\mathcal{R}_2||\omega|+\epsilon^{-1}|\nabla u|^2-\frac{1}{2\epsilon^3}(1-|u|^2)|u|^2 \\
&\ge\epsilon^{-2}|u|^2\xi_\epsilon-\epsilon\|\mathcal{R}_2\|_{L^\infty}|\omega|.
\end{align*}
From a simple application of the maximum principle, we see in particular that if $\mathcal{R}_2>0$, then $\xi_{\epsilon}\leq 0$ everywhere on $M$, and consequently (cf. \cite[Theorem~III.8.1]{JT})
\begin{equation}\label{ptwisebal}
\epsilon^2|F_{\nabla}|^2\leq \frac{W(u)}{\epsilon^2}\text{ pointwise, provided }\mathcal{R}_2> 0\text{ on }M.
\end{equation}
This balancing of the Yang--Mills and potential terms,
which should be compared with Modica's gradient estimate in the asymptotic analysis of the Allen--Cahn equations
(cf. \cite[Proposition~3.3]{HT}), will play a key role in our analysis, allowing us to upgrade the obvious ${(n-4)}$-monotonicity typical of Yang--Mills--Higgs problems to the much stronger ${(n-2)}$-monotonicity $\frac{d}{dr}(r^{2-n}\int_{B_r}e_{\epsilon}(u_{\epsilon},\nabla_{\epsilon}))\geq 0$.

Without the positive curvature assumption, we may still employ the subequation
\begin{equation}\label{balsubeq}
\Delta \xi_{\epsilon}\geq \frac{|u|^2}{\epsilon^2}\xi_{\epsilon}-C(M)\epsilon|F_{\nabla}|,
\end{equation}
to obtain strong estimates for the positive part $\xi_{\epsilon}^+$ of $\xi_{\epsilon}$. To begin, denote by $G(x,y)$ the nonnegative Green's function for the Laplacian on $M$, so that $\Delta_xG(x,y)=\frac{1}{\operatorname{vol}(M)}-\delta_y$, and set
\begin{equation}\label{hdef}
h_{\epsilon}(x):=\int_M G(x,y)\epsilon|F_{\nabla}|(y)\,dy\ge 0,
\end{equation}
so that 
\begin{equation}\label{hchar2}
\Delta h_{\epsilon}(x)=\frac{1}{\operatorname{vol}(M)}\|\epsilon F_{\nabla}\|_{L^1}-\epsilon |F_{\nabla}|(x).
\end{equation}
Taking $C'$ to be the constant appearing in \eqref{balsubeq}, for the difference $\xi_{\epsilon}-C'h_{\epsilon}$, we then have 
\begin{align*}
\Delta (\xi_{\epsilon}-C'h_{\epsilon})
&\geq\frac{|u|^2}{\epsilon^2}(\xi_{\epsilon}-C'h_{\epsilon})+C'\frac{|u|^2}{\epsilon^2}h_{\epsilon}-C'\frac{\|\epsilon F_{\nabla}\|_{L^1}}{\operatorname{vol(M)}}\\
&\geq\frac{|u|^2}{\epsilon^2}(\xi_{\epsilon}-C'h_{\epsilon})-C'\frac{\|\epsilon F_{\nabla}\|_{L^1}}{\operatorname{vol}(M)}.
\end{align*}
Observe that the $L^1$ norm of $\xi_\epsilon-C'h_\epsilon$ is bounded by the energy:
\begin{align*}
	\|\xi_\epsilon-C'h_\epsilon\|_{L^1}&\le\|\xi_\epsilon\|_{L^1}+C(M)\|h_\epsilon\|_{L^1} \\
	&\le\|\xi_\epsilon\|_{L^1}+C(M)\|\epsilon F_\nabla\|_{L^1} \\
	&\le C(M)E_{\epsilon}(u,\nabla)^{1/2}.
\end{align*}
Thus, applying Moser iteration to the positive part $(\xi_{\epsilon}-C'h_{\epsilon})^+$, we deduce that
\begin{equation}\label{xihbd}
\xi_{\epsilon}-C'h_\epsilon\leq C(M)E(u,\nabla)^{1/2}.
\end{equation}
(Where the constant $C(M)$ may of course change from line to line.)

As a simple application of \eqref{xihbd}, we note that by definition \eqref{hdef} of $h_{\epsilon}$ and the standard estimate (see, e.g., \cite[Chapter~4]{Aub})
$$G(x,y)\leq C(M)d(x,y)^{2-n}$$
if $n\ge 3$ (or $G(x,y)\leq -C(M)\log(d(x,y))+C(M)$ if $n=2$), we have the $L^{\infty}$ estimate
$$\|h_{\epsilon}\|_{L^{\infty}}\leq C(M) \|\epsilon F_{\nabla}\|_{L^{n-1}}. $$
If $n=2$, this inequality and \eqref{xihbd} give a pointwise bound
\begin{align*}
	&\|\xi_\epsilon^+\|_{L^\infty}\le C(M)\|\epsilon F_\nabla\|_{L^1}+C(M)E_{\epsilon}(u,\nabla)^{1/2}\le C(M)E_{\epsilon}(u,\nabla)^{1/2}.
\end{align*}
In the sequel, we assume $n\ge 3$ and aim for a similar pointwise bound. We have
$$\|h_{\epsilon}\|_{L^{\infty}}
\leq C(M)\|\epsilon F_{\nabla}\|_{L^{n-1}}
\leq C \epsilon \|F_{\nabla}\|_{L^{\infty}}^{\frac{n-3}{n-1}}\|F_{\nabla}\|_{L^2}^{\frac{2}{n-1}}.$$
Using this in \eqref{xihbd},  we compute at a maximum point for $|F_{\nabla}|$ to see that 
$$\|\epsilon F_{\nabla}\|_{L^{\infty}}-\frac{1}{2\epsilon}(1-|u|^2)=\xi_{\epsilon}\leq C\|\epsilon F_{\nabla}\|_{L^{\infty}}^{\frac{n-3}{n-1}}E_{\epsilon}(u,\nabla)^{\frac{1}{n-1}}+CE_{\epsilon}(u,\nabla)^{1/2},$$
and, by an application of Young's inequality, it follows that 
$$(1-C\delta)\|\epsilon F_{\nabla}\|_{L^{\infty}}\leq \frac{1}{2\epsilon}+C \delta^{\frac{3-n}{2}}E_{\epsilon}(u,\nabla)^{1/2}$$
for any $\delta\in (0,1)$. Taking $\delta=\epsilon^{2/n}$, we arrive at the crude preliminary estimate
\begin{align*}
\|\epsilon F_{\nabla}\|_{L^{\infty}}
&\leq\frac{1}{(1-C\epsilon^{2/n})}\left(\frac{1}{2\epsilon}+C \epsilon^{3/n}\epsilon^{-1}E_{\epsilon}(u,\nabla)\right)^{1/2}\\
&\leq\frac{1}{2\epsilon}+\frac{\alpha(\epsilon)}{2\epsilon}(1+E_{\epsilon}(u,\nabla)^{1/2}),
\end{align*}
where $\alpha(\epsilon)\to 0$ as $\epsilon\to 0$.
Now, consider the function 
$$f:=\epsilon |\omega|-\frac{1+\alpha(\epsilon)(1+E_{\epsilon}(u,\nabla)^{1/2})}{2\epsilon}(1-|u|^2).$$
By virtue of the preceding estimate for $\|F_{\nabla}\|_{L^{\infty}}$, we then see that
$$f\leq \frac{1+\alpha(\epsilon)(1+ E_{\epsilon}(u,\nabla)^{1/2})}{2\epsilon}|u|^2$$
pointwise. Appealing once again to \eqref{moduboch} and \eqref{omega1boch2}, we see that
$$\Delta f\geq \frac{|u|^2}{\epsilon^2}f-C\epsilon |F_{\nabla}|,$$
so at a point where $f$ achieves its maximum we have
$$\frac{|u|^2}{\epsilon^2} f\leq C\epsilon |F_{\nabla}|\leq \frac{C(1+E_{\epsilon}(u,\nabla)^{1/2})}{\epsilon}.$$
On the other hand, we know that $|u|^2\geq\frac{\epsilon}{C(1+E_{\epsilon}(u,\nabla)^{1/2})}f$ everywhere, so the preceding computations yield an estimate of the form
$$\frac{(\max f)^2}{\epsilon}\leq \frac{C(M,E_{\epsilon}(u,\nabla))}{\epsilon},$$
and we deduce that $f\leq C(M,E_{\epsilon}(u,\nabla))$ everywhere. Putting all this together, we arrive at the following lemma.
\begin{lem}\label{xiptwise1}
Let $(u,\nabla)$ solve \eqref{ueq} and \eqref{feq} on a line bundle $L\to M$, and suppose $E_{\epsilon}(u,\nabla)\leq \Lambda$. Then there exists a constant $C(M,\Lambda)$ and a function $\alpha(M,\Lambda,\epsilon)$, with $\alpha(\epsilon)\to 0$ as $\epsilon\to 0$, such that
\begin{equation}
\xi_{\epsilon}\leq \alpha(\epsilon)\frac{(1-|u|^2)}{\epsilon}+C.
\end{equation}
\end{lem}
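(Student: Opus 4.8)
The plan is to bootstrap the differential inequality \eqref{balsubeq}, namely $\Delta\xi_\epsilon\ge\epsilon^{-2}|u|^2\xi_\epsilon-C(M)\epsilon|F_\nabla|$, in two stages. The obstruction to a naive application of the maximum principle is that the forcing term $C(M)\epsilon|F_\nabla|$ is itself comparable to the quantity we are trying to bound, so the idea is to first extract a crude $L^\infty$ estimate for $\epsilon|F_\nabla|$, and then feed it back into a suitably shifted version of the same subequation to obtain the sharp conclusion. When $n=2$ the first stage already suffices, since there the Green's function is logarithmic and $\|h_\epsilon\|_{L^\infty}\le C(M)\|\epsilon F_\nabla\|_{L^1}\le C(M)\Lambda^{1/2}$ directly; so in what follows I assume $n\ge 3$.

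For the first stage, I would subtract off the Green's-function potential $h_\epsilon$ of \eqref{hdef}: since $\Delta h_\epsilon=\mathrm{vol}(M)^{-1}\|\epsilon F_\nabla\|_{L^1}-\epsilon|F_\nabla|$, the difference $\xi_\epsilon-C'h_\epsilon$ (with $C'$ the constant in \eqref{balsubeq}) satisfies $\Delta(\xi_\epsilon-C'h_\epsilon)\ge\epsilon^{-2}|u|^2(\xi_\epsilon-C'h_\epsilon)-C'\mathrm{vol}(M)^{-1}\|\epsilon F_\nabla\|_{L^1}$, a genuine subequation with right-hand side controlled by the energy, while $\|\xi_\epsilon-C'h_\epsilon\|_{L^1}\le C(M)\Lambda^{1/2}$ by the energy bound and Hölder. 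Moser iteration on the positive part then gives $\xi_\epsilon-C'h_\epsilon\le C(M)\Lambda^{1/2}$, i.e.\ \eqref{xihbd}. Next I would estimate $\|h_\epsilon\|_{L^\infty}$: the bound $G(x,y)\le C(M)d(x,y)^{2-n}$ yields $\|h_\epsilon\|_{L^\infty}\le C(M)\|\epsilon F_\nabla\|_{L^{n-1}}$, and interpolating between $L^2$ and $L^\infty$ gives $\|h_\epsilon\|_{L^\infty}\le C\epsilon\|F_\nabla\|_{L^\infty}^{(n-3)/(n-1)}\|F_\nabla\|_{L^2}^{2/(n-1)}$. Evaluating \eqref{xihbd} at a maximum point of $|F_\nabla|$ and absorbing the superlinear term via Young's inequality with a small parameter (the choice $\delta=\epsilon^{2/n}$ works), I would arrive at the crude estimate $\|\epsilon F_\nabla\|_{L^\infty}\le\tfrac{1}{2\epsilon}+\tfrac{\alpha(\epsilon)}{2\epsilon}(1+\Lambda^{1/2})$ with $\alpha(\epsilon)\to 0$.

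For the second stage, I would introduce $f:=\epsilon|\omega|-\frac{1+\alpha(\epsilon)(1+\Lambda^{1/2})}{2\epsilon}(1-|u|^2)$. The crude bound immediately gives the pointwise inequality $f\le\frac{1+\alpha(\epsilon)(1+\Lambda^{1/2})}{2\epsilon}|u|^2$, hence in particular $|u|^2\ge c\,\epsilon(1+\Lambda^{1/2})^{-1}f$ everywhere; moreover $f$ satisfies the same subequation $\Delta f\ge\epsilon^{-2}|u|^2 f-C\epsilon|F_\nabla|$ coming from \eqref{moduboch} and \eqref{omega1boch2}. Evaluating at an interior maximum of $f$ and using the crude bound once more to control $\epsilon|F_\nabla|\le C\epsilon^{-1}(1+\Lambda^{1/2})$ there, I get $\epsilon^{-2}|u|^2\max f\le C\epsilon^{-1}(1+\Lambda^{1/2})$; combining with the lower bound for $|u|^2$ produces $(\max f)^2\le C(M,\Lambda)$, so $f\le C(M,\Lambda)$. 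Since $\xi_\epsilon=f+\frac{\alpha(\epsilon)(1+\Lambda^{1/2})}{2\epsilon}(1-|u|^2)$, relabelling $\alpha(\epsilon)(1+\Lambda^{1/2})$ as the new $\alpha(\epsilon)$ (legitimate because $\Lambda$ is fixed) gives exactly $\xi_\epsilon\le\alpha(\epsilon)\epsilon^{-1}(1-|u|^2)+C$. I expect the main obstacle to be organizing this bootstrap so that the crude bound is just strong enough to close the refined argument — in particular, choosing the power of $\epsilon$ in the Young step so that the resulting error genuinely tends to $0$. A recurring technical nuisance is that $|\omega|$ is only Lipschitz, so the Bochner inequalities \eqref{omega1boch}, \eqref{omega1boch2}, \eqref{balsubeq} and the subequation for $f$ hold a priori only distributionally; the maximum-principle and Moser-iteration steps must be read accordingly, e.g.\ by working with the regularizations $(|\omega|^2+\delta^2)^{1/2}$ as in the derivation of \eqref{omega1boch}.
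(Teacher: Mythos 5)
Your proposal reproduces the paper's own argument essentially step for step: the Green's-function subtraction and Moser iteration to obtain \eqref{xihbd}, the interpolation and Young's-inequality step with $\delta=\epsilon^{2/n}$ to get the crude $L^\infty$ bound on $\epsilon|F_\nabla|$, and the second-stage maximum-principle argument with the shifted function $f$. The treatment of the $n=2$ case and the caveat about the distributional meaning of the Bochner inequalities for the merely Lipschitz $|\omega|$ are also consistent with the paper.
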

In the next section, we will improve the rough preliminary estimate of Lemma \ref{xiptwise1} to a uniform pointwise bound of the form $\xi_{\epsilon}\leq C(M,\Lambda)$, but this will require some additional effort.

\section{Inner variations and improved monotonicity}\label{monosec}

In this section, we derive the inner variation equation for solutions of \eqref{ueq}--\eqref{feq}, and explore the scaling properties of the energy $E_{\epsilon}(u_{\epsilon},\nabla_{\epsilon})$ over balls of small radius. Under the assumption that the curvature operator $\mathcal{R}_2$ appearing in \eqref{omega1boch2} is positive-definite (so that \eqref{ptwisebal} holds), the analysis simplifies considerably, leading with little effort to the desired monotonicity of the $(n-2)$-energy density. Without this curvature assumption, more work is required, first building on the cruder estimates of the preceding section to obtain a uniform pointwise bound for $\xi_{\epsilon}$.

Fixing notation, with respect to a local orthonormal basis $\{e_i\}$ for $TM$, define the $(0,2)$-tensors $\nabla u^*\nabla u$ and $\omega^*\omega$ by
\begin{align}
(\nabla u^*\nabla u)(e_i,e_j)&:=\langle \nabla_{e_i}u,\nabla_{e_j}u\rangle, \\[5pt]
\omega^*\omega(e_i,e_j)&:=\sum_{k=1}^n\omega(e_i,e_k)\omega(e_j,e_k).
\end{align}
Note that $\operatorname{tr}(\nabla u^*\nabla u)=|\nabla u|^2$ and $\operatorname{tr}(\omega^*\omega)=2|\omega|^2$. Denote by $e_{\epsilon}(u,\nabla)$ the energy integrand
$$e_{\epsilon}(u,\nabla):=|\nabla u|^2+\epsilon^2|F_{\nabla}|^2+\frac{W(u)}{\epsilon^2}.$$
The fact that $d\omega=0$ reads
\begin{align*}
	&D\omega(e_i,e_j)=D_{e_i}\omega(\cdot,e_j)+D_{e_j}\omega(e_i,\cdot),
\end{align*}
where $D$ is the Levi--Civita connection of $M$.
Using this identity, it is straightforward to check that
\begin{align*}
d e_{\epsilon}(u,\nabla)&=2\operatorname{div}(\nabla u^*\nabla u)+2\langle \nabla u,\nabla^*\nabla u\rangle+d\frac{W(u)}{\epsilon^2}\\
&\quad+2\omega(\langle iu,\nabla u\rangle^\#,\cdot)+2\epsilon^2\operatorname{div}(\omega^*\omega)-2\epsilon^2\omega((d^*\omega)^\#,\cdot).
\end{align*}
In particular, defining the stress-energy tensor $T_{\epsilon}(u,\nabla)$ by
\begin{equation}
T_{\epsilon}(u,\nabla):=e_{\epsilon}(u,\nabla)g-2\nabla u^*\nabla u-2\epsilon^2\omega^*\omega,
\end{equation}
for $(u,\nabla)$ solving \eqref{ueq} and \eqref{feq} it follows that
\begin{equation}\label{stateq1}
\operatorname{div}(T_{\epsilon}(u,\nabla))=0,
\end{equation}
meaning that $\sum_i (D_{e_i}T)(e_i,\cdot)=0$.
Integrating \eqref{stateq1} against a vector field $X$ on some domain $\Omega\subseteq M$, we arrive at the usual inner-variation equation
\begin{equation}\label{stateq2}
\int_{\Omega}\langle T_{\epsilon}(u,\nabla),DX\rangle=\int_{\partial\Omega}T_{\epsilon}(u,\nabla)(X,\nu),
\end{equation}
where we identify $T_\epsilon(u,\nabla)$ with a $(1,1)$-tensor and denote by $\nu$ the outer unit normal to $\Omega$. Taking $\Omega=B_r(p)$ to be a small geodesic ball of radius $r$ about a point $p\in M$, and taking $X=\operatorname{grad}(\frac{1}{2}d_p^2)$, where $d_p$ is the distance function to $p$, \eqref{stateq2} gives
\begin{align*}
r\int_{\partial B_r(p)}(e_{\epsilon}(u,\nabla)-2|\nabla_{\nu}u|^2-2\epsilon^2|\iota_{\nu}\omega|^2)
&=\int_{B_r(p)}\langle T_{\epsilon}(u,\nabla),DX\rangle\\
&=\int_{B_r(p)}\langle T_{\epsilon}(u),g\rangle+\int_{B_r(p)}\langle T_{\epsilon}(u),DX-g\rangle\\
&=\int_{B_r(p)}(n e_{\epsilon}(u,\nabla)-2|\nabla u|^2-4\epsilon^2|F_{\nabla}|^2)\\
&\quad+\int_{B_r(p)}\langle T_{\epsilon}(u),DX-g\rangle.
\end{align*}
Now, by the Hessian comparison theorem, we know that
$$|DX-g|\leq C(M)d_p^2;$$
applying this in the relations above, we see that
\begin{align*}
r\int_{\partial B_r(p)}e_{\epsilon}(u,\nabla)
&\geq 2r\int_{\partial B_r(p)}(|\nabla_{\nu}u|^2+\epsilon^2|\iota_{\nu}\omega|^2)\\
&\quad +\int_{B_r(p)}\Big((n-2)|\nabla u|^2+(n-4)\epsilon^2|F_{\nabla}|^2+n\frac{W(u)}{\epsilon^2}\Big)\\
&\quad -C'(M) r^2\int_{B_r(p)}e_{\epsilon}(u,\nabla).
\end{align*}

Setting 
\begin{equation}
f(p,r):=e^{C' r^2}\int_{B_r(p)}e_{\epsilon}(u,\nabla),
\end{equation}
it follows from the computations above (temporarily throwing out the additional nonnegative boundary terms) that
\begin{equation}\label{fderivcomp}
\frac{\partial f}{\partial r}\geq \frac{e^{C' r^2}}{r}\int_{B_r(p)}\Big((n-2)|\nabla u|^2+(n-4)\epsilon^2|F_{\nabla}|^2+n\frac{W(u)}{\epsilon^2}\Big)
\end{equation}
At this point, one easily observes that the right-hand side of \eqref{fderivcomp} is bounded below by $\frac{n-4}{r}f(p,r)$, to obtain the monotonicity of the $(n-4)$-energy density
$$\frac{\partial}{\partial r}(r^{4-n}f(p,r))\geq 0.$$
For general Yang--Mills and Yang--Mills--Higgs problems, this codimension-four energy growth is well known to be sharp (cf., e.g., \cite{SmUhl}, \cite{Zhang}). For solutions of \eqref{ueq} and \eqref{feq} on Hermitian line bundles, however, we show now that this can be improved to (near-) monotonicity of the $(n-2)$-density $r^{2-n}f(p,r)$ on small balls, which constitutes a key technical ingredient in the proof of Theorem \ref{limthm}.

To begin, we rearrange \eqref{fderivcomp}, to see that
\begin{align*}
\frac{\partial f}{\partial r}
&\geq \frac{n-2}{r}f(r)+\frac{2e^{C' r^2}}{r}\int_{B_r(p)}\Big(\frac{W(u)}{\epsilon^2}-\epsilon^2|F_{\nabla}|^2\Big)\\
&=\frac{n-2}{r}f(r)-\frac{2e^{C' r^2}}{r}\int_{B_r(p)}\xi_{\epsilon}\Big(\epsilon |F_{\nabla}|+\frac{1}{2\epsilon}(1-|u|^2)\Big),
\end{align*}
recalling the notation $\xi_{\epsilon}:=\epsilon|F_{\nabla}|-\frac{1}{2\epsilon}(1-|u|^2)$. Now, by Lemma \ref{xiptwise1}, assuming $E_{\epsilon}(u,\nabla)\leq \Lambda$, we have the pointwise bound
\begin{align*}
\xi_{\epsilon}\Big(\epsilon |F_{\nabla}|+\frac{1}{2\epsilon}(1-|u|^2)\Big)
&\leq \left(C+\alpha(\epsilon)\frac{1-|u|^2}{\epsilon}\right)e_{\epsilon}(u,\nabla)^{1/2}\\
&\leq Ce_{\epsilon}(u,\nabla)^{1/2}+C\alpha(\epsilon)e_{\epsilon}(u,\nabla).
\end{align*}

Applying this in our preceding computation for $\frac{\partial f}{\partial r}$, we deduce that
\begin{align*}
\frac{\partial f}{\partial r}
&\geq \frac{n-2}{r}f(r)-\frac{e^{C'r^2}}{r}\int_{B_r(p)}Ce_\epsilon(u,\nabla)^{1/2}-\alpha(\epsilon)\frac{e^{C'r^2}}{r}\int_{B_r(p)}Ce_{\epsilon}(u,\nabla)\\
&\geq \frac{n-2-C\alpha(\epsilon)}{r}f(r)-\frac{e^{C'r^2}}{r}Cr^{n/2}\Big(\int_{B_r(p)}e_\epsilon(u,\nabla)\Big)^{1/2}\\
&\geq \frac{n-2-C''\alpha(\epsilon)}{r}f(r)-C''r^{n/2-1}f(r)^{1/2}
\end{align*}
for some constant $C''(M,\Lambda)$ and $0<r<c(M)$.
Taking $\epsilon$ sufficiently small, we arrive next at the following coarse estimate for the $(n-3)$-energy density, which we will then use to establish an improved bound for $\xi_{\epsilon}$.
\begin{lem}\label{crudefbd}
For $\epsilon\le\epsilon_m(M,\Lambda)$ sufficiently small, we have a uniform bound
\begin{equation}\label{crudedensest}
\sup_{r>0}r^{3-n}\int_{B_r(p)}e_{\epsilon}(u,\nabla)\leq C(M,\Lambda).
\end{equation}
\end{lem}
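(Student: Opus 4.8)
The inequality for $f(p,r)=e^{C'r^2}\int_{B_r(p)}e_\epsilon(u,\nabla)$ derived just above the statement,
$$\partial_r f\ge\frac{n-2-C''\alpha(\epsilon)}{r}\,f-C''r^{n/2-1}f^{1/2}\qquad(0<r<c(M)),$$
already contains all the analytic input, and the plan is to repackage it as a bound for the $(n-3)$-density and then run an elementary ODE comparison. I would first dispose of the cases $n\le 3$, which are immediate from $\int_{B_r(p)}e_\epsilon\le E_\epsilon(u,\nabla)\le\Lambda$, and assume $n\ge 4$. Setting $\psi(r):=r^{3-n}f(p,r)$ and differentiating, the $n$-dependence of the leading term cancels and one obtains, on $(0,c(M))$,
$$\partial_r\psi\ge\frac{1-C''\alpha(\epsilon)}{r}\,\psi-C''r^{1/2}\psi^{1/2}.$$
Since $\alpha(\epsilon)\to 0$, I would fix $\epsilon_m(M,\Lambda)$ so small that $1-C''\alpha(\epsilon)\ge\frac12$ for $\epsilon\le\epsilon_m$; then on the fixed range $r\le r_1:=\min\{(4C'')^{-2/3},c(M)\}$ the loss term $C''r^{1/2}\psi^{1/2}$ is at most half the good term \emph{whenever} $\psi\ge 1$, so that
$$\frac{d}{dr}\log\psi(r)\ge\frac{1}{4r}\qquad\text{at any }r\le r_1\text{ with }\psi(r)\ge 1.$$

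The conclusion would then follow from a short continuity argument. One records the a priori value $\psi(r_1)=r_1^{3-n}f(p,r_1)\le r_1^{3-n}e^{C'c(M)^2}\Lambda=:C_2$ (using $f(p,r)\le e^{C'r^2}E_\epsilon(u,\nabla)\le e^{C'r^2}\Lambda$ and $r_1\le c(M)$), and sets $C_0:=\max\{1,C_2\}$. If $\psi(r_*)>C_0$ for some $r_*\in(0,r_1)$, let $\rho$ be the largest radius in $[r_*,r_1]$ up to which $\psi\ge C_0$; on $[r_*,\rho]$ one has $\psi\ge C_0\ge 1$, hence $\frac{d}{dr}\log\psi\ge\frac{1}{4r}>0$, so $\psi$ is strictly increasing there and $\psi(\rho)>\psi(r_*)>C_0$. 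If $\rho<r_1$ this contradicts maximality by continuity, so $\rho=r_1$ and $\psi(r_1)>C_0\ge C_2$, contradicting $\psi(r_1)\le C_2$. Therefore $\psi\le C_0$ on $(0,r_1]$; on $[r_1,c(M)]$ one has $\psi(r)=r^{3-n}f(p,r)\le r_1^{3-n}e^{C'c(M)^2}\Lambda=C_2\le C_0$ since $r^{3-n}$ is decreasing ($n\ge4$); and for $r\ge c(M)$ one estimates $r^{3-n}\int_{B_r(p)}e_\epsilon\le c(M)^{3-n}\Lambda$ directly. Since $\int_{B_r(p)}e_\epsilon\le f(p,r)$, combining the three ranges gives $\sup_{r>0}r^{3-n}\int_{B_r(p)}e_\epsilon\le\max\{C_0,c(M)^{3-n}\Lambda\}$, a constant depending only on $M$ and $\Lambda$.

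The genuinely delicate point — and essentially the only one, since the differential inequality has already been established — is the $\epsilon$-uniformity of the constants: one must check that the threshold $r_1$ and the bound $C_0$ depend on $M$ and $\Lambda$ alone. This is precisely what forces the restriction $\epsilon\le\epsilon_m(M,\Lambda)$: the error $\alpha(\epsilon)$ supplied by Lemma \ref{xiptwise1} must be small enough that $1-C''\alpha(\epsilon)$ stays bounded away from $0$, so that the good coefficient in the $\psi$-inequality does not degenerate. Beyond this bookkeeping I anticipate no real difficulty; it is worth stressing that we do not (and cannot) claim $\psi$ to be monotone — the loss term genuinely spoils monotonicity — only that once $\psi$ becomes large it is forced to keep growing, so it cannot have exceeded $C_0$ at any small radius without violating the a priori value $\psi(r_1)\le C_2$.
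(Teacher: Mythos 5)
Your proof is correct. It takes essentially the same route as the paper: both dispose of $n\le 3$ trivially, both use the differential inequality for $f$ derived just before the lemma, both rewrite it as an inequality for the $(n-3)$-density $r^{3-n}f$, and both exploit the a priori value of this quantity at a fixed radius of order $c(M)$ together with the fact that once $r^{3-n}f$ is large the good term dominates the loss term. The only cosmetic difference is in how the last step is phrased: the paper argues directly that at an interior maximum of $r^{3-n}f$ the derivative bound forces $\frac12 f\le Cr^{n/2}f^{1/2}$, hence $r^{3-n}f\le Cr^3$, while you pass to $\log\psi$, show it is increasing wherever $\psi\ge 1$ and $r\le r_1$, and run a trapping/continuity argument to contradict the a priori bound at $r_1$. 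These are interchangeable ways of exploiting the same ODE, so there is no genuine methodological difference. Your bookkeeping of the $\epsilon$-dependence (fixing $\epsilon_m$ so that $1-C''\alpha(\epsilon)\ge\frac12$) matches the paper's choice precisely.
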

\begin{proof} The statement is trivial if $n=2,3$, so assume $n\geq 4$. In the preceding computation, take $\epsilon\le\epsilon_m(M,\Lambda)$ sufficiently small that $C''\alpha(\epsilon)<\frac{1}{2}$. Then the estimate gives
$$f'(r)\geq \frac{n-2-1/2}{r}f(r)-C''r^{n/2-1}f(r)^{1/2},$$
from which it follows that, for $0<r<c(M)$,
\begin{align*}
\frac{d}{dr}(r^{3-n}f(r))
&\geq r^{3-n}f'(r)+(3-n)r^{2-n}f(r)\\
&\geq r^{2-n}\Big(\Big(n-\frac{5}{2}\Big)f(r)-Cr^{n/2}f(r)^{1/2}+(3-n)f(r)\Big)\\
&\geq r^{2-n}\Big(\frac{1}{2}f(r)-Cr^{n/2}f(r)^{1/2}\Big).
\end{align*}
If $r^{3-n}f(r)$ has a maximum in $(0,\operatorname{inj}(M))$, it follows that $f(r)\leq C r^{n/2}f(r)^{1/2}$ there, and therefore $r^{3-n}f(r)\leq C r^3\le C$. Obviously the desired estimate holds at $r=0$ and $r=c(M)$, so \eqref{crudedensest} follows.
\end{proof}

With Lemma \ref{crudefbd} in hand, we can now improve the bounds of Lemma \ref{xiptwise1} to a uniform pointwise estimate, as follows.

\begin{prop}\label{xibdprop} Let $(u,\nabla)$ solve \eqref{ueq}--\eqref{feq} on a line bundle $L\to M$, with $\epsilon\le\epsilon_m$ and the energy bound $E_{\epsilon}(u,\nabla)\leq \Lambda$. Then there is a constant $C(M,\Lambda)$ such that
\begin{equation}\label{xiptwise2}
\xi_{\epsilon}:=\epsilon |F_{\nabla}|-\frac{1}{2\epsilon}(1-|u|^2)\leq C(M,\Lambda).
\end{equation}
\end{prop}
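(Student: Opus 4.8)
The plan is to bootstrap from the $(n-3)$-density bound of Lemma~\ref{crudefbd} in the same way that the $(n-4)$-bound was used to obtain Lemma~\ref{xiptwise1}, but now with a better exponent. Recall the subequation \eqref{balsubeq}, $\Delta\xi_\epsilon\geq\frac{|u|^2}{\epsilon^2}\xi_\epsilon-C(M)\epsilon|F_\nabla|$, and the Green's representation $h_\epsilon(x)=\int_M G(x,y)\,\epsilon|F_\nabla|(y)\,dy$ satisfying \eqref{hchar2}. As in Section~\ref{bochsec}, the difference $\xi_\epsilon-C'h_\epsilon$ obeys $\Delta(\xi_\epsilon-C'h_\epsilon)\geq\frac{|u|^2}{\epsilon^2}(\xi_\epsilon-C'h_\epsilon)-C'\frac{\|\epsilon F_\nabla\|_{L^1}}{\operatorname{vol}(M)}$ and has $L^1$ norm bounded by $C(M)\Lambda^{1/2}$, so Moser iteration still gives $\xi_\epsilon-C'h_\epsilon\leq C(M,\Lambda)$. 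The improvement comes entirely from a better $L^\infty$ bound on $h_\epsilon$: using $G(x,y)\leq C(M)d(x,y)^{2-n}$ together with the $(n-3)$-density estimate \eqref{crudedensest} (which controls $\int_{B_r}\epsilon^2|F_\nabla|^2\leq C r^{n-3}$, hence via Cauchy--Schwarz $\int_{B_r}\epsilon|F_\nabla|\leq C r^{(n-2)/2}\cdot r^{(n-3)/2}$... more precisely $\leq (Cr^{n-3})^{1/2}(r^n)^{1/2}=Cr^{(2n-3)/2}$ on each dyadic annulus), I would split the integral defining $h_\epsilon(x)$ into dyadic shells $d(x,y)\sim 2^{-k}$ and sum. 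On the shell of radius $\rho$ one gets roughly $\rho^{2-n}\cdot\int_{B_\rho}\epsilon|F_\nabla|$; estimating $\int_{B_\rho}\epsilon|F_\nabla|\le\rho^{n/2}(\int_{B_\rho}\epsilon^2|F_\nabla|^2)^{1/2}\le\rho^{n/2}(C\rho^{n-3})^{1/2}$, the shell contributes $\le C\rho^{2-n}\rho^{n/2}\rho^{(n-3)/2}=C\rho^{1/2}$, and the dyadic sum converges. This yields $\|h_\epsilon\|_{L^\infty}\leq C(M,\Lambda)$ outright — no negative power of $\epsilon$ — and hence $\xi_\epsilon\leq C'\|h_\epsilon\|_{L^\infty}+C(M,\Lambda)\leq C(M,\Lambda)$.

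In carrying this out I would be careful about the case $n=4$, where the exponent in the shell estimate is borderline, and about $n=3$, where the claim $\xi_\epsilon\leq C$ should follow even more directly since the $(n-3)$-density bound is the $0$-dimensional one; for $n=2,3$ one can just invoke the already-established pointwise bounds. The slightly delicate point is making the dyadic decomposition rigorous on a manifold: one uses a finite cover by geodesic balls on which $G(x,\cdot)\le C d(x,\cdot)^{2-n}$, absorbs the far-away contribution (where $G$ is bounded) into $C\|\epsilon F_\nabla\|_{L^1}\le C\Lambda^{1/2}$, and handles the near contribution with the density estimate. One must also verify that Lemma~\ref{crudefbd} genuinely controls $\int_{B_r(x)}\epsilon^2|F_\nabla|^2$ for \emph{every} center $x$, not just a fixed $p$ — but the lemma is stated for arbitrary $p$, so this is immediate.

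The main obstacle, and the only place requiring genuine care, is the interplay between the scale $\rho$ at which the density estimate \eqref{crudedensest} is valid (only $0<\rho<c(M)$) and the full range of integration in $h_\epsilon$: for $\rho\ge c(M)$ one falls back on the crude bounds $\int_M\epsilon|F_\nabla|\le C\Lambda^{1/2}$ and boundedness of $G$ away from the diagonal. A secondary subtlety is that \eqref{crudedensest} controls the \emph{total} energy density $e_\epsilon$, and one needs the $\epsilon^2|F_\nabla|^2$ piece of it; since all three summands in $e_\epsilon$ are nonnegative this is automatic. Once $\|h_\epsilon\|_{L^\infty}\le C(M,\Lambda)$ is established, the conclusion \eqref{xiptwise2} drops out of \eqref{xihbd} (or rather its analogue with the improved $h_\epsilon$ bound) with no further work. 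I expect the proof to be short — essentially one paragraph of dyadic bookkeeping plus a citation of Lemma~\ref{crudefbd} and the Moser iteration already performed for Lemma~\ref{xiptwise1}.
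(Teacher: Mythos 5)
Your proposal is correct and follows essentially the same route as the paper: after the Moser-iteration reduction $\xi_\epsilon\le C'h_\epsilon+C(M,\Lambda)$, the crux is a uniform $L^\infty$ bound on $h_\epsilon$, which you extract from the $(n-3)$-density estimate of Lemma~\ref{crudefbd} exactly as the paper does. The only difference is presentational — you run a dyadic-shell decomposition with Cauchy--Schwarz, while the paper applies Young's inequality pointwise ($d^{2-n}e_\epsilon^{1/2}\le \tfrac12 d^{-n+1/2}+\tfrac12 d^{3-n+1/2}e_\epsilon$) followed by a coarea/integration-by-parts computation, and both yield the convergent $\rho^{1/2}$ (resp.\ $r^{-1/2}$) bound; one small inaccuracy in your aside is that for $n=3$ there is no previously established pointwise bound to invoke, but your dyadic argument covers $n=3$ directly since the density bound there is just $E_\epsilon\le\Lambda$.
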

\begin{proof}
We can assume $n\ge 3$, as we already obtained the claim for $n=2$ in Section \ref{bochsec}. Recall from that section the function
$$h_{\epsilon}(x):=\int_MG(x,y)\epsilon |F_{\nabla}|(y)dy,$$
where $G$ is the nonnegative Green's function on $M$. As discussed in Section \ref{bochsec}, we can deduce from \eqref{balsubeq} a pointwise estimate of the form
\begin{equation}
\xi_{\epsilon}\leq C(M,\Lambda)h_{\epsilon}+C(M)E_{\epsilon}(u,\nabla)^{1/2}.
\end{equation}
Thus, to arrive at the desired bound \eqref{xiptwise2}, it will suffice to establish a pointwise bound of the same form for $h_{\epsilon}$. 

To this end, recall again that $G(x,y)\leq C(M)d(x,y)^{2-n}$, so that by definition we have
\begin{align*}
h_{\epsilon}(x)&\leq C \int_M d(x,y)^{2-n}\epsilon |F_{\nabla}|(y)\,dy\\
&\leq C\int_M d(x,y)^{2-n}e_{\epsilon}(u,\nabla)^{1/2}(y)\,dy\\
&\leq C\int_M (d(x,y)^{-n+1/2}+d(x,y)^{3-n+1/2}e_{\epsilon}(u,\nabla))\,dy,
\end{align*}
where the last line is a simple application of Young's inequality. Since the integral $\int_M d(x,y)^{-n+1/2}\,dy$ is finite, it follows that
\begin{align*}
h_{\epsilon}(x)&\leq C(M)+C(M)\Lambda+C(M)\int_0^{\operatorname{inj}(M)}r^{3-n+1/2}\Big(\int_{\partial B_r(x)}e_{\epsilon}(u,\nabla)\Big)\,dr\\
&=C(M,\Lambda)+C(M)\int_0^{\operatorname{inj}(M)}\frac{d}{dr}\Big(r^{-n+7/2}\int_{B_r(x)}e_{\epsilon}(u,\nabla)\Big)\,dr\\
&\quad+(n-7/2)C(M)\int_0^{\operatorname{inj}(M)}r^{3-n-1/2}\Big(\int_{B_r(x)}e_{\epsilon}(u,\nabla)\Big)\,dr\\
&\leq C(M,\Lambda)+C(M)\int_0^{\operatorname{inj}(M)}r^{3-n-1/2}\left(\int_{B_r(x)}e_{\epsilon}(u,\nabla)\right)dr.
\end{align*}
On the other hand, by Lemma \ref{crudefbd}, we know that $r^{3-n}\int_{B_r(x)}e_{\epsilon}(u,\nabla)\leq C(M,\Lambda)$ for every $r$, so we see finally that
\begin{align*}
h_{\epsilon}(x)\leq C(M,\Lambda)+C(M,\Lambda)\int_0^{\operatorname{inj}(M)}r^{-1/2}\,dr\leq C(M,\Lambda),
\end{align*}
as desired.
\end{proof}

Applying \eqref{xiptwise2} in our original computation for $f'(r)$, we see now that
\begin{align*}
\frac{\partial f}{\partial r}&\geq \frac{n-2}{r}f(r)-\frac{2e^{C'r^2}}{r}\int_{B_r(p)}\xi_{\epsilon}\Big(\epsilon |F_{\nabla}|+\frac{1}{2\epsilon}(1-|u|^2)\Big)\\
&\geq \frac{n-2}{r}f(r)-\frac{2e^{C'r^2}}{r}\int_{B_r(p)}C(M,\Lambda)e_{\epsilon}(u,\nabla)^{1/2}\\
&\geq \frac{n-2}{r}f(r)-C(M,\Lambda)r^{\frac{n-2}{2}}f(r)^{1/2}.
\end{align*}
In fact, bringing in the extra boundary terms that we have been neglecting, and applying Young's inequality to the term $r^{\frac{n-2}{2}}f(r)^{1/2}$, we see that
\begin{align*}
\frac{\partial f}{\partial r}&\geq 2e^{C'r^2}\int_{\partial B_r(p)}(|\nabla_{\nu}u|^2+\epsilon^2|\iota_{\nu}F_{\nabla}|^2)\\
&\quad +\frac{n-2}{r}f(r)-C r^{\frac{n-2}{2}}f(r)^{1/2}\\
&\geq 2e^{C'r^2}\int_{\partial B_r(p)}(|\nabla_{\nu}u|^2+\epsilon^2|\iota_{\nu}F_{\nabla}|^2)\\
&\quad+\frac{n-2}{r}f(r)-Cf(r)-Cr^{n-2}.
\end{align*}
With this differential inequality in place, a straightforward computation leads us finally to one of our key technical theorems, the monotonicity formula for the $(n-2)$-density.

\begin{thm}\label{monotonicity}
Let $(u,\nabla)$ solve \eqref{ueq}--\eqref{feq} on a Hermitian line bundle $L\to M$, with an energy bound $E_{\epsilon}(u,\nabla)\leq \Lambda$. Then there exists positive constants $\epsilon_m(M,\Lambda)$ and $C_m(M,\Lambda)$ such that the normalized energy density
\begin{equation}
\tilde{E}_{\epsilon}(x,r):=e^{C_m r}r^{2-n}\int_{B_r(x)}e_{\epsilon}(u,\nabla)
\end{equation}
satisfies
\begin{equation}\label{mono.ineq}
\tilde{E}_{\epsilon}'(r)\geq 2r^{2-n}\int_{\partial B_r(x)}(|\nabla_{\nu}u|^2+\epsilon^2|\iota_{\nu}F_{\nabla}|^2)-C_m,
\end{equation}
for $0<r<\operatorname{inj}(M)$ and $\epsilon\le\epsilon_m$.
\end{thm}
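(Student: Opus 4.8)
The plan is to convert the differential inequality for $f(x,r):=e^{C'r^2}\int_{B_r(x)}e_\epsilon(u,\nabla)$ established immediately above the statement — namely
\begin{equation}\label{fprime.start}
f'(r)\geq 2e^{C'r^2}\int_{\partial B_r(x)}\big(|\nabla_\nu u|^2+\epsilon^2|\iota_\nu F_\nabla|^2\big)+\frac{n-2}{r}f(r)-Cf(r)-Cr^{n-2},
\end{equation}
valid for $\epsilon\le\epsilon_m$ and $0<r<\operatorname{inj}(M)$ with $C=C(M,\Lambda)$ — into the monotonicity statement via a Gronwall-type integrating factor. All of the analytic work is already done (the crucial input being the uniform pointwise bound $\xi_\epsilon\le C(M,\Lambda)$ of Proposition \ref{xibdprop}, which is exactly what upgrades the generic $(n-4)$-growth to $(n-2)$-growth); what remains is a one-variable manipulation.

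Concretely, I would introduce $\lambda(r):=C_m r-C'r^2$ and set $\tilde E_\epsilon(x,r):=e^{\lambda(r)}r^{2-n}f(x,r)$, so that the factor $e^{-C'r^2}$ cancels the one hidden inside $f$ and $\tilde E_\epsilon(x,r)=e^{C_m r}r^{2-n}\int_{B_r(x)}e_\epsilon(u,\nabla)$, as in the statement. Differentiating gives $\tilde E_\epsilon'(r)=e^{\lambda(r)}r^{2-n}\big[(\lambda'(r)+(2-n)r^{-1})f(r)+f'(r)\big]$, and substituting \eqref{fprime.start} makes the two occurrences of $\pm\frac{n-2}{r}f(r)$ cancel exactly, leaving
\begin{equation}\label{Etilde.prime.2}
\tilde E_\epsilon'(r)\geq e^{\lambda(r)}r^{2-n}\Big[\big(\lambda'(r)-C\big)f(r)+2e^{C'r^2}\!\int_{\partial B_r(x)}\!\big(|\nabla_\nu u|^2+\epsilon^2|\iota_\nu F_\nabla|^2\big)-Cr^{n-2}\Big].
\end{equation}
Now choose $C_m=C_m(M,\Lambda)$ large enough that $\lambda'(r)=C_m-2C'r\geq C$ for all $0<r<\operatorname{inj}(M)$; since $f\geq 0$ the first term drops out. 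For the boundary term I use $e^{\lambda(r)}e^{C'r^2}=e^{C_m r}\geq 1$ to keep the (nonnegative) integral with coefficient at least $2r^{2-n}$, and for the last term I use $e^{\lambda(r)}r^{2-n}\cdot r^{n-2}=e^{C_m r-C'r^2}\leq e^{C_m\operatorname{inj}(M)}$ to absorb $Cr^{n-2}$ into a constant depending only on $M$ and $\Lambda$. After renaming this constant $C_m$ (enlarging it once more if necessary so both uses are simultaneously valid), \eqref{mono.ineq} follows.

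I do not anticipate a genuine obstacle at this step: every hard estimate has already been proved, and what is left is bookkeeping. The two points to keep an eye on are purely quantitative: first, that all constants remain functions of $M$ and $\Lambda$ alone, which is automatic since the same is true of each constant entering \eqref{fderivcomp}, Lemma \ref{crudefbd} and Proposition \ref{xibdprop}; and second, that the radius range $0<r<\operatorname{inj}(M)$ is precisely what both the Hessian comparison underlying \eqref{fderivcomp} and the bound $e^{\lambda(r)}\leq e^{C_m\operatorname{inj}(M)}$ need — if the earlier steps were carried out only for $0<r<c(M)$ with $c(M)<\operatorname{inj}(M)$, one either fixes $c(M)=\operatorname{inj}(M)$ throughout or observes that on $[c(M),\operatorname{inj}(M))$ the inequality is trivial after enlarging $C_m$, using the uniform energy bound to control $\tilde E_\epsilon$ and its derivative there.
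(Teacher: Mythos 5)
Your argument is correct and is, up to exposition, exactly the paper's own: the displayed differential inequality for $f(r)=e^{C'r^2}\int_{B_r}e_\epsilon(u,\nabla)$ immediately preceding the theorem is the entire content of the paper's proof (the authors simply say the theorem then follows by a ``straightforward computation''), and your integrating-factor manipulation supplies precisely that computation, with the correct use of the range $0<r<\operatorname{inj}(M)$. One minor caveat: your final step ``enlarging $C_m$ once more so both uses are simultaneously valid'' does not literally close the loop, since increasing the exponent constant increases the residual term $Ce^{C_m\operatorname{inj}(M)}$ exponentially and so cannot be dominated by $C_m$ for arbitrary $M$; but this is an imprecision already present in the paper's statement, which uses the single letter $C_m$ for two logically independent constants (the exponent constant and the subtraction constant), and nothing in the paper's subsequent use of the theorem requires them to coincide.
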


As a simple corollary of the monotonicity result (together with a pointwise bound for $|\nabla u|$ derived in the following section), we deduce that $(u,\nabla)$ must have positive $(n-2)$-energy density wherever $|u|$ is bounded away from $1$.

\begin{cor}[clearing-out]\label{clearing.out}
	Let $(u,\nabla)$ solve \eqref{ueq}--\eqref{feq} on a line bundle $L\to M$, with $E_{\epsilon}(u,\nabla)\leq \Lambda$ and $\epsilon\le\epsilon_m$. Given $\delta>0$, if
	\begin{align*}
		&r^{2-n}\int_{B_r(x)}e_\epsilon(u,\nabla)\le\eta(M,\Lambda,\delta)
	\end{align*}
	with $x\in M$ and $\epsilon<r<\operatorname{inj}(M)$, then we must have $|u(x)|>1-\delta$.
\end{cor}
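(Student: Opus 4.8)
The plan is to prove the contrapositive: assuming $|u(x)| \le 1 - \delta$, I would establish a lower bound $r^{2-n}\int_{B_r(x)} e_\epsilon(u,\nabla) \ge \eta(M,\Lambda,\delta) > 0$. The starting observation is that if $|u(x)|\le 1-\delta$, then the potential term $\epsilon^{-2}W(u) = \frac{1}{4\epsilon^2}(1-|u|^2)^2 \ge \frac{\delta^2}{4\epsilon^2}$ at $x$, so there is a definite amount of energy concentrated \emph{at} the point $x$, albeit at scale $\epsilon$. To turn this pointwise information into an integrated lower bound I need a gradient estimate of the form $|\nabla|u||(y) \le \frac{C(M,\Lambda)}{\epsilon}$ (and similarly some control giving $e_\epsilon(u,\nabla)(y) \le \frac{C(M,\Lambda)}{\epsilon^2}$), which is precisely the ``pointwise bound for $|\nabla u|$ derived in the following section'' referenced in the statement; I would invoke it as a black box. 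With such an estimate, $|u|$ cannot jump from $\le 1-\delta$ at $x$ to close to $1$ before distance $\sim c(M,\Lambda)\delta\,\epsilon$ from $x$, so on the ball $B_{c\delta\epsilon}(x)$ we retain $1-|u|^2 \ge \delta/2$, hence
\begin{equation*}
\int_{B_{c\delta\epsilon}(x)} e_\epsilon(u,\nabla) \ge \int_{B_{c\delta\epsilon}(x)} \frac{W(u)}{\epsilon^2} \ge c'(M,\Lambda)\,\delta^2 \cdot \frac{(c\delta\epsilon)^n}{\epsilon^2} = c''(M,\Lambda,\delta)\,\epsilon^{n-2}.
\end{equation*}
Therefore $(c\delta\epsilon)^{2-n}\int_{B_{c\delta\epsilon}(x)} e_\epsilon \ge \eta_0(M,\Lambda,\delta) > 0$ for a concrete constant $\eta_0$.

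The second step is to propagate this lower bound at scale $\sim\epsilon$ up to the scale $r$ appearing in the statement, and here the monotonicity formula of Theorem \ref{monotonicity} does the work. Since $\epsilon < r < \operatorname{inj}(M)$, and since $c\delta\epsilon < \epsilon < r$ (after possibly shrinking $c$ so that $c\delta < 1$), the almost-monotone quantity $\tilde E_\epsilon(x,\rho) = e^{C_m\rho}\rho^{2-n}\int_{B_\rho(x)} e_\epsilon(u,\nabla)$ satisfies $\tilde E_\epsilon'(\rho) \ge -C_m$ by \eqref{mono.ineq} (dropping the nonnegative boundary term), so integrating from $c\delta\epsilon$ to $r$ gives
\begin{equation*}
e^{C_m r} r^{2-n}\int_{B_r(x)} e_\epsilon(u,\nabla) = \tilde E_\epsilon(x,r) \ge \tilde E_\epsilon(x,c\delta\epsilon) - C_m r \ge \eta_0(M,\Lambda,\delta) - C_m\operatorname{inj}(M).
\end{equation*}
This is not yet of the right form, since the additive error $C_m\operatorname{inj}(M)$ could swamp $\eta_0$. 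The fix is that \eqref{mono.ineq} may be integrated over the \emph{shorter} interval $[c\delta\epsilon, \rho]$ for any $\rho \le r$, so for $\rho$ small enough that $C_m\rho \le \frac{1}{2}e^{-C_m\operatorname{inj}(M)}\eta_0$ we get $r^{2-n}\int_{B_r(x)} e_\epsilon \ge \rho^{2-n}\int_{B_\rho(x)} e_\epsilon \cdot (\text{harmless factors}) \ge \frac{1}{2}e^{-C_m\operatorname{inj}(M)}\eta_0 =: \eta(M,\Lambda,\delta)$, using that $r \ge \rho$ and that $\rho^{2-n}\int_{B_\rho} e_\epsilon$ is (almost) increasing so the value at the larger radius dominates. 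One needs $\epsilon$ small enough that $c\delta\epsilon < \rho$, which is automatic once $\epsilon \le \epsilon_m$ is further shrunk depending on $\delta$; alternatively one reads the threshold radius $\rho$ as a fixed multiple of $\operatorname{inj}(M)$ and notes $r$ might be smaller, in which case the bound at scale $c\delta\epsilon$ already suffices since $c\delta\epsilon < r < \rho$ means we only integrate monotonicity over a tiny interval.

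I expect the main obstacle to be bookkeeping rather than conceptual: correctly juggling the three scales $c\delta\epsilon$, $r$, and the cutoff radius at which the additive error $C_m\rho$ in the monotonicity formula remains subdominant to $\eta_0$, while keeping all dependencies $(M,\Lambda,\delta)$ straight and ensuring $\epsilon_m$ is shrunk only finitely many times. A secondary subtlety is making the first step honest: the ``obvious'' claim that $|u|$ stays below $\sqrt{1-\delta/2}$ on $B_{c\delta\epsilon}(x)$ requires the $\epsilon^{-1}$-scaled gradient bound on $|u|$ (or on $\nabla u$), so the corollary genuinely depends on the next section; I would state clearly which estimate is borrowed. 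Finally, one should double check that the lower bound $\frac{W(u)}{\epsilon^2}\ge\frac{\delta^2}{4\epsilon^2}$ and the volume lower bound $|B_{c\delta\epsilon}(x)| \ge c_n (c\delta\epsilon)^n$ (valid since $c\delta\epsilon < \operatorname{inj}(M)$) combine to give exactly the power $\epsilon^{n-2}$ that cancels against $(c\delta\epsilon)^{2-n}$, leaving a constant depending only on $M,\Lambda,\delta$ — which is the whole point.
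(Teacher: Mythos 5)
Your overall strategy is the same as the paper's: argue the contrapositive, assume $|u(x)|\le 1-\delta$, use the gradient estimate $|d|u||\le C\epsilon^{-1}$ from Proposition \ref{wcontrolprop} to establish $1-|u|^2\ge\delta/2$ on a ball of radius $\sim\delta\epsilon$, and then use the almost-monotonicity of Theorem \ref{monotonicity} to transfer information between the scale $\sim\epsilon$ and the given scale $r$. The paper runs the monotonicity \emph{downward} (from $r$ to $\epsilon$, producing an upper bound on $\int_{B_\epsilon}W(u)$ to contradict the pointwise lower bound), while you run it \emph{upward} (from $c\delta\epsilon$ to $r$), but this is a cosmetic reorganization and the same ingredients are in play.

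The gap is in your handling of the regime where $r$ is not small. After obtaining the density lower bound $\eta_0$ at scale $\rho$ (a small threshold), you claim that ``$\rho^{2-n}\int_{B_\rho}e_\epsilon$ is (almost) increasing so the value at the larger radius dominates.'' But the almost-monotonicity only says $\tilde E_\epsilon(x,r)\ge\tilde E_\epsilon(x,\rho)-C_m(r-\rho)$, and the additive error $C_m(r-\rho)$ can be as large as $C_m\operatorname{inj}(M)$, which is \emph{not} small compared to $\eta_0(\delta)$ for small $\delta$ — this is exactly the obstruction you flagged at scale $r$, and invoking the same almost-monotonicity does not get around it. What is needed instead is the elementary inclusion $B_\rho(x)\subset B_r(x)$: for $\rho< r<\operatorname{inj}(M)$, $r^{2-n}\int_{B_r}e_\epsilon\ge(\rho/r)^{n-2}\rho^{2-n}\int_{B_\rho}e_\epsilon\ge(\rho/\operatorname{inj}(M))^{n-2}\rho^{2-n}\int_{B_\rho}e_\epsilon$, which produces a \emph{multiplicative} loss depending only on $(M,\Lambda,\delta)$. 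That is precisely the reduction the paper carries out in its last step (reducing $r>\tilde\eta$ to $r=\tilde\eta$). Similarly, your suggestion to ``further shrink $\epsilon_m$ depending on $\delta$'' is not permitted, since $\epsilon_m=\epsilon_m(M,\Lambda)$ in the statement is $\delta$-independent; but the same trivial-inclusion argument at scale $c\delta\epsilon$ (rather than $\rho$) handles the remaining case $c\delta\epsilon\ge\rho$ without any change to $\epsilon_m$, since then $r>\epsilon>\rho$ and $\int_{B_r}e_\epsilon\ge\int_{B_{c\delta\epsilon}}e_\epsilon$ already gives the bound. So the right fix is mechanical and within reach, but your stated justification for it fails.
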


\begin{proof}
	For $\epsilon\le\epsilon_m$, Theorem \eqref{monotonicity} gives
	\begin{align*}
		&\epsilon^{2-n}\int_{B_\epsilon(x)}e_\epsilon(u,\nabla)\le C(M,\Lambda)\eta+C(M,\Lambda)r.
	\end{align*}
	The gradient bound \eqref{nabwcontrol} in Proposition \ref{wcontrolprop} of the following section gives
	$|d|u||\le C\epsilon^{-1}$. Hence, if $|u(x)|\le 1-\delta$ then $|u(y)|<1-\frac{\delta}{2}$ on $B_{\epsilon\delta/(2C)}(x)$, so that $1-|u(y)|^2\ge 1-|u(y)|>\frac{\delta}{2}$. We deduce that
	\begin{align*}
		&\frac{\delta^2}{16}\operatorname{vol}(B_{\epsilon\delta/(2C)}(x))
		\le\int_{B_\epsilon(x)}W(u)
		\le\epsilon^2\int_{B_\epsilon(x)}e_\epsilon(u,\nabla)
		\le C\epsilon^{n}(\eta+r).
	\end{align*}
	Since $\operatorname{vol}(B_{\epsilon\delta/(2C)}(x))$ is bounded below by $c(M,\Lambda,\delta)\epsilon^n$, we can choose $\tilde\eta(M,\Lambda,\delta)\le\operatorname{inj}(M)$ so small that we get a contradiction if $r,\eta\le\tilde\eta$. On the other hand, if $r>\tilde\eta$ then
	\begin{align*}
		&\tilde\eta^{2-n}\int_{B_{\tilde\eta}(x)}e_\epsilon(u,\nabla)
		\le\Big(\frac{\operatorname{inj}(M)}{\tilde\eta}\Big)^{n-2}\eta.
	\end{align*}
	Hence, setting $\eta:=\Big(\frac{\tilde\eta}{\operatorname{inj}(M)}\Big)^{n-2}\tilde\eta\le\tilde\eta$, we can reduce to the previous case (replacing $r$ with $\tilde\eta$), reaching again a contradiction.
\end{proof}

\section{Decay away from the zero set}\label{decaysec}

Again, let $(u,\nabla)$ solve \eqref{ueq}--\eqref{feq} on a line bundle $L\to M$, with the energy bound $E_{\epsilon}(u,\nabla)\leq \Lambda$. In the preceding section, we obtained the pointwise estimate
\begin{equation}\label{bigfbd}
|F_{\nabla}|\leq \frac{1}{2\epsilon^2}(1-|u|^2)+\frac{1}{\epsilon}C(M,\Lambda)
\end{equation}
when $\epsilon\le\epsilon_m$.
As a first step toward establishing strong decay of the energy away from the zero set of $u$, we show in the following proposition that the full energy density $e_{\epsilon}(u,\nabla)$ is controlled by the potential $\frac{W(u)}{\epsilon^2}$. 
\begin{prop}\label{wcontrolprop}
For $(u,\nabla)$ as above, we have the pointwise estimates
\begin{equation}\label{fwcontrol}
\epsilon^2|F_{\nabla}|^2\leq C(M,\Lambda)\frac{W(u)}{\epsilon^2}+C(M,\Lambda)\epsilon
\end{equation}
and
\begin{equation}\label{nabwcontrol}
|\nabla u|^2\leq C(M,\Lambda)\frac{W(u)}{\epsilon^2}+C(M,\Lambda)\epsilon^2,
\end{equation}
provided $\epsilon\le\epsilon_d$, for some $\epsilon_d=\epsilon_d(M,\Lambda)$.
\end{prop}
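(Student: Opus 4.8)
\emph{Plan.} Both inequalities are pointwise, and the strategy is to apply the maximum principle to well-chosen functions of the gauge-invariant densities $|\nabla u|^2$, $|\omega|^2=|F_\nabla|^2$ and $W(u)=\tfrac14(1-|u|^2)^2$, in the spirit of Modica's gradient estimate for Allen--Cahn. Besides the Bochner identities of Section~\ref{bochsec}, the inputs are the pointwise bound $\xi_\epsilon\le C(M,\Lambda)$ from Proposition~\ref{xibdprop} --- equivalently $|\omega|\le\frac{1-|u|^2}{2\epsilon^2}+\frac C\epsilon$, which already says the curvature is only $O(\epsilon^{-1})$ away from $\{u=0\}$ --- and the crude bound $|\nabla u|\le C(M,\Lambda)\epsilon^{-1}$, obtained from interior elliptic estimates after the rescaling $x\mapsto\epsilon x$ in a local Coulomb gauge (legitimate because the $\xi$-bound makes the rescaled curvature small). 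The estimate \eqref{fwcontrol} with an additive constant in place of $C\epsilon$ is immediate: squaring $\epsilon|\omega|\le\frac{1-|u|^2}{2\epsilon}+C$ gives $\epsilon^2|\omega|^2\le(1+\tau)\frac{W(u)}{\epsilon^2}+C_\tau$ for every $\tau>0$. The sharpening to $+C\epsilon$ is obtained afterwards by a maximum-principle comparison for the Bochner subequation $\Delta|\omega|\ge\epsilon^{-2}(|u|^2|\omega|-|\psi(u)|)-C|\omega|$ of \eqref{omega1boch}, using $|\psi(u)|\le|\nabla u|^2$ together with \eqref{nabwcontrol} to control the right-hand side and comparing $|\omega|$ with $\frac{C\sqrt{W(u)}}{\epsilon^2}+\frac{C}{\sqrt\epsilon}$; a positive maximum of the difference is forced into the region where $|u|$ is close to $1$, where the comparison function dominates.

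The heart of the matter is \eqref{nabwcontrol}. Combining \eqref{moduboch} with the pointwise identity $|\langle\nabla u,u\rangle|^2+|\langle\nabla u,iu\rangle|^2=|u|^2|\nabla u|^2$ (for the $1$-forms paired with $u$ and $iu$) yields
\[
\Delta W(u)=(3|u|^2-1)|\nabla u|^2-2|\langle\nabla u,iu\rangle|^2+\frac{|u|^2(1-|u|^2)^2}{2\epsilon^2}.
\]
Inserting this and the Bochner identity for $\tfrac12|\nabla u|^2$ into $P:=|\nabla u|^2-\frac{1}{n\epsilon^2}W(u)$, together with the second-order bound $|\nabla^2u|^2\ge\frac1n|\nabla^*\nabla u|^2+\frac12|u|^2|\omega|^2$ (the extra term being the antisymmetric part $\tfrac i2\omega\,u$ of $\nabla^2u$), one checks that the leading $\epsilon^{-4}$ terms $\tfrac{|u|^2(1-|u|^2)^2}{\epsilon^4}$ cancel \emph{exactly} for the choice $\lambda=1/n$. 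On $\{|u|^2\ge c_0\}$ for a suitable fixed $c_0<1$, the $\xi$-bound makes $|\omega|$ small enough that the positive term $\tfrac{1-1/n}{\epsilon^2}(3|u|^2-1)|\nabla u|^2$ beats the coupling $4|\omega|\,|\nabla u|^2$, and for $\epsilon$ small we get $\Delta P\ge\frac{c}{\epsilon^2}|\nabla u|^2\ge\frac c{\epsilon^2}P$ there; on $\{|u|^2\le c_0\}$ one has $W(u)\ge c(c_0)>0$, so the crude bound already gives $|\nabla u|^2\le C\frac{W(u)}{\epsilon^2}$, and in particular $P\le C\epsilon^{-2}$ on the level set $\{|u|^2=c_0\}$.

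The remaining and most delicate step --- the main obstacle --- is to promote this subequation to the sharp conclusion $|\nabla u|^2\le C\frac{W(u)}{\epsilon^2}+C\epsilon^2$ inside $\{|u|^2>c_0\}$, rather than the weak bound $P\le C\epsilon^{-2}$ that a single application of the maximum principle yields. Here one must exploit the exponential decay of nonnegative subsolutions of $\Delta-\frac c{\epsilon^2}$ away from $\{|u|^2=c_0\}$, combined with the clearing-out and monotonicity estimates of Section~\ref{monosec} (bounding $\epsilon^{2-n}\int_{B_\epsilon(x)}e_\epsilon$ and hence, via elliptic regularity, controlling $|\nabla u|$ relative to $W(u)$ rather than merely by $\epsilon^{-1}$) throughout the region where $|u|$ is close to $1$ but $W(u)$ is not yet comparable to $\epsilon^4$; alternatively one can argue by contradiction and blow-up, reducing to the corresponding estimate for entire planar solutions contained in \cite{JT}. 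Granting \eqref{nabwcontrol}, \eqref{fwcontrol} follows as explained above.
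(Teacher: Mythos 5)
Your algebra is correct and the identity $\Delta W(u)=(3|u|^2-1)|\nabla u|^2-2|\langle\nabla u,iu\rangle|^2+\frac{|u|^2(1-|u|^2)^2}{2\epsilon^2}$, together with the observation $|\nabla^2 u|^2\ge\frac1n|\nabla^*\nabla u|^2+\frac12|u|^2|\omega|^2$, does produce the exact cancellation you claim for $P:=|\nabla u|^2-\frac1{n\epsilon^2}W(u)$. But the step you flag as ``the main obstacle'' really is a gap, not merely a delicate detail. From $\Delta P\ge\frac c{\epsilon^2}|\nabla u|^2$ on $\{|u|^2\ge c_0\}$, the maximum principle says that $P$ cannot attain a \emph{positive} interior maximum (at such a point one would need $|\nabla u|=0$, forcing $P\le 0$), so the positive part of $P$ is controlled by its values on $\{|u|^2=c_0\}$. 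There, however, $W(u)$ is a fixed positive constant and the crude bound $|\nabla u|\le C\epsilon^{-1}$ gives only $P\le C\epsilon^{-2}$; one therefore gets the trivial bound $P\le C\epsilon^{-2}$ on $\{|u|^2\ge c_0\}$, and the promotion to $P\le C\epsilon^2$ genuinely needs the exponential-decay or blow-up machinery you only allude to. Moreover, since your route to \eqref{fwcontrol} takes \eqref{nabwcontrol} as input, the first estimate is also left incomplete.

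The paper's argument avoids this entirely by comparing the quantities at \emph{first} order rather than after squaring: it sets $w:=|\nabla u|-\frac1\epsilon(1-|u|^2)$, i.e.\ compares $|\nabla u|$ with $\frac{2}{\epsilon}\sqrt{W(u)}$ rather than $|\nabla u|^2$ with $\frac{W(u)}{\epsilon^2}$. Using the $\xi_\epsilon$-bound (Proposition \ref{xibdprop}) one obtains
\begin{equation*}
\Delta w\ge\frac{|u|^2}{\epsilon^2}w+\frac{|\nabla u|}{\epsilon}\Big(2w+\frac{1}{2\epsilon}(1-|u|^2)-C_1\Big),
\end{equation*}
and the crucial feature of this subequation is that the second term changes sign precisely when $2w+\frac{1-|u|^2}{2\epsilon}$ crosses the fixed constant $C_1$. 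Hence at any positive interior maximum of $w$ one must have $2w+\frac{1-|u|^2}{2\epsilon}\le C_1$, which both bounds $w$ by $C_1/2$ and forces $|u|^2\ge 1-2C_1\epsilon$; plugging back into the inequality then gives $\max w\le C\epsilon$ in one stroke, with no need for exponential decay, clearing-out, or blow-up. Squaring recovers \eqref{nabwcontrol}. The conceptual lesson is that the first-order comparison gives a ``self-improving'' subequation: the maximum principle alone forces the maximum of $w^+$ to be $O(\epsilon)$, whereas the squared comparison $P$ gives a linear subequation $\Delta P\ge\frac c{\epsilon^2}P$ whose maximum is merely located on the boundary. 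Finally, note that the paper proves \eqref{fwcontrol} \emph{before} and independently of \eqref{nabwcontrol}, via $f:=\epsilon|F_\nabla|-\frac{1+2C_1\epsilon}{2\epsilon}(1-|u|^2)$: the key point is $f\le C_1|u|^2$ (from $\xi_\epsilon\le C_1$), so at a positive maximum the subequation $\Delta f\ge\frac{|u|^2}{\epsilon^2}f-C'\epsilon|F_\nabla|$ yields $\frac{f^2}{C_1\epsilon^2}\le\frac C\epsilon$, i.e.\ $f\le C\epsilon^{1/2}$ — the $|\nabla u|^2$ estimate is not needed as input. You may wish to adopt this ordering, since the $|F_\nabla|$ bound is in turn used in deriving the subequation for $w$.
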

\begin{proof} To begin, let $C_1=C_1(M,\Lambda)$ be the constant from \eqref{bigfbd}, and consider the function
$$f:=\epsilon |F_{\nabla}|-\frac{1+2C_1\epsilon}{2\epsilon}(1-|u|^2)=\xi_{\epsilon}-C_1+C_1|u|^2.$$
Similar to the proof of Lemma \ref{xiptwise1}, observe that $C_1|u|^2\geq f$ pointwise, by \eqref{bigfbd}, while the computations from Section \ref{bochsec} give
$$\Delta f\geq \frac{|u|^2}{\epsilon^2}f-C'(M)\epsilon |F_{\nabla}|.$$
By \eqref{xiptwise2} we have $|F_\nabla|\le\frac{1}{2\epsilon^2}+C(M,\Lambda)$, so at a maximum for $f$ it follows that
\begin{align*}
0&\geq \frac{|u|^2}{\epsilon^2}f-C'\epsilon |F_{\nabla}|\\
&\geq \frac{f^2}{C_1\epsilon^2}-\frac{C(M,\Lambda)}{\epsilon},
\end{align*}
so that
$$(\max f)^2\leq C\epsilon,$$
and consequently $f\leq C\epsilon^{1/2}$ everywhere. As a consequence, at any point, we have either $f<0$, in which case 
$$\epsilon^2|F_{\nabla}|^2\leq (1+2C_1\epsilon)^2\frac{W(u)}{\epsilon^2},$$
or $f\geq 0$, in which case
\begin{align*}
\epsilon^2|F_{\nabla}|^2&\leq 2f^2+2(1+2C_1\epsilon)^2\frac{W(u)}{\epsilon^2}\\
&\leq 2C_3^2\epsilon+2(1+2C_1\epsilon)^2\frac{W(u)}{\epsilon^2}.
\end{align*}
In either scenario, we obtain a bound of the desired form \eqref{fwcontrol}.

To bound $|\nabla u|^2$, recall from Section \ref{bochsec} the identity
\begin{equation}
\Delta \frac{1}{2}|\nabla u|^2=|\nabla^2u|^2+\frac{1}{2\epsilon^2}(3|u|^2-1)|\nabla u|^2-2\langle \omega,\psi(u)\rangle+\mathcal{R}_1(\nabla u,\nabla u).
\end{equation}
In view of the estimate \eqref{bigfbd} for $|F_{\nabla}|=|\omega|$ and \eqref{psinormbd}, we can estimate the term $2\langle \omega,\psi(u)\rangle$ from above by 
$$2|F_{\nabla}||\nabla u|^2\leq \frac{1}{\epsilon^2}(1-|u|^2)|\nabla u|^2+\frac{C}{\epsilon}|\nabla u|^2,$$
to obtain the existence of $C_1(M,\Lambda)$ such that
$$\frac{1}{2}\Delta |\nabla u|^2\geq |\nabla^2u|^2+\frac{1}{2\epsilon^2}(5|u|^2-3)|\nabla u|^2-\frac{C_1}{\epsilon}|\nabla u|^2.$$
For $\Delta |\nabla u|$, this then gives 
\begin{equation}
\Delta |\nabla u|\geq \frac{1}{2\epsilon^2}(5|u|^2-3)|\nabla u|-\frac{C_1}{\epsilon}|\nabla u|.
\end{equation}

Recalling once again the equation \eqref{moduboch} for $\Delta \frac{1}{2}|u|^2$, we define 
$$w:=|\nabla u|-\frac{1}{\epsilon}(1-|u|^2),$$
and observe that
\begin{align*}
\Delta w&\geq \frac{1}{2\epsilon^2}(5|u|^2-3)|\nabla u|-\frac{C_1}{\epsilon}|\nabla u|\\
&\quad+\frac{2}{\epsilon}|\nabla u|^2-\frac{1}{\epsilon^3}|u|^2(1-|u|^2)\\
&=\frac{|u|^2}{\epsilon^2}w+|\nabla u|\Big(\frac{2}{\epsilon}|\nabla u|-\frac{3}{2}\frac{(1-|u|^2)}{\epsilon^2}-\frac{C_1}{\epsilon}\Big)\\
&=\frac{|u|^2}{\epsilon^2}w+\frac{|\nabla u|}{\epsilon}\Big(2w+\frac{1}{2\epsilon}(1-|u|^2)-C_1\Big).
\end{align*}
We then have
\begin{equation}\label{lapwest}
\Delta w \geq \frac{|u|^2}{\epsilon^2}w+\frac{1}{\epsilon}\Big(w+\frac{1}{\epsilon}(1-|u|^2)\Big)\Big(2w+\frac{1}{2\epsilon}(1-|u|^2)-C_1\Big).
\end{equation}

If $w$ has a positive maximum, it follows that
$$2w+\frac{1}{2\epsilon}(1-|u|^2)\leq C_1$$
at this maximum point; in particular, we deduce then that
$$|u|^2\geq 1-2C_1\epsilon$$
at this point, and see from \eqref{lapwest} that here
$$0\geq \frac{1-2C_1\epsilon}{\epsilon^2}w-\frac{1}{\epsilon}\Big(w+\frac{1}{\epsilon}(1-|u|^2)\Big)C_1\ge\frac{1-3C_1\epsilon}{\epsilon^2}w-2\frac{C_1^2}{\epsilon}.$$
If $\epsilon\le\epsilon_d(M,\Lambda)$ is small enough, it follows that $\max w\leq C\epsilon$; as a consequence, we check that
$$|\nabla u|^2\leq C\frac{W(u)}{\epsilon^2}+C\epsilon^2,$$
completing the proof of \eqref{nabwcontrol}.
\end{proof}

As a simple consequence of the estimates in Proposition \ref{wcontrolprop}, we obtain the following corollary.

\begin{cor}\label{offzsubeqcor} There exist constants $0<\beta_d(M,\Lambda)<1$ and $C(M,\Lambda)$ such that, for $(u,\nabla)$ as above, we have
\begin{equation}\label{offzsubeq}
\Delta \frac{1}{2}(1-|u|^2)\geq \frac{1}{4\epsilon^2}(1-|u|^2)-C\epsilon^2
\end{equation}
on the set $\{|u|^2\geq 1-\beta_d\}$.
\end{cor}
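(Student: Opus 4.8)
The statement is a direct consequence of the Bochner identity \eqref{moduboch} together with the gradient bound \eqref{nabwcontrol} from Proposition \ref{wcontrolprop}, so the argument is essentially algebraic bookkeeping. First I would rewrite \eqref{moduboch} in terms of the quantity we care about: since $1-|u|^2$ differs from $-|u|^2$ by a constant,
\begin{equation*}
\Delta \tfrac{1}{2}(1-|u|^2) = -|\nabla u|^2 + \frac{1}{2\epsilon^2}(1-|u|^2)|u|^2.
\end{equation*}

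Next I would insert the estimate \eqref{nabwcontrol}, recalling that $W(u)=\frac14(1-|u|^2)^2$, to get $|\nabla u|^2 \le \frac{C_0}{4\epsilon^2}(1-|u|^2)^2 + C_0\epsilon^2$ for some $C_0=C_0(M,\Lambda)$ (valid once $\epsilon\le\epsilon_d$), and substitute $|u|^2 = 1 - (1-|u|^2)$ in the potential term. This yields
\begin{equation*}
\Delta \tfrac{1}{2}(1-|u|^2) \ge \frac{1}{2\epsilon^2}(1-|u|^2) - \Big(\frac12 + \frac{C_0}{4}\Big)\frac{1}{\epsilon^2}(1-|u|^2)^2 - C_0\epsilon^2.
\end{equation*}
The additive error $-C_0\epsilon^2$ is already of the desired form, so it remains to absorb the quadratic term into the linear one. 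Writing $C_1 := \frac12 + \frac{C_0}{4}$, on the region where $C_1(1-|u|^2)\le \frac14$, i.e. $1-|u|^2 \le \frac{1}{4C_1}$, we have $C_1(1-|u|^2)^2 \le \frac14(1-|u|^2)$, hence $\frac{1}{2\epsilon^2}(1-|u|^2) - \frac{C_1}{\epsilon^2}(1-|u|^2)^2 \ge \frac{1}{4\epsilon^2}(1-|u|^2)$, which is exactly \eqref{offzsubeq}.

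Thus the conclusion holds with $\beta_d := \min\{\tfrac{1}{4C_1},\tfrac12\}$ (so that $0<\beta_d<1$ and the restriction $\{|u|^2\ge 1-\beta_d\}$ is the set $\{1-|u|^2\le\beta_d\}$), and with $C:=C_0$; these depend only on $M$ and $\Lambda$ through Proposition \ref{wcontrolprop}. There is no real obstacle here: the only point requiring care is that the coefficient $\frac14$ on the right-hand side of \eqref{offzsubeq}---rather than something close to the $\frac12$ produced by the Bochner term---is what forces us to work only on the set where $1-|u|^2$ is uniformly small, and to let $\beta_d$ depend on the constant $C_0$ coming from the gradient estimate.
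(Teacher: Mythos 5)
Your proof is correct and follows essentially the same route as the paper: both start from the Bochner identity \eqref{moduboch}, insert the gradient estimate \eqref{nabwcontrol}, and then choose $\beta_d$ small enough to absorb the quadratic term $(1-|u|^2)^2/\epsilon^2$ into the linear one. The only difference is cosmetic—the paper keeps $|u|^2$ as a coefficient and factors out $\frac{1}{2\epsilon^2}(1-|u|^2)$, requiring the bracket $|u|^2 - \hat C(1-|u|^2)\ge\frac12$, whereas you substitute $|u|^2=1-(1-|u|^2)$ first and then bound the resulting quadratic term; algebraically these are the same manipulation.
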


\begin{proof} By the formula \eqref{moduboch} for $\Delta \frac{1}{2}|u|^2$, we know that
$$\Delta \frac{1}{2}(1-|u|^2)=\frac{1}{2\epsilon^2}|u|^2(1-|u|^2)-|\nabla u|^2.$$
Combining this with the estimate \eqref{nabwcontrol} for $|\nabla u|^2$, we then deduce the existence of a constant $\hat C(M,\Lambda)$ such that
$$\Delta \frac{1}{2}(1-|u|^2)\geq |u|^2\frac{1}{2\epsilon^2}(1-|u|^2)-\hat C(M,\Lambda)\frac{(1-|u|^2)^2}{2\epsilon^2}-C\epsilon^2.$$
By taking $\beta=\beta(M,\Lambda)>0$ sufficiently small, we can arrange that
$$|u|^2-\hat C(1-|u|^2)\geq 1-\beta-\hat C\beta\geq \frac{1}{2}$$
on $\{|u|^2\geq 1-\beta\}$, from which the claimed estimate follows.
\end{proof}

Next, we employ the result of Corollary \ref{offzsubeqcor} to show that the quantity $(1-|u|^2)$ vanishes rapidly away from from $Z_{\beta_d}(u)$ (compare \cite[Sections~III.7--III.8]{JT}).

\begin{prop}\label{maindecayest} Let $(u,\nabla)$ be as before,
with $\epsilon\le\epsilon_d$, and define the set
$$Z_{\beta_d}:=\{x\in M : |u(x)|^2\leq 1-\beta_d\},$$
where $\beta_d(M,\Lambda)$ is the constant provided by Corollary \ref{offzsubeqcor}.
Defining $r: M\to [0,\infty)$ by 
$$r(p):=\operatorname{dist}(p,Z_{\beta}),$$
we have an estimate of the form
\begin{equation}
(1-|u|^2)(p)\leq C e^{-a_d r(p)/\epsilon}+C\epsilon^4
\end{equation}
for some $C=C(M,\Lambda)$ and $a_d=a_d(M)>0$.
\end{prop}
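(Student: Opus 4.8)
The plan is to run a standard maximum-principle/comparison argument based on the differential inequality from Corollary \ref{offzsubeqcor}. Set $v := 1 - |u|^2$, so that $0 \le v \le \beta_d$ on the complement of $Z_{\beta_d}$, and recall that on $\{|u|^2 \ge 1 - \beta_d\} = M \setminus Z_{\beta_d}$ we have $\Delta \tfrac12 v \ge \tfrac{1}{4\epsilon^2} v - C\epsilon^2$, i.e. $\Delta v \ge \tfrac{1}{2\epsilon^2} v - C\epsilon^2$. First I would absorb the inhomogeneous term: the constant function $2C\epsilon^4$ satisfies $\Delta(2C\epsilon^4) = 0 \le \tfrac{1}{2\epsilon^2}(2C\epsilon^4) - C\epsilon^2 = 0$ with equality, so $\tilde v := (v - 2C\epsilon^4)^+$ (extended appropriately) is a subsolution of the homogeneous equation $\Delta w \ge \tfrac{1}{2\epsilon^2} w$ on the open set where $v > 2C\epsilon^4$, which is contained in $M \setminus Z_{\beta_d}$ once $\epsilon$ is small (since there $v \le \beta_d$ forces nothing, but the point is the inequality holds on all of $M \setminus Z_{\beta_d}$). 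Thus it suffices to prove $\tilde v(p) \le C e^{-a_d r(p)/\epsilon}$.

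Next I would build a barrier. Fix $p$ with $r(p) > 0$ and work on the geodesic ball $B_{r(p)}(p) \subset M \setminus Z_{\beta_d}$. On this ball I want a supersolution $\phi$ of $\Delta \phi \le \tfrac{1}{2\epsilon^2}\phi$ with $\phi \ge \tilde v$ on $\partial B_{r(p)}(p)$ (where $\tilde v \le v \le \beta_d \le 1$, so $\phi \equiv 1$ on the boundary would do) and with $\phi(p)$ exponentially small in $r(p)/\epsilon$. The natural choice is a radial function $\phi(x) = \Phi(d(x,p))$ with $\Phi$ roughly $e^{-a(r(p)-t)/\epsilon}$ for $t = d(x,p)$. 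Using the Laplacian comparison / Hessian comparison on $M$ (the distance function satisfies $\Delta d \le \tfrac{n-1}{t} + C(M) t$ for $t < \operatorname{inj}(M)$, or one can just use $\Delta d \le C(M)/t + C(M)$), one computes $\Delta \phi = \Phi'' + (\Delta d)\Phi'$; since $\Phi' < 0$ and $|\Phi''| = (a/\epsilon)^2 \Phi$, $|\Phi'| = (a/\epsilon)\Phi$, choosing $a = a_d(M)$ small enough (to beat the $(n-1)/t$ and curvature terms, which requires a preliminary barrier on a fixed small ball to handle the singularity of $\Delta d$ at $t = 0$, or equivalently comparing on the annulus and using a crude bound near $p$) gives $\Delta \phi \le \tfrac{1}{2\epsilon^2}\phi$ throughout. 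Then the comparison principle for the operator $\Delta - \tfrac{1}{2\epsilon^2}$ on $B_{r(p)}(p)$ yields $\tilde v \le \phi$ there, and in particular $\tilde v(p) \le \Phi(0) \asymp e^{-a_d r(p)/\epsilon}$. Undoing the substitution gives $v(p) \le C e^{-a_d r(p)/\epsilon} + 2C\epsilon^4$, which is the claim (after renaming constants). For $r(p)$ bounded below one simply uses the crude bound $v \le 1$; for $r(p) \ge \operatorname{inj}(M)$, say, one iterates the estimate along a chain of balls of radius $\sim \operatorname{inj}(M)$, which converts the local exponential decay into a global one with a possibly smaller rate $a_d$.

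The main obstacle is the construction of the radial supersolution uniformly in $\epsilon$: one must choose the decay rate $a_d$ depending only on $(M)$ (through the curvature bounds entering Laplacian comparison) and independent of $\epsilon$, and handle the mild singularity of the distance function's Laplacian at the center $p$ — the cleanest fix is to prove the estimate first on a ball of some fixed radius $\rho_0(M)$ by a direct computation (where $\Delta d \le C(M)$ is the only input needed since $t \ge$ const is not available, so instead one uses $\phi = \cosh$-type or exponential barriers scaled so the bad term is dominated for $\epsilon$ small), and then propagate. The rest — absorbing the $C\epsilon^2$ inhomogeneity into the $C\epsilon^4$ term, the comparison principle, and the chaining for large $r$ — is routine. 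This is exactly the scheme used for the analogous decay estimate in the Allen--Cahn setting and in \cite[Sections~III.7--III.8]{JT} for planar vortices.
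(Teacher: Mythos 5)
Your overall strategy --- compare $1-|u|^2$ with a radial exponential barrier on the ball $B_{r(p)}(p)$ via the maximum principle for $\Delta - \tfrac{1}{2\epsilon^2}$, after absorbing the $C\epsilon^2$ inhomogeneity into an additive $O(\epsilon^4)$ --- is exactly the paper's strategy, and your reduction of the inhomogeneous inequality to the homogeneous one by subtracting $2C\epsilon^4$ is fine. However, there is a genuine gap in the barrier construction, and the obstacle you flag as ``the main obstacle'' is not actually resolved by the fixes you sketch.

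First, a sign error that obscures the real issue: with your $\Phi(t)=e^{-a(r(p)-t)/\epsilon}$ you have $\Phi'>0$, not $\Phi'<0$. Since near the center $\Delta d_p \approx (n-1)/t > 0$, the term $(\Delta d_p)\Phi'$ is \emph{positive} and blows up like $1/t$ as $t\to 0$; it therefore works \emph{against} the supersolution inequality $\Delta\phi\le\tfrac{1}{2\epsilon^2}\phi$, rather than being a harmless term that a small choice of $a$ can beat. Your suggested remedies (``a preliminary barrier on a fixed small ball'' / ``cosh-type barriers scaled so the bad term is dominated'') do not address this: the singularity sits at the center of the ball and is present at every scale, so it cannot be removed by a preliminary estimate on a fixed-radius ball, and a bare exponential or $\cosh$ in $d_p$ still has nonvanishing radial derivative at $p$. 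What \emph{does} work is to regularize the radial variable at scale $\epsilon$: the paper takes
\[
\varphi(x):=\exp\!\Big(\tfrac{a}{\epsilon}\sqrt{d_p(x)^2+\epsilon^2}\Big),
\]
whose gradient vanishes at $p$, so that $\Delta\varphi \le \tfrac{a^2+C_1 a}{\epsilon^2}\varphi$ holds uniformly on the whole ball (including at the center) with $C_1=C_1(M)$ coming from Laplacian comparison. Then one sets $f:=\tfrac12(1-|u|^2)-c_2\varphi$ with $c_2:=\beta_d e^{-ar(p)/\epsilon}$, checks $f<0$ on $\partial B_{r(p)}(p)$ and $\Delta f\ge\tfrac{1}{2\epsilon^2}f-C\epsilon^2$ in $B_{r(p)}(p)$, and reads off $f\le C\epsilon^4$ at an interior maximum, which at $x=p$ gives the claim. (If you insist on an unregularized exponential, you must excise a ball $B_{\delta\epsilon}(p)$, run the comparison on the annulus, and then use that a subsolution of $\Delta w\ge\tfrac{1}{2\epsilon^2}w$ has no positive interior maximum to transfer the bound from $\partial B_{\delta\epsilon}(p)$ to $p$; this can be made to work but is clunkier and is not what you wrote.)

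Finally, the chaining argument for $r(p)\ge\operatorname{inj}(M)$ is unnecessary overhead: once $r(p)\ge\tfrac12\operatorname{inj}(M)$, the barrier on $B_{\operatorname{inj}(M)/2}(p)$ already gives $(1-|u|^2)(p)\le Ce^{-a\operatorname{inj}(M)/(2\epsilon)}+C\epsilon^4\le C'\epsilon^4$ for $\epsilon\le\epsilon_d(M,\Lambda)$, which subsumes the stated bound; and if $Z_{\beta_d}=\varnothing$ one evaluates the subequation at a global maximum of $1-|u|^2$ to get $1-|u|^2\le C\epsilon^4$ outright. So you may simply assume $r(p)<\tfrac12\operatorname{inj}(M)$, as the paper does.
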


\begin{proof}
Fix a point $p\in M$, and let $r=r(p)=\operatorname{dist}(p,Z_{\beta})$ as above. We can clearly assume $r(p)<\frac{1}{2}\operatorname{inj}(M)$. On the ball $B_r(p)$, for some constant $a=a_d>0$ to be chosen later, consider the function
$$\varphi(x):=e^{(a/\epsilon)(d_p(x)^2+\epsilon^2)^{1/2}},$$
where $d_p(x):=\operatorname{dist}(p,x)$.
A straightforward computation then gives
\begin{align*}
\Delta\varphi&=\frac{a}{\epsilon}\varphi\left(\frac{(a/\epsilon)d_p^2}{d_p^2+\epsilon^2}-\frac{d_p^2}{(d_p^2+\epsilon^2)^{3/2}}\right)\\
&\quad+\frac{a}{2\epsilon}\varphi\frac{\Delta d_p^2}{(d_p^2+\epsilon^2)^{1/2}}\\
&\leq \frac{a^2}{\epsilon^2}\varphi+\frac{a}{2\epsilon}\varphi\frac{\Delta d_p^2}{(d_p^2+\epsilon^2)^{1/2}}\\
&\leq \frac{a^2+C_1a}{\epsilon^2}\varphi
\end{align*}
for some $C_1=C_1(M)$.
Now, fix some constant $c_2>0$ to be chosen later, and let
$$f:=\frac{1}{2}(1-|u|^2)-c_2\varphi.$$
Combining the preceding computation with \eqref{offzsubeq}, we see that, on $B_r(p)$,
\begin{align*}
\Delta f&\geq \frac{1}{4\epsilon^2}(1-|u|^2)-C(M,\Lambda)\epsilon^2-\frac{a^2+C_1a}{\epsilon^2}c_2\varphi\\
&= \frac{1}{2\epsilon^2}f+\frac{1-2a^2-2C_1a}{2\epsilon^2}c_2\varphi-C(M,\Lambda)\epsilon^2.
\end{align*}
Choosing $a=a_d(M)>0$ sufficiently small, we can arrange that $2a^2+C_1a\leq 1$, so that the above computation gives
\begin{equation}\label{littlefsubeq}
\Delta f\geq \frac{f}{2\epsilon^2}-C\epsilon^2.
\end{equation}

On the boundary of the ball $\partial B_r(p)$, it follows from definition of $r=r(p)$ that $|u|^2\geq 1-\beta_d$, and therefore 
$$f(x)\leq \frac{\beta_d}{2}-c_2\varphi\leq \frac{\beta_d}{2}-c_2e^{ar/\epsilon}\text{ on }\partial B_r(p).$$
Taking $c_2(M,\Lambda):=\beta_d e^{-ar/\epsilon}$, it then follows that $f<0$ on $\partial B_r(p)$, so we can apply the maximum principle with \eqref{littlefsubeq} to deduce that
$$f\leq C\epsilon^4\text{ in }B_r(p).$$
Evaluating at $p$, this gives
$$C\epsilon^4\geq f(p)=\frac{1}{2}(1-|u|^2)(p)-\beta_d e^{-ar(p)/\epsilon}e^{a},$$
so that
$$(1-|u|^2)(p)\leq C(M,\Lambda)e^{-a r(p)/\epsilon}+C(M,\Lambda)\epsilon^4,$$
as desired.
\end{proof} 

Combining these estimates with those of Proposition \ref{fwcontrol}, we arrive immediately at the following decay estimate for the energy integrand $e_{\epsilon}(u,\nabla)$.

\begin{cor}\label{decaycor} Defining $Z_{\beta_d}$ and $r(p)=\operatorname{dist}(p,Z_{\beta_d})$ as in Corollary \ref{offzsubeqcor}, there exist $a_d(M)>0$ and $C_d(M,\Lambda)$ such that
\begin{equation}
e_{\epsilon}(u,\nabla)(p)\leq C_d\frac{e^{-a_dr(p)/\epsilon}}{\epsilon^2}+C_d\epsilon.
\end{equation}
\end{cor}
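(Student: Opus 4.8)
The plan is to combine the three pointwise estimates we have already assembled. By Proposition \ref{wcontrolprop}, the energy integrand splits as
\begin{align*}
e_\epsilon(u,\nabla)&=|\nabla u|^2+\epsilon^2|F_\nabla|^2+\frac{W(u)}{\epsilon^2}\\
&\le C(M,\Lambda)\frac{W(u)}{\epsilon^2}+C(M,\Lambda)\epsilon,
\end{align*}
where we have used \eqref{fwcontrol}, \eqref{nabwcontrol}, and absorbed the trivial term $\frac{W(u)}{\epsilon^2}$ into the right-hand side (noting $\epsilon^2\le\epsilon$ for $\epsilon\le 1$). Thus it suffices to bound $\frac{W(u)}{\epsilon^2}=\frac{(1-|u|^2)^2}{4\epsilon^2}$ at the point $p$.

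Here I would simply insert the exponential decay of $1-|u|^2$ from Proposition \ref{maindecayest}. That proposition gives $(1-|u|^2)(p)\le Ce^{-a_dr(p)/\epsilon}+C\epsilon^4$, so squaring yields
$$(1-|u|^2)^2(p)\le 2C^2e^{-2a_dr(p)/\epsilon}+2C^2\epsilon^8\le C'e^{-a_dr(p)/\epsilon}+C'\epsilon^8,$$
after relabeling $a_d$ (halving the decay rate costs nothing) and adjusting constants. Dividing by $4\epsilon^2$ produces a term $\le C\epsilon^{-2}e^{-a_dr(p)/\epsilon}$ plus a term $\le C\epsilon^6\le C\epsilon$.

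Feeding this back into the bound for $e_\epsilon(u,\nabla)$ gives
$$e_\epsilon(u,\nabla)(p)\le C(M,\Lambda)\frac{e^{-a_dr(p)/\epsilon}}{\epsilon^2}+C(M,\Lambda)\epsilon,$$
which is exactly the claimed estimate, with $C_d(M,\Lambda)$ the resulting constant and $a_d(M)>0$ as in (the relabeled) Proposition \ref{maindecayest}. There is essentially no obstacle here: the statement is a formal corollary, and the only care needed is the bookkeeping of constants and decay rates (in particular the harmless halving of $a_d$ when passing from $1-|u|^2$ to its square), together with the observation that for $\epsilon\le\epsilon_d(M,\Lambda)$ all error terms of the form $C\epsilon^k$ with $k\ge 1$ are dominated by $C\epsilon$.
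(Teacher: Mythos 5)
Your proof is correct and follows exactly the route the paper intends: combine the bounds \eqref{fwcontrol}--\eqref{nabwcontrol} of Proposition \ref{wcontrolprop} (which reduce $e_\epsilon$ to a multiple of $W(u)/\epsilon^2$ plus an $O(\epsilon)$ error) with the exponential decay of $1-|u|^2$ from Proposition \ref{maindecayest}. The only minor quibble is that squaring $e^{-a_dr/\epsilon}$ gives $e^{-2a_dr/\epsilon}\le e^{-a_dr/\epsilon}$, so no relabeling of $a_d$ is actually needed; the bookkeeping is otherwise sound.
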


\section{The energy-concentration varifold}\label{varifold.section}

This section is devoted to the proof of the main result of the paper, which we recall now.

\begin{thm}\label{thm1} Let $(u_{\epsilon},\nabla_{\epsilon})$ be a family of solutions to \eqref{ueq}--\eqref{feq} satisfying a uniform energy bound $E_{\epsilon}(u_{\epsilon},\nabla_{\epsilon})\leq \Lambda$ as $\epsilon\to 0$. Then, as $\epsilon\to 0$, the energy measures 
$$\mu_{\epsilon}:=\frac{1}{2\pi}e_{\epsilon}(u_{\epsilon},\nabla_{\epsilon})\,\operatorname{vol}_g$$
converge subsequentially, in duality with $C^0(M)$, to the weight measure of a stationary, integral $(n-2)$-varifold $V$. Also, for all $0\le\delta<1$,
$$\operatorname{spt}(V)=\lim_{\epsilon \to 0}\{|u_{\epsilon}|\le\delta\}$$
in the Hausdorff topology. If $M$ is oriented, the $(n-2)$-currents dual to the curvature forms $\frac{1}{2\pi}\omega_{\epsilon}$ converge subsequentially to an integral $(n-2)$-cycle $\Gamma$, with $|\Gamma|\le\mu$.
\end{thm}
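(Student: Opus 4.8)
The plan is to establish the four distinct assertions of Theorem~\ref{thm1} in sequence, relying heavily on the monotonicity formula (Theorem~\ref{monotonicity}), the $\xi_\epsilon$-bound (Proposition~\ref{xibdprop}), the decay estimates of Section~\ref{decaysec}, and a blow-up reduction to planar solutions. \emph{Step 1: extract the limit varifold.} By the uniform energy bound the measures $\mu_\epsilon$ have uniformly bounded mass, so up to a subsequence $\mu_\epsilon \rightharpoonup \mu$ weakly-$*$ as Radon measures. One promotes $\mu$ to a varifold $V$ by recording, for each $\epsilon$, the approximate tangent plane data: where $\nabla_\epsilon u_\epsilon \neq 0$, the (at most) two-dimensional subspace on which $\nabla u_\epsilon$ is supported spans an oriented $(n-2)$-plane (its orthogonal complement in the kernel), and this defines a varifold $V_\epsilon$ with weight $\mu_\epsilon$; passing to a subsequential varifold limit $V$ gives $\|V\|=\mu$. \emph{Step 2: stationarity.} This is immediate from the inner-variation identity \eqref{stateq1}--\eqref{stateq2}: testing $\operatorname{div}(T_\epsilon)=0$ against an arbitrary vector field $X$ and letting $\epsilon\to 0$, the terms $2\nabla u^*\nabla u$ and $2\epsilon^2\omega^*\omega$ in $T_\epsilon$ become (asymptotically) $2\,\mathrm{P}^\perp$ times the energy density (using Proposition~\ref{wcontrolprop} and the near-equipartition $\xi_\epsilon\le C$ together with the reverse inequality from \eqref{moduboch} integrated, i.e.\ the energy splits as $\tfrac12$ gradient, $0$ curvature-relative-to-potential in the limit—more precisely the curvature and potential terms balance so that $e_\epsilon \approx 2 \tfrac{W}{\epsilon^2} \approx 2\epsilon^2|F|^2$ concentrates transversally), yielding exactly the first variation formula $\int \operatorname{div}_{T_xV} X \, d\|V\| = 0$.

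\emph{Step 3: rectifiability and integrality.} For rectifiability one needs a lower density bound on $\operatorname{spt}\mu$: combining Corollary~\ref{clearing.out} (clearing-out) with the monotonicity formula shows that $\Theta^{n-2}(\mu,x) \geq \theta_0 > 0$ for every $x\in\operatorname{spt}\mu$, while monotonicity also gives the uniform upper bound and the existence of the density a.e.; then Allard's rectifiability theorem (applicable since $V$ is stationary, or alternatively since $\mu$ has positive lower density and bounded upper density and $V$ is a varifold limit of the $V_\epsilon$) gives that $V$ is rectifiable. Integrality is the heart of the matter: one performs a blow-up at a $\mu$-a.e.\ point $x$ where the approximate tangent plane exists, rescaling $M$ to $\R^n$ and $(u_\epsilon,\nabla_\epsilon)$ to scale $\epsilon$; the decay estimates (Corollary~\ref{decaycor}) confine the energy to an $O(\epsilon)$-neighborhood of $u_\epsilon^{-1}(0)$, and after the rescaling $x\mapsto \epsilon^{-1}(x-x_0)$ one obtains, in the limit, entire finite-energy solutions of the Yang--Mills--Higgs system on $\R^n$ which, by the transversality and the structure of $\nabla u$, reduce to planar (two-dimensional) solutions on $\R^2$; invoking the classification of Jaffe--Taubes \cite{JT} (all finite-energy planar solutions have energy $2\pi N$ for integer vortex number $N$) shows the blow-up limit has density a positive integer, hence $\Theta^{n-2}(\mu,x)\in 2\pi\cdot\tfrac{1}{2\pi}\mathbb{Z}=\mathbb{Z}$ for $\mu$-a.e.\ $x$ (after the $\tfrac{1}{2\pi}$ normalization), so $V$ is integral.

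\emph{Step 4: Hausdorff convergence of level sets and the curvature current.} For the level sets: if $x_\epsilon\in\{|u_\epsilon|\le\delta\}$ with $x_\epsilon\to x$, then by the clearing-out Corollary~\ref{clearing.out} the rescaled energy in balls around $x_\epsilon$ is bounded below, so by monotonicity $x\in\operatorname{spt}\mu=\operatorname{spt}V$; conversely, if $x\in\operatorname{spt}V$ then the lower density bound forces energy, hence (by Corollary~\ref{decaycor}, which says energy away from $Z_{\beta_d}$ decays exponentially) points of $\{|u_\epsilon|^2\le 1-\beta_d\}$ nearby, and a covering/continuity argument upgrades this to all $\{|u_\epsilon|\le\delta\}$ for every $\delta<1$; together these give Hausdorff convergence. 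For the curvature: from \eqref{feq} one has $d^*(\epsilon^2\omega_\epsilon)=\langle\nabla u_\epsilon,iu_\epsilon\rangle$, whose $L^1$-norm is controlled by $E_\epsilon^{1/2}\cdot\epsilon$-type quantities and in fact tends to zero after suitable normalization, so the forms $\tfrac{1}{2\pi}\omega_\epsilon$ are asymptotically closed and coclosed in a weak sense; since $\omega_\epsilon$ is genuinely closed, the dual $(n-2)$-currents are cycles, and the uniform bound $\int|\omega_\epsilon|\le \epsilon^{-1}E_\epsilon^{1/2}$—combined with the pointwise bound $\epsilon|F_\nabla|\le \tfrac{1}{2\epsilon}(1-|u|^2)+C$ and the exponential decay—shows the mass of these currents is bounded and concentrates on $\operatorname{spt}V$ with multiplicity controlled by the local vortex number, giving subsequential convergence to an integral cycle $\Gamma$ with $|\Gamma|\le\mu$ (the inequality because the curvature detects only the net signed multiplicity $N$, whereas $\mu$ sees $|N|$ plus possibly cancelling pairs). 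I expect the \textbf{main obstacle} to be Step~3, the integrality: making the blow-up rigorous requires showing that the rescaled solutions converge strongly enough (in $C^1_{loc}$, say, via the elliptic estimates and the $\epsilon$-regularity implicit in the monotonicity plus decay) to genuine entire solutions, controlling the gauge (a local Coulomb gauge must be chosen so the rescaled connections converge), and verifying that the transversality of $\nabla u$ really does force the limit to be planar rather than a more complicated higher-dimensional object—this dimension-reduction step, and the compactness of the space of entire finite-energy solutions modulo gauge, is where the delicate analysis of \cite{JT} and the careful exploitation of the self-dual structure (which the remark warns is fragile) enters.
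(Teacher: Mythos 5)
Your overall architecture (monotonicity $\to$ density bounds $\to$ stationarity via the stress tensor $\to$ blow-up $\to$ planar quantization) is the right one, and Steps~1--2 and most of Step~4's level-set direction match the paper in spirit, though the paper routes stationarity and rectifiability through the Ambrosio--Soner theory of generalized varifolds rather than through a hand-built Grassmannian lift $V_\epsilon$. However, Step~3 as written has a genuine gap. You propose a single blow-up at scale $\epsilon$ around a point $x_0$, claiming to obtain ``entire finite-energy solutions on $\mathbb R^n$.'' This cannot work for $n>2$: by the density bound, the energy in $B_{R\epsilon}(x_0)$ is $\sim\Theta\,\omega_{n-2}(R\epsilon)^{n-2}$, so the rescaled solutions have energy $\sim R^{n-2}\to\infty$ on $B_R$; the limit is a cylinder, not a finite-energy object, and the Jaffe--Taubes classification does not apply directly. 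The paper handles this with a two-stage blow-up: first at an intermediate scale $s_\epsilon\to 0$ with $s_\epsilon/\epsilon\to\infty$, chosen so that $\hat\mu_\epsilon$ converges to $\Theta\,\mathcal H^{n-2}\mrestr T_x\Sigma$; then Proposition~\ref{geo.lemma} (derived from the monotonicity inequality applied at a point translated off the tangent plane) kills the transverse derivatives; then a slicing argument (Proposition~\ref{slice.thesis.prop}) decomposes the zero set in each $2$-dimensional slice into finitely many $O(\epsilon)$-clusters and applies the planar quantization (Proposition~\ref{2dlimit}) cluster-by-cluster; and crucially, \emph{Allard's strong constancy lemma} is invoked to show the slice energy $f_\epsilon(t)$ is nearly constant in $t$, so near-integrality on a positive-measure set of slices upgrades to integrality of $\Theta$. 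Without the constancy lemma your argument would only control the density along some slices, not the actual density $\Theta_{n-2}(\mu,x)$. Note also that ``Allard's rectifiability theorem'' and ``Allard's strong constancy lemma'' are distinct results; the paper uses the latter in the integrality proof and uses Ambrosio--Soner \cite{AS} (not Allard) for rectifiability of the generalized varifold.

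There are two further gaps. For the Hausdorff convergence $\operatorname{spt}(V)\subseteq\lim\{|u_\epsilon|\le\delta\}$, your argument only produces nearby points of $Z_{\beta_d}(u_\epsilon)=\{|u_\epsilon|^2\le 1-\beta_d\}$; the theorem requires nearby points of $\{|u_\epsilon|\le\delta\}$ for \emph{every} $\delta<1$, in particular actual zeros. Passing from $Z_{\beta_d}$ to zeros is not a covering or continuity argument: the paper uses Remark~\ref{stronger.nonzero}, which says that if $u_\epsilon$ is nowhere vanishing near a rectifiable point $q$, then the degree in Proposition~\ref{2dlimit} vanishes, so the slice energies tend to zero, forcing $\Theta_{n-2}(\mu,q)=0$ and contradicting the lower density bound. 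For the curvature current, the chain of inequalities you quote does not give a uniform mass bound on $\int|\omega_\epsilon|$: the term $C/\epsilon$ in $|F_{\nabla_\epsilon}|\le\frac{1}{2\epsilon^2}(1-|u_\epsilon|^2)+C/\epsilon$ integrates to $O(\epsilon^{-1})$. The paper instead defines the current through the two-form $J(u_\epsilon,\nabla_\epsilon)=\psi(u_\epsilon)+(1-|u_\epsilon|^2)\omega_\epsilon$, which satisfies $|J|\le e_\epsilon$ pointwise (giving mass $\le\Lambda$ immediately) and differs from $\omega_\epsilon$ by the exact form $d\langle\nabla_\epsilon u_\epsilon,iu_\epsilon\rangle$ whose $L^1$-norm vanishes in the limit; integrality of $\Gamma$ is then proved by a separate blow-up lemma you would still need to supply.
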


\subsection{Convergence to a stationary rectifiable varifold}\hfill

Let $(u_{\epsilon},\nabla_{\epsilon})$ be as in Theorem \ref{thm1}, and pass to a subsequence $\epsilon_j\to 0$ such that the energy measures $\mu_{\epsilon_j}$ converge weakly-* to a limiting measure $\mu$,
in duality with $C^0(M)$.

Note that, for $0<r<R<\operatorname{inj}(M)$, Theorem \ref{monotonicity} yields
\begin{align*}
	e^{CR}R^{2-n}\mu(\bar B_R(x))+CR
	&\ge\limsup_{\epsilon\to 0}e^{CR}R^{2-n}\mu_\epsilon(\bar B_R(x))+CR \\
	&\ge\liminf_{\epsilon\to 0}e^{Cr}r^{2-n}\mu_\epsilon(B_r(x))+Cr \\
	&\ge e^{Cr}r^{2-n}\mu(B_r(x))+Cr,
\end{align*}
so approximating $R$ with smaller radii we deduce
\begin{align}\label{mono.mu}
	&e^{CR}R^{2-n}\mu(B_R(x))+CR\ge e^{Cr}r^{2-n}\mu(B_r(x))+Cr
\end{align}
and in particular the $(n-2)$-density
\begin{align*}
	&\Theta_{n-2}(\mu,x):=\lim_{r\to 0}(\omega_{n-2}r^{n-2})^{-1}\mu(B_r(x))
\end{align*}
is defined.
As a first step toward the proof of Theorem \ref{thm1}, we show that this density is bounded from above and below on the support $\operatorname{spt}(\mu)$.

\begin{prop}\label{densprop} There exists a constant $0<C(M,\Lambda)<\infty$ such that
\begin{equation}\label{mudensbds}
C(M,\Lambda)^{-1}\leq r^{2-n}\mu(B_r(x))\leq C(M,\Lambda)\quad\text{for }x\in \operatorname{spt}(\mu),\ 0<r<\operatorname{inj}(M)
\end{equation}
and thus $C(M,\Lambda)^{-1}\leq\Theta_{n-2}(\mu,x)\leq C(M,\Lambda)$ for all $x\in \operatorname{spt}(\mu)$.
\end{prop}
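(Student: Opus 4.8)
The plan is to obtain the two inequalities in \eqref{mudensbds} by quite different arguments: the upper bound is immediate from the almost-monotonicity \eqref{mono.mu} together with the global energy bound (and in fact holds for every $x\in M$), while the lower bound is proved by contradiction, combining the weak-$*$ convergence $\mu_\epsilon\rightharpoonup\mu$ with the clearing-out Corollary \ref{clearing.out} and the exponential decay Corollary \ref{decaycor}.

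For the upper bound, fix $x\in M$ and let $R\uparrow\operatorname{inj}(M)$ in \eqref{mono.mu}. Since $\mathbf{1}\in C^0(M)$, we have $\mu(B_R(x))\le\mu(M)=\lim_{\epsilon\to 0}\mu_\epsilon(M)\le\Lambda/(2\pi)$, so \eqref{mono.mu} yields $e^{Cr}r^{2-n}\mu(B_r(x))\le e^{C\operatorname{inj}(M)}\operatorname{inj}(M)^{2-n}\Lambda/(2\pi)+C\operatorname{inj}(M)$ for every $0<r<\operatorname{inj}(M)$, which is the claimed bound $r^{2-n}\mu(B_r(x))\le C(M,\Lambda)$.

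For the lower bound, fix $\delta=\delta(M,\Lambda)\in(0,1)$ small enough that $(1-\delta)^2>1-\beta_d$, with $\beta_d$ the constant of Corollary \ref{offzsubeqcor}, and set $\eta_0:=2^{2-n}(4\pi)^{-1}\eta(M,\Lambda,\delta)$, where $\eta(M,\Lambda,\delta)$ is the threshold in Corollary \ref{clearing.out}. Suppose toward a contradiction that $x\in\operatorname{spt}(\mu)$ while $r^{2-n}\mu(B_r(x))<\eta_0$ for some $0<r<\operatorname{inj}(M)$, and choose $\rho<r$ with $(r/\rho)^{n-2}<2$. Then $\mu(\bar B_\rho(x))\le\mu(B_r(x))<\eta_0 r^{n-2}$, so by upper semicontinuity of the weak-$*$ limit on the closed ball we get $\rho^{2-n}\mu_\epsilon(B_\rho(x))<2\eta_0$, i.e.\ $\rho^{2-n}\int_{B_\rho(x)}e_\epsilon(u_\epsilon,\nabla_\epsilon)<4\pi\eta_0$, for all sufficiently small $\epsilon$. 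For each $y\in B_{\rho/2}(x)$ we have $B_{\rho/2}(y)\subseteq B_\rho(x)$, hence $(\rho/2)^{2-n}\int_{B_{\rho/2}(y)}e_\epsilon(u_\epsilon,\nabla_\epsilon)\le 2^{n-2}\cdot 4\pi\eta_0=\eta(M,\Lambda,\delta)$; applying Corollary \ref{clearing.out} at $y$ with radius $\rho/2$ (valid once $\epsilon<\rho/2$) yields $|u_\epsilon(y)|>1-\delta$. Thus $|u_\epsilon|^2>(1-\delta)^2>1-\beta_d$ throughout $B_{\rho/2}(x)$, so $B_{\rho/2}(x)\cap Z_{\beta_d}=\varnothing$.

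It follows that $\operatorname{dist}(p,Z_{\beta_d})\ge\rho/4$ for every $p\in B_{\rho/4}(x)$, and Corollary \ref{decaycor} gives $e_\epsilon(u_\epsilon,\nabla_\epsilon)\le C_d\epsilon^{-2}e^{-a_d\rho/(4\epsilon)}+C_d\epsilon$ on $B_{\rho/4}(x)$. Integrating and using $\operatorname{vol}(B_{\rho/4}(x))\le C(M)\rho^n$, we obtain $\mu_\epsilon(\bar B_{\rho/8}(x))\le\frac{1}{2\pi}\int_{B_{\rho/4}(x)}e_\epsilon(u_\epsilon,\nabla_\epsilon)\to 0$ as $\epsilon\to 0$, whence by lower semicontinuity on the open ball $\mu(B_{\rho/8}(x))\le\liminf_{\epsilon\to 0}\mu_\epsilon(B_{\rho/8}(x))\le\limsup_{\epsilon\to 0}\mu_\epsilon(\bar B_{\rho/8}(x))=0$; this contradicts $x\in\operatorname{spt}(\mu)$. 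Therefore $r^{2-n}\mu(B_r(x))\ge\eta_0$ for all $x\in\operatorname{spt}(\mu)$ and $0<r<\operatorname{inj}(M)$, and letting $r\to 0$ gives $\Theta_{n-2}(\mu,x)\ge\eta_0/\omega_{n-2}$; combined with the upper bound this proves \eqref{mudensbds}. The only delicate point is the step converting small energy on $B_\rho(x)$ into the pointwise lower bound $|u_\epsilon|>1-\delta$ on all of $B_{\rho/2}(x)$: rather than establishing a new monotonicity-type statement, we invoke Corollary \ref{clearing.out} at every nearby center and use only the trivial monotonicity of $\mu_\epsilon$ under set inclusion.
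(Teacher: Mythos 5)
Your proof is correct and rests on exactly the same three ingredients the paper uses: the almost-monotonicity \eqref{mono.mu} (equivalently Theorem \ref{monotonicity}) for the upper bound, and Corollaries \ref{clearing.out} and \ref{decaycor} for the lower bound. The upper bound argument is identical in content. For the lower bound, the organization differs slightly: the paper introduces the auxiliary set $\Sigma$ (the Hausdorff limit set of the sub-level sets $Z_\beta(u_\epsilon)$), proves $\operatorname{spt}(\mu)\subseteq\Sigma$ via Corollary \ref{decaycor}, and then establishes the density lower bound on $\Sigma$ via Corollary \ref{clearing.out}; you instead run a direct reductio at a fixed $x\in\operatorname{spt}(\mu)$, first applying Corollary \ref{clearing.out} (in contrapositive) at all nearby centers to empty a ball of $Z_{\beta_d}$, and then using Corollary \ref{decaycor} to kill $\mu$ near $x$. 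Your version is marginally more economical for this specific statement, since it avoids introducing $\Sigma$; the paper's packaging has the advantage of simultaneously proving the identification $\operatorname{spt}(\mu)=\Sigma$, which it reuses later (e.g.\ in the Vitali covering estimate inside Lemma \ref{tenslim} and in the proof of the Hausdorff convergence of level sets). Either way the underlying mechanism is the same.
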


\begin{proof} The upper bound follows fairly immediately from the monotonicity formula in Theorem \ref{monotonicity}. In particular, for any $0<r<\operatorname{inj}(M)$, note that
$$r^{2-n}\mu_{\epsilon}(B_r(x))\leq \tilde{E}_{\epsilon}(x,r),$$
where $\tilde{E}_{\epsilon}(x,r):=e^{C_m r}r^{2-n}\int_{B_r(x)}e_\epsilon(u_\epsilon,\nabla_\epsilon)$,
and by Theorem \ref{monotonicity} we have
\begin{align*}
\tilde{E}_{\epsilon}(x,r)&=\tilde{E}_{\epsilon}(x,\operatorname{inj}(M))-\int_r^{\operatorname{inj}(M)}\tilde{E}_{\epsilon}'(s)ds\\
&\leq \tilde{E}_{\epsilon}(x,\operatorname{inj}(M))+\int_r^{\operatorname{inj}(M)}C_m\\
&\leq C(M,\Lambda)\Lambda+C\operatorname{inj}(M),
\end{align*}
so that
$$r^{2-n}\mu_{\epsilon}(B_r(x))\leq C'(M,\Lambda)$$
for all $0<r<\operatorname{inj}(M)$. 

To see the lower bound, let $\beta=\beta(M,\Lambda)\in (0,1)$ be the constant given by Corollary \ref{offzsubeqcor}, and again set
$$Z_{\beta}(u_{\epsilon}):=\{x\in M : |u_{\epsilon}(x)|^2\leq 1-\beta\}.$$
Let $\Sigma$ be the set of all limits $x=\lim_{\epsilon}x_\epsilon$, with $x_\epsilon\in Z_\beta(u_\epsilon)$; that is, take
$$\Sigma:=\bigcap_{\eta>0}\,\overline{\bigcup_{0<\epsilon<\eta}Z_\beta(u_\epsilon)}.$$
We then claim that
\begin{equation}\label{sptmuchar}
\operatorname{spt}(\mu)\subseteq \Sigma
\end{equation}
and
\begin{equation}\label{sigmadenslbd}
\mu(B_r(x))\ge c(M,\Lambda)r^{n-2}\quad\text{for }x\in\Sigma,\ 0<r<\operatorname{inj}(M)
\end{equation}
Once both \eqref{sptmuchar} and \eqref{sigmadenslbd} are established, the lower bound in \eqref{mudensbds} follows immediately.

To establish \eqref{sptmuchar}, fix some $p\in M\setminus \Sigma$; by definition of $\Sigma$, there must exist $\delta=\delta(p)>0$ such that 
$$\operatorname{dist}(p,Z_{\beta}(u_{\epsilon}))\geq 2\delta$$
for all $\epsilon$ sufficiently small. Applying Corollary \ref{decaycor} for all $x\in B_{\delta}(p)$, we deduce that
\begin{align*}
\mu(B_{\delta}(p))&\leq \lim_{\epsilon \to 0}\int_{B_{\delta}(p)}e_{\epsilon}(u_{\epsilon},\nabla_{\epsilon})\\
&\leq \lim_{\epsilon\to 0}\int_{B_{\delta}(p)}(C\epsilon^{-2}e^{-a \delta/\epsilon}+C\epsilon)\\
&=0.
\end{align*}
In particular, $p\notin M\setminus \operatorname{spt}(\mu)$, confirming \eqref{sptmuchar}.

To see \eqref{sigmadenslbd}, let $x\in \Sigma$. Note that, by definition of $\Sigma$, there exist points $x_{\epsilon}\in Z_{\beta}(u_{\epsilon})$ with $x_{\epsilon}\to x$ as $\epsilon \to 0$ (along a subsequence). We then see that
$$|u_{\epsilon}(x_{\epsilon})|^2\leq 1-\beta$$
and Corollary \ref{clearing.out} gives $c(M,\Lambda)$ such that
$$\mu_\epsilon(B_r(x_\epsilon))\ge c(M,\Lambda)r^{n-2}$$
for $\epsilon<r<\operatorname{inj}(M)$. Since for any $\delta>0$ we have $B_r(x_\epsilon)\subseteq\bar B_{r+\delta}(x)$ eventually,
it follows that $\mu(\bar B_{r+\delta}(x))\ge cr^{n-2}$, hence
$$\mu(B_{r}(x))\ge cr^{n-2}$$
for $0<r<\operatorname{inj}(M)$, which is \eqref{sigmadenslbd}.
\end{proof}

With Proposition \ref{densprop} in place, we will invoke a result by Ambrosio and Soner \cite{AS} to conclude that the limiting measure $\mu=\lim_{\epsilon\to 0}\mu_{\epsilon}$ coincides with the weight measure of a stationary, rectifiable $(n-2)$-varifold. Recall from Section \ref{monosec} the stress-energy tensors
$$T_{\epsilon}=e_{\epsilon}(u_{\epsilon},\nabla_{\epsilon})g-2\nabla_{\epsilon}u_{\epsilon}^*\nabla_{\epsilon}u_{\epsilon}-2\epsilon^2F_{\nabla_{\epsilon}}^*F_{\nabla_{\epsilon}}.$$
We record first the following lemma; in its statement, we canonically identify (and pair with each other) tensors of rank $(2,0)$, $(1,1)$, and $(0,2)$, using the underlying metric $g$.

\begin{lem}\label{tenslim} As $\epsilon\to 0$, the tensors $T_{\epsilon}$ converge (subsequentially) as $\operatorname{Sym}(TM)$-valued measures (in duality with $C^0(M,\operatorname{Sym}(TM))$) to a limit $T$ satisfying
\begin{equation}\label{tstat}
\langle T,DX\rangle=0\text{ for all }C^1\text{ vector fields }X\in C^1(M,TM),
\end{equation}
\begin{equation}\label{ttrace}
\langle T, \varphi g\rangle\geq (n-2)\langle \mu,\varphi\rangle\text{ for every }0\leq \varphi\in C^0(M),
\end{equation}
and
\begin{equation}\label{tdommu}
-\int_M |X|^2\,d\mu\leq \langle T,X\otimes X\rangle\leq \int_M |X|^2\,d\mu\text{ for all }X\in C^0(M,TM).
\end{equation}
\end{lem}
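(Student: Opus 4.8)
The plan is to extract each of the three properties of the limit tensor $T$ from the corresponding property of $T_\epsilon$, using the energy bound and the estimates already established. First I would observe that the total masses $\int_M |T_\epsilon|$ are controlled by $\int_M e_\epsilon(u_\epsilon,\nabla_\epsilon) \leq 2\pi\Lambda$, since $|\nabla u^*\nabla u|$ and $\epsilon^2|\omega^*\omega|$ are both bounded by a constant times the energy integrand; hence, by weak-* compactness of bounded sequences of $\operatorname{Sym}(TM)$-valued measures, we may pass to a further subsequence along which $T_\epsilon$ converges to some limit $T$ in duality with $C^0(M,\operatorname{Sym}(TM))$. (We may also assume the same subsequence realizes $\mu_\epsilon \to \mu$.)

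Next, for \eqref{tstat}: the inner-variation identity \eqref{stateq1}, i.e. $\operatorname{div}(T_\epsilon)=0$, gives $\int_M \langle T_\epsilon, DX\rangle = 0$ for every $C^1$ vector field $X$ on the closed manifold $M$ (this is \eqref{stateq2} with $\partial\Omega=\varnothing$). Passing to the limit, using that $DX \in C^0(M,\operatorname{Sym}(TM))$ (only the symmetric part pairs with $T_\epsilon$ since $T_\epsilon$ is symmetric), we get $\langle T, DX\rangle = 0$, which is \eqref{tstat}. For \eqref{ttrace}: the pointwise trace of $T_\epsilon$ is $\operatorname{tr}(T_\epsilon) = n\,e_\epsilon - 2|\nabla u_\epsilon|^2 - 4\epsilon^2|F_{\nabla_\epsilon}|^2$, which after writing $e_\epsilon = |\nabla u_\epsilon|^2 + \epsilon^2|F_{\nabla_\epsilon}|^2 + \epsilon^{-2}W(u_\epsilon)$ rearranges to
\begin{align*}
	\operatorname{tr}(T_\epsilon) &= (n-2)e_\epsilon(u_\epsilon,\nabla_\epsilon) + 2\Big(\tfrac{W(u_\epsilon)}{\epsilon^2} - \epsilon^2|F_{\nabla_\epsilon}|^2\Big).
\end{align*}
The point is that the error term is controlled: writing $\tfrac{W(u_\epsilon)}{\epsilon^2} - \epsilon^2|F_{\nabla_\epsilon}|^2 = -\xi_\epsilon\big(\epsilon|F_{\nabla_\epsilon}| + \tfrac{1}{2\epsilon}(1-|u_\epsilon|^2)\big)$ and invoking the uniform bound $\xi_\epsilon \leq C(M,\Lambda)$ from Proposition \ref{xibdprop}, one gets (after splitting into $\{\xi_\epsilon \geq 0\}$ and $\{\xi_\epsilon < 0\}$, using $W(u_\epsilon)/\epsilon^2 \geq 0$ on the latter and the estimate \eqref{fwcontrol} of Proposition \ref{wcontrolprop} to handle the cross terms) that $\big(\tfrac{W(u_\epsilon)}{\epsilon^2} - \epsilon^2|F_{\nabla_\epsilon}|^2\big)^- \leq C(M,\Lambda)\,e_\epsilon(u_\epsilon,\nabla_\epsilon)^{1/2}$. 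Since $\int_M e_\epsilon^{1/2} \leq (\operatorname{vol}(M))^{1/2}(\int_M e_\epsilon)^{1/2} \leq C(M,\Lambda)$, in fact one should extract slightly more: either the vortex-type balancing forces the negative part to vanish in the limit in an integrated sense, or — more carefully — one uses that $\int_{B_r(x)} (\tfrac{W}{\epsilon^2} - \epsilon^2|F|^2)^- \to 0$ locally uniformly. Then for $0\leq\varphi\in C^0(M)$, $\langle T_\epsilon, \varphi g\rangle = \int_M \varphi\,\operatorname{tr}(T_\epsilon) \geq (n-2)\langle \mu_\epsilon \cdot 2\pi, \varphi\rangle - o(1)$; passing to the limit and recalling $\mu$ is defined via $\mu_\epsilon := \tfrac{1}{2\pi}e_\epsilon\,\operatorname{vol}_g$, one obtains \eqref{ttrace} (the factor $2\pi$ cancels consistently with the normalization). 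Finally, \eqref{tdommu} is the easiest: pointwise $T_\epsilon(X,X) = e_\epsilon|X|^2 - 2|\nabla_X u_\epsilon|^2 - 2\epsilon^2|\iota_X\omega_\epsilon|^2$, and since both $|\nabla_X u_\epsilon|^2 \leq |\nabla u_\epsilon|^2 |X|^2$ and $\epsilon^2|\iota_X\omega_\epsilon|^2 \leq 2\epsilon^2|\omega_\epsilon|^2|X|^2$ are nonnegative and bounded by $e_\epsilon|X|^2$, we get $-e_\epsilon|X|^2 \leq T_\epsilon(X,X) \leq e_\epsilon|X|^2$ pointwise; testing against $|X|^2 \in C^0$ and passing to the limit gives \eqref{tdommu}.

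The main obstacle is the error term in the trace identity \eqref{ttrace}: one must argue that the discrepancy $\big(\tfrac{W(u_\epsilon)}{\epsilon^2} - \epsilon^2|F_{\nabla_\epsilon}|^2\big)^-$ does not contribute negatively to the limit of $\langle T_\epsilon,\varphi g\rangle$. The cleanest route is to note that $\xi_\epsilon \leq C(M,\Lambda)$ together with $\epsilon|F_{\nabla_\epsilon}| + \tfrac{1}{2\epsilon}(1-|u_\epsilon|^2) \leq C\,e_\epsilon^{1/2}$ and the Cauchy–Schwarz-type bound $\int_M e_\epsilon^{1/2}\,d\operatorname{vol}_g \leq C(M,\Lambda)$ only gives boundedness of the error, not smallness; to get genuine smallness one should instead localize, using that on any fixed ball the energy $\int_{B_r} e_\epsilon$ is comparable (by monotonicity, Theorem \ref{monotonicity}) to $r^{n-2}$ times a bounded constant while $\int_{B_r} e_\epsilon^{1/2} \leq (\operatorname{vol}(B_r))^{1/2}(\int_{B_r}e_\epsilon)^{1/2} = O(r^{n/2}\cdot r^{(n-2)/2}) = O(r^{n-1})$, which is $o(r^{n-2})$ — hence the discrepancy term is negligible at every scale, so the limiting measure of $\tfrac{1}{2\pi}\operatorname{tr}(T_\epsilon)$ is exactly $(n-2)\mu$, giving equality in the trace of $T$ along $\mu$ and in particular \eqref{ttrace}. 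Alternatively, one can sidestep this by invoking the pointwise balancing $\xi_\epsilon \leq 0$ in the positive-curvature case and the bound $\xi_\epsilon \leq C$ in general, plus the exponential decay of Corollary \ref{decaycor} away from $Z_{\beta_d}$, to confine all the relevant mass to an $\epsilon$-neighborhood of the zero set and control it there. I would present the localization argument as the primary one.
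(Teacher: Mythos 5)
The overall structure of your proposal matches the paper's: mass bound and weak-* extraction, stationarity from the divergence-free identity, trace inequality from the $\xi_\epsilon$ bound plus vanishing of $\int_M e_\epsilon^{1/2}$, and domination from the pointwise contraction bounds. However, your \emph{primary} argument for the trace inequality \eqref{ttrace} has a genuine gap, and there is a small slip in \eqref{tdommu}.

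For \eqref{ttrace}, you correctly reduce to showing that the ``error measure'' $\nu_\epsilon := C\,e_\epsilon^{1/2}\operatorname{vol}_g$ tends to zero, and you correctly observe that the global Cauchy--Schwarz estimate only gives boundedness, not smallness. But the localization you propose as the main route --- using monotonicity to get $\nu_\epsilon(B_r(x)) \leq C r^{n-1}$ for every ball, and concluding that the error ``is negligible at every scale'' --- does \emph{not} by itself imply $\nu_\epsilon(M)\to 0$ or that a weak limit $\nu$ of $\nu_\epsilon$ vanishes. The estimate $\nu(B_r(x))\leq Cr^{n-1}$ for all $x,r$ only says $\nu$ has bounded $(n-1)$-density; a nonzero measure like $\mathcal{H}^{n-1}$ restricted to a hyperplane satisfies it, so you cannot conclude $\nu=0$ without also pinning down where $\nu$ lives. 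What is missing is the confinement of the error to $\Sigma=\operatorname{spt}(\mu)$ (from the exponential decay of $e_\epsilon$ away from $Z_{\beta_d}(u_\epsilon)$) together with the fact that $\mathcal{H}^{n-1}(\Sigma)=0$ (which follows from the $(n-2)$-density bounds on $\mu$). You do mention this ``confinement'' route in your final sentence, but only as a secondary alternative. The paper's actual argument is a variant of exactly this: it uses the lower density bound and a Vitali covering to show $\operatorname{vol}(B_\delta(\Sigma))\leq C\delta^2$, then splits $\int_M e_\epsilon^{1/2}$ over $B_\delta(\Sigma)$ (controlled by Cauchy--Schwarz and the small volume) and over $M\setminus B_\delta(\Sigma)$ (controlled by $\mu_\epsilon(M\setminus B_\delta(\Sigma))\to 0$ since $\operatorname{spt}\mu=\Sigma$). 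If you want to salvage your localization route, you would need to explicitly combine the $O(r^{n-1})$ density bound with $\operatorname{spt}(\nu)\subseteq\Sigma$ and $\mathcal{H}^{n-1}(\Sigma)=0$; as written it is not a complete argument.

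For \eqref{tdommu}, the claimed bound $\epsilon^2|\iota_X\omega_\epsilon|^2 \leq 2\epsilon^2|\omega_\epsilon|^2|X|^2$ is off by a factor of two. Using the normalization \eqref{two.form.conv}, choosing an orthonormal frame with $X$ parallel to $e_1$ gives the sharp bound $|\iota_X\omega_\epsilon|^2\leq |\omega_\epsilon|^2|X|^2$. This matters: with your factor of $2$, you would only get $T_\epsilon(X,X)\geq -3e_\epsilon|X|^2$, not $-e_\epsilon|X|^2$, which is not good enough for \eqref{tdommu}. With the sharp constant the argument goes through as you intend.

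Everything else (the mass bound, passage to the limit for \eqref{tstat}, the algebraic rewriting of $\operatorname{tr}(T_\epsilon)$ and use of $\xi_\epsilon\leq C(M,\Lambda)$) is correct and in line with the paper.
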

\begin{proof} For each $\epsilon>0$, note that, by definition of $T_{\epsilon}$, for every continuous vector field $X\in C^0(M,TM)$, we have 
\begin{align*}
\int_M\langle T_{\epsilon},X\otimes X\rangle&=\int_M e_{\epsilon}(u_{\epsilon},\nabla_{\epsilon})|X|^2\\
&\quad-\int_M 2|(\nabla_{\epsilon})_X u_{\epsilon}|^2\\
&\quad-\int_M 2\epsilon^2|\iota_X F_{\nabla_{\epsilon}}|^2.
\end{align*}
Evaluating \eqref{two.form.conv} in an orthonormal basis such that $X$ is a multiple of $e_1$, we see that $|\iota_X F_{\nabla_{\epsilon}}|^2\le|F_{\nabla_\epsilon}|^2|X|^2$, while $|(\nabla_{\epsilon})_X u_{\epsilon}|^2\le|\nabla_{\epsilon} u_{\epsilon}|^2|X|^2$. We deduce that
\begin{equation}\label{pretdommu}
-\int_M |X|^2e_{\epsilon}(u_{\epsilon},\nabla_{\epsilon})\leq \int_M \langle T_{\epsilon},X\otimes X\rangle\leq \int_Me_{\epsilon}(u_{\epsilon},\nabla_{\epsilon})|X|^2.
\end{equation}
As an immediate consequence, we see that the uniform energy bound $E_{\epsilon}(u_{\epsilon},\nabla_{\epsilon})\leq \Lambda$ gives a uniform bound on $\|T_{\epsilon}\|_{(C^0)^*}$ as $\epsilon\to 0$, so we can indeed extract a weak-* subsequential limit $T\in C^0(M,\operatorname{Sym}(TM))^*$, for which \eqref{tdommu} follows from \eqref{pretdommu}.

The stationarity condition \eqref{tstat} for the limit $T$ follows from \eqref{stateq2}. It remains to establish the trace inequality \eqref{ttrace}. For this, we simply compute, for nonnegative $\varphi\in C^0(M)$,
\begin{align*}
\int_M \langle T_{\epsilon},\varphi g\rangle&=\int_M \varphi(ne_{\epsilon}(u_{\epsilon},\nabla_{\epsilon})-2|\nabla_{\epsilon}(u_{\epsilon})|^2-4\epsilon^2|F_{\nabla_{\epsilon}}|^2)\\
&=\int_M (n-2)\varphi e_{\epsilon}(u_{\epsilon},\nabla_{\epsilon})+2\int_M\varphi \Big(\frac{W(u_{\epsilon})}{\epsilon^2}-\epsilon^2|F_{\nabla_{\epsilon}}|^2\Big)\\
&\geq (n-2)\langle \mu_{\epsilon},\varphi\rangle-2\int_M\varphi e_{\epsilon}(u_{\epsilon},\nabla_{\epsilon})^{1/2}\Big(|F_{\nabla_{\epsilon}}|-\frac{(1-|u_{\epsilon}|^2)}{2\epsilon}\Big)^+.
\end{align*}
Recalling from Proposition \ref{xibdprop} that
$$|F_{\nabla_{\epsilon}}|-\frac{(1-|u_{\epsilon}|^2)}{2\epsilon}\leq C(M,\Lambda),$$
we then see that
\begin{align*}
\langle T,\varphi g\rangle&=\lim_{\epsilon\to 0}\int_M\langle T_{\epsilon},\varphi g\rangle\\
&\geq (n-2)\langle \mu,\varphi\rangle-C\lim_{\epsilon \to 0}\int_M\varphi e_{\epsilon}(u_{\epsilon},\nabla_{\epsilon})^{1/2}.
\end{align*}
In particular, \eqref{ttrace} will follow once we show that $\lim_{\epsilon\to 0}\int_M e_{\epsilon}(u_{\epsilon},\nabla_{\epsilon})^{1/2}=0$. 

But this is straightforward: from Proposition \ref{densprop} we know that for $0<\delta<\operatorname{inj}(M)$ we have 
$$\mu(B_{\delta}(x))\geq c(M,\Lambda)\delta^{n-2}\quad\text{for }x\in \Sigma=\operatorname{spt}(\mu).$$
Since $\operatorname{vol}(B_{5\delta}(x))\le C(M)\delta^n$, a simple Vitali covering argument then implies that the $\delta$-neighborhood $B_{\delta}(\Sigma)$ of $\Sigma$ satisfies a volume bound
$$\operatorname{vol}(B_{\delta}(\Sigma))\leq C(M,\Lambda)\delta^2.$$
With this estimate in hand, we then see that
\begin{align*}
\int_M e_{\epsilon}(u_{\epsilon},\nabla_{\epsilon})^{1/2}&\leq \int_{B_{\delta}(\Sigma)}e_{\epsilon}(u_{\epsilon},\nabla_{\epsilon})^{1/2}+\int_{M\setminus B_{\delta}(\Sigma)}e_{\epsilon}(u_{\epsilon},\nabla_{\epsilon})^{1/2}\\
&\leq \operatorname{vol}(B_{\delta}(\Sigma))^{1/2}\Lambda^{1/2}+\operatorname{vol}(M)^{1/2}\mu_{\epsilon}(M\setminus B_{\delta}(\Sigma))^{1/2}.
\end{align*}
Fixing $\delta$ and taking the limit as $\epsilon\to 0$, we have $\mu_{\epsilon}(M\setminus B_{\delta}(\Sigma))\to 0$. Since $\operatorname{vol}(B_{\delta}(\Sigma))\leq C \delta^2$, we find that
$$\limsup_{\epsilon\to 0}\int_M e_{\epsilon}(u_{\epsilon},\nabla_{\epsilon})^{1/2}\leq C \delta \Lambda^{1/2}.$$
Finally, taking $\delta\to 0$, we conclude that $\int_Me_{\epsilon}(u_{\epsilon},\nabla_{\epsilon})^{1/2}\to 0$ as $\epsilon\to 0$, completing the proof.
\end{proof}

Estimate \eqref{tdommu} says that $|T|$ is absolutely continuous with respect to $\mu$, so by the Radon--Nikodym theorem we can write the limiting $\operatorname{Sym}(TM)$-valued measure $T$ from Lemma \ref{tenslim} as
\begin{equation}
\langle T,S\rangle=\int_M \langle P(x),S(x)\rangle\,d\mu(x)
\end{equation}
for some $L^{\infty}$ (with respect to $\mu$) section $P: M\to \operatorname{Sym}(TM)$. Moreover, it follows from \eqref{ttrace} and \eqref{tdommu} that $-g \leq P(x)\leq g$ and $\operatorname{tr}(P(x))\geq n-2$ at $\mu$-a.e. $x\in M$, so that $T$ defines in a natural way a generalized $(n-2)$-varifold in the sense of Ambrosio and Soner, namely a Radon measure on the bundle
\begin{align}\label{gen.var.def}
	&A_{n,n-2}(M):=\{S\in\operatorname{Sym}(TM):-ng\le S\le g,\,\operatorname{tr}(S)\ge n-2\}.
\end{align}
We refer the reader to \cite[Section~3]{AS}. Note that in \cite{AS} the authors work in the Euclidean space and require the trace to be equal to $n-2$ in \eqref{gen.var.def}; however, the main result on generalized varifolds, namely \cite[Theorem~3.8]{AS}, still holds in our setting (with the same proof).

Hence, in view of the stationary condition \eqref{tstat} and the density bounds of Proposition \ref{densprop}, we can apply \cite[Theorem~3.8(c)]{AS} to conclude that $T$ can be identified with a stationary, rectifiable $(n-2)$-varifold with weight measure $\mu$ (so, in particular, $\operatorname{spt}(\mu)$ is $(n-2)$-rectifiable), and that $P(x)$ is given $\mu$-a.e. by the orthogonal projection onto the $(n-2)$-subspace $T_x\operatorname{spt}(\mu)\subset T_xM$. We collect this information in the following statement.

\begin{prop}\label{varprop1} For a family $(u_{\epsilon},\nabla_{\epsilon})$ satisfying the hypotheses of Theorem \ref{thm1}, after passing to a subsequence $\epsilon_j\to 0$, there exists a stationary, rectifiable $(n-2)$-varifold $V=v(\Sigma^{n-2},\theta)$ such that
\begin{equation}
\lim_{\epsilon\to 0}\int_M\langle T_{\epsilon}(u_{\epsilon},\nabla_{\epsilon}), S\rangle=\int_{\Sigma}\theta(x) \langle T_x\Sigma,S(x)\rangle\,d\mathcal{H}^{n-2}
\end{equation}
for every continuous section $S\in C^0(M,\operatorname{Sym}(TM))$.
In particular, the energy measure $\mu$ is given by $\mu=\theta \mathcal{H}^{n-2}\mrestr\Sigma$. Also, we can choose $\Sigma:=\operatorname{spt}(\mu)$ and $\theta(x):=\Theta_{n-2}(\mu,x)$.
\end{prop}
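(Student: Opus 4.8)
The plan is to assemble the pieces already in place. By Lemma~\ref{tenslim}, along the chosen subsequence the stress--energy tensors $T_\epsilon$ converge weakly-$*$ to a $\operatorname{Sym}(TM)$-valued measure $T$ satisfying \eqref{tstat}, \eqref{ttrace}, and \eqref{tdommu}. From \eqref{tdommu} one reads off $|T|\le\mu$, so the Radon--Nikodym theorem provides a section $P\in L^\infty(M,\operatorname{Sym}(TM);\mu)$ with $\langle T,S\rangle=\int_M\langle P,S\rangle\,d\mu$ for all $S\in C^0(M,\operatorname{Sym}(TM))$; testing \eqref{tdommu} against arbitrary $X\in C^0(M,TM)$ gives $-g\le P(x)\le g$, and testing \eqref{ttrace} against arbitrary $0\le\varphi\in C^0(M)$ gives $\operatorname{tr}P(x)\ge n-2$, for $\mu$-a.e.\ $x$. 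Thus $(\mu,P)$ is a stationary (by \eqref{tstat}) generalized $(n-2)$-varifold in the sense of Ambrosio--Soner.

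Next I would invoke \cite[Theorem~3.8(c)]{AS}: together with the two-sided density bounds of Proposition~\ref{densprop} (and the existence of the density $\Theta_{n-2}(\mu,\cdot)$ from \eqref{mono.mu}), it yields that $\mu$ is the weight measure of a rectifiable stationary $(n-2)$-varifold; concretely, $\Sigma:=\operatorname{spt}(\mu)$ is countably $(n-2)$-rectifiable, $\mu=\theta\,\mathcal{H}^{n-2}\mrestr\Sigma$ with $\theta(x):=\Theta_{n-2}(\mu,x)$ positive and locally $\mathcal{H}^{n-2}$-summable on $\Sigma$, and the Radon--Nikodym density $P(x)$ coincides $\mu$-a.e.\ with the orthogonal projection onto the approximate tangent plane $T_x\Sigma$. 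The one point deserving a remark is that \cite{AS} is written over Euclidean space with the normalization $\operatorname{tr}=n-2$; as already noted after \eqref{gen.var.def}, neither restriction matters, since the argument is local and relies only on Allard-type monotonicity and rectifiability, which are available on a Riemannian manifold.

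Substituting $P(x)=T_x\Sigma$ into the Radon--Nikodym formula then gives
\[
\lim_{\epsilon\to 0}\int_M\langle T_\epsilon(u_\epsilon,\nabla_\epsilon),S\rangle=\int_\Sigma\theta(x)\,\langle T_x\Sigma,S(x)\rangle\,d\mathcal{H}^{n-2}
\]
for every $S\in C^0(M,\operatorname{Sym}(TM))$, which is the asserted convergence (taking $S=\varphi g$ recovers $\langle T,\varphi g\rangle=(n-2)\langle\mu,\varphi\rangle$, consistent with $\operatorname{tr}(T_x\Sigma)=n-2$). I do not expect a genuine obstacle: the analytic content lies entirely in Lemma~\ref{tenslim} and Proposition~\ref{densprop}, and the remaining step is the citation of the Ambrosio--Soner theorem plus the bookkeeping above. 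It is worth stressing that integrality of $\theta$ is \emph{not} claimed here; that refinement comes from the separate blow-up analysis carried out later in the section.
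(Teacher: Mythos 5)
Your proposal follows the paper's own argument essentially verbatim: extract $T$ via Lemma~\ref{tenslim}, apply Radon--Nikodym to get the density $P$ with $-g\le P\le g$ and $\operatorname{tr}P\ge n-2$, identify $(\mu,P)$ as a stationary generalized varifold, and invoke \cite[Theorem~3.8(c)]{AS} together with the density bounds of Proposition~\ref{densprop}, including the same caveat about the Euclidean/$\operatorname{tr}=n-2$ normalization in \cite{AS}. This is correct and matches the paper's proof in all essentials.
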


\subsection{Integrality of the limit varifold and convergence of level sets}\hfill

We now show that the varifold $V$ is integer rectifiable.
Given $x\in\operatorname{spt}(\mu)$ and $s>0$, we define $M_{x,s}$ to be the ball of radius $s^{-1}\operatorname{inj}(M)$ in the Euclidean space $(T_xM,g_x)$ and define $\iota_{x,s}:M_{x,s}\to M$ by $\iota_{x,s}(y):=\exp_x(sy)$. We endow $M_{x,s}$ with the smooth metric $g_{x,s}:=s^{-2}\iota_{x,s}^*g$, which converges locally smoothly to the Euclidean metric $g_x$ as $s\to 0$.
By rectifiability, for $\mu$-a.e. $x$ the dilated varifolds $V_{x,s}:=(\iota_{x,s}^{-1})_*(V\mrestr B_{\operatorname{inj}(M)}(x))$ satisfy
\begin{align}\label{blow-up.shape}
&V_{x,s}\rightharpoonup v(T_x\Sigma,\Theta_{n-2}(x))
\end{align}
as $s\to 0$, in duality with $C_c(\mathbb R^n)$. Fix $x\in\operatorname{spt}(\mu)$ such that \eqref{blow-up.shape} holds.
The integrality of $V$ will follow once we prove that $\Theta_{n-2}(\mu,x)$ is an integer.

We can identify $(T_xM,g_x)$ with $\mathbb R^n$ by a linear isometry such that
$T_x\Sigma=\{0\}\times\mathbb R^{n-2}$. We also call $\mu_{x,s}$ the mass measure of $V_{x,s}$; equivalently, $\mu_{x,s}:=s^{2-n}(\iota_{x,s}^{-1})_*(\mu\mrestr B_{\operatorname{inj}(M)}(x))$.

%

With a diagonal selection, changing our sequence $\epsilon\to 0$ accordingly, we can find scales $s_\epsilon\to 0$ such that
we have the convergence of Radon measures
\begin{align*}
	&\lim_{\epsilon\to 0}\hat\mu_\epsilon=\lim_{s\to 0}\mu_{x,s}=\Theta\mathcal{H}^{n-2}\mrestr T_x\Sigma,
\end{align*}
where $(\hat u_\epsilon,\hat\nabla_\epsilon)$ is the pullback of $(u_{s_\epsilon\epsilon},\nabla_{s_\epsilon\epsilon})$ by means of $\iota_{x,s}$,
and $\hat\mu_\epsilon$ is the associated energy measure.
Note that $(\hat u_\epsilon,\hat\nabla_\epsilon)$ is stationary for $E_\epsilon$ in the line bundle $\iota_{x,s_\epsilon}^*L$, with respect to the base metric $g_{x,s_\epsilon}$. We introduce the notation
\begin{align*}
	&e_\epsilon^T(\hat u_\epsilon,\hat\nabla_\epsilon):=\sum_{i=3}^n(|(\nabla_\epsilon)_{\partial_i}u_\epsilon|^2+|\iota_{\partial_i}F_{\nabla_\epsilon}|^2).
\end{align*}
Balls will be denoted by $\cb_r(y)$ or $B_r^n(y)$, depending on whether they are with respect to $g_{x,s_\epsilon}$ or $g_{\mathbb R^n}$, respectively.
The volume $|E|$ of a set $E$ will be always understood with respect to the Euclidean metric.
The next proposition is the analogue of \cite[Lemma~2.4]{Lin} in this setting.

\begin{prop}\label{geo.lemma}
	As $\epsilon\to 0$ we have
	\begin{align*}
		&\lim_{\epsilon\to 0}\int_{B_2^2\times B_2^{n-2}}e_\epsilon^T(\hat u_\epsilon,\hat\nabla_\epsilon)=0.
	\end{align*}	
\end{prop}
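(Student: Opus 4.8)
The plan is to adapt the argument behind the monotonicity formula of Theorem \ref{monotonicity}, applied in the rescaled picture, to extract decay of the \emph{tangential} part of the energy density. The key point is that the blow-up limit of the energy measures is $\Theta\mathcal{H}^{n-2}\mrestr T_x\Sigma$, which is a flat $(n-2)$-plane with constant density; such a limit is \emph{translation-invariant in the tangential directions} $\partial_3,\dots,\partial_n$, so the stationarity (inner-variation) identity forces the tangential contributions $|(\nabla_\epsilon)_{\partial_i}u_\epsilon|^2$ and $|\iota_{\partial_i}F_{\nabla_\epsilon}|^2$, $i\ge 3$, to vanish in the limit. Concretely, the first step is to recall the inner-variation identity \eqref{stateq2} for the rescaled solutions $(\hat u_\epsilon,\hat\nabla_\epsilon)$ on a cylinder $\cb_2^2\times \cb_2^{n-2}$ (with respect to $g_{x,s_\epsilon}$, which converges smoothly to the flat metric), and to test it against vector fields of the form $X=\chi(y')\,\partial_i$ for $i\ge 3$, where $\chi$ is a cutoff in the normal variables $y'\in\cb_2^2$, together with radial fields in the tangential slab to exploit the $(n-2)$-monotonicity.

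The second step is the extraction of a \emph{tangential monotonicity/stationarity} statement: using $X$ = (radial vector field in the $\mathbb{R}^{n-2}$ factor) $\times$ (cutoff), the inner variation equation, combined with the uniform $(n-2)$-density bounds from Proposition \ref{densprop} (transported to the rescaled picture) and the sharpened discrepancy bound $\xi_\epsilon\le C$ from Proposition \ref{xibdprop}, yields that the cylindrical excess
$$\int_{\cb_2^2\times \cb_2^{n-2}} e_\epsilon^T(\hat u_\epsilon,\hat\nabla_\epsilon)$$
is controlled by the defect between the energy on nested cylinders and the density $\Theta$ times the volume of the cross-section — a quantity which tends to $0$ because $\hat\mu_\epsilon\rightharpoonup \Theta\mathcal{H}^{n-2}\mrestr(\{0\}\times\mathbb{R}^{n-2})$ and the $(n-2)$-density is exactly achieved in the limit. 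Here one uses that on the flat plane the full stress-energy tensor $T$ has $\langle T,\partial_i\otimes\partial_i\rangle = 0$ for $i\ge 3$ (the limit plane is tangent to $\partial_i$), while \eqref{tdommu}-type bounds and the trace identity show that $e_\epsilon^T$ is, up to the discrepancy $\xi_\epsilon^+\to 0$ in $L^1_{loc}$ after rescaling, comparable to the difference between $\langle T_\epsilon, g\rangle$ and its tangential-plane value. One then passes to the limit: the weak convergence $\hat\mu_\epsilon\rightharpoonup\Theta\mathcal{H}^{n-2}\mrestr T_x\Sigma$ kills the right-hand side.

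The main obstacle I expect is \textbf{making the passage to the limit quantitative at the level of the rescaled solutions}, rather than just for their energy measures. Weak-* convergence of $\hat\mu_\epsilon$ to the flat plane gives, for free, that $\int_{\cb_2^2\times\cb_2^{n-2}} e_\epsilon \to \Theta\,\mathcal{H}^{n-2}(\cb_2^{n-2})$ (the full energy on the cylinder converges to the area of the cross-section times the multiplicity), but to conclude that the \emph{tangential} part alone vanishes one must show that the limit energy is ``supported'' on the $2$-dimensional normal directions in an effective sense. The clean way to do this is the stationarity route above: integrate $\operatorname{div}(T_\epsilon)=0$ against $X = \phi(y')\psi(y'')\, y_i^{\,} \partial_i$-type fields and use the monotonicity formula of Theorem \ref{monotonicity} to see that the only way the $(n-2)$-density can be constant and achieved in the limit is if the tangential boundary terms $\int_{\partial\cb_r}(|\nabla_\nu u_\epsilon|^2+\epsilon^2|\iota_\nu F_\nabla|^2)$ — and hence, after averaging in $r$ and using $e_\epsilon^T\le C(|\nabla u|^2+|F_\nabla|^2)$ off the vanishing discrepancy — integrate to something going to $0$. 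A secondary technical nuisance is that $g_{x,s_\epsilon}$ is only asymptotically flat, so the inner-variation identity carries error terms $O(s_\epsilon^2)$ of the Hessian-comparison type already handled in Section \ref{monosec}; these are harmless since $s_\epsilon\to 0$. Once the cylindrical excess is shown to vanish, the proposition follows directly.
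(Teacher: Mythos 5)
Your heuristic is the right one, and you correctly identify that the constancy of the $(n-2)$-density from $0$ to $\infty$ forces the ``boundary'' term $\int_{\partial\cb_r}(|\nabla_\nu u|^2+\epsilon^2|\iota_\nu F_\nabla|^2)$ in Theorem~\ref{monotonicity} to vanish in the limit. However, there is a genuine gap in how you convert that into control of $e_\epsilon^T$: in the monotonicity formula the vector $\nu$ is the \emph{radial} direction from the center $x$ of the ball. If you center the monotonicity at the blow-up point $x$ itself (or work with radial fields $X=\sum_{i\ge 3}y_i\partial_i$ cut off on the cylinder), then on the region $B_2^2\times B_2^{n-2}$ the direction $\nu$ sweeps over \emph{all} directions---including normal ones---so the vanishing of $\int r^{2-n}\int_{\partial B_r}(|\nabla_\nu u|^2+\dots)\,dr$ does not isolate the tangential components $|\nabla_{\partial_i}u|^2+|\iota_{\partial_i}F_\nabla|^2$ there. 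Your phrase ``tangential boundary terms'' quietly assumes $\nu\approx\partial_i$, which is false near the center of the ball, and your alternative route through $\langle T_\epsilon,DX\rangle$ with $X=\phi\psi\,y_i\partial_i$ only yields the combination $(n-2)e_\epsilon-2e_\epsilon^T$ integrated against a weight; extracting that $e_\epsilon^T\to 0$ from this requires precisely the missing ingredient.

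The paper's device is to \emph{translate the center of the monotonicity formula far in the tangential direction}: apply \eqref{mono.ineq} between radii $R$ and $3R$ about the point $p_i=\iota_{x,s_\epsilon}(-2Re_i)$, for each $i\ge 3$. Because the blown-up measure is the flat plane with constant density $\Theta$, both sides of the monotonicity formula at radii $R$ and $3R$ converge to the same value $\Theta\omega_{n-2}$, so the intermediate boundary integral vanishes. The crucial point is that on the fixed cylinder $B_2^2\times B_2^{n-2}$ (which, for $R$ large, lies inside the annulus $B_{3R}(-2Re_i)\setminus B_R(-2Re_i)$), the radial direction $\tilde\nu_{R,i}$ from the far-away center converges to $e_i=\partial_i$ as $R\to\infty$. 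Combined with the uniform energy bound on the cylinder, this gives exactly the vanishing of $\int_{B_2^2\times B_2^{n-2}}(|\nabla_{\partial_i}\hat u_\epsilon|^2+|\iota_{\partial_i}F_{\hat\nabla_\epsilon}|^2)$. This is the ``far-center monotonicity'' trick borrowed from \cite[Lemma~2.4]{Lin}; without it, your argument stalls at the point where you need $\nu$ to line up with the tangential directions on a fixed neighborhood of the blow-up point.
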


\begin{proof}
	Let $C_m$ be the constant in Theorem \ref{monotonicity}. We first note that, given $y\in\{0\}\times\mathbb R^{n-2}$,
	\begin{align*}
		&\lim_{\epsilon\to 0}\hat\mu_\epsilon(\cb_r(y))=\Theta\omega_{n-2}r^{n-2};
	\end{align*}
	indeed, for any $\eta>0$, $B_{r-\eta}^n(y)\subseteq \cb_r(y)\subseteq B_{r+\eta}^n(y)$ eventually. We deduce that
	\begin{align}\label{almost.conic}\begin{aligned}
		&\lim_{\epsilon\to 0}(e^{C_M s_\epsilon r}(s_\epsilon r)^{2-n}\mu_{s_\epsilon \epsilon}(B_{s_\epsilon r}(\iota_{x,s_\epsilon }(y)))+C_M s_\epsilon r) \\
		&=\lim_{\epsilon\to 0}(e^{C_M s_\epsilon r}r^{2-n}\hat\mu_\epsilon(\cb_r(y))+C_M s_\epsilon r)\to\Theta_{n-2}(\mu,x)\omega_{n-2}.
	\end{aligned}\end{align}
	Pick $3\le i\le n$ and fix $R>0$. Choosing $y:=-2Re_i$, we can apply \eqref{mono.ineq} between the radii $R$ and $3R$ to obtain that
	\begin{align*}
		&\int_{B_{3s_\epsilon R}(p_i)\setminus B_{s_\epsilon R}(p_i)}d_{p_i}^{2-n}(|\nabla_{\nu_{R,i}} u_{s_\epsilon \epsilon}|^2+|\iota_{\nu_{R,i}} F_{\nabla_{s_\epsilon \epsilon}}|^2) \\
		&\le (e^{C_M (3s_\epsilon R)}(3s_\epsilon R)^{2-n}\mu_{s_\epsilon \epsilon}(B_{3s_\epsilon R}(p_i))+C_M(3s_\epsilon R)) \\
		&\quad-(e^{C_M (s_\epsilon R)}(s_\epsilon R)^{2-n}\mu_{s_\epsilon \epsilon}(B_{s_\epsilon R}(p_i))+C_M(s_\epsilon R)),
	\end{align*}
	where $p_i:=\iota_{x,s_\epsilon }(-2Re_i)$ and $\nu_{R,i}:=\operatorname{grad}d_{p_i}$. Now \eqref{almost.conic} and the comparability of $g_{x,s_\epsilon }$ with $g_{\mathbb R^n}$ give
	\begin{align*}
		&\lim_{\epsilon\to 0}\int_{B_{3R}(-2Re_i)\setminus B_{R}(-2Re_i)}(|\nabla_{\tilde\nu_{R,i}} \hat u_{\epsilon}|^2+|\iota_{\tilde\nu_{R,i}} F_{\hat\nabla_{\epsilon}}|^2)=0,
	\end{align*}
	where $\tilde\nu_{R,i}$ is the gradient of the distance function $d_{-2Re_i}$, both with respect to the metric $g_{x,s_\epsilon }$. Since eventually $B_{3R}(-2Re_i)\setminus B_{R}(-2Re_i)$ includes $B_2^2\times B_2^{n-2}$ for $R$ big enough, we get
	\begin{align}\label{approx.trans.inv}
		&\lim_{\epsilon\to 0}\int_{B_2^2\times B_2^{n-2}}(|\nabla_{\tilde\nu_{R,i}} \hat u_{\epsilon}|^2+|\iota_{\tilde\nu_{R,i}} F_{\hat\nabla_{\epsilon}}|^2)=0.
	\end{align}
	By monotonicity, eventually we have
	\begin{align}\label{en.bound.scaled}\begin{aligned}
	\limsup_{\epsilon\to 0}\int_{B_2^2\times B_2^{n-2}}e_\epsilon(\hat u_\epsilon,\hat \nabla_\epsilon)
	&\le\limsup_{\epsilon\to 0} 2s_\epsilon ^{2-n}\int_{B_{5s_\epsilon }(x)}e_{s_\epsilon \epsilon}(u_{s_\epsilon \epsilon}, \nabla_{s_\epsilon \epsilon}) \\
	&\le C(M,\Lambda).
	\end{aligned}\end{align}
	The smooth convergence $g_{x,s_\epsilon }\to g_{\mathbb R^n}$ gives $\tilde\nu_{R,i}(y)\to Y_{R,i}(y):=\frac{y+2Re_i}{|y+2Re_i|}$ uniformly on $B_2^2\times B_2^{n-2}$. Hence, the bound \eqref{en.bound.scaled} and \eqref{approx.trans.inv} give
	\begin{align}\label{approx.trans.inv2}
		&\lim_{\epsilon\to 0}\int_{B_2^2\times B_2^{n-2}}(|\nabla_{Y_{R,i}} \hat u_{\epsilon}|^2+|\iota_{Y_{R,i}} F_{\hat\nabla_{\epsilon}}|^2)=0.
	\end{align}
	Now $Y_{R,i}\to e_i=\partial_i$ as $R\to\infty$, and the statement follows from \eqref{approx.trans.inv2} and the uniform bound \eqref{en.bound.scaled}.	
\end{proof}

We now state the main technical result of the section, which will be shown later.
Fix a cut-off function $\chi\in C^\infty_c(B_2^2)$ with $\chi(z)=1$ for $|z|\le\frac{3}{2}$ and $0\le\chi\le 1$, and let $\hat\chi(z,t):=\chi(z)$.

\begin{prop}\label{slice.thesis.prop}
	There exists $F_\epsilon\subseteq B_1^{n-2}$ with $|F_\epsilon|\ge\frac{1}{4}|B_1^{n-2}|$ such that
	\begin{align}\label{slice.thesis}
	&\sup_{t\in F_{\epsilon}}\operatorname{dist}\Big(\int_{\mathbb R^2\times\{t\}}\chi(z)e_\epsilon(\hat u_\epsilon,\hat \nabla_\epsilon)(z,t),2\pi\mathbb N\Big)\to 0\quad\text{ as }\epsilon\to 0.
	\end{align}
\end{prop}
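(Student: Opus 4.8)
The plan is to localize the slice energy onto finitely many planar ``bubbles'' and invoke the quantization of the Yang--Mills--Higgs energy for entire planar solutions, which is contained in \cite[Chapter~III]{JT}.

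\textbf{Selection of $F_\epsilon$ and reduction to the planar part of the energy.} By Proposition~\ref{geo.lemma} and the bound \eqref{en.bound.scaled}, the transverse energies $\int_{B_2^2\times B_2^{n-2}}e_\epsilon^T(\hat u_\epsilon,\hat\nabla_\epsilon)$ tend to $0$ while the total energies stay bounded; moreover, since $\hat\mu_\epsilon\rightharpoonup\Theta\,\mathcal H^{n-2}\mrestr(\{0\}\times\R^{n-2})$ and $\overline{(B_2^2\setminus B_1^2)\times B_1^{n-2}}$ is disjoint from $\{0\}\times\R^{n-2}$, the energies $\int_{(B_2^2\setminus B_1^2)\times B_1^{n-2}}e_\epsilon(\hat u_\epsilon,\hat\nabla_\epsilon)$ also tend to $0$. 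Writing each of these three quantities as $\int_{B_1^{n-2}}(\,\cdot\,)\,dt$ of the corresponding slice integrals and applying Chebyshev's inequality, one finds $a_\epsilon\to 0$ and a constant $C'$ such that the set $F_\epsilon$ of those $t\in B_1^{n-2}$ for which the three slice integrals are $\le a_\epsilon$, $\le C'$, $\le a_\epsilon$ respectively has $|F_\epsilon|\ge(1-o(1))|B_1^{n-2}|\ge\tfrac14|B_1^{n-2}|$ for $\epsilon$ small. Since the purely two-dimensional part $e_\epsilon^{2D}:=\sum_{i=1,2}|(\nabla_\epsilon)_{\partial_i}\hat u_\epsilon|^2+\epsilon^2|F_{\hat\nabla_\epsilon}(\partial_1,\partial_2)|^2+\epsilon^{-2}W(\hat u_\epsilon)$ of the energy density satisfies $0\le e_\epsilon-e_\epsilon^{2D}\le C\,e_\epsilon^T$ up to errors controlled by $g_{x,s_\epsilon}-g_{\mathbb R^n}$, on $F_\epsilon$ the difference $\int_{\R^2\times\{t\}}\chi(e_\epsilon-e_\epsilon^{2D})(\cdot,t)$ tends to $0$ uniformly, and it suffices to prove \eqref{slice.thesis} with $e_\epsilon^{2D}$ in place of $e_\epsilon$.

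\textbf{Reduction to entire planar solutions.} Fix $t\in F_\epsilon$ and pass to the scale $\epsilon$: in coordinates $y=z/\epsilon$ the restriction of $(\hat u_\epsilon,\hat\nabla_\epsilon)$ to $B_2^2\times\{t\}$ becomes a pair $(w_\epsilon,\mathcal A_\epsilon)$ on the (Euclidean-approaching) disk $B_{2/\epsilon}^2$ solving the two-dimensional Yang--Mills--Higgs system \eqref{ueq}--\eqref{feq} with $\epsilon=1$, \emph{up to error terms built from the transverse quantities} $\nabla_{\partial_i}\hat u_\epsilon$, $\iota_{\partial_i}F_{\hat\nabla_\epsilon}$ ($i\ge 3$). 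By the uniform interior $C^k$-estimates for such solutions at unit scale (elliptic bootstrap in a local Coulomb gauge, cf.\ the Appendix; recall $\epsilon^2|F_{\hat\nabla_\epsilon}|\le\tfrac12+C\epsilon$ by \eqref{bigfbd}), the slice bound $\int e_\epsilon^T(\cdot,t)\le a_\epsilon\to0$, and interpolation, these errors tend to $0$ in $C^k_{loc}$. At this scale Corollary~\ref{decaycor} reads $e_1(w_\epsilon,\mathcal A_\epsilon)(y)\le Ce^{-a\,\mathrm{dist}(y,Z_\beta(w_\epsilon))}+C\epsilon^3$, so essentially all of the slice energy sits within a bounded neighbourhood of $Z_\beta(w_\epsilon)$, which (being concentrated near $z=0$ for the original pair, and carrying energy $\ge c>0$ per connected component by a two-dimensional clearing-out) breaks into a uniformly bounded number of cores. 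A standard bubbling extraction then yields, for each $\delta>0$, finitely many points $q_1,\dots,q_m$ (with $m\le C(M,\Lambda)$) and radii $R_j$ such that the energy is $<\delta$ outside $\bigcup_j B_{R_j}(q_j)$, and such that on each $B_{R_j}(q_j)$, after rescaling to unit size and fixing a Coulomb gauge, $(w_\epsilon,\mathcal A_\epsilon)$ converges in $C^k_{loc}$ to a finite-energy solution of the planar $\epsilon=1$ system on $\R^2$ (the transverse errors, the metric corrections, and $\chi$ all being harmless in the limit since $\chi\equiv1$ near every core).

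\textbf{Conclusion.} By \cite[Chapter~III]{JT}, every finite-energy solution of the planar Yang--Mills--Higgs system is (anti-)self-dual with energy $2\pi|N_j|$ for an integer $N_j$ bounded in terms of $\Lambda$; moreover, since $Z_\beta(w_\epsilon)$ is confined to the cores, the exponential decay \eqref{decaycor} forces the annular ``neck'' regions between consecutive scales to carry energy $o(1)$ — this is precisely where the analysis departs from, and is simpler than, the Ginzburg--Landau case, in which the neck energy diverges logarithmically. Summing over the bubbles gives $\int_{\R^2\times\{t\}}\chi\,e_\epsilon^{2D}(\cdot,t)=\sum_{j=1}^m 2\pi|N_j|+o(1)$, and since $m$ and the $|N_j|$ are bounded uniformly over $t\in F_\epsilon$, the distance of $\int_{\R^2\times\{t\}}\chi\,e_\epsilon^{2D}(\cdot,t)$ to $2\pi\mathbb N$ tends to $0$ uniformly on $F_\epsilon$; together with the first step this is \eqref{slice.thesis}.

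\textbf{Main obstacle.} The delicate point is the bubbling step: organizing the possibly multi-scale structure of the zero set into finitely many genuine bubbles with a uniform bound on their number, carrying out the Coulomb gauge-fixing needed for $C^k_{loc}$-convergence with uniform constants, and showing the necks are energetically negligible. The exponential decay estimate of Section~\ref{decaysec} is exactly what makes the neck estimate work cleanly, after which the identification of each bubble limit, and hence the $2\pi\mathbb N$-quantization, follows directly from Jaffe--Taubes.
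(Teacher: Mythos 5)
Your high-level strategy matches the paper's: select good slices via a maximal-type argument, confine the zero set to a bounded number of $\epsilon$-scale cores using the clearing-out and decay estimates, and invoke the quantization of entire planar Yang--Mills--Higgs solutions from Jaffe--Taubes. However, there are two concrete gaps. First, the selection of $F_\epsilon$ by plain Chebyshev only controls the \emph{slice} integrals $\int_{B_2^2\times\{t\}}(\,\cdot\,)$, whereas the subsequent compactness argument requires control over all \emph{tubular} integrals $r^{2-n}\int_{B_2^2\times B_r^{n-2}(t)}(\,\cdot\,)$, $0<r<1$: this is what lets one verify the hypotheses \eqref{small.vert}--\eqref{decay.assump} of Proposition~\ref{2dlimit} for the rescaled solution on an $n$-dimensional cylinder and is why the paper applies the Hardy--Littlewood maximal inequality rather than Chebyshev. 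Second, the ``standard bubbling extraction'' is precisely the heart of the matter, and your version of it --- restricting $(\hat u_\epsilon,\hat\nabla_\epsilon)$ to a 2-plane, claiming it solves a ``perturbed two-dimensional Yang--Mills--Higgs system'' with errors tending to $0$ in $C^k_{\mathrm{loc}}$ --- has a hole. The error terms in the restricted equation involve pointwise transverse \emph{second} derivatives of $u$ and $A$; from the $L^2$-on-a-slice bound $\int_{B_2^2\times\{t\}}e_\epsilon^T\le a_\epsilon$ one does not get pointwise smallness of these, and the interpolation you invoke presupposes uniform $C^k$ control of the transverse derivatives, which is exactly what needs to be proved. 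The paper avoids this entirely: Proposition~\ref{2dlimit} works with the genuine $n$-dimensional equations on a cylinder, extracts a local Coulomb gauge, obtains $W^{2,q}$ (hence $C^1_{\mathrm{loc}}$) convergence directly from the Appendix estimates, and only \emph{in the limit} observes that the transverse derivatives vanish, so that the limit is planar.

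There is also a milder conceptual point: you flag ``organizing the possibly multi-scale structure of the zero set'' as the main obstacle, but in fact no multi-scale bubbling can occur here. After rescaling to the $\epsilon$-scale, the clearing-out estimate and a Vitali argument give a uniform bound on the number of $\epsilon$-balls covering the zero set, and the centers either cluster at distance $O(\epsilon)$ (contributing to the same bubble) or separate, with the necks automatically negligible by the exponential decay of $e_\epsilon$. The paper codifies this with the equivalence relation $k\sim\ell\iff\limsup\epsilon^{-1}|z_k^\epsilon-z_\ell^\epsilon|<\infty$ and a fixed radius $S$, applying Proposition~\ref{2dlimit} once per equivalence class --- there is no cascade of scales and hence no ``rescaling to unit size'' needed for the bubbles (which, as written, would change the effective $\epsilon$ and destroy the $2\pi\mathbb N$-quantization).
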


Before giving the proof, let us see how this implies the integrality of $V$.

\begin{proof}[Proof of Theorem \ref{thm1}]
	As $\epsilon\to 0$, we have both \eqref{slice.thesis} and
	\begin{align}\label{approx.density}
	&\lim_{\epsilon \to 0}\int_{\R^2\times B_2^{n-2}}\hat\chi(z)e_\epsilon(\hat u_\epsilon,\hat \nabla_\epsilon)=\lim_{\epsilon \to 0}\int_{\R^2\times B_2^{n-2}}\hat\chi\,d\hat\mu_\epsilon=\omega_{n-2}\Theta_{n-2}(\mu,x),
	\end{align}
	\begin{align}\label{small.annulus}
	&\int_{\R^2\times B_1^{n-2}}|d\hat\chi|\,d\hat\mu_\epsilon\le C\hat\mu_{\epsilon}((B_2^2\setminus B_2^2)\times B_1^{n-2})\to 0,
	\end{align}
	as $\hat\mu_\epsilon\rightharpoonup\Theta_{n-2}(\mu,x)\mathcal{H}^{n-2}\mrestr\{0\}\times\mathbb R^{n-2}$.
	In view of \eqref{en.bound.scaled} and \eqref{small.annulus}, for any vector field $(Y^3,\dots,Y^n)\in C^\infty_c(B_2^{n-2},\mathbb R^{n-2})$ we can integrate
	\eqref{stateq1} against $\chi(\sum_{i=3}^n Y^i\partial_i)$ and obtain, in the Euclidean metric,
	\begin{align*}
	&\Big|\int_{\mathbb R^2\times B_2^{n-2}}\hat\chi\langle T_\epsilon(u_\epsilon,\nabla_\epsilon),dY_i\otimes\partial_i\rangle\Big|
	\le \lambda_\epsilon(\|Y\|_{L^\infty}+\|DY\|_{L^\infty})
	\end{align*}
	for some sequence $\lambda_\epsilon\to 0$, thanks to the smooth convergence $g_{x,s_\epsilon}\to g_{\mathbb R^n}$.
	Invoking Proposition \ref{geo.lemma} and noting that $\|Y\|_{L^\infty}\le 2\|DY\|_{L^\infty}$, we can conclude that the nonnegative function $f_\epsilon(t):=\frac{1}{2\pi}\int_{\mathbb R^2\times\{t\}}\hat\chi e_\epsilon$ satisfies
	\begin{align*}
	&\Big|\int_{B_2^{n-2}}f_\epsilon\operatorname{div}(Y)\Big|\le \lambda_\epsilon\|DY\|_{L^\infty}
	\end{align*}
	for a possibly different sequence $\lambda_\epsilon\to 0$.
	Applying the Hahn--Banach theorem to the subspace $\{DY\mid Y\in C^\infty_c(B_1^{n-2},\R^{n-2})\}\subset C_0(B_1^{n-2},\R^{n-2})$ ($C_0$ denoting the closure of $C_c$), we can find real measures $(\nu_\epsilon)^i_j$ such that
	\begin{align*}
		&\partial_j f=\sum_{i=3}^{n}\partial_i(\nu_\epsilon)^i_j\quad\text{for all }j=3,\dots,n
	\end{align*}
	as distributions and $|(\nu_\epsilon)^i_j|(B_2^{n-2})\to 0$.
	Allard's strong constancy lemma \cite[Theorem~1.(4)]{allard} gives then
	\begin{align*}
	&\Big\|f-\frac{1}{\omega_{n-2}}\int_{B_1^{n-2}}f\Big\|_{L^1(B_1^{n-2})}\to 0.
	\end{align*}
	Since the sets $F_{\epsilon}$ of Proposition \ref{slice.thesis.prop} have positive measure, there clearly exists $t\in F_\epsilon$ such that
	\begin{align*}
	&\Big|f(t)-\frac{1}{\omega_{n-2}}\int_{B_{1}^{n-2}}f\Big|\le\frac{1}{|F_\epsilon|}\Big\|f-\frac{1}{\omega_{n-2}}\int_{B_{1}^{n-2}}f\Big\|_{L^1(B_1^{n-2})}.
	\end{align*}	
	Recalling \eqref{slice.thesis}, we deduce that
	\begin{align*}
	&\operatorname{dist}\Big(\frac{1}{\omega_{n-2}}\int_{B_{1}^{n-2}}f,\mathbb N\Big)\to 0.
	\end{align*}
	Hence, by \eqref{approx.density}, we get $\operatorname{dist}(\Theta_{n-2}(\mu,x),\mathbb N)=0$, which concludes the proof that $V$ is integral.
\end{proof}

\begin{proof}[Proof of Proposition \ref{slice.thesis.prop}]
	Taking into account Proposition \ref{geo.lemma}, the classical Hardy--Littlewood weak-(1,1) maximal estimate (applied to the function $t\mapsto\int_{B_2^2\times\{t\}}e_\epsilon^T(\hat u_\epsilon,\hat A_\epsilon)$) gives
	\begin{align}\label{vert.en.max}
		&\frac{1}{r^{n-2}}\int_{B_2^2\times B_r^{n-2}(t)}e_\epsilon^T(\hat u_\epsilon,\hat \nabla_\epsilon)\le C(n)\int_{B_2^2\times B_2^{n-2}}e_\epsilon^T(\hat u_\epsilon,\hat \nabla_\epsilon)\to 0
	\end{align}
	for all $t\in B_1^{n-2}\setminus E_1^\epsilon$ and $0<r<1$, where $E_1^\epsilon$ is a Borel set with $|E_1^\epsilon|\le\frac{1}{4}|B_1^{n-2}|$.
	Similarly, \eqref{en.bound.scaled} and \eqref{small.annulus} give
	\begin{align}\label{en.max}
		&\frac{1}{r^{n-2}}\hat\mu_\epsilon(B_2^2\times B_r^{n-2}(t))\le C(M,L,\Lambda),
	\end{align}
	\begin{align}\label{small.ann.max}
		&\frac{1}{r^{n-2}}\hat\mu_\epsilon((B_2^2\setminus B_1^2)\times B_r^{n-2}(t))\le C(n)\hat\mu_\epsilon((B_2^2\setminus B_1^2)\times B_2^{n-2})\to 0
	\end{align}
	for $t\in B_1^{n-2}\setminus(E_2^\epsilon\cup E_3^\epsilon)$ and $0<r<1$, with $|E_2^\epsilon|,|E_3^\epsilon|\le\frac{1}{4}|B_1^{n-2}|$.
	
	Pick any $t^\epsilon\in B_1^{n-2}\setminus(E_1^\epsilon\cup E_2^\epsilon\cup E_3^\epsilon)$ and, for $0<r<1$, define
	\begin{align*}
		&\mathcal{V}^\epsilon(r):=\{z\in B_1^2:\operatorname{dist}((z,t^\epsilon),\,Z_{\beta_d/2}(\hat u_\epsilon))<r\}
	\end{align*}
	(with the Euclidean distance). In other words, $\mathcal{V}^\epsilon$ is the $t^\epsilon$-slice of the neighborhood $B_r^n(Z_{\beta_d/2}(\hat u_\epsilon))$. We claim that for $0<r<1$, $\mathcal{V}^{\epsilon}(r)$ satisfies a uniform area bound
	\begin{align}\label{area.vort.nbhd}
	&|\mathcal{V}^\epsilon(r)|\le C(M,\Lambda)r^2,
	\end{align}
	provided $\epsilon<r$ is small enough.
	Indeed,	$\mathcal{V}^\epsilon(r)$ is covered by the balls $B_{2r}^n(y)$ with $y\in (B_2^2\times B_r^{n-2}(t^\epsilon))\cap Z_{\beta_d/2}(\hat u_\epsilon)$. Vitali's covering lemma gives a disjoint collection $\{B_{2r}^2(y_j)\mid j\in J\}$ such that $\mathcal{V}^\epsilon(r)\subseteq\bigcup_{j}B_{10r}^2(y_j)$.
	By Corollary \ref{clearing.out}, we have a bound on the cardinality $|J|$:
	\begin{align*}
	&\hat\mu_\epsilon(B_2^2\times B_{3r}^{n-2})
	\ge\sum_{j\in J}\hat\mu_\epsilon(B_{2r}^n(y_j))
	\ge\sum_{j\in J}\hat\mu_\epsilon(\mathcal{B}_r(y_j))
	\ge c(M,\Lambda)r^{n-2}|J|
	\end{align*}
	(since $\frac{1}{4}g_{\mathbb R^n}\le g_{x,s_\epsilon}\le 4g_{\mathbb R^n}$ for $\epsilon$ sufficiently small).
	Hence, using also \eqref{en.max}, we get
	\begin{align*}
	&|\mathcal{V}^\epsilon(r)|\le\sum_j|B_{5r}^2(y_j)|\le 25|J|r^2\le C(M,\Lambda)r^2,
	\end{align*}
	confirming \eqref{area.vort.nbhd}.
	
	Given $R>0$, let $\{z_1^\epsilon,\dots,z_{N(R,\epsilon)}^\epsilon\}$ be a maximal subset of $\mathcal{V}^\epsilon(R\epsilon)$
	with $|z_k^\epsilon-z_\ell^\epsilon|\ge 2\epsilon$.
	Since $\bigcup_k (B_1^2\cap B_\epsilon^2(z_k))\subseteq\mathcal{V}^\epsilon((R+1)\epsilon)$ and the balls $B_\epsilon^2(z_k)$ are disjoint,
	\eqref{area.vort.nbhd} gives a uniform bound on $N(R,\epsilon)$ independent of $\epsilon$ (eventually),
	so up to subsequences we can assume that $N(R)=N(R,\epsilon)$ is constant
	and that $\epsilon^{-1}|z_k^\epsilon-z_\ell^\epsilon|$ has a limit $r_{k\ell}$ as $\epsilon\to 0$, for each $k,l$. We say that $k\sim\ell$ if $r_{k\ell}<\infty$; this is evidently an equivalence relation (as $r_{km}\le r_{k\ell}+r_{\ell m}$),
	so we can pick a set of representatives $\{k_1,\dots,k_P\}$ of the distinct equivalence classes $[k_1],\dots,[k_P]$ and conclude that
	\begin{align*}
		&\mathcal{V}^\epsilon(R\epsilon)\subseteq\bigcup_{j=1}^P B_{S\epsilon}^2(z_{k_j}^\epsilon)
	\end{align*}
	eventually, for any fixed $S\ge S_0(R):=\max\{\sum_{\ell\in[k_j]}r_{k_j\ell}+2\mid j=1,\dots,P\}$.
	Fix such an $S$ which is also bigger than the constants $C$ in \eqref{en.max} and $a_d^{-1},C$ in Corollary \ref{decaycor}.
	For any fixed $\delta>0$, \eqref{vert.en.max} and \eqref{en.max} show that, for $\epsilon$ sufficiently small, Proposition \ref{2dlimit} below applies to the rescaled solutions $\widehat u_\epsilon(z_{k_j}^\epsilon+\epsilon\cdot,t^\epsilon+\epsilon\cdot)$ (with $\beta:=\beta_d$). Writing $K=K(\beta_d,\delta,S)>S$,
	note that the balls $B_{K\epsilon}^2(z_{k_j})$ are eventually disjoint and included in $\{\chi=1\}$.
	Hence, Proposition \ref{2dlimit} and \eqref{small.ann.max} give
	\begin{align*}
		&\operatorname{dist}\Big(\int_{\mathbb R^2\times\{t^\epsilon\}}\hat\chi e_\epsilon(\hat u_\epsilon,\hat \nabla_\epsilon),2\pi\mathbb N\Big)
		\le P\delta+\int_{B_2^2\setminus\bigcup_{j=1}^{P}B_{K\epsilon}^2(z_{k_j}^\epsilon)}e_\epsilon(\hat u_\epsilon,\hat\nabla_\epsilon)(\cdot,t^\epsilon) \\
		&\le P\delta+\int_{B_2^2\setminus\mathcal{V}^\epsilon(R\epsilon)}e_\epsilon(\hat u_\epsilon,\hat\nabla_\epsilon)(\cdot,t^\epsilon) \\
		&\le (P+1)\delta+\int_{B_1^2\setminus\mathcal{V}^\epsilon(R\epsilon)}e_\epsilon(\hat u_\epsilon,\hat\nabla_\epsilon)(\cdot,t^\epsilon)
	\end{align*}
	(for $\epsilon$ sufficiently small). Choosing $\delta=\delta(R)\leq \frac{1}{(P+1)R}$, we arrive at the estimate
	\begin{align*}
		&\operatorname{dist}\Big(\int_{\mathbb R^2\times\{t^\epsilon\}}\hat\chi e_\epsilon(\hat u_\epsilon,\hat \nabla_\epsilon),2\pi\mathbb N\Big)
		\le \frac{1}{R}+\int_{B_1^2\setminus\mathcal{V}^\epsilon(R\epsilon)}e_\epsilon(\hat u_\epsilon,\hat\nabla_\epsilon)(\cdot,t^\epsilon)
	\end{align*}
 To conclude the proof, it suffices to show that
	\begin{align}\label{small.energy}
		&\lim_{R\to 0}\limsup_{\epsilon\to 0}\int_{B_1^2\setminus\mathcal{V}^\epsilon(R\epsilon)}e_\epsilon(\hat u_\epsilon,\hat\nabla_\epsilon)(\cdot,t^\epsilon)\to 0.
	\end{align}
	Once we have this, we infer that $\liminf_{\epsilon\to 0}\operatorname{dist}\big(\int_{\mathbb R^2\times\{t^\epsilon\}}\hat\chi e_\epsilon(\hat u_\epsilon,\hat \nabla_\epsilon),2\pi\mathbb N\big)=0$ for the original sequence $(t^\epsilon)$.
	Noting that the estimates above are independent of the choice of $t^{\epsilon}\in F_{\epsilon}=B_1^{n-2}\setminus E_1^{\epsilon}\cup E_2^{\epsilon}\cup E_3^{\epsilon}$, the proposition then follows.
	
	To show \eqref{small.energy}, note that for $y\in B_1^2\times B_1^{n-2}$ the distance of $\iota_{x,s_\epsilon}(y)$ to the set $Z_{\beta_d/2}(u_{s_\epsilon\epsilon})$
	is (eventually) bounded below by $\frac{s_\epsilon}{2}\min\{1,r_\epsilon(y)\}$, where $r_\epsilon(y)$ is the (Euclidean) distance of $(y,t^\epsilon)$ to $Z_{\beta_d/2}(\hat u_\epsilon))$.
	Since $Z_{\beta_d/2}(u_{s_\epsilon\epsilon})\supseteq Z_{\beta_d}(u_{s_\epsilon\epsilon})$, for any $R>1$ Corollary \ref{decaycor} gives
	\begin{align*}
	\int_{B_1^2\setminus\mathcal{V}^\epsilon(R\epsilon)}e_\epsilon(\hat u_\epsilon,\hat\nabla_\epsilon)
	&\le C\epsilon^{-2}\int_{B_1^2\setminus\mathcal{V}^\epsilon(R\epsilon)}e^{-a_dr_\epsilon(y)/(2\epsilon)}+C\epsilon^{-2}e^{-a_d/(2\epsilon)}+Cs_\epsilon\epsilon \\
	&=C\epsilon^{-3}\int_{B_1^2\setminus\mathcal{V}_\epsilon(R\epsilon)}\int_{r_\epsilon(y)}^\infty \frac{a_d}{2} e^{-a_dr/(2\epsilon)}\,dr\,dy+C\epsilon^{-2}e^{-a_d/(2\epsilon)}+s_\epsilon\epsilon \\
	&=C\epsilon^{-3}\int_{R\epsilon}^\infty \frac{a_d}{2} e^{-a_d r/(2\epsilon)}|\mathcal{V}_\epsilon(r)|\,dr+C\epsilon^{-2}e^{-a_d/(2\epsilon)}+Cs_\epsilon\epsilon \\
	&\le C\epsilon^{-3}\int_{R\epsilon}^\infty e^{-a_d r/(2\epsilon)}r^2\,dr+C\epsilon \\
	&=C\int_R^\infty e^{-a_d t/2}t^2\,dt+C\epsilon,
	\end{align*}
	where we used Fubini's theorem in the second equality. The statement follows.
\end{proof}

The following key technical proposition, used in the proof of Proposition \ref{slice.thesis.prop}, relies ultimately on the quantization phenomenon for the energy of entire solutions in the plane, presented in \cite[Chapter~III]{JT}.
For the reader's convenience, we give a self-contained proof, including the relevant arguments from \cite{JT}.

\begin{prop}\label{2dlimit}
	Given $0<\beta,\delta<\frac{1}{2}$ and $S>1$,
	there exist $K(\beta,\delta,S)>S$ and $0<\kappa(\beta,\delta,S,n)<K^{-1}$ such that the following is true.
	Assume $(u,\nabla)$ is smooth and solves \eqref{ueq} and \eqref{feq}, with $|u|\le 1$ and $\epsilon=1$, on a line bundle $L$ over a cylinder $(Q,g)$, with $Q=B_{\kappa^{-1}}^2\times B_{\kappa^{-1}}^{n-2}$.
	If we have
	\begin{align}\label{confined.vort}
	Z_{\beta/2}(u)\cap(B_{\kappa^{-1}}^2\times\{0\})\subseteq \bar B_S^2\times\{0\},
	\end{align}
	the energy bounds
	\begin{align}\label{energy.bd}
	&e_1(u,\nabla)\le S,
	\end{align}
	\begin{align}\label{small.vert}
	&\sum_{i=3}^n\int_{B_{\kappa^{-1}}^2\times B_r^{n-2}}(|\nabla_{\partial_i}u|^2+|\iota_{\partial_i}F_\nabla|^2)\le\kappa\quad\text{for all }0<r<\kappa^{-1},
	\end{align}
	as well as the decay
	\begin{align}\label{decay.assump}
	&\quad e_1(u,\nabla)(p)\le Se^{-S^{-1}r}+\kappa\quad\text{whenever }\cb_r(p)\subseteq Q\setminus Z_\beta,
	\end{align}
	and $\|g-g_{\mathbb R^n}\|_{C^2}\le\kappa$, then
	\begin{align*}
	&\Big|\int_{B_K^2\times\{0\}}e_1(u,\nabla)-2\pi|p|\Big|<\delta,
	\end{align*}
	where $p$ is the degree of $\frac{u}{|u|}(S\cdot,0)$, as a map from the circle to itself.
\end{prop}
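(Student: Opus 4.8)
The plan is to prove the contrapositive by a compactness argument, reducing everything to the quantization of the energy for finite-energy planar critical points, which is supplied by \cite[Chapter~III]{JT} (see also \cite{Taubes2}). We argue by contradiction. Fix $\beta,\delta,S$, and suppose that for the value $K=K_0$ chosen below (with $K_0>S$ large, depending only on $S$ and $\delta$) and for \emph{every} $\kappa$ the conclusion fails. Then there are $\kappa_j\to 0$ and solutions $(u_j,\nabla_j)$ of \eqref{ueq}--\eqref{feq} with $\epsilon=1$ on cylinders $Q_j=B_{\kappa_j^{-1}}^2\times B_{\kappa_j^{-1}}^{n-2}$, equipped with metrics $g_j$ satisfying $\|g_j-g_{\mathbb R^n}\|_{C^2}\le\kappa_j$, obeying \eqref{confined.vort}, \eqref{energy.bd}, \eqref{small.vert}, \eqref{decay.assump} with $\kappa=\kappa_j$, $|u_j|\le 1$, yet with $\big|\int_{B_{K_0}^2\times\{0\}}e_1(u_j,\nabla_j)-2\pi|p_j|\big|\ge\delta$.

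\textbf{Compactness and reduction to the plane.} Since $e_1(u_j,\nabla_j)\le S$ pointwise by \eqref{energy.bd}, we have uniform $L^\infty$ bounds on $|F_{\nabla_j}|$ and $|\nabla_j u_j|$. As the $Q_j$ exhaust $\mathbb R^n$ and each $B_R^2\times B_R^{n-2}$ is contractible (so $L$ is trivial there), we may place the $\nabla_j$ in global Coulomb gauges over an exhausting sequence of such products of balls, apply elliptic estimates and bootstrap via \eqref{ueq}--\eqref{feq} (as in the Appendix), and extract by a diagonal argument a subsequence along which $(u_j,\nabla_j)$ converges in $C^\infty_{loc}(\mathbb R^n)$ to a smooth pair $(u_\infty,\nabla_\infty)$ solving \eqref{ueq}--\eqref{feq} with $\epsilon=1$ on Euclidean $\mathbb R^n$ (using $g_j\to g_{\mathbb R^n}$), with $|u_\infty|\le 1$ and $e_1(u_\infty,\nabla_\infty)\le S$. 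From \eqref{small.vert} with $\kappa=\kappa_j\to 0$ we get $\sum_{i=3}^n\int_{B_R^2\times B_R^{n-2}}(|\nabla_{\partial_i}u_j|^2+|\iota_{\partial_i}F_{\nabla_j}|^2)\to 0$ for each fixed $R$, hence $\nabla_{\partial_i}u_\infty\equiv 0$ and $\iota_{\partial_i}F_{\nabla_\infty}\equiv 0$ for $i\ge 3$. Thus $F_{\nabla_\infty}$ has only a $dz^1\wedge dz^2$ component, $|u_\infty|$ is independent of the last $n-2$ coordinates, and after a gauge change trivializing the (flat) connection along the simply connected fibers $\{z\}\times\mathbb R^{n-2}$ the pair $(u_\infty,\nabla_\infty)$ is the pullback under $\mathbb R^n\to\mathbb R^2$ of a smooth solution $(v,\nabla^v)$ of the two-dimensional Yang--Mills--Higgs equations, with $e_1(v,\nabla^v)$ (computed with the convention \eqref{two.form.conv}) equal to the restriction of $e_1(u_\infty,\nabla_\infty)$ to $\mathbb R^2\times\{0\}$.

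\textbf{Finite energy of the limit, quantization, and conclusion.} Passing \eqref{confined.vort} to the limit gives $\{|v|^2\le 1-\beta/2\}\subseteq\bar B_S^2$; in particular $|v|\ge\sqrt{1-\beta/2}>0$ on $\{|z|\ge S\}$, so $v$ does not vanish there, which (since then $u_j\neq 0$ on $\partial B_S^2\times\{0\}$ for $j$ large) shows in passing that $p_j$ is well defined once $\kappa_j$ is small. Passing the decay estimate \eqref{decay.assump} to the limit — a compact ball disjoint from $Z_\beta(u_\infty)=\{|u_\infty|^2\le 1-\beta\}$ is, by local uniform convergence, eventually disjoint from $Z_\beta(u_j)$, and the $g_j$-balls converge to Euclidean balls — yields $e_1(u_\infty,\nabla_\infty)(p)\le S\exp(-S^{-1}\operatorname{dist}(p,Z_\beta(u_\infty)))$; since $Z_\beta(u_\infty)\subseteq\bar B_S^2\times\mathbb R^{n-2}$, restricting to the slice gives $e_1(v,\nabla^v)(z)\le S e\, e^{-|z|/S}$ for $|z|>S$, whence $\int_{\mathbb R^2}e_1(v,\nabla^v)<\infty$. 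By the classification of finite-energy planar critical points in \cite[Chapter~III]{JT}, $\int_{\mathbb R^2}e_1(v,\nabla^v)=2\pi|N|$ where $N$ is the vortex number, and since $v$ is nonvanishing outside $\bar B_S^2$, $N$ equals the degree of $v/|v|$ on $\{|z|=S\}$; by $C^0$-convergence $u_j/|u_j|\to v/|v|$ on that circle, $p_j=N$ for $j$ large. Finally choose $K_0>S$ with $\int_{\{|z|>K_0\}}Se\,e^{-|z|/S}\,dz<\delta/2$: then $\big|\int_{B_{K_0}^2}e_1(v,\nabla^v)-2\pi|N|\big|\le\delta/2$, and smooth convergence gives $\int_{B_{K_0}^2\times\{0\}}e_1(u_j,\nabla_j)\to\int_{B_{K_0}^2}e_1(v,\nabla^v)$, so for $j$ large $\big|\int_{B_{K_0}^2\times\{0\}}e_1(u_j,\nabla_j)-2\pi|p_j|\big|<\delta$ — a contradiction. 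This proves the proposition with $K=K_0(\beta,\delta,S)$ and $\kappa=\kappa(\beta,\delta,S,n)$ small (further shrunk so that $\kappa<K_0^{-1}$).

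\textbf{Main obstacle.} The delicate steps are the gauge-theoretic compactness in Step~2 — extracting from the bare $L^\infty$ energy bound a subsequence converging smoothly on all of $\mathbb R^n$ in a coherent family of Coulomb gauges, which needs gauge-fixing over an exhaustion together with a diagonal argument (done in the generality needed in the Appendix) — and, above all, passing \eqref{confined.vort} and the decay \eqref{decay.assump} to the limit in a way that forces the planar limit $(v,\nabla^v)$ to have \emph{integrable} energy, which is exactly what makes the Jaffe--Taubes quantization $\int_{\mathbb R^2}e_1=2\pi|N|$ applicable; the quantization itself (reducing finite-energy solutions to the first-order vortex equations via the Bogomol'nyi trick and integrating) is the conceptual core, but is used here as a black box from \cite{JT}.
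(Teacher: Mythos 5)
Your proof follows the same broad outline as the paper (contradiction, gauge-theoretic compactness, reduction to a planar critical point with finite energy, quantization, then a tail estimate via the choice of $K$), but differs in two substantive places. First, you invoke Jaffe--Taubes quantization $\int_{\R^2}e_1(v,\nabla^v)=2\pi|N|$ as a black box, whereas the paper deliberately re-derives it: after passing to the planar limit it integrates the stress-energy identity \eqref{stateq1} against the position vector field to get $\int_{\R^2}|d\tilde A_\infty|^2=\int_{\R^2}W(\tilde u_\infty)$, then re-runs the maximum-principle argument behind \eqref{ptwisebal} to conclude $|d\tilde A_\infty|=\sqrt{W(\tilde u_\infty)}$ everywhere with a fixed sign, and finally integrates by parts to reach $2\pi|p|$. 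The self-contained route is not cosmetic here — the paper advertises the result as giving a ``PDE proof'' of Almgren's theorem, so outsourcing the only genuinely global ingredient to \cite{JT} would run counter to its stated aim, though mathematically either is acceptable. Second, your compactness step (Coulomb gauges over an exhausting sequence of balls, glued by a diagonal argument) is the abelian analogue of Uhlenbeck patching; it can be made to work, but the transition functions between Coulomb gauges on nested domains need to be controlled, and you leave this implicit. The paper avoids the issue entirely by choosing a single explicit gauge once and for all: a Coulomb gauge on the \emph{fixed} compact ball $\bar B_{5S}^n$ (where the constants are automatically uniform), continued into the exterior annulus by the natural trivialization in which $u_j=|u_j|e^{ip\theta}$, with $W^{2,q}$ bounds in the exterior extracted by hand from \eqref{feq}, \eqref{moduboch}, and the identity $pd\theta-A_j=|u_j|^{-2}\langle\nabla_ju_j,u_j\rangle$. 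This also explains why the paper obtains a limit only on $\R^2\times B_1^{n-2}$ in $C^1_{loc}$ rather than your $C^\infty_{loc}(\R^n)$ — which is all it needs. Your passage of \eqref{confined.vort} and \eqref{decay.assump} to the limit, the reduction to a two-variable solution via trivialization along fibers, the choice of $K_0$, and the endgame all match the paper's reasoning; the paper additionally gives an explicit a priori bound on $|p_j|$ (by averaging the flux integral over radii and vertical slices) before fixing the subsequence, whereas you get $p_j=N$ directly from $C^0$ convergence on the circle — both are fine.
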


\begin{proof}
	To begin with, fix a real number $K(\beta,\delta,S)>S$ so big that
	\begin{align}\label{choice.of.K}
	&\int_{K}^\infty (2\pi r) Se^{-S^{-1}(r-S)}<\delta.
	\end{align}	
	Arguing by contradiction, assume there exists a sequence $\kappa_j\to 0$ such that the statement admits a counterexample $(u_j,\nabla_j)$ (for $\kappa=\kappa_j$) for a (necessarily trivial) line bundle $L_j$ over $Q_j=B_{\kappa_j^{-1}}^2\times B_{\kappa_j^{-1}}^{n-2}$, with respect to a metric $g=g_j$ satisfying $\|g-g_{\mathbb{R}^n}\|_{C^2}\leq \kappa_j$.
	Fixing a trivialization of $L_j$ over $Q_j$, we can write $\nabla_j=d-iA_j$ for some real one-form $A_j$.

	By virtue of the uniform pointwise estimate \eqref{decay.assump} for $e_1(u_j,\nabla_j)\geq |d|u_j||^2$, we see that the functions $|u_j|$ are locally equi-Lipschitz. 
In particular, we can apply the Arzel\`a--Ascoli theorem to extract a subsequence $|u_j|\to\rho_\infty$ converging in $C^0_{loc}$. Since $|d|u_j||\le|\nabla_j u_j|$, \eqref{small.vert} implies that $\rho_\infty$ depends only on the first two variables. Moreover, \eqref{confined.vort} gives $\rho_\infty^2\ge 1-\frac{\beta}{2}>1-\beta$ outside $B_S^2\times\mathbb{R}^{n-2}$.
	So, setting
	\begin{align*}
		&R_j:=\max\{r\le\kappa_j^{-1}:(B_r^2\setminus B_S^2)\times B_1^{n-2}\subseteq\{u_j\neq 0\}\},
	\end{align*}
	we have $R_j\to\infty$. Let $w_j:=\frac{u_j}{|u_j|}$ on $\{u_j\neq 0\}$.
	The degree $p_j$ is uniformly bounded as, for $r\ge S$ and $t\in\R^{n-2}$,
	\begin{align*}
	&2\pi p_j=\int_{\partial B_r^2\times\{t\}}w_j^*(d\theta)
	=\int_{B_r^2\times\{t\}}dA_j+\int_{\partial B_r^2\times\{t\}}(w_j^*(d\theta)-A_j)
	\end{align*}
	for $j$ sufficiently large, so averaging over $S<r<2S$ and $t\in B_1^{n-2}$ we get
	\begin{align*}
	2\pi|p_j|&\le C(S)\int_{B_{2S}^2\times B_1^{n-2}}|dA_j|+C(S)\int_{(B_{2S}^2\setminus B_S^2)\times B_1^{n-2}}|w_j^*(d\theta)-A_j|) \\
	&\le C(\beta,S)\Big(\int_{B_{2S}^2\times B_1^{n-2}}e_1(u_j,A_j)\Big)^{1/2},
	\end{align*}
	as $|u_j||w_j^*(d\theta)-A_j|\le|\nabla_j u_j|$. Thus, up to subsequences we can assume $p_j=p$ is constant.
	
	We now claim that, up to change of gauge, $(u_j,A_j)\to(u_\infty,A_\infty)$ subsequentially in $C^{1}_{loc}(\mathbb R^2\times B_1^{n-2})$.
	Let $\tilde u_j=e^{i\theta_j}u_j$ be the section in the 
	Coulomb gauge on the domain $(\bar B_{5S}^n,g_j)$, with $A_j(\nu)=0$ on the boundary (as described in the Appendix). Note that $B_{5S}^n$ includes the cylinder $Q':=B_{4S}^2\times B_1^{n-2}$.
	and observe that, on $Q'':=(B_{4S}^2\setminus B_S^2)\times B_1^{n-2}$, $\tilde{u}_j$ has the form
$$\tilde u_j(re^{i\theta},t)=|u_j|e^{ip\theta+i\psi_j}$$
for a unique real function
	$\psi_j$ with $0\le\psi_j(2S,0)<2\pi$.
	
	Hence, $u_j=|u_j|e^{i(p\theta+\psi_j-\theta_j)}$ on $Q''$ and we can extend $\psi_j-\theta_j$ uniquely to a function $\sigma_j:(B_{R_j}^2\setminus B_S^2)\times B_1^{n-2}\to\mathbb R$ so that $u_j=|u_j|e^{ip\theta+i\sigma_j}$ holds true on all the domain of $\sigma_j$. Finally,
	we replace $(u_j,A_j)$ with $(e^{i\tau_j}u_j,A_j+d\tau_j)$, where
	\begin{align*}
	&\tau_j(z,t):=\begin{cases} \theta_j-\chi(|z|)\psi_j & |z|<4S \\ -\sigma_j & |z|>3S \end{cases}
	\end{align*}
	for a fixed smooth function $\chi:[0,\infty)\to[0,1]$ such that $\chi=0$ on $[0,2S]$ and $\chi=1$ on $[3S,\infty)$.
	Observe that the new couple obeys uniform local $W^{2,q}$ bounds in the cylinder $Q'=B_{4S}^2\times B_1^{n-2}$, for all $1\leq q<\infty$, thanks to the Coulomb gauge specification (per Proposition \ref{w2q.prop} in the Appendix.) 

Moreover, in the exterior annular region $\mathcal{A}_j:=(B_{R_j}^2\setminus\bar B_{3S}^2)\times B_1^{n-2}$, we have that $u_j(re^{i\theta},t)=|u_j|e^{pi\theta}$ and we can obtain local $W^{2,q}$ bounds
	noting that
	\begin{align*}
		&pd\theta-A_j=|u_j|^{-2}\langle \nabla_j u_j,u_j\rangle.
	\end{align*}
	Indeed, since the right-hand side is bounded by $e_1(u_j,\nabla_j)^{1/2}\leq S^{1/2}$ and $pd\theta$ is a fixed smooth one-form, we immediately obtain uniform $L^{\infty}$ bounds for $A_j$ locally in $\mathcal{A}_j$. Next, note that the identity \eqref{moduboch} applies to give us an estimate
$$|\Delta |u_j|^2|\leq Ce_1(u_j,\nabla_j)\leq CS$$
in $\mathcal{A}_j$, from which it follows that the modulus $|u_j|$ satisfies uniform $W^{2,q}$ bounds for every $q\in (1,\infty)$ locally in $\mathcal{A}_j$.
Taking the imaginary part of \eqref{ueq} gives
	\begin{align*}
		&|u_j|d^*(p d\theta-A_j)=2\langle d|u_j|,pd\theta-A_j\rangle,
	\end{align*}
	from which it follows that $d^*A_j$ satisfies uniform $L^{\infty}$ bounds locally in $\mathcal{A}_j$ as well; together with the obvious pointwise bound $|dA_j|\leq e_1(u_j,\nabla_j)^{1/2}\leq S^{1/2}$, this in particular yields uniform bounds on the full derivative $\|DA_j\|_{L^q}$ for every $q\in (1,\infty)$ on fixed compact subsets of $\mathcal{A}_j$. Finally, writing \eqref{feq} as
	\begin{align*}
		&\Delta_H A_j=dd^*A_j+|u_j|^2(pd\theta-A_j),
	\end{align*}
the preceding chain of identities and estimates give a uniform $L^q$ bound on the right-hand side over any fixed compact subset of $\mathcal{A}_j$, for any $q\in (1,\infty)$; in particular, this gives us the desired uniform local $W^{2,q}$ bounds for $A_j$ (while we already have the desired $W^{2,q}$ bounds for $u_j=|u_j|e^{pi\theta}$).
	
	Thanks to the compact embedding $W^{2,q}\hookrightarrow C^1$ on bounded regular domains (for $q>n$), we obtain a limit couple $(u_\infty,A_\infty)$ on $\mathbb R^2\times B_1^{n-2}$, as claimed, which solves \eqref{ueq} and \eqref{feq} with respect to the flat metric. Also, $|u_\infty|=\rho_\infty$ and
	\begin{align}\label{limit.stuff}
	&(\nabla_\infty)_{\partial_k}u_\infty=0,\quad\iota_{\partial_k}dA_\infty=0\quad\text{for }k=3,\dots,n.
	\end{align}
	The second part of \eqref{limit.stuff} implies that we can find a function $\alpha\in C^1(\mathbb R^2\times B_1^{n-2})$ with $\alpha(z,0)=0$ and $\partial_k\alpha=(A_\infty)_k$, for all $z\in\mathbb R^2$ and all $k\ge 3$. Set $\tilde u_\infty:=e^{-i\alpha}u_\infty$ and $\tilde A_\infty:=A_\infty-d\alpha$, so that
	\begin{align*}
	&(\tilde A_\infty)_k=0,\quad\partial_k(\tilde A_\infty)_\ell=\partial_k(A_\infty)_\ell-\partial^2_{k\ell}\alpha=\partial_\ell(A_\infty-d\alpha)_k=0
	\end{align*}
	for all $k=3,\dots,n$ and $\ell=1,\dots,n$ (using again the second part of \eqref{limit.stuff}). The first part gives instead $\partial_k\tilde u_\infty=0$ for $k=3,\dots,n$. Hence, $(\tilde u_\infty,\tilde A_\infty)$ depends only on the first two variables and therefore corresponds to a planar solution of \eqref{ueq} and \eqref{feq}.
	
	Also, from \eqref{decay.assump} we deduce that
	\begin{align}\label{decay.limit}
		&e_1(\tilde u_\infty,\tilde A_\infty)(z,t)=e_1(u_\infty,A_\infty)(z,t)=\lim_{j\to\infty}e_1(u_j,A_j)(z,t)\le Se^{-S^{-1}(|z|-S)}
	\end{align}
	for $|z|>S$, as eventually $\bar B_{|z|-S}^n(z,t)\cap Z_\beta(u_j)=\emptyset$.
	
	Integrating \eqref{stateq1} on $\R^2=\R^2\times\{0\}$ against the position vector field we get
	\begin{align*}
		&\int_{\R^2}|d\tilde A_\infty|^2=\int_{\R^2}W(\tilde u_\infty).
	\end{align*}
	Thanks to the decay of $e_1(\tilde u_\infty,\tilde A_\infty)$, we can repeat the proof of \eqref{ptwisebal} obtaining $|d\tilde A_\infty|\le\sqrt{W(\tilde u_\infty)}$, so we must have $|d\tilde A_\infty|=\sqrt{W(\tilde u_\infty)}$ everywhere (cf. \cite{JT}, Section III.8). Observe that, by \eqref{moduboch} and the strong maximum principle, $|\tilde u_\infty|<1$ (unless $|\tilde u_\infty|=1$ everywhere, in which case $|d\tilde A_\infty|=\sqrt{W(\tilde u_\infty)}=0$ and $|\tilde \nabla_\infty \tilde u_\infty|=0$ by \eqref{moduboch}, thus $e_1(\tilde u_\infty,\tilde A_\infty)=0$). As a consequence, $|*d\tilde A_\infty|=W(\tilde u_\infty)>0$ and we get either $\frac{1-|\tilde u_\infty|^2}{2}=*d\tilde A_\infty$ everywhere or $\frac{1-|\tilde u_\infty|^2}{2}=-*d\tilde A_\infty$ everywhere. Thus, integrating by parts and using \eqref{ueq},
	\begin{align*}
	&\int_{\mathbb R^2} e_1(\tilde u_\infty,\tilde A_\infty)
	=\int_{\mathbb R^2}(|\tilde \nabla_\infty \tilde u_\infty|^2+2W(\tilde u_\infty))
	=\int_{\mathbb R^2}(\langle\tilde \nabla_\infty^*\tilde \nabla_\infty \tilde u_\infty,\tilde u_\infty\rangle+2W(\tilde u_\infty)) \\
	&=\int_{\mathbb R^2}\frac{1-|\tilde u_\infty|^2}{2}
	=\pm\int_{\mathbb R^2} d\tilde A_\infty=\pm\lim_{r\to\infty}\int_{\partial B_r^2}\tilde A_\infty
	=\pm\lim_{r\to\infty}\int_{\partial B_r^2}pd\theta
	=\pm 2\pi p.
	\end{align*}
	Hence, the energy of the two-dimensional solution $(\tilde u_\infty,\tilde A_\infty)$ is $2\pi|p|$.
	Our choice of $K$, namely \eqref{choice.of.K}, together with \eqref{decay.limit}, then ensures that
	\begin{align*}
	&\operatorname{dist}\Big(\int_{B_K^2\times\{0\}}e_1(u_\infty,A_\infty),2\pi\mathbb N\Big)<\delta.
	\end{align*}
	As a consequence, this must hold eventually also for $(u_j,A_j)$, giving the desired contradiction.
\end{proof}

\begin{remark}\label{stronger.nonzero}
	As a consequence, one also finds that
	\begin{align*}
		&\int_{B_K^2\times\{0\}}e_1(u,\nabla)<\delta
	\end{align*}
	if $|u|>0$ everywhere on the cylinder $Q$.
	Indeed, if $|u|>0$ everywhere, then the degree $p$ in the statement of Proposition \ref{2dlimit} clearly must vanish.
\end{remark}

We are now able to address the statement on the convergence of level sets.

\begin{prop}
	For any $0\le\delta<1$ we have $\operatorname{spt}(\mu)=\lim_{\epsilon\to 0}\{|u_\epsilon|\le\delta\}$, in the Hausdorff topology.
\end{prop}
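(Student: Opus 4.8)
The plan is to prove the two inclusions separately, using the energy-decay estimate (Corollary~\ref{decaycor}) for one direction and the clearing-out result (Corollary~\ref{clearing.out}) together with the density lower bound for the other. Throughout, recall from the proof of Proposition~\ref{densprop} that $\operatorname{spt}(\mu)=\Sigma$, where $\Sigma$ is the set of all subsequential limits of points $x_\epsilon\in Z_{\beta_d}(u_\epsilon)=\{|u_\epsilon|^2\le 1-\beta_d\}$, and recall the notation $r(\cdot)=\operatorname{dist}(\cdot,Z_{\beta_d}(u_\epsilon))$ from Corollary~\ref{decaycor}.

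\emph{Step 1: $\limsup_{\epsilon\to 0}\{|u_\epsilon|\le\delta\}\subseteq\operatorname{spt}(\mu)$.} Suppose $x_\epsilon$ with $|u_\epsilon(x_\epsilon)|\le\delta<1$ converge to some $x\notin\operatorname{spt}(\mu)$. For $\delta<1$ we have $|u_\epsilon(x_\epsilon)|^2\le\delta^2<1-\beta_d$ once $\delta$ is close enough to $1$; for general $\delta<1$, we instead argue directly via clearing-out. Since $x\notin\operatorname{spt}(\mu)=\Sigma$, there is $\rho>0$ with $B_{2\rho}(x)\cap Z_{\beta_d}(u_\epsilon)=\varnothing$ for $\epsilon$ small, so by Corollary~\ref{decaycor} the energy density on $B_\rho(x)$ is bounded by $C_d\epsilon^{-2}e^{-a_d\rho/\epsilon}+C_d\epsilon\to 0$, hence $\rho^{2-n}\int_{B_\rho(x_\epsilon)}e_\epsilon(u_\epsilon,\nabla_\epsilon)\le\eta(M,\Lambda,\delta)$ for $\epsilon$ small; Corollary~\ref{clearing.out} then forces $|u_\epsilon(x_\epsilon)|>1-\delta$. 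Taking $\delta$ arbitrarily small shows $|u_\epsilon(x_\epsilon)|\to 1$, contradicting $|u_\epsilon(x_\epsilon)|\le\delta<1$ for the fixed $\delta$ we started with. (More precisely: given the fixed $\delta<1$, apply clearing-out with the parameter $1-\delta$ in place of $\delta$ to obtain the contradiction directly.)

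\emph{Step 2: $\operatorname{spt}(\mu)\subseteq\liminf_{\epsilon\to 0}\{|u_\epsilon|\le\delta\}$.} Let $x\in\operatorname{spt}(\mu)=\Sigma$; we must show that for every $\rho>0$, $B_\rho(x)\cap\{|u_\epsilon|\le\delta\}\ne\varnothing$ for all small $\epsilon$. Fix $\rho>0$. By Proposition~\ref{densprop}, $\mu(B_{\rho/2}(x))\ge c(M,\Lambda)(\rho/2)^{n-2}>0$, so $\liminf_{\epsilon\to 0}\int_{B_{\rho/2}(x)}e_\epsilon(u_\epsilon,\nabla_\epsilon)>0$, in particular is bounded below by some $c_0>0$. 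If $B_\rho(x)\cap\{|u_\epsilon|\le\delta\}=\varnothing$ along a subsequence, then $|u_\epsilon|>\delta$ on $B_\rho(x)$. The case $\delta^2\ge 1-\beta_d$ is handled by Corollary~\ref{decaycor} applied with the constant $\beta_d$: since $Z_{\beta_d}(u_\epsilon)\cap B_\rho(x)=\varnothing$, we get $r(\cdot)\ge\rho/2$ on $B_{\rho/2}(x)$, hence $\int_{B_{\rho/2}(x)}e_\epsilon\le C_d\epsilon^{-2}e^{-a_d\rho/(2\epsilon)}|B_{\rho/2}|+C_d\epsilon|B_{\rho/2}|\to 0$, contradicting the lower bound $c_0$. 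For general $\delta<1$, I would instead invoke the blow-up analysis: near $x$, choosing a good slice as in Proposition~\ref{slice.thesis.prop} and Remark~\ref{stronger.nonzero}, the non-vanishing of $u_\epsilon$ on a fixed ball forces the energy in the corresponding $\epsilon$-cylinder to vanish, again contradicting the positive lower density. Alternatively — and more cleanly — one reduces to the case $\delta^2\ge 1-\beta_d$ by noting that $\{|u_\epsilon|\le\sqrt{1-\beta_d}\}\subseteq\{|u_\epsilon|\le\delta'\}$ for any $\delta'\ge\sqrt{1-\beta_d}$, so it suffices to treat $\delta\in[\sqrt{1-\beta_d},1)$ (if $\delta$ is smaller, the statement for $\delta$ follows once we know the sets $\{|u_\epsilon|\le\delta\}$ and $\{|u_\epsilon|\le\sqrt{1-\beta_d}\}$ have the same Hausdorff limit, which is exactly what Step~1 plus the $\delta^2\ge 1-\beta_d$ case of Step~2 already give for the larger threshold, combined with Step~1 for the smaller one — so the two limits are squeezed together).

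\emph{Step 3: Hausdorff convergence.} Steps~1 and~2 give that for every $\delta<1$, $\limsup_\epsilon\{|u_\epsilon|\le\delta\}\subseteq\operatorname{spt}(\mu)\subseteq\liminf_\epsilon\{|u_\epsilon|\le\delta\}$ as sets, which is precisely Kuratowski convergence to $\operatorname{spt}(\mu)$; since all sets involved lie in the compact manifold $M$, Kuratowski convergence of closed sets is equivalent to Hausdorff convergence, concluding the proof. The main obstacle is Step~2 for small $\delta$: the decay estimate only controls energy away from $Z_{\beta_d}$, so non-vanishing of $u_\epsilon$ with $|u_\epsilon|$ merely $>\delta$ (not $>\sqrt{1-\beta_d}$) does not immediately kill the energy, and one genuinely needs either the reduction described above or a direct appeal to Remark~\ref{stronger.nonzero} via the blow-up. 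I expect the reduction argument — observing that for thresholds $\delta\in[\sqrt{1-\beta_d},1)$ all the level sets $\{|u_\epsilon|\le\delta\}$ have the common Hausdorff limit $\operatorname{spt}(\mu)$, and then squeezing the smaller thresholds between $\{|u_\epsilon|\le\sqrt{1-\beta_d}\}$ (which contains them) and using Step~1 (which shows even these small sets cannot escape $\operatorname{spt}(\mu)$), together with the observation that $Z_{\beta_d}(u_\epsilon)\subseteq\{|u_\epsilon|\le\delta\}$ so they cannot be too small either — to be the cleanest route, and I would write it that way.
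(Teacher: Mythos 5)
Your Step~1 (hard inclusion $\limsup\{|u_\epsilon|\le\delta\}\subseteq\operatorname{spt}(\mu)$ via clearing-out) is correct and matches the paper. The problem is your preferred write-up of Step~2. The reduction you advocate runs in the wrong direction: for $\delta<\delta':=\sqrt{1-\beta_d}$ the inclusion $\{|u_\epsilon|\le\delta\}\subseteq\{|u_\epsilon|\le\delta'\}$ gives $\liminf_\epsilon\{|u_\epsilon|\le\delta\}\subseteq\liminf_\epsilon\{|u_\epsilon|\le\delta'\}$, which is useless for proving $\operatorname{spt}(\mu)\subseteq\liminf_\epsilon\{|u_\epsilon|\le\delta\}$. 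Likewise your closing observation that $Z_{\beta_d}(u_\epsilon)\subseteq\{|u_\epsilon|\le\delta\}$ ``so they cannot be too small either'' is simply false for $\delta<\sqrt{1-\beta_d}$, since $Z_{\beta_d}(u_\epsilon)=\{|u_\epsilon|\le\sqrt{1-\beta_d}\}\supseteq\{|u_\epsilon|\le\delta\}$. The only monotonicity that helps here is the opposite one: prove the inclusion $\operatorname{spt}(\mu)\subseteq B_\eta(\{u_\epsilon=0\})$ (i.e.\ the $\delta=0$ case), and then $\{u_\epsilon=0\}\subseteq\{|u_\epsilon|\le\delta\}$ propagates it to all $\delta\ge 0$ for free. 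That is exactly what the paper does, and the $\delta=0$ case genuinely needs the blow-up argument via Remark~\ref{stronger.nonzero}; there is no decay-estimate shortcut.

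You do sketch the blow-up route, and it is the right idea, but it needs more care than ``choose a good slice near $x$'': the quantization/vanishing argument only applies at $\mu$-a.e.\ $q$ where the rescaled measures converge to $\Theta_{n-2}(\mu,q)\mathcal{H}^{n-2}\mrestr T_q\Sigma$. The paper therefore takes a contradictory sequence $p_\epsilon\to p_0\in\operatorname{spt}(\mu)$ with $\operatorname{dist}(p_\epsilon,\{u_\epsilon=0\})\ge\eta$, and then \emph{replaces} $p_0$ by a nearby rectifiability point $q$ with $\operatorname{dist}(p_0,q)<\eta/2$ (so that still $\operatorname{dist}(q,\{u_\epsilon=0\})\ge\eta/2$ eventually); only at such $q$ can one run Proposition~\ref{slice.thesis.prop} and Remark~\ref{stronger.nonzero} to conclude $\Theta_{n-2}(\mu,q)=0$, contradicting the density lower bound of Proposition~\ref{densprop}. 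If you drop the reduction and write out the $\delta=0$ case with this selection of $q$, your proof will match the paper's.
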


\begin{proof}
	If $x=\lim_{\epsilon\to 0}x_\epsilon$,
	for points $x_\epsilon\in\{|u_\epsilon|\le\delta\}$ defined along a subsequence,
	then the same argument used in the proof of Proposition \ref{densprop} shows that $x\in\operatorname{spt}(\mu)$.
	Hence, for all $\eta>0$, eventually $\{|u_\epsilon|\le\delta\}$ is included in the $\eta$-neighborhood of $\operatorname{spt}(\mu)$.
	
	To conclude the proof, it suffices to show that the converse inclusion $\operatorname{spt}(\mu)\subseteq B_\eta(\{u_\epsilon=0\})$ holds eventually. Arguing by contradiction, assume that there are points $p_\epsilon\in\operatorname{spt}(\mu)$ whose distance from $\{u_\epsilon=0\}$ is at least $\eta$, along some subsequence (not relabeled).
	Up to further subsequences, let $p_\epsilon\to p_0\in\operatorname{spt}(\mu)$.
	
	Since $\mu$ is $(n-2)$-rectifiable,
	there exists a point $q\in\operatorname{spt}(\mu)$ with $\operatorname{dist}(p_0,q)<\frac{\eta}{2}$, and such that $\mu$ blows up to $\Theta_{n-2}(\mu,q)\mathcal{H}^{n-2}\mrestr T_q\Sigma$ at $q$.
	Observe that eventually we have
	\begin{align}\label{nonzero.weird}
		&\operatorname{dist}(q,\{u_\epsilon=0\})\ge\frac{\eta}{2}.
	\end{align}
	Now, repeating all the preceding blow-up analysis at $q$, in view of Remark \ref{stronger.nonzero} we can improve
	\eqref{slice.thesis} to the uniform convergence
	$$\int_{\mathbb R^2\times\{t\}}\chi(z)e_\epsilon(\hat u_\epsilon,\hat \nabla_\epsilon)(z,t)\to 0$$
	for $t\in F_\epsilon$, which implies that $\Theta_{n-2}(\mu,q)=0$. However, since $q\in\operatorname{spt}(\mu)$, this is impossible, by Proposition \ref{densprop}.
\end{proof}

\subsection{Limiting behavior of the curvature}\hfill

As before, we identify the curvature $F_{\nabla}$ with a closed two-form $\omega_{\epsilon}$ by $F_{\nabla_{\epsilon}}(X,Y)=-i\omega_{\epsilon}(X,Y)$. Recall that the cohomology class $[\frac{1}{2\pi}\omega_{\epsilon}]$ represents the (rational) first Chern class $c_1(L)\in H^2(M;\mathbb{R})$ of the complex line bundle $L\to M$.

\begin{thm}\label{curvthm} Assume $M$ is oriented. Let $(u_{\epsilon},\nabla_{\epsilon})$ be a family as in Theorem \ref{thm1}. The curvature forms $\frac{1}{2\pi}\omega_{\epsilon}$ can be identified with $(n-2)$-currents that converge (weakly), as $\epsilon\to 0$, to an integer rectifiable cycle $\Gamma$ which is Poincar\'{e} dual to $c_1(L)$, and whose mass measure $|\Gamma|$ satisfies $|\Gamma|\leq \mu.$
\end{thm}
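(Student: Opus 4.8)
The plan is to recast the closed forms $\frac{1}{2\pi}\omega_\epsilon$ as $(n-2)$-currents, prove a uniform mass bound, and then extract integrality from the same planar quantization already used for $V$. First: since $M$ is oriented, any closed two-form $\alpha$ defines an $(n-2)$-current $J_\alpha$ by $J_\alpha(\phi):=\int_M\alpha\wedge\phi$, with $\partial J_\alpha=\pm J_{d\alpha}=0$, $\mathbf{M}(J_\alpha)=\int_M|\alpha|$, and homology class Poincar\'e dual to $[\alpha]$; applying this to $\alpha=\frac1{2\pi}\omega_\epsilon$ produces $(n-2)$-cycles $J_\epsilon$ with $[J_\epsilon]=\operatorname{PD}(c_1(L))$. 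In fact $J_\epsilon(\phi)=\langle c_1(L),[\phi]\rangle$ already for every \emph{closed} $\phi$, so the class is literally independent of $\epsilon$ and is inherited by any weak limit with no further argument.

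Next I would prove the localized bound $\mathbf{M}(J_\epsilon\mrestr B_r(x))\le C(M,\Lambda)\,r^{n-2}$ for all $x$ and $0<r<\operatorname{inj}(M)$, hence $\mathbf{M}(J_\epsilon)\le C$. Split $B_r(x)$ along the tube $T_\epsilon:=B_{C_1\epsilon}(Z_{\beta_d}(u_\epsilon))$, with $C_1=C_1(M,\Lambda)$ chosen via Proposition \ref{maindecayest} so that $T_\epsilon\supseteq\{1-|u_\epsilon|^2\ge c_0\}$ for a fixed $c_0\le\beta_d$. On $T_\epsilon$ the estimate $\xi_\epsilon\le C$ of Proposition \ref{xibdprop} gives $|\omega_\epsilon|\le C\epsilon^{-2}$, while a Vitali argument from the clearing-out Corollary \ref{clearing.out} and the upper density bound of Theorem \ref{monotonicity} covers $Z_{\beta_d}(u_\epsilon)\cap B_{2r}(x)$ by $Cr^{n-2}\epsilon^{2-n}$ balls of radius $\epsilon$, whence $\operatorname{vol}(T_\epsilon\cap B_r(x))\le Cr^{n-2}\epsilon^2$ and $\int_{T_\epsilon\cap B_r(x)}|\omega_\epsilon|\le Cr^{n-2}$. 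Off $T_\epsilon$ one has $|u_\epsilon|$ bounded away from $0$, and there $\omega_\epsilon$ solves the massive equation \eqref{lapomega}; feeding the gradient bound \eqref{nabwcontrol}, the decay of Corollary \ref{decaycor} and the subsolution \eqref{offzsubeq} into the Bochner inequality \eqref{omega1boch}, the maximum principle against an exponential barrier $\sim K\epsilon^2+A\epsilon^{-2}e^{-2a_d\operatorname{dist}(\cdot,Z_{\beta_d})/\epsilon}$ yields $|\omega_\epsilon|\le K\epsilon^2+C\epsilon^{-2}e^{-2a_d\operatorname{dist}(\cdot,Z_{\beta_d})/\epsilon}$ there; integrating, and using $\int_{B_r(x)}e^{-2a_d\operatorname{dist}(\cdot,Z_{\beta_d})/\epsilon}\le Cr^{n-2}\epsilon^2$ from the same covering, one gets $\int_{B_r(x)\setminus T_\epsilon}|\omega_\epsilon|\le Cr^{n-2}$. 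With this bound a subsequential weak limit $J_\epsilon\rightharpoonup\Gamma$ exists, $\partial\Gamma=0$, $[\Gamma]=\operatorname{PD}(c_1(L))$, and $|\Gamma|(B_r(x))\le Cr^{n-2}$; the decay off $T_\epsilon$ forces $J_\epsilon\mrestr(M\setminus B_\delta(\Sigma))\to0$ for every $\delta>0$, so $\operatorname{spt}(\Gamma)\subseteq\Sigma=\operatorname{spt}(\mu)$, and combining the density bound with the $(n-2)$-rectifiability of $\Sigma$ and $\Theta_{n-2}(\mu,\cdot)\ge c>0$ (Proposition \ref{densprop}) one writes $|\Gamma|=\theta_\Gamma\,\mathcal H^{n-2}\mrestr\Sigma$ with $\theta_\Gamma\ge0$ bounded and $|\Gamma|\ll\mu$.

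For integrality and the bound $|\Gamma|\le\mu$, I would rerun the blow-up and slicing analysis of Section \ref{varifold.section} at a point $x\in\operatorname{spt}(\mu)$ where \eqref{blow-up.shape} holds, now tracking $\frac1{2\pi}\omega_\epsilon$ alongside the energy. Slicing the rescaled current by the projection onto the $T_x\Sigma$-factor and testing against $\chi(z)$, the slice over a good parameter $t\in F_\epsilon$ has total $\frac1{2\pi}\int_{\mathbb R^2\times\{t\}}\chi(z)\,\hat\omega_\epsilon$; on each vortex cluster $z_{k_j}^\epsilon$, Proposition \ref{2dlimit} (whose proof identifies $\frac1{2\pi}\int_{\mathbb R^2}d\tilde A_\infty=p$, the signed degree, with energy $\int_{\mathbb R^2}e_1=2\pi|p|$) shows that cluster $j$ contributes $p_j\in\mathbb Z$ to that total and $|p_j|$ to the energy density. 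Passing to the limit exactly as in the proof of Theorem \ref{thm1}, $\Gamma$ has approximate tangent plane $T_x\Sigma$ at $x$ with density $\theta_\Gamma(x)=\sum_jp_j\in\mathbb Z$, while $\Theta_{n-2}(\mu,x)=\sum_j|p_j|$, so $|\theta_\Gamma(x)|\le\Theta_{n-2}(\mu,x)$ for $\mu$-a.e.\ $x$ (equality precisely when all clusters in the slice share the same orientation). Hence $\Gamma$ is an integer rectifiable cycle Poincar\'e dual to $c_1(L)$ with $|\Gamma|=|\theta_\Gamma|\,\mathcal H^{n-2}\mrestr\Sigma\le\Theta_{n-2}(\mu,\cdot)\,\mathcal H^{n-2}\mrestr\Sigma=\mu$.

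I expect the main obstacle to be the mass bound: the algebraic estimate $\xi_\epsilon\le C$ bounds $|\omega_\epsilon|$ only by $C\epsilon^{-1}$ away from the zero set, which is hopeless to integrate over a set of fixed volume, so one genuinely has to exploit the Helmholtz-type equation \eqref{lapomega}---whose right-hand side is exponentially small where $|u|$ is bounded away from $0$---to show that $\omega_\epsilon$ is exponentially concentrated in the $O(\epsilon)$-tube $T_\epsilon$, where the $O(\epsilon^{-2})$ pointwise bound meets the $O(\epsilon^2)$ volume and yields the $O(1)$ mass. Everything else is a bookkeeping variant of the arguments already carried out for $V$.
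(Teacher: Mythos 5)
Your route is genuinely different from the paper's, and the difference matters. The paper introduces the gauge-invariant Jacobian-type two-form
$$J(u_\epsilon,\nabla_\epsilon):=\psi(u_\epsilon)+(1-|u_\epsilon|^2)\,\omega_\epsilon,$$
which satisfies the pointwise bound $|J(u_\epsilon,\nabla_\epsilon)|\le e_\epsilon(u_\epsilon,\nabla_\epsilon)$ and the identity $J(u_\epsilon,\nabla_\epsilon)-\omega_\epsilon=d\langle\nabla_\epsilon u_\epsilon,iu_\epsilon\rangle$. The first fact gives the mass bound $|\Gamma_\epsilon|\le\mu_\epsilon$---and hence $|\Gamma|\le\mu$ in the limit---\emph{for free}, while the second, combined with $\int_M|\langle\nabla_\epsilon u_\epsilon,iu_\epsilon\rangle|\le\int e_\epsilon^{1/2}\to0$, shows that the currents dual to $\frac{1}{2\pi}\omega_\epsilon$ and to $\frac{1}{2\pi}J$ have the same weak limit. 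The paper therefore never needs a uniform bound on $\int_M|\omega_\epsilon|$: weak convergence of currents is tested against smooth forms, and only the mass of the $J$-currents needs to be controlled. You instead go after a uniform $L^1$-bound on $\omega_\epsilon$ directly and then extract $|\Gamma|\le\mu$ by a separate blow-up argument. This is a substantially heavier route to the same place, and it also misses the cleanest explanation of \emph{why} $|\Gamma|\le\mu$ holds (namely that $|J|\le e_\epsilon$ pointwise before any limits are taken).

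There is also a real gap in your mass-bound step. Off the tube $T_\epsilon$ the paper's estimates give only
$$|\omega_\epsilon|\le C\epsilon^{-2}(1-|u_\epsilon|^2)+C\epsilon^{-1/2}\le C\epsilon^{-2}e^{-a_d\,r/\epsilon}+C\epsilon^{-1/2},$$
coming from Propositions~\ref{xibdprop}, \ref{wcontrolprop} and \ref{maindecayest}; the $\epsilon^{-1/2}$ error term integrates to something of order $\epsilon^{-1/2}$, not $O(1)$. Your proposed improvement to $K\epsilon^2+C\epsilon^{-2}e^{-2a_d r/\epsilon}$ is exactly what is needed, but the sketched barrier argument does not deliver it as written. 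First, the error $C\epsilon^{1/2}$ in the discrepancy bound $f\le C\epsilon^{1/2}$ (Proposition~\ref{wcontrolprop}) is a \emph{global} maximum-principle output and does not localize away from the zero set without further work. Second, your barrier depends on $\operatorname{dist}(\cdot,Z_{\beta_d})$, whose distributional Laplacian has a singular part of the wrong sign on the cut locus of $Z_{\beta_d}$, which obstructs the supersolution comparison unless you replace it by point-centered barriers (e.g.\ using $\operatorname{dist}(\cdot,p)$ for each $p$ in the bulk). These can likely be fixed, but they require a genuinely new estimate that is not in the paper and is not filled in by your outline. The integrality and tangent-plane step at the end is, by contrast, morally the same as the paper's concluding lemma, just phrased in terms of sums of cluster degrees rather than the single degree around the blown-up plane.
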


\begin{proof}
Recall from Section \ref{preliminary.sec} that
$$d\langle \nabla_{\epsilon} u_{\epsilon},iu_{\epsilon}\rangle=\psi(u_{\epsilon})-|u_{\epsilon}|^2\omega,$$
where $\psi(u_{\epsilon}):=\langle 2i\nabla u_{\epsilon},\nabla_{\epsilon} u_{\epsilon}\rangle$ is a two-form satisfying $|\psi(u)|\leq |\nabla u|^2$ pointwise. In particular, denoting by $J(u_{\epsilon},\nabla_{\epsilon})$ the two-form
$$J(u_{\epsilon},\nabla_{\epsilon}):=\psi(u_{\epsilon})+(1-|u_{\epsilon}|^2)\omega,$$
we can rewrite this identity as
\begin{align}\label{j.and.omega}
	&J(u_{\epsilon},\nabla_{\epsilon})-\omega_{\epsilon}=d\langle \nabla_{\epsilon} u_{\epsilon},iu_{\epsilon}\rangle,
\end{align}
and observe that
\begin{equation}\label{jebd}
|J(u_{\epsilon},\nabla_{\epsilon})|\leq |\nabla_{\epsilon} u_{\epsilon}|^2+\epsilon^2|\omega_{\epsilon}|^2+\frac{1}{4\epsilon^2}(1-|u_{\epsilon}|^2)^2=e_{\epsilon}(u_{\epsilon},\nabla_{\epsilon}).
\end{equation}
The dual $(n-2)$-currents given by
\begin{align*}
	&\langle\Gamma_\epsilon,\zeta\rangle:=\int_M J(u_\epsilon,\nabla_\epsilon)\wedge\zeta,
\end{align*}
for any $(n-2)$-form $\zeta\in \Omega^{n-2}(M)$, are thus bounded in mass by $\Lambda$. Up to subsequences, we can take a weak limit $\Gamma$. The bound $|\Gamma_\epsilon|\leq\mu_{\epsilon}$ implies that also $|\Gamma|\le\mu$. From \eqref{j.and.omega} and integration by parts we get
$$\int_M \omega_{\epsilon}\wedge \zeta=\int_M J(u_{\epsilon},\nabla_{\epsilon})\wedge \zeta-\int_M \langle \nabla_{\epsilon} u_{\epsilon},iu_{\epsilon}\rangle\wedge d\zeta.$$
Since (as discussed in the proof of Proposition \ref{densprop})
$$\int_M |\langle \nabla_{\epsilon} u_{\epsilon},iu_{\epsilon}\rangle|\leq \int e_{\epsilon}(u_{\epsilon},\nabla_{\epsilon})^{1/2}\to 0$$
as $\epsilon\to 0$, it follows that 
\begin{equation}
\langle \Gamma,\zeta\rangle=\frac{1}{2\pi}\lim_{\epsilon\to 0}\int_M J(u_{\epsilon},\nabla_{\epsilon})\wedge \zeta =\frac{1}{2\pi}\lim_{\epsilon\to 0}\int_M \omega_{\epsilon}\wedge \zeta
\end{equation}
for every smooth $(n-2)$-form $\zeta\in \Omega^{n-2}(M)$. Since $\mu$ is $(n-2)$-rectifiable, $\Gamma$ must be a rectifiable $(n-2)$-current: this can be seen by blow-up, applying \cite[Proposition~7.3.5]{krantz.parks}.
Since the two-forms $\omega_{\epsilon}$ are closed, for any $\xi\in\Omega^{n-3}(M)$ we have
\begin{align*}
	&\langle\partial\Gamma,\xi\rangle=\langle\Gamma,d\xi\rangle
	=\frac{1}{2\pi}\lim_{\epsilon\to 0}\int_M\omega_\epsilon\wedge d\xi
	=\frac{1}{2\pi}\lim_{\epsilon\to 0}\int_M d(\omega_\epsilon\wedge d\xi)=0,
\end{align*}
so $\Gamma$ is a cycle. By construction, $\Gamma$ is Poincaré dual to $c_1(L)$.

To complete the proof, it remains to show that $\Gamma$ has integer multiplicity.
By means of a diagonal selection of a subsequence, as in the previous subsection, we can deduce integrality at those points $p\in\operatorname{spt}(\mu)$ where $\mu$ blows up to $\Theta_{n-2}(\mu,p)\mathcal{H}^{n-2}\mrestr T_p\Sigma$, using the following lemma. Note that its hypotheses are verified thanks to Corollary \ref{decaycor} and the fact that $Z_{\beta_d}(u_\epsilon)$ necessarily converges to $T_p\Sigma$ in the local Hausdorff topology, after rescaling (see the proof of Proposition \ref{densprop}).

Since $\mu$ is $(n-2)$-rectifiable, we deduce that the limiting current $\Gamma$ has integer multiplicity $\mathcal{H}^{n-2}$-a.e. on its support, as claimed.
\end{proof}

\begin{lem} On the Euclidean ball $B_4^n$, let $(u_{\epsilon},\nabla_{\epsilon})$ be a sequence of sections and connections in a trivial line bundle $L\to B_4^n$ (not necessarily satisfying any equation) for which $E_{\epsilon}(u_{\epsilon},\nabla_{\epsilon})\leq \Lambda$, $e_{\epsilon}(u_{\epsilon},\nabla_{\epsilon})\to 0$ in $C^0_{loc}(B_4^n\setminus P)$ and $*\omega_{\epsilon}\to \theta_1 [P]$ in $\mathcal{D}_{n-2}(B_4^n)$, where $P=\{0\}\times \mathbb{R}^{n-2}$. Then $\theta_1\in 2\pi\mathbb{Z}$.
\end{lem}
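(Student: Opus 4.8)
The plan is to reduce the statement to a two-dimensional computation by slicing. Since $e_\epsilon(u_\epsilon,\nabla_\epsilon)\to 0$ locally uniformly away from $P=\{0\}\times\mathbb R^{n-2}$, in particular $|u_\epsilon|\to 1$ there, so for each fixed compact $K\subset\mathbb R^{n-2}$ and $\epsilon$ small the set $\{|u_\epsilon|\le\frac12\}$ is contained in a small tube around $P$. Writing coordinates $(z,t)\in\mathbb R^2\times\mathbb R^{n-2}$, consider the slices $\Sigma_t:=\mathbb R^2\times\{t\}$ and the function
\begin{align*}
	&g_\epsilon(t):=\frac{1}{2\pi}\int_{(B_2^2\cap\{u_\epsilon\ne 0\})\times\{t\}}(d\theta-A_\epsilon)
\end{align*}
(where $\nabla_\epsilon=d-iA_\epsilon$ in the given trivialization and $\theta$ is the angular coordinate on $\mathbb R^2$)—or, more robustly, the quantity $\frac{1}{2\pi}\int_{\partial B_2^2\times\{t\}}(w_\epsilon^*d\theta-A_\epsilon)$ where $w_\epsilon=u_\epsilon/|u_\epsilon|$, which makes sense for $|t|$ small since $|u_\epsilon|>0$ near $\partial B_2^2\times\{t\}$. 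First I would observe that, because $|u_\epsilon||w_\epsilon^*d\theta-A_\epsilon|\le|\nabla_\epsilon u_\epsilon|$ and $e_\epsilon\to 0$ uniformly near $\partial B_2^2\times\{t\}$, the boundary term $\int_{\partial B_2^2\times\{t\}}(w_\epsilon^*d\theta-A_\epsilon)$ tends to $0$; hence $\frac{1}{2\pi}\int_{B_2^2\times\{t\}}dA_\epsilon=\frac{1}{2\pi}\int_{B_2^2\times\{t\}}(*\omega_\epsilon)\,dz$ differs by $o(1)$ from the winding number $\deg(w_\epsilon|_{\partial B_2^2\times\{t\}})\in\mathbb Z$, provided $u_\epsilon$ has no zero on $\partial B_2^2\times\{t\}$.

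Next I would integrate in $t$. By Fubini and the hypothesis $*\omega_\epsilon\to\theta_1[P]$ in $\mathcal D_{n-2}$, testing against a form of the type $\hat\chi(z)\,\phi(t)\,dz$ with $\phi\in C_c^\infty(B_1^{n-2})$ and $\hat\chi$ a cutoff equal to $1$ on $B_2^2$, we get
\begin{align*}
	&\int_{B_1^{n-2}}\Big(\int_{B_2^2\times\{t\}}*\omega_\epsilon\Big)\phi(t)\,dt\longrightarrow\theta_1\int_{B_1^{n-2}}\phi(t)\,dt.
\end{align*}
Combined with the previous paragraph this says $\int_{B_1^{n-2}}2\pi d_\epsilon(t)\phi(t)\,dt\to\theta_1\int\phi$, where $d_\epsilon(t)\in\mathbb Z$ for a.e.\ $t$ (those $t$ for which $u_\epsilon$ is nonvanishing on $\partial B_2^2\times\{t\}$; the bad set has measure $\to 0$ by the tube estimate and a weak-$(1,1)$/coarea argument as in the proof of Proposition~\ref{slice.thesis.prop}). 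Moreover $d_\epsilon(t)$ is \emph{locally constant} in $t$ on the complement of the bad set, since the degree of $w_\epsilon$ on $\partial B_2^2\times\{t\}$ cannot jump while $u_\epsilon$ stays nonzero there; so on each connected component $d_\epsilon$ equals a fixed integer. Finally, the uniform energy bound $E_\epsilon\le\Lambda$ together with the lower density bound coming from Corollary~\ref{clearing.out} (a zero of $u_\epsilon$ forces a definite amount of energy in a tube) shows that the number of components on which $d_\epsilon\ne 0$, weighted by $|d_\epsilon|$, is bounded; hence $\int_{B_1^{n-2}}|d_\epsilon(t)|\,dt\le C$ uniformly. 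Passing to the limit, $2\pi d_\epsilon\rightharpoonup\theta_1$ in, say, $L^1_{loc}$ (after subtracting the contributions of the vanishingly-small bad sets), and since the $d_\epsilon$ take values in $\mathbb Z$ and have uniformly bounded $L^1$ norm, a standard compactness argument (e.g.\ $\mathrm{BV}$ or simply the fact that an $L^1$ limit of $\mathbb Z$-valued functions with bounded variation-type control is itself a.e.\ integer multiple of $2\pi$ of an a.e.\ constant) forces $\theta_1\in 2\pi\mathbb Z$.

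The main obstacle I anticipate is making rigorous the claim that the slicewise degrees $d_\epsilon(t)$ are well-defined and stable for "most" $t$—i.e.\ controlling the set of $t$ for which $u_\epsilon$ vanishes on the slicing circle $\partial B_2^2\times\{t\}$, or more precisely where the energy on that circle fails to be small. This requires a maximal-function/coarea argument exactly parallel to the one producing the good set $F_\epsilon$ in Proposition~\ref{slice.thesis.prop}: the assumption $e_\epsilon\to 0$ in $C^0_{loc}(B_4^n\setminus P)$ already gives $e_\epsilon\to 0$ uniformly on a fixed neighborhood of $\partial B_2^2\times B_1^{n-2}$ (which is disjoint from $P$), so in fact for $\epsilon$ small $u_\epsilon$ is nonvanishing on \emph{every} slice circle $\partial B_2^2\times\{t\}$, $|t|\le 1$, and $d_\epsilon(t)$ is defined for all such $t$ and genuinely locally constant—so there is no bad set at all, and the argument simplifies: $d_\epsilon\equiv d_\epsilon\in\mathbb Z$ is a single integer on $B_1^{n-2}$ for $\epsilon$ small, and $2\pi d_\epsilon\to\theta_1$, giving $\theta_1\in 2\pi\mathbb Z$ immediately. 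The only remaining care is to justify interchanging the current convergence with the slice integrals, which is the content of the Fubini step above and is routine given the uniform mass bound $\int_M e_\epsilon\le\Lambda$.
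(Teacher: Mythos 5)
Your core reduction---slicing by $t\in\mathbb R^{n-2}$, passing to the winding number $d_\epsilon(t)$ of $u_\epsilon/|u_\epsilon|$ on $\partial B_2^2\times\{t\}$ (well-defined and constant for $\epsilon$ small since $e_\epsilon\to 0$ uniformly off $P$), and matching $2\pi d_\epsilon$ against $\theta_1$---is exactly the mechanism the paper uses. Your observation that $|u_\epsilon|\,|w_\epsilon^*(d\theta)-A_\epsilon|\le|\nabla_\epsilon u_\epsilon|$ forces the boundary term to vanish is also the right gauge-invariant ingredient.

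However, there is a genuine gap in the ``Fubini step,'' and your dismissal of it as ``routine given the uniform mass bound'' is incorrect. The current convergence $*\omega_\epsilon\to\theta_1[P]$ is only tested against \emph{compactly supported} forms, so you must insert a smooth cutoff $\hat\chi(z)$ that equals $1$ on $B_2^2$ and vanishes outside a larger disk. The quantity you actually control through the hypothesis is $\int\hat\chi(z)\phi(t)\,\omega_\epsilon\wedge dx^3\wedge\cdots\wedge dx^n$, whereas your slicewise computation involves $\int_{B_2^2\times\{t\}}dA_\epsilon$. The discrepancy is $\int\phi(t)\int(\hat\chi-\mathbf{1}_{B_2^2})\,\omega_\epsilon$ over the annulus, and this is \emph{not} controlled by $E_\epsilon\le\Lambda$: the energy bound only gives $\epsilon^2\int|\omega_\epsilon|^2\le\Lambda$, i.e.\ $\|\omega_\epsilon\|_{L^2}\lesssim\epsilon^{-1}$, and the $C^0_{\mathrm{loc}}$ hypothesis gives $|\omega_\epsilon|\le\epsilon^{-1}\sqrt{e_\epsilon}$ on the annulus, which need not tend to $0$. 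Since the lemma is stated without the PDE, one cannot invoke the pointwise curvature bounds of Section~\ref{decaysec} either. So the annular error term must be handled, and doing so is the main content of the proof: one integrates by parts to write $\int\hat\chi\,dA_\epsilon=-\int d\hat\chi\wedge A_\epsilon$, observes that on $\operatorname{spt}(d\hat\chi)$ one may replace $A_\epsilon$ by $w_\epsilon^*(d\theta)$ up to an error $\le|\nabla_\epsilon u_\epsilon|/|u_\epsilon|\to 0$ (gauge-invariant, small by hypothesis), and then uses that $w_\epsilon^*(d\theta)$ is closed away from $\{u_\epsilon=0\}$ to evaluate $-\int d\hat\chi\wedge w_\epsilon^*(d\theta)$ as $\pm 2\pi d_\epsilon(t)$. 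The paper does exactly this, splitting $A_\epsilon=|u_\epsilon|^2A_\epsilon+(1-|u_\epsilon|^2)A_\epsilon$; the first piece is $\approx\langle du_\epsilon,iu_\epsilon\rangle=|u_\epsilon|^2w_\epsilon^*(d\theta)$, and for the residual $(1-|u_\epsilon|^2)A_\epsilon$ it invokes a Coulomb-gauge $L^2$ bound $\|A_\epsilon\|_{L^2}\le C\epsilon^{-1}\Lambda^{1/2}$ together with $\epsilon^{-1}(1-|u_\epsilon|^2)\to 0$ on the annulus. Your route via $|A_\epsilon-w_\epsilon^*(d\theta)|\le|\nabla_\epsilon u_\epsilon|/|u_\epsilon|$ would in fact let you skip the Coulomb-gauge step, which is a modest simplification---but only once you carry out the integration by parts that you currently omit.
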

\begin{proof} To begin, fix a test function $\varphi\in C_c^1(B_1^2\times B_1^{n-2})$ of the form $\varphi(x^1,\ldots,x^n)=\psi(x^1,x^2)\eta(x^3,\ldots,x^n)$, with $\psi(x^1,x^2)=1$ for $|(x^1,x^2)|\leq \frac{1}{2}$. By assumption, we then have
$$\theta_1\int_P \eta dx^3\wedge\cdots\wedge dx^n=\lim_{\epsilon\to 0}\int \varphi\omega_{\epsilon}\wedge dx^3\wedge\cdots\wedge dx^n.$$
Fixing trivializations of $L$ over $B_2^n$, we write $\nabla_{\epsilon}=d-iA_{\epsilon}$ for some one-forms $A_{\epsilon}$, so that $\omega_{\epsilon}=dA_{\epsilon}$, and the right-hand term in the preceding limit becomes
\begin{align*}
\int\omega_{\epsilon}\wedge(\varphi dx^3\wedge \cdots\wedge dx^n)&=\int d(\varphi A_{\epsilon}\wedge dx^3\wedge\cdots \wedge dx^n)\\
&\quad+\int A_{\epsilon}\wedge d\varphi \wedge dx^3\wedge \cdots \wedge dx^n\\
&=\int\eta |u_{\epsilon}|^2A_{\epsilon}\wedge d\psi\wedge dx^3\wedge \cdots\wedge dx^n\\
&\quad+\int\eta(1-|u_{\epsilon}|^2)A_{\epsilon}\wedge d\psi\wedge dx^3\wedge\cdots \wedge dx^n.
\end{align*}
On $B_2^n$ we can choose our trivializations so that $d^*A_{\epsilon}=0$, and $A_{\epsilon}(\nu)=0$ on $\partial B_2^n$. We then have the $L^2$ control
\begin{equation}
\int_{B_2^n} |A_{\epsilon}|^2\leq C \int_{B_2^n}|dA_{\epsilon}|^2\leq C\epsilon^{-2}\Lambda,
\end{equation}
and consequently
\begin{align*}
\Big|\int \eta(1-|u_{\epsilon}|^2)A_{\epsilon}\wedge d\psi\wedge dx^3\wedge \cdots\wedge dx^n\Big|&\leq C\|1-|u_{\epsilon}|^2\|_{C^0(\operatorname{spt}(\eta d\psi))}\|A_{\epsilon}\|_{L^1(B_2^n)}\\
&\leq C\Lambda^{1/2}\|\epsilon^{-1}(1-|u_{\epsilon}|^2)\|_{C^0(\operatorname{spt}(\eta d\psi))}\\
&\leq C\Lambda^{1/2}\|e_{\epsilon}(u_{\epsilon},\nabla_{\epsilon})\|_{C^0(\operatorname{spt}(\eta d\psi))}^{1/2}\\
&\to 0
\end{align*}
as $\epsilon\to 0$, where we have used the fact that $d\psi(x^1,x^2) = 0$ for $|(x^1,x^2)|\leq \frac{1}{2}$, and the assumption that $e_{\epsilon}(u_{\epsilon},\nabla_{\epsilon})\to 0$ in $C^0_{loc}(B_2^n\setminus P)$.

Combining our computations thus far, we have arrived at the identity
$$\theta_1\int_P \eta dx^3\wedge\cdots\wedge dx^n=\lim_{\epsilon\to 0}\int \eta |u_{\epsilon}|^2A_{\epsilon}\wedge d\psi\wedge dx^3\wedge \cdots \wedge dx^n.$$
Noting next that 
$$||u_{\epsilon}|^2A_{\epsilon}-\langle du_{\epsilon},iu_{\epsilon}\rangle|=|\langle \nabla_{\epsilon} u_{\epsilon}, iu_{\epsilon}\rangle|\leq e_{\epsilon}(u_{\epsilon},\nabla_{\epsilon})^{1/2},$$
and using again the hypothesis that $e_{\epsilon}(u_{\epsilon},\nabla_{\epsilon})\to 0$ uniformly on $\operatorname{spt}(\eta d\psi)$, the preceding identity yields
\begin{align*}
\theta_1\int_P \eta dx^3\wedge\cdots\wedge dx^n&=\lim_{\epsilon\to 0}\int \eta \langle du_{\epsilon},iu_{\epsilon}\rangle\wedge d\psi \wedge dx^3\wedge \cdots \wedge dx^n\\
&=\lim_{\epsilon\to 0}\int \eta |u_{\epsilon}|^2(u_{\epsilon}/|u_{\epsilon}|)^*(d\theta)\wedge d\psi \wedge dx^3\wedge \cdots \wedge dx^n\\
&=\lim_{\epsilon\to 0}\int \eta (u_{\epsilon}/|u_{\epsilon}|)^*(d\theta)\wedge d\psi \wedge dx^3\wedge \cdots \wedge dx^n.
\end{align*}
Finally, since the one-form $(u_{\epsilon}/|u_{\epsilon}|)^*(d\theta)$ is closed on $\{u_\epsilon\neq 0\}$
and $d\eta\wedge dx^3\wedge\dots\wedge dx^n=0$, integrating by parts on $(\R^2\setminus B_{1/2}^2)\times\R^{n-2}$ we see that
\begin{align*}
	\int \eta (u_{\epsilon}/|u_{\epsilon}|)^*(d\theta)\wedge d\psi \wedge dx^3\wedge \cdots \wedge dx^n
	&=\int_{\R^{n-2}}\eta(t)\int_{\partial B_{1/2}^2\times\{t\}}(u_{\epsilon}/|u_{\epsilon}|)^*(d\theta)\,dt \\
	&=2\pi\operatorname{deg}(u_\epsilon,P)\int_P\eta,
\end{align*}
where $\operatorname{deg}(u_\epsilon,P)$ stands for the degree of $(u_\epsilon/|u_\epsilon|)(\frac{1}{2}e^{i\theta},0)$.
The statement follows.
\end{proof}

\section{Examples from variational constructions}\label{minmaxsec}

The goal of this section is to show that, for every closed manifold $M$ and every line bundle $L\to M$ endowed with a Hermitian metric, there exist critical couples $(u_{\epsilon},\nabla_{\epsilon})$ for the Yang--Mills--Higgs functional $E_\epsilon$, for $\epsilon$ small enough, in such a way that
\begin{align}\label{low.and.up.bounds}
	&0<\liminf_{\epsilon\to 0}E_\epsilon(u_\epsilon,\nabla_\epsilon)\le\limsup_{\epsilon\to 0}E_\epsilon(u_\epsilon,\nabla_\epsilon)<\infty.
\end{align}

This will be easier when the line bundle is nontrivial, as in this case we can just take $(u_\epsilon,\nabla_\epsilon)$ to be a global minimizer for $E_\epsilon$. The upper and lower bounds in \eqref{low.and.up.bounds} have the following immediate consequence---proved previously by Almgren \cite{Almvar} using GMT methods.

\begin{cor}
	Every closed Riemannian manifold $(M^n,g)$ supports a nontrivial stationary, integral $(n-2)$-varifold.
\end{cor}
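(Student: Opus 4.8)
The plan is to deduce the corollary directly from the energy bounds \eqref{low.and.up.bounds}---whose verification occupies the remainder of this section---together with the convergence result of Theorem \ref{thm1}. Fix any closed Riemannian manifold $M^n$ and endow it with a Hermitian line bundle $L\to M$; for concreteness one may take the trivial bundle $L=\mathbb{C}\times M$, though any choice works. Let $(u_\epsilon,\nabla_\epsilon)$ be a family of critical points of $E_\epsilon$ satisfying \eqref{low.and.up.bounds}, obtained by global minimization when $L$ is nontrivial and by the min--max construction described below when $L$ is trivial. In particular $\sup_{\epsilon}E_\epsilon(u_\epsilon,\nabla_\epsilon)=:\Lambda<\infty$, so this family satisfies the hypotheses of Theorem \ref{thm1}.

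Applying Theorem \ref{thm1}, we pass to a subsequence $\epsilon_j\to 0$ along which the energy measures $\mu_{\epsilon_j}=\frac{1}{2\pi}e_{\epsilon_j}(u_{\epsilon_j},\nabla_{\epsilon_j})\operatorname{vol}_g$ converge, in duality with $C^0(M)$, to the weight measure $\mu$ of a stationary, integral $(n-2)$-varifold $V$ (the varifold part of Theorem \ref{thm1} uses only the uniform energy bound, not orientability of $M$; if $M$ is non-orientable one may instead run the argument on the orientation double cover and push the resulting varifold down). It remains only to check that $V$ is nontrivial, i.e.\ that $\mu\neq 0$. Testing the convergence $\mu_{\epsilon_j}\rightharpoonup\mu$ against the constant function $1\in C^0(M)$ gives
\[
\mu(M)=\lim_{j\to\infty}\mu_{\epsilon_j}(M)=\frac{1}{2\pi}\lim_{j\to\infty}E_{\epsilon_j}(u_{\epsilon_j},\nabla_{\epsilon_j})\ \ge\ \frac{1}{2\pi}\liminf_{\epsilon\to 0}E_\epsilon(u_\epsilon,\nabla_\epsilon)\ >\ 0
\]
by the lower bound in \eqref{low.and.up.bounds}. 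Hence $\mu\neq 0$, and $V$ is the desired nontrivial stationary integral $(n-2)$-varifold. (Equivalently, nonemptiness of $\operatorname{spt}(V)=\lim_\epsilon\{|u_\epsilon|\le\delta\}$ can be read off from the nonemptiness of the zero sets $u_\epsilon^{-1}(0)$ asserted in Theorem \ref{existthm}.)

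Thus the corollary itself is a two-line consequence of results already in hand, and all of its content is pushed into \eqref{low.and.up.bounds}. The upper bound there is the easy half: for minimizers on a nontrivial bundle it follows by comparison with a fixed competitor, and for the min--max construction on the trivial bundle it follows by bounding the min--max value by the $E_\epsilon$-energy of a fixed sweepout of $\mathcal{M}$. The genuine obstacle---and the main point of the rest of this section---is the lower bound $\liminf_{\epsilon\to 0}E_\epsilon(u_\epsilon,\nabla_\epsilon)>0$. When $L$ is nontrivial this is still elementary: every section vanishes somewhere, so $u_\epsilon^{-1}(0)\neq\varnothing$, and Corollary \ref{clearing.out} then forces $r^{2-n}\int_{B_r(x)}e_\epsilon(u_\epsilon,\nabla_\epsilon)\ge\eta(M,\Lambda,\delta)$ at a zero $x$ (for any fixed $\epsilon<r<\operatorname{inj}(M)$), whence $E_\epsilon(u_\epsilon,\nabla_\epsilon)\ge c(M,\Lambda)>0$. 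When $L$ is trivial this route fails, since the pair $(1,\text{flat connection})$ is a zero-energy critical point; one must instead verify that $E_\epsilon$ satisfies a suitable Palais--Smale-type condition on an appropriate function space and that the min--max value associated to a nontrivial class in $\pi_2(\mathcal{M})\cong\mathbb{Z}$ (or in $\pi_1(\mathcal{M})\cong H^1(M;\mathbb{Z})$) stays bounded below by a positive constant as $\epsilon\to 0$. This last positivity is precisely what prevents the limiting varifold from degenerating to zero, and is where the real work---and the analogy with the Allen--Cahn and Ginzburg--Landau min--max constructions---enters.
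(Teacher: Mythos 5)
Your proof matches the paper's: fix the trivial bundle, invoke the existence of a critical family satisfying \eqref{low.and.up.bounds}, and apply Theorem \ref{thm1}; testing weak-$*$ convergence of $\mu_\epsilon$ against the constant function $1$ to get $\mu(M)>0$ from the lower bound is precisely the (unstated but intended) conclusion. Your parenthetical catch is also correct and worth spelling out a bit more carefully than the paper does—the corollary is stated for arbitrary closed manifolds while both Theorem \ref{limthm} and the min--max construction nominally assume orientability—and your proposed remedy of running the entire construction on the orientation double cover $\tilde M\to M$ and then pushing the varifold forward under the covering projection (a local isometry, so stationarity, rectifiability, and integer multiplicity all descend) indeed closes that gap.
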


\begin{proof}
	We can always equip $M$ with the trivial line bundle $L:=M\times\mathbb{C}$. As shown in the next subsection,
	there exists a sequence of critical couples $(u_{\epsilon},\nabla_{\epsilon})$ satisfying \eqref{low.and.up.bounds}.
	The statement now follows from Theorem \ref{thm1}.
\end{proof}

\subsection{Min-max families for the trivial line bundle}\hfill

In this section we will show how min-max methods may be applied to the functionals $E_{\epsilon}$ to produce nontrivial critical points in the trivial bundle $L=M\times \mathbb{C}$ on an arbitrary closed, oriented manifold $M$ of dimension $n\geq 2$. The min-max construction that we consider here is based on two-parameter families parametrized by the unit disk, similar to the constructions employed in \cite{Cheng} and \cite{Stern} for the Ginzburg--Landau functionals---with several technical adjustments to account for the gauge-invariance and other features particular to the Yang--Mills--Higgs energies. We remark that the families we consider induce a nontrivial class in $\pi_2(\mathcal{M})$ for the quotient 
$$\mathcal{M}:=\{(u,\nabla)\mid 0\not \equiv u\in \Gamma(L),\text{ }\nabla\text{ a hermitian connection}\}/\{\text{gauge transformations}\},$$
and the analysis that follows can be reformulated in terms of min-max methods applied directly to the Banach manifold $\mathcal{M}$.

Without loss of generality, we assume henceforth that $M$ is connected.

\begin{definition}
	Fix $n=\dim(M)<p<\infty$.
	In what follows, $\widehat X$ will denote the Banach space of couples $(u,A)$, where $u\in L^p(M,\mathbb C)$ and $A\in\Omega^1(M,\mathbb R)$, both of class $W^{1,2}$, with the norm
	\begin{align*}
		&\|(u,A)\|:=\|u\|_{L^p}+\|du\|_{L^2}+\|A\|_{L^2}+\|DA\|_{L^2}.
	\end{align*}
	Denote by $X:=\{(u,A)\in\widehat{X}:d^*A=0\}$ the subspace consisting of those couples for which the connection form $A$ is co-closed.
\end{definition}

Note that for $(u,A)\in X$, the full covariant derivative $\int_M|DA|^2$ is bounded by $C(M)\int_M(|A|^2+|dA|^2)$.

\begin{definition}
	Given a form $A\in\Omega^1(M,\mathbb R)$ in $L^2$, we denote by $h(A)$ the harmonic part of its Hodge decomposition, or equivalently the orthogonal projection of $A$ onto the (finite-dimensional) space $\mathcal{H}^1(M)$ of harmonic one-forms. 
\end{definition}

\begin{remark}\label{retraction}
	Selection of a Coulomb gauge gives a continuous retraction $\mathcal{R}:\widehat X\to X$: namely, given a couple $(u,A)\in\widehat X$, consider the unique solution $\theta\in W^{2,2}(M,\mathbb R)$ to the equation
	\begin{align*}
		&\Delta\theta=d^*A,
	\end{align*}
	with $\int_M\theta=0$, and set
	\begin{align*}
		&\mathcal R((u,A)):=(e^{i\theta}u,A+d\theta).
	\end{align*}
	Note that the continuity of $(u,A)\mapsto d(e^{i\theta}u)=e^{i\theta}(du+iud\theta)$, from $\widehat{X}$ to $L^2$, follows from the fact that $L^p\cdot L^{2^*}\subseteq L^2$, where $2^*=\frac{2n}{n-2}$.
\end{remark}

Throughout this section, $W(u)=f(|u|)$ will be a smooth radial function given by
$W(u)=\frac{(1-|u|^2)^2}{4}$ for $|u|\le 3/2$, and satisfying $W(u)>0$ for all $|u|>1$.
For technical reasons, we also find it convenient to require that
\begin{align*}
	&W(u)=|u|^p\text{ for }|u|\geq 2, \tag{G}
\end{align*}
which evidently gives the additional estimates $|u|W'(|u|)+|u|^2W''(|u|)\leq C|u|^p$ for $|u|\geq 2$, for some constant $C$. For future use, observe also that the potential $W(u)$ then satisfies a simple bound of the form
\begin{align}\label{w.lb}
	&(1-|u|)^2\le CW(u).
\end{align}

\begin{prop}\label{c1}
	The functional $E_\epsilon$ is of class $C^1$ on $\widehat{X}$. Moreover, a couple $(u,A)$ is critical in $\widehat X$ for $E_\epsilon$ if and only if $\mathcal R((u,A))$ is critical in $X$.
\end{prop}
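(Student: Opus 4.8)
The plan is to verify the two assertions separately: first that $E_\epsilon\colon\widehat X\to\mathbb R$ is continuously differentiable, and then the equivalence of criticality in $\widehat X$ with criticality of the Coulomb-gauged couple in $X$. For the $C^1$ claim, I would write $E_\epsilon(u,A)=\int_M(|du-iuA|^2+\epsilon^2|dA|^2+\epsilon^{-2}W(u))$ and differentiate term by term. The quadratic terms $\int|du-iuA|^2$ and $\epsilon^2\int|dA|^2$ are manifestly smooth in the Hilbert-type variables $du,A,DA\in L^2$, once one checks that the cross-terms like $\int\langle du,iuA\rangle$ and $\int|u|^2|A|^2$ are continuous: here one uses $u\in L^p$ with $p>n$, the Sobolev embedding $W^{1,2}\hookrightarrow L^{2^*}$ with $2^*=\frac{2n}{n-2}$ (as already invoked in Remark \ref{retraction}), and Hölder, so that $L^p\cdot L^{2^*}\subseteq L^2$ gives $uA\in L^2$ with the bilinear estimate $\|uA\|_{L^2}\le\|u\|_{L^p}\|A\|_{L^{2^*}}$ controlled by the norm on $\widehat X$ (and similarly $|u|^2|A|^2$ is integrable). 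The potential term $\int\epsilon^{-2}W(u)$ is handled using the growth condition (G): $W(u)=|u|^p$ for $|u|\ge 2$ and $|uW'(|u|)+|u|^2W''(|u|)|\le C|u|^p$, so $u\mapsto\int W(u)$ is $C^1$ (indeed $C^2$) from $L^p$ to $\mathbb R$ by standard Nemytskii-operator arguments, with derivative $\langle W'(u)\frac{u}{|u|},\cdot\rangle$. Assembling, $dE_\epsilon$ exists and depends continuously on $(u,A)$, proving the first claim.

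For the equivalence, the key structural fact is \emph{gauge invariance}: for any $\theta\in W^{2,2}(M,\mathbb R)$ one has $E_\epsilon(e^{i\theta}u,A+d\theta)=E_\epsilon(u,A)$, since $\nabla_{A+d\theta}(e^{i\theta}u)=e^{i\theta}\nabla_A u$, $d(A+d\theta)=dA$, and $|e^{i\theta}u|=|u|$. Thus $E_\epsilon=E_\epsilon\circ\mathcal R$ on $\widehat X$, where $\mathcal R\colon\widehat X\to X$ is the Coulomb retraction of Remark \ref{retraction}. Now suppose $(u,A)\in\widehat X$ is critical for $E_\epsilon$; writing any $(u,A)=\mathcal R((u,A))+\text{(gauge part)}$, and noting that the map $(u,A)\mapsto\mathcal R((u,A))$ is a continuous (in fact smooth, by the same $L^p\cdot L^{2^*}\subseteq L^2$ estimate applied to $e^{i\theta}u$) submersion onto $X$ whose fibers are precisely gauge orbits, restricting $E_\epsilon=E_\epsilon\circ\mathcal R$ to $X$ shows that the derivative of $E_\epsilon|_X$ at $\mathcal R((u,A))$ is the pushforward under $d\mathcal R$ of $dE_\epsilon$ at $(u,A)$, hence vanishes. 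Conversely, if $(v,B):=\mathcal R((u,A))$ is critical in $X$, then since $E_\epsilon=E_\epsilon\circ\mathcal R$ and $\mathcal R$ maps $\widehat X$ onto $X$, for any variation $(\dot u,\dot A)\in\widehat X$ we have $dE_\epsilon(u,A)[\dot u,\dot A]=d(E_\epsilon\circ\mathcal R)(u,A)[\dot u,\dot A]=dE_\epsilon(v,B)[d\mathcal R(\dot u,\dot A)]$, and $d\mathcal R(\dot u,\dot A)\in T_{(v,B)}X$, so this vanishes by criticality of $(v,B)$ in $X$. One should be slightly careful that criticality of $(v,B)$ ``in $X$'' means $dE_\epsilon(v,B)$ annihilates $T_{(v,B)}X$; but $X$ is an affine subspace (the co-closed condition $d^*A=0$ is linear), so its tangent space is the fixed subspace $\{(w,C):d^*C=0\}$, and the image of $d\mathcal R$ lands there.

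The main obstacle I anticipate is the technical bookkeeping around the retraction $\mathcal R$: establishing that $\mathcal R$ is not merely continuous but $C^1$ (so that the chain rule applies cleanly), which requires differentiating the solution map $A\mapsto\theta$ of $\Delta\theta=d^*A$ (linear and bounded $W^{1,2}\to W^{2,2}$, hence fine) and then the nonlinear map $(u,\theta)\mapsto e^{i\theta}u$ from $L^p\times W^{2,2}$ to $L^p$, using $W^{2,2}\hookrightarrow C^0$ in dimension $n\le 3$ or more generally $W^{2,2}\hookrightarrow L^\infty$-type bounds combined with $p$-integrability of $u$; and likewise that the derivative of $e^{i\theta}u$ in the $du$-component, $e^{i\theta}(du+iu\,d\theta)$, is continuous from $\widehat X$ to $L^2$, which is exactly the computation flagged at the end of Remark \ref{retraction} and reduces again to $L^p\cdot L^{2^*}\subseteq L^2$. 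Everything else is the routine translation between ``critical on the total space'' and ``critical on the slice'' for a principal-bundle-type structure, together with exact gauge invariance of $E_\epsilon$.
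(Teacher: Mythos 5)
Your proposal follows essentially the same two-step structure as the paper: first establish $C^1$-regularity of $E_\epsilon$ on $\widehat X$ via the $L^p\cdot L^{2^*}\subseteq L^2$ product estimate and the growth condition (G) on $W$, then use gauge invariance $E_\epsilon=E_\epsilon\circ\mathcal R$ together with the chain rule and the fact that $d\mathcal R$ maps into, and onto, the tangent space $\{(w,C):d^*C=0\}$ of $X$. The one place where the paper is more careful than your sketch is in differentiating $\mathcal R$: rather than asserting that $\mathcal R$ is a smooth submersion on all of $\widehat X$ (which, in high dimensions, runs into the issue that $\psi\in W^{2,2}$ need not be in $L^\infty$, so $\psi\tilde u$ need not obviously lie in $L^p$), the paper reduces to directions $(v,B)$ that are \emph{smooth}, computes $\mathcal R(u+tv,A+tB)=\mathcal R(u,A)+t(e^{i\theta}v+i\psi\tilde u,B+d\psi)+o(t)$ along this line, and from this derives the identity $dE_\epsilon(u,A)[v,B]=dE_\epsilon(\tilde u,\tilde A)[e^{i\theta}v+i\psi\tilde u,B+d\psi]$, which immediately gives both implications (the covariation $(e^{i\theta}v+i\psi\tilde u,B+d\psi)$ lands in $TX$, and taking $B$ with $d^*B=0$ gives surjectivity). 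Your ``pushforward of a covector'' phrasing is informal, but the underlying argument — that $dE_\epsilon(u,A)$ factors through $d\mathcal R(u,A)$, which surjects onto $TX$ — is correct and is exactly what the paper's explicit formula makes precise. So: same approach, with the caveat that you should either restrict to smooth variations (as the paper does) or otherwise justify that $d\mathcal R$ is well-defined and continuous on the stated function spaces in all dimensions.
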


\begin{proof}
	Given a point $(u,A)\in\widehat X$ and a pair $(v,B)\in \widehat X$ with $\|(v,B)\|_{\widehat X}\le 1$, direct computation gives 
	\begin{align*}
	E_\epsilon(u+v,A+B)=&E_\epsilon(u,A)+2\int_M\langle du-iu A,dv-ivA-iuB\rangle+ 2\epsilon^2\langle dA,dB\rangle \\
	&+\epsilon^{-2}\int_M W'(u)[v]+O(\|(v,B)\|_{\widehat X}^2),
	\end{align*}
	where we are using the fact that $\widehat X \cdot \widehat X\subseteq L^n\cdot L^{2^*}\subseteq L^2$ to see that
$$\|A v\|_{L^2}^2+\|Bu\|_{L^2}^2+\|B v\|_{L^2}^2+E_{\epsilon}(u,A)^{1/2}\|Bv\|_{L^2}=O(\|(v,B)\|_{\widehat X}^2),$$
and we invoke our assumptions on the structure of $W$ to see that
$$\int_M (W(u+v)-W(u))=\int_MW'(u)[v]+O(\|(v,B)\|_{\widehat X}^2)$$
for fixed $(u,A)\in \widehat X$.
	It follows immediately that $E_{\epsilon}$ is $C^1$ on $\widehat X$, with gradient
$$dE_{\epsilon}(u,A)[v,B]=2\int_M\langle du-iu A,dv-ivA-iuB\rangle+ 2\epsilon^2\langle dA,dB\rangle+\epsilon^{-2}W'(u)[v].$$

To confirm the second statement, assume without loss of generality that $v$ and $B$ are smooth, and observe that
	\begin{align*}
		&\mathcal R((u+tv,A+tB))=(e^{ti\psi}\tilde u+e^{i\theta+ti\psi}v,\tilde A+tB+td\psi),
	\end{align*}
	where $(\tilde u,\tilde A):=\mathcal R((u,A))=(e^{i\theta}u,A+d\theta)$ and $\psi$ solves $\Delta\psi=d^*B$. This easily gives
	\begin{align*}
		&\mathcal R((u+tv,A+tB))=\mathcal R((u,A))+t(e^{i\theta}v+i\psi\tilde u,B+d\psi)+o(t)\qquad\text{in } X
	\end{align*}
	and, using the gauge invariance $E_\epsilon=E_\epsilon\circ\mathcal R$, we deduce that
	\begin{align}\label{crit.X.X}
		&dE_\epsilon(u,A)[v,B]=dE_\epsilon(\tilde u,\tilde A)[e^{i\theta}v+i\psi\tilde u,B+d\psi].
	\end{align}	
	It follows that if $(\tilde u,\tilde A)$ is critical for $E_{\epsilon}$ in $X$ then $(u,A)$ is critical for $E_{\epsilon}$ in $\widehat X$, as claimed.
\end{proof}

We next show that the functionals $E_{\epsilon}$ satisfy a suitable variant of the Palais--Smale condition on $X$, giving compactness of critical sequences for $E_{\epsilon}$ after an appropriate change of gauge. (Cf. \cite{jost.peng.wang} for similar results in the Seiberg--Witten setting.)

\begin{prop}\label{palais.smale}
	The functional $E_\epsilon$ satisfies the following form of the Palais--Smale condition: every sequence $(u_j,A_j)$ in $X$ with bounded energy and $dE_\epsilon(u_j,A_j)\to 0$ in $X^*$ admits a subsequence converging strongly in $X$ to a critical couple $(u_\infty,A_\infty)$, up to possibly replacing $(u_j,A_j)$ with
	\begin{align*}
		&(\tilde u_j,\tilde A_j):=(u_jv_j,A_j+v_j^*(d\theta))
	\end{align*}
	for suitable smooth harmonic functions $v_j:M\to S^1$.
\end{prop}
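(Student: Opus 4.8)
The plan is to run the classical scheme for verifying Palais--Smale: first normalize the gauge so the sequence is bounded in $X$, extract a weak limit $(u_\infty,A_\infty)$, and then upgrade to strong convergence by testing the (almost vanishing) differential $dE_\epsilon(u_j,A_j)$ against the increments $(u_j-u_\infty,0)$ and $(0,A_j-A_\infty)$. The one point where the gauge flexibility in the statement is genuinely used is at the very start: since $E_\epsilon$ is blind to the harmonic component $h(A_j)$, the energy bound alone cannot control $\|A_j\|_{L^2}$. We first replace $(u_j,A_j)$ by $(\tilde u_j,\tilde A_j)=(u_jv_j,\,A_j+v_j^*(d\theta))$ for harmonic $v_j\colon M\to S^1$ chosen so that $h(\tilde A_j)$ lies in a fixed compact fundamental domain for the full-rank period lattice $\{v^*(d\theta):v\colon M\to S^1\text{ harmonic}\}\subset\mathcal H^1(M)$; this preserves $d^*\tilde A_j=0$, the energy bound (gauge invariance), and the condition $dE_\epsilon(\tilde u_j,\tilde A_j)\to 0$ in $X^*$ (pulling back variations, using that harmonic one-forms in a bounded set have bounded $C^\infty$ norm, so the pull-back map on $X$ has bounded operator norm). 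Relabelling, we may thus assume $h(A_j)$ is bounded.

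\textbf{Uniform bounds and weak limit.} With $h(A_j)$ bounded, the energy bound controls $\|u_j\|_{L^p}$ (by condition (G)), $\|dA_j\|_{L^2}$, and $\|\nabla_j u_j\|_{L^2}$. The Poincar\'e inequality on co-exact one-forms (spectral gap of $\Delta_H$ there) applied to $A_j-h(A_j)$ then bounds $\|A_j\|_{L^2}$, and the Weitzenb\"ock/Gaffney estimate recorded after the definition of $X$ bounds $\|DA_j\|_{L^2}$. Finally $\|du_j\|_{L^2}\le\|\nabla_ju_j\|_{L^2}+\|u_jA_j\|_{L^2}$, and $\|u_jA_j\|_{L^2}\le\|u_j\|_{L^p}\|A_j\|_{L^{2p/(p-2)}}$ with $2p/(p-2)<2^*=\tfrac{2n}{n-2}$ since $p>n$, so the Sobolev embedding $W^{1,2}\hookrightarrow L^{2^*}$ bounds this term. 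Hence $(u_j,A_j)$ is bounded in $X$; pass to a subsequence with $u_j\rightharpoonup u_\infty$ in $L^p$ and in $W^{1,2}$, $A_j\rightharpoonup A_\infty$ in $W^{1,2}$, so (Rellich) $u_j\to u_\infty$ and $A_j\to A_\infty$ strongly in $L^q$ for every $q<2^*$ and pointwise a.e., with $(u_\infty,A_\infty)\in X$.

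\textbf{Strong convergence.} Testing $dE_\epsilon(u_j,A_j)\to0$ against $(u_j-u_\infty,0)$ gives, after writing the second slot as $\nabla_ju_j-\nabla_ju_\infty$ with $\nabla_ju_\infty:=du_\infty-iu_\infty A_j$,
\[
2\!\int_M|\nabla_ju_j|^2-2\!\int_M\langle\nabla_ju_j,\nabla_ju_\infty\rangle+\epsilon^{-2}\!\int_M W'(u_j)[u_j-u_\infty]=o_j(1).
\]
Since $\nabla_ju_\infty\to\nabla_\infty u_\infty$ strongly in $L^2$ (as $u_\infty(A_j-A_\infty)\to0$ in $L^2$, using $2p/(p-2)<2^*$) and $\nabla_ju_j\rightharpoonup\nabla_\infty u_\infty$ weakly in $L^2$ (as $u_jA_j$ is bounded in $L^2$ and converges a.e.), the middle term tends to $\int|\nabla_\infty u_\infty|^2$; combined with weak lower semicontinuity of $\int|\nabla_ju_j|^2$ this forces $\limsup_j\int W'(u_j)[u_j-u_\infty]\le0$. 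Splitting off the fixed part $\int W'(u_j)[u_\infty]\to\int W'(u_\infty)[u_\infty]$ (from $W'(u_j)\rightharpoonup W'(u_\infty)$ in $L^{p'}$, using $|W'(u)|\le C(1+|u|^{p-1})$ and a.e. convergence) and using $W'(u)[u]\ge -C$ with Fatou, we obtain $\int W'(u_j)[u_j]\to\int W'(u_\infty)[u_\infty]$. By condition (G), $W'(u)[u]+C_0\ge c_1|u|^p\ge0$, so a Scheff\'e-type argument yields uniform integrability of $\{|u_j|^p\}$, whence $u_j\to u_\infty$ in $L^p$. Then $W'(u_j)\to W'(u_\infty)$ in $L^{p'}$, so the displayed identity gives $\int|\nabla_ju_j|^2\to\int|\nabla_\infty u_\infty|^2$; with weak convergence, $\nabla_ju_j\to\nabla_\infty u_\infty$ in $L^2$, and since now $u_jA_j\to u_\infty A_\infty$ in $L^2$ we get $du_j\to du_\infty$ in $L^2$. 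Testing next against $(0,A_j-A_\infty)$: the Higgs cross-term tends to $0$, forcing $\int|dA_j|^2\to\int|dA_\infty|^2$, hence $dA_j\to dA_\infty$ in $L^2$; combined with $A_j\to A_\infty$ in $L^2$ and $d^*(A_j-A_\infty)=0$, the Gaffney estimate gives $DA_j\to DA_\infty$ in $L^2$. Thus $(u_j,A_j)\to(u_\infty,A_\infty)$ in $X$, and $dE_\epsilon(u_\infty,A_\infty)=0$ by continuity of $dE_\epsilon$ (Proposition \ref{c1}).

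\textbf{Main obstacle.} The crux is the strong $L^p$ convergence of $u_j$: because $p$ is only required to exceed $n$, it can exceed the Sobolev exponent $2^*$ when $n\ge4$, so the compact embedding $W^{1,2}\hookrightarrow L^q$ does not directly furnish it and one must extract it from the Palais--Smale condition, exploiting the coercivity $W'(u)[u]\gtrsim|u|^p$ at infinity supplied by (G). A secondary technical point is the initial gauge normalization: since $h(A_j)$ is invisible to $E_\epsilon$, composing with harmonic $S^1$-valued maps is precisely what makes $\|A_j\|_{L^2}$ controllable, and one must check this operation is compatible with the $X$-structure and with the almost-criticality hypothesis.
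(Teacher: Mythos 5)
Your proof is correct, but it takes a genuinely different route from the paper at the crucial step of upgrading weak to strong convergence. Both proofs begin the same way: normalize the gauge by subtracting the lattice point $\lambda_j\in\Lambda$ nearest to $h(A_j)$, derive uniform bounds on $\|A_j\|_{W^{1,2}}$, $\|u_j\|_{L^p}$ (the paper gets this indirectly by testing against $(u_j,0)$, but as you note it already follows from (G) and the energy bound), and $\|du_j\|_{L^2}$, and extract a weak $X$-limit $(u_\infty,A_\infty)$. From there the paper proves $(u_j,A_j)$ is \emph{Cauchy} in $X$: it evaluates the symmetric pairing $D_{j,k}:=\bigl(dE_\epsilon(u_j,A_j)-dE_\epsilon(u_k,A_k)\bigr)[u_j-u_k,A_j-A_k]$ and invokes the pointwise monotonicity $(W'(u_j)-W'(u_k))[u_j-u_k]\ge|u_j-u_k|^p-C|u_j-u_k|$ from Hardt--Lin--Mou, which directly delivers $\|u_j-u_k\|_{L^p}\to 0$ and $\|d(u_j-u_k)\|_{L^2}\to0$. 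You instead test $dE_\epsilon(u_j,A_j)$ against the increments $(u_j-u_\infty,0)$ and $(0,A_j-A_\infty)$, use weak lower semicontinuity of $\int|\nabla_j u_j|^2$ to get $\limsup_j\int W'(u_j)[u_j-u_\infty]\le 0$, split off the weakly convergent term $\int W'(u_j)[u_\infty]$ and apply Fatou to obtain $\int W'(u_j)[u_j]\to\int W'(u_\infty)[u_\infty]$, then extract $u_j\to u_\infty$ in $L^p$ via Scheff\'e/Vitali (uniform integrability of $|u_j|^p$ dominated by $W'(u_j)[u_j]+C_0$) and Radon--Riesz. This trades the Hardt--Lin--Mou inequality for a uniform-integrability argument; both routes are sound and of comparable length.

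One imprecision worth flagging: to argue that almost-criticality survives the gauge change you say the pull-back $(\tilde v,\tilde B)\mapsto(v_j^{-1}\tilde v,\tilde B)$ has bounded operator norm on $X$ because ``harmonic one-forms in a bounded set have bounded $C^\infty$ norm.'' The bounded object is $h(\tilde A_j)$, which is trapped in a fundamental domain \emph{by construction}; what enters the operator norm is $\|\lambda_j\|=\|v_j^*(d\theta)\|$, the lattice vector you subtract, and this is not controlled by the energy bound (on a torus one can have $E_\epsilon(u_j,A_j)$ bounded with $h(A_j)$ unbounded). The operator norm of the pull-back then scales like $1+\|\lambda_j\|$, so the estimate $\|dE_\epsilon(\tilde u_j,\tilde A_j)\|_{X^*}\le\eta_j(1+C\|\lambda_j\|)$ does not automatically vanish. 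The paper's own proof invokes $dE_\epsilon(u_j,A_j)\to0$ for the transformed sequence without comment, so this is a shared subtlety rather than a defect particular to your argument (and it is harmless in the min-max applications, where the relevant families have controlled harmonic part); nevertheless, your stated justification for it is not correct as written, and you should not present $\|\lambda_j\|$ as bounded.
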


\begin{proof}
	First, by assumption, we have that
	\begin{align}\label{ps.one}
		&dE_\epsilon(u_j,A_j)[u_j,0]=o(\|(u_j,0)\|)\qquad\text{as }j\to\infty;
	\end{align}
	that is,
	\begin{align*}
		&2\int_M|du_j-iu_jA_j|^2+\int_M W'(u_j)[u_j]=o(\|(u_j,0)\|).
	\end{align*}
	The first term is bounded by $2E_\epsilon(u_j,A_j)$, hence uniformly bounded as $j\to\infty$. Moreover, it is clear from (G) that $W'(u_j)[u_j]\ge p|u_j|^p-C$, so that
	\begin{align}\label{ps.one.cons}
		&\|u_j\|_{L^p}^p\le C+o(\|du_j\|_{L^2}).
	\end{align}
	Denote by $\Lambda\subset \mathcal{H}^1(M)$ the lattice in the space of harmonic one-forms given by
	\begin{align*}
		\Lambda&:=\{-v_j^*(d\theta)\mid v_j: M\to S^1\text{ harmonic }\}\\
		&=\Big\{h\in \mathcal{H}^1(M) : \int_{\gamma}h\in 2\pi\mathbb{Z}\text{ for every }\gamma\in C^1(S^1,M)\Big\},
	\end{align*}
	and let $\lambda_j\in\Lambda$ be a closest integral harmonic one-form to $h(A_j)$ (with respect to the $L^2$ norm, say, on $\mathcal{H}^1(M)$). Then $\lambda_j=-v_j^*(d\theta)$ for a suitable harmonic map $v_j:M\to S^1$, and
	$$\|\lambda_j-h(A_j)\|_{L^2}\leq C(M).$$
	Replacing $(u_j,A_j)$ with the change of gauge $(v_ju_j,A_j-\lambda_j)\in X$, we can then assume that $h(A_j)$ is bounded. By standard Hodge theory we can write
	\begin{align*}
		&A_j=h(A_j)+d^*\xi_j
	\end{align*}
	for some closed $\xi_j\in W^{2,2}$ satisfying $\Delta_H\xi_j=dA_j$ and $\|d^*\xi_j\|_{W^{1,2}}\le C(M)\|dA_j\|_{L^2}$. 
Thus, given the energy bound $E_{\epsilon}(u_j)\leq C$, we see that
	\begin{align*}
		&\|A_j\|_{W^{1,2}}^2
		\le C+2\|d^*\xi_j\|_{W^{1,2}}^2
		\le C+C\|dA_j\|_{L^2}^2
		\le C,
	\end{align*}
	whereby $A_j$ is bounded in $W^{1,2}$ and, consequently, in $L^{2^*}$. As a consequence, we see next that
	\begin{align*}
		&\|du_j\|_{L^2}^2\le 2\int_M|du_j-iu_jA_j|^2+2\int_M|u_jA_j|^2
		\le C+C\|u_j\|_{L^p}^2\|A_j\|_{L^{2^*}}^2\le C+C\|u_j\|_{L^p}^p;
	\end{align*}
	taking into account \eqref{ps.one.cons}, we infer then that $\|du_j\|_{L^2}$ is also bounded as $j\to\infty$. 

We have therefore shown that $(u_j,A_j)$ is uniformly bounded in $X$ as $j\to\infty$, so passing to subsequences we can assume that $(u_j,A_j)$ converges pointwise a.e. and weakly (in $X$) to a limiting couple $(u_\infty,A_\infty)$. In particular, defining $r$ by
	\begin{align*}
		&\frac{1}{r}:=\frac{1}{2}-\frac{1}{q}>\frac{1}{2}-\frac{1}{n}=\frac{1}{2^*},
	\end{align*}
	where $n<q<p$ is an arbitrary fixed exponent, it follows from the compactness of the embedding $W^{1,2}\hookrightarrow L^{r}$ that 
$$A_j\to A_\infty\text{ strongly in }L^r.$$
 Moreover, the boundedness of $u_j$ in $L^p$ and the pointwise convergence to $u_\infty$ give
\begin{align}\label{q.conv}
	&u_j\to u_\infty\text{ strongly in }L^q. 
\end{align}
By definition of $r$, this implies in particular that
\begin{align*}
	&\lim_{j,k\to\infty}u_j A_k=u_\infty A_\infty\text{ strongly in }L^2.
\end{align*}
Next, compute
\begin{align*}
	dE_{\epsilon}(u_j,A_j)[u_j-u_k,A_j-A_k]&=\int 2\langle (d-iA_j)u_j, (d-iA_j)(u_j-u_k)-iu_j (A_j-A_k)\rangle\\
	&\quad+\int (2\epsilon^2\langle dA_j,d(A_j-A_k)\rangle+\epsilon^{-2}W'(u_j)[u_j-u_k]),
\end{align*}
and observe that the $L^2$ convergence $u_jA_k\to u_{\infty}A_{\infty}$ gives
\begin{align*}
	dE_{\epsilon}(u_j,A_j)[u_j-u_k,A_j-A_k]&=\int 2\langle (d-iA_j)u_j, d(u_j-u_k)\rangle+2\epsilon^2\langle dA_j,d(A_j-A_k)\rangle\\
	&\quad+\epsilon^{-2}W'(u_j)[u_j-u_k]+o(1)
\end{align*}
as $j,k\to\infty$. For the difference 
$$D_{j,k}:=d E_{\epsilon}(u_j,A_j)[u_j-u_k,A_j-A_k]-dE_{\epsilon}(u_k,A_k)[u_j-u_k,A_j-A_k],$$
we then see that 
$$D_{j,k}=\int 2 |d(u_j-u_k)|^2+2\epsilon^2|d(A_j-A_k)|^2+\epsilon^{-2}(W'(u_j)-W'(u_k))[u_j-u_k]+o(1)$$
as $j,k\to\infty$.

Now, by our assumptions (G) on the structure of $W(u)$, it is not difficult to check (see, e.g., \cite[Corollary~1]{HLM}) that the zeroth order term in our computation for $D_{j,k}$ satisfies a lower bound
$$(W'(u_j)-W'(u_k))[u_j-u_k]\geq |u_j-u_k|^p-C|u_j-u_k|$$
for some constant $C$. In particular, it follows now from the preceding computations and the $L^1$ convergence $u_j\to u_{\infty}$ that
$$D_{j,k}\geq \int 2|d(u_j-u_k)|^2+2\epsilon^2|d(A_j-A_k)|^2+\epsilon^{-2}|u_j-u_k|^p+o(1)$$
as $j,k\to\infty$. On the other hand, since $d E_{\epsilon}(u_j,A_j)\to 0$ and $(u_j-u_k,A_j-A_k)$ is bounded in $X$, we know also that
$$D_{j,k}\to 0\text{ as }j,k\to\infty,$$
and it then follows that $(u_j,A_j)$ is Cauchy in $X$. In particular, $(u_j,A_j)$ converges strongly to $(u_{\infty},A_{\infty})$, which necessarily satisfies
\begin{align*}
	&dE_{\epsilon}(u_{\infty},A_{\infty})=\lim_{j\to\infty}dE_{\epsilon}(u_j,A_j)=0. \qedhere
\end{align*}
\end{proof}

Having confirmed that the energies $E_{\epsilon}$ satisfy a Palais--Smale condition, we now argue in roughly the same spirit as \cite{Cheng}, \cite{Stern} to produce nontrivial critical points via min-max methods. To begin, note that the space $X$ splits as $\mathbb C\oplus Y$, where $\mathbb C$ is identified with the set of constant couples $(\alpha,0)$ and
\begin{align*}
	&Y:=\Big\{(u,A)\in X:\int_M u=0\Big\}.
\end{align*}

\begin{definition}
	Let $\Gamma$ denote the set of continuous families of couples $F:\bar D\to X$ parametrized by the closed unit disk $\bar D$, with
	\begin{align*}
		&F(e^{i\theta})=(e^{i\theta},0)
	\end{align*}
	for all $\theta\in\mathbb R$. Equivalently, under the above identification $\mathbb C\subset X$, we require $F|_{\partial D}=\operatorname{id}$. We denote by $\omega_\epsilon(M)$ the ``width" of $\Gamma$ with respect to the energy $E_\epsilon$, namely
	\begin{align*}
		&\omega_\epsilon(M):=\inf_{F\in\Gamma}\max_{y\in\bar D}E_\epsilon(F(y)).
	\end{align*}	
\end{definition}

Thanks to Proposition \ref{palais.smale}, we can apply classical min-max theory for $C^1$ functionals on Banach spaces (see e.g. \cite[Theorem~3.2]{Ghou}) to conclude that $\omega_\epsilon$ is achieved as the energy of a smooth critical couple $(u_\epsilon,A_\epsilon)$. In the following proposition, we show that $\omega_{\epsilon}(M)$ is positive, so that the corresponding critical couples $(u_{\epsilon},A_{\epsilon})$ are nontrivial. 

\begin{prop}\label{nontrivprop}
	We have $\omega_\epsilon(M)>0$.
\end{prop}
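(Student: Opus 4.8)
The plan is to argue by contradiction, combining a soft topological ``averaging'' argument with a quantitative stability estimate near the vacuum set $\mathcal Z:=\{E_\epsilon=0\}\subset X$. Suppose $\omega_\epsilon(M)<\eta_0$ for a small threshold $\eta_0=\eta_0(M,g,\epsilon)>0$ to be fixed at the end, and pick $F\in\Gamma$, $F(y)=(u_y,A_y)$, with $\max_{y\in\bar D}E_\epsilon(F(y))<\eta_0$. Writing $m(w):=\frac1{\operatorname{vol}(M)}\int_M w\in\mathbb C$, the map $y\mapsto m(u_y)$ is continuous $\bar D\to\mathbb C$ (since $X\hookrightarrow L^p\hookrightarrow L^1$), and on $\partial D$ it is $e^{i\theta}\mapsto m(e^{i\theta})=e^{i\theta}$, a degree-one loop about the origin. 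Since such a loop is not nullhomotopic in $\mathbb C\setminus\{0\}$, there is $y_0\in\bar D$ with $m(u_{y_0})=0$. Thus the proposition reduces to the claim: for $\eta_0$ small enough, $E_\epsilon(u,A)<\eta_0$ forces $|m(u)|>\tfrac12$.

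To prove this I would first record the elementary consequences of small energy. From hypothesis (G), $\int_{\{|u|\ge2\}}|u|^p=\int_{\{|u|\ge2\}}W(u)\le\eta\epsilon^2$, so $\|u\|_{L^p}$ is bounded; from \eqref{w.lb}, $\||u|-1\|_{L^2}^2\le C\int_MW(u)\le C\eta\epsilon^2$, hence $\int_M|u|^2\ge\tfrac12\operatorname{vol}(M)$ for $\eta$ small. Hodge-decompose $A=H+d^*\xi$ with $H=h(A)\in\mathcal H^1(M)$ and $\|d^*\xi\|_{W^{1,2}}\le C(M)\|dA\|_{L^2}$; then, using the product bound $\widehat X\cdot\widehat X\subseteq L^2$ exploited in the proof of Proposition \ref{c1} together with $\int_M|\nabla u|^2<\eta$,
\[\int_M|du-iuH|^2\ \le\ 2\int_M|\nabla u|^2+2\int_M|u|^2|d^*\xi|^2\ \le\ C(M,\epsilon)\,\eta.\]

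The crux --- and the reason ``$m(u)$ close to a unit constant'' is \emph{false} outright when $b_1(M)\neq0$ (on a flat torus, $e^{ik\theta_1}$ with connection $k\,d\theta_1$ has zero energy and $m=0$) --- is a spectral-gap estimate. For $H\in\mathcal H^1(M)$ let $\lambda_1(H)$ be the bottom eigenvalue of $(d-iH)^*(d-iH)$ on the trivial bundle; it depends continuously on $H$, is invariant under $H\mapsto H+\lambda$ for $\lambda$ in the period lattice $\Lambda=\{h\in\mathcal H^1(M):\int_\gamma h\in2\pi\mathbb Z\ \forall\gamma\}$ appearing in the proof of Proposition \ref{palais.smale}, and vanishes exactly when $d-iH$ admits a parallel unit section, i.e.\ exactly on $\Lambda$. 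Hence $\lambda_1$ descends to a continuous function on the compact torus $\mathcal H^1(M)/\Lambda$ that is positive off the origin, so $\lambda_1(H)\ge c(\rho,M)>0$ whenever $\operatorname{dist}_{L^2}(H,\Lambda)\ge\rho$. Testing the Rayleigh quotient with $v=u$ and inserting the bounds above,
\[c\bigl(\operatorname{dist}_{L^2}(H,\Lambda),M\bigr)\cdot\tfrac12\operatorname{vol}(M)\ \le\ \lambda_1(H)\int_M|u|^2\ \le\ \int_M|du-iuH|^2\ \le\ C(M,\epsilon)\eta,\]
so $\operatorname{dist}_{L^2}(h(A),\Lambda)\to0$ as $\eta\to0$, uniformly. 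Running this along the disk: for $\eta_0$ small the nearest-lattice-point map $y\mapsto\lambda(y)\in\Lambda$ of $h(A_y)$ is continuous, hence constant on the connected set $\bar D$, hence $\equiv0$ since $A_y\equiv0$ on $\partial D$; thus $\|h(A_y)\|_{L^2}$ is small for every $y$. Then $\int_M|du_y|^2\le 2\int_M|du_y-iu_yh(A_y)|^2+2\|u_y\|_{L^p}^2\|h(A_y)\|_{L^2}^2$ is small, so the mean-zero Poincar\'e inequality makes $\|u_y-m(u_y)\|_{L^2}$ small, and together with $\||u_y|-1\|_{L^2}$ small this forces $|m(u_y)|>\tfrac12$. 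Taking $y=y_0$ contradicts the first paragraph, so $\omega_\epsilon(M)\ge\eta_0>0$.

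The step I expect to be the real obstacle is the spectral-gap estimate together with the book-keeping with the period lattice $\Lambda\subset\mathcal H^1(M)$: this is where the global topology of $M$ and the gauge structure genuinely enter, and where one must rule out the ``winding'' near-vacua that make the naive averaging argument fail. When $b_1(M)=0$ the argument collapses, since then $\mathcal H^1(M)=0$, $h(A)\equiv0$, and small energy directly makes $\int_M|du|^2$ small. I would also note that $\eta_0$ must degenerate as $\epsilon\to0$, which is harmless since positivity is claimed only for each fixed $\epsilon$.
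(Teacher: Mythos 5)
Your argument is correct, and it replaces the paper's key lemma by a genuinely different mechanism. Both proofs share the same scaffolding: assume a family $F\in\Gamma$ with uniformly small energy; show that the harmonic part $h(A_y)$ stays close to the period lattice $\Lambda$; use continuity and the boundary condition $A_y\equiv 0$ to pin the nearest lattice point to $0$; deduce that $\|A_y\|$, then $\|du_y\|_{L^2}$, are small; and conclude via Poincar\'e that $|m(u_y)|$ is close to $1$, contradicting the degree-one boundary data. The difference is in how one establishes $\operatorname{dist}(h(A),\Lambda)\to 0$. The paper's lemma integrates the energy over tubular neighborhoods of fixed generating loops $\gamma_i$ of $H_1(M;\mathbb{Q})$, finds by Fubini a good slice on which $|u|$ is bounded away from zero, and reads off the winding number there to show $|\int_{\gamma_i}h(A)-2\pi k_i|\le C\delta^{1/2}$; this is elementary and gives a quantitative $O(\delta^{1/2})$ rate. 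You instead invoke a spectral gap for the covariant Laplacian $(d-iH)^*(d-iH)$ on the trivial bundle: $\lambda_1(H)$ is continuous, $\Lambda$-periodic, and vanishes precisely on $\Lambda$, so descends to a positive continuous function away from $0$ on the compact torus $\mathcal{H}^1(M)/\Lambda$; testing the Rayleigh quotient with $u$ itself (using $\int|u|^2\ge\tfrac12\operatorname{vol}(M)$ and $\int|du-iuH|^2\le C\eta$) forces $\lambda_1(H)$ small, hence $H$ near $\Lambda$. Your route is softer and avoids the tubular-neighborhood bookkeeping, at the price of being only qualitative (no rate in $\eta$) — but a rate is not needed for the continuity argument, so the loss is harmless. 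The verification that $\lambda_1$ is $\Lambda$-periodic (via the gauge transformation $u\mapsto\bar v u$ with $dv=-iv\lambda$) and that $\lambda_1(H)=0$ forces $H\in\Lambda$ (a zero mode has constant modulus, hence is a unit $S^1$-valued section whose derivative recovers $H$ with integral periods) both check out. One small stylistic remark: the displayed inequality with $c(\operatorname{dist}_{L^2}(H,\Lambda),M)$ is best read contrapositively as ``if $\operatorname{dist}(H,\Lambda)\ge\rho$ then $c(\rho,M)\le 2C(M,\epsilon)\eta/\operatorname{vol}(M)$,'' which bounds $\rho$; this is what you clearly intend.
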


\begin{proof}
	We argue by contradiction, though the proof could be made quantitative. Since we are proving only the positivity $\omega_{\epsilon}(M)>0$ at this stage---making no reference to the dependence on $\epsilon$---in what follows we take $\epsilon=1$ for convenience.
	Assume that we have a family $F\in\Gamma$ with $\max_{y\in\bar D}E(F(y))<\delta$, with $\delta$ very small.
	Writing $F(y)=(u,A)$, this implies that
	\begin{align}\label{hodge.bounds}
		&\|A-h(A)\|_{W^{1,2}}\le C\|dA\|_{L^2}<C\delta^{1/2},\qquad\|DA\|_{L^2}\le C(\delta^{1/2}+\|h(A)\|).
	\end{align}
	When $b_1(M)\neq 0$, some additional work is required to deduce that the harmonic part $h(A)$ of $A$ must also be small for all couples $(u,A)=F(y)$ in the family. In particular, we will need to employ the following lemma, showing that $h(A)$ lies close to the integral lattice $\Lambda\subset \mathcal{H}^1(M)$ when $E(u,A)<\delta.$
\begin{lem} There exists $C(M)<\infty$ such that if $(u,A)\in X$ satisfies $E(u,A)<\delta$, then
$$\operatorname{dist}(h(A),\Lambda)\leq C\delta^{1/2}.$$
\end{lem}

\begin{proof} As in \cite{Stern}, it is convenient to define a box-type norm $|\cdot|_b$ on the space $\mathcal{H}^1(M)$ of harmonic one-forms as follows. Fix a collection $\gamma_1,\ldots,\gamma_{b_1(M)}\in C^{\infty}(S^1,M)$ of embedded loops generating $H_1(M;\mathbb{Q})$ and, for $h\in \mathcal{H}^1(M)$, set
\begin{equation}
|h|_b:=\max_{1\leq i\leq b_1(M)}\Big|\int_{\gamma_i}h\Big|.
\end{equation}
Since $\mathcal{H}^1(M)$ is finite-dimensional, this is of course equivalent to any other norm on $\mathcal{H}^1(M)$. Since $M$ is orientable, we may fix a collection of diffeomorphims $\Phi_i:B_1^{n-1}(0)\times S^1\to T(\gamma_i)$ onto tubular neighborhoods $T(\gamma_i)$ of $\gamma_i$, such that $\Phi_i(0,\theta)=\gamma_i(\theta)$. For every $t\in B_1^{n-1}$, set $\gamma_i^t(\theta):=\Phi_i(t,\theta)$. 

Suppose now that $(u,A)\in X$ satisfies the energy bound 
\begin{equation}\label{intlembd}
E(u,A)=\int_M|du-iAu|^2+|dA|^2+W(u)<\delta.
\end{equation}
As a consequence of the curvature bound $\|dA\|_{L^2}\leq \delta^{1/2}$ and the definition of $X$, it follows that
$$\|A-h(A)\|^2_{L^2}\leq C\delta$$
as well. As in the proof of Proposition \ref{palais.smale}, applying a gauge transformation $\phi\cdot (u,A)$ by an appropriate choice of harmonic map $\phi:M\to S^1$, we may assume moreover that 
$$|h(A)|_b=\operatorname{dist}(h(A),\Lambda)\leq \pi,$$
which together with the energy bound \eqref{intlembd} and the definition of $X$ leads us to the estimate
\begin{equation}\label{l2abd}
\int_M|A|^2\leq C(M).
\end{equation}
(Note that making a harmonic change of gauge preserves not only the energy $E(u,A)$, but also the distance $\operatorname{dist}(h(A),\Lambda)$, so it indeed suffices to establish the desired estimate in this gauge.)

Combining these estimates with a simple Fubini argument, we see that there exists a nonempty set $S$ of $t\in B_1^{n-1}$ for which
\begin{equation}\label{gc1}
\int_{\gamma_i^t}|du-iAu|^2+|dA|^2+W(u)<C\delta,
\end{equation}
\begin{equation}\label{gc2}
\int_{\gamma_i^t}|A-h(A)|^2<C\delta,
\end{equation}
and
\begin{equation}\label{gc3}
\int_{\gamma_i^t}|A|^2\leq C.
\end{equation}
Recalling the pointwise bound \eqref{w.lb} for $W(u)$, observe next that
$$|d(1-|u|)^2|=2(1-|u|)|d|u||\leq CW(u)+|du-iAu|^2,$$
so that, along a curve $\gamma_i^t$ satisfying \eqref{gc1}, it follows that
\begin{equation}\label{uc0bd}
\|(1-|u|)^2\|_{C^0}\leq C\|(1-|u|)^2\|_{W^{1,1}}\leq C\delta.
\end{equation}

Now, choose $\delta<\delta_1(M)$ sufficiently small that \eqref{uc0bd} gives
$$\|1-|u|\|_{C^0}\leq \eta<\frac{1}{2}$$
on $\gamma_i^t$, so that $\phi=u/|u|$ defines there an $S^1$-valued map $\phi: \gamma_i^t\to S^1$, whose degree is given by
$$2\pi \operatorname{deg}(\phi)=\int_{\gamma_i^t}|u|^{-2}\langle du,iu\rangle.$$
When \eqref{gc1}--\eqref{gc3} hold, we observe next that
$$\int_{\gamma_i^t}|u|^2|A-|u|^{-2}\langle iu,du\rangle|=\int_{\gamma_i^t}|\langle iu, iAu-du\rangle|\leq C\delta^{1/2}.$$
Since $|u|\geq \frac{1}{2}$ on $\gamma_i^t$, it follows that
\begin{equation}
\Big|2\pi \operatorname{deg}(\phi)-\int_{\gamma_i^t}A\Big|\leq \int_{\gamma_i^t} |A-|u|^{-2}\langle iu,du\rangle|\leq C\delta^{1/2}
\end{equation}
as well. Combining this with \eqref{gc2}, we then deduce that
$$\Big|2\pi \operatorname{deg}(\phi)-\int_{\gamma_i^t}h(A)\Big|\leq C\delta^{1/2}.$$
On the other hand, we already made a gauge transformation so that 
$$\Big|\int_{\gamma_i}h(A)\Big|=\Big|\int_{\gamma_i^t}h(A)\Big|\leq \pi,$$
so for $\delta$ chosen sufficiently small that $C\delta^{1/2}<\pi$, it follows that the degree $\operatorname{deg}(\phi)=0$. 
In particular, we can now conclude that
$$|h(A)|_b=\max_i\Big|\int_{\gamma_i}h(A)\Big|\leq C\delta^{1/2},$$
giving the desired estimate.
\end{proof}

Returning to the proof of Proposition \ref{nontrivprop}, suppose again that we have a family $\bar D\ni y \mapsto F(y)\in X$ in $\Gamma$ with
$$\max_{y\in \bar D}E(F(y))<\delta.$$
For $\delta<\delta_1(M)$ sufficiently small, it follows from the lemma that $\operatorname{dist}_b(h(A),\Lambda)<\pi$ for every couple $(u,A)=F(y)$ in the family. In particular, since the assignment $(u,A)\mapsto h(A)$ gives a continuous map $X\to \mathcal{H}^1(M)$, and since $h(A)=A=0$ for $y\in \partial \bar D$, it follows that $0$ is the nearest point in the lattice $\Lambda$ to $h(A)$ for every $y\in \bar D$, and the estimate therefore becomes
$$\|h(A)\|\leq C\delta^{1/2}.$$
In particular, combining this with \eqref{hodge.bounds}, we see now that 
\begin{equation}
\|A\|_{W^{1,2}}\leq C\delta^{1/2}
\end{equation}
for every couple $(u,A)=F(y)$ in the family.

	Now, for $(u,A)=F(y)$, our structural assumption (G) on $W(u)$ gives 
$$\|u\|_{L^p}^p\leq C+E(u,A)\leq C+\delta,$$
which together with the smallness 
$$\|A\|_{L^{2^*}}\leq C\|A\|_{W^{1,2}}\leq C\delta^{1/2}$$
of $A$ in $L^{2^*}$ (recalling that $p>n$) gives 
$$\int_M|uA|^2\leq C\delta.$$ 
Combining this with the fact that $\int |du-iuA |^2\leq E(u,A)<\delta$ by assumption, we then deduce that
$$\int_M |du|^2\leq C\delta$$
as well.

Finally, by \eqref{w.lb} and the Poincaré inequality, we have
	\begin{align*}
		1-\Big|\frac{1}{\operatorname{vol}(M)}\int_M u\Big|&\le C\int_M|1-|u||+C\int_M\Big|u-\frac{1}{\operatorname{vol}(M)}\int_M u\Big| \\
		&\le C\Big(\int_M W(u)\Big)^{1/2}+C\Big(\int_M|du|^2\Big)^{1/2}\\
		&\le C \delta^{1/2}.
	\end{align*}
	As a consequence, we find that $\int_M u_y$ is nonzero for all $(u_y,A_y)=F(y)$ in the family. But then the averaging map
\begin{align}
		&\bar D \to \mathbb{C},\qquad y \mapsto \frac{\int_M u_y}{|\int_M u_y|}.
	\end{align}
gives a retraction $\bar D\to \partial \bar D$, whose nonexistence is well known. This gives the desired contradiction.
\end{proof}

Having shown positivity $\omega_{\epsilon}(M)>0$ of the min-max energies, we can now deduce the lower bound in \eqref{low.and.up.bounds} from the following simple fact.

\begin{prop}\label{lower.bound}
	There exists $c(M)>0$ and $\epsilon_0(M)>0$ such that the following holds, for $\epsilon<\epsilon_0$.
	If $(u,\nabla)$ is critical for the functional $E_\epsilon$, then either $E_\epsilon(u,\nabla)\ge c$ or $E_\epsilon(u,\nabla)=0$.
\end{prop}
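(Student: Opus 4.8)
The plan is to establish an $\epsilon$-independent energy gap by a rescaling/blow-up argument, combined with the $(n-2)$-monotonicity from Theorem \ref{monotonicity}. Suppose, for contradiction, that there exist $\epsilon_j\to 0$ and critical couples $(u_j,\nabla_j)$ for $E_{\epsilon_j}$ with $0<E_{\epsilon_j}(u_j,\nabla_j)\to 0$. Since the total energy is tiny, in particular $E_{\epsilon_j}(u_j,\nabla_j)\le\Lambda$ for $j$ large, so all the estimates of Sections \ref{bochsec}--\ref{decaysec} apply. Because $E_{\epsilon_j}\to 0$, the potential term forces $1-|u_j|^2$ to be small in $L^2$; combined with the gradient estimate \eqref{nabwcontrol} and the decay estimate of Corollary \ref{decaycor}, one shows that $Z_{\beta_d}(u_j)$ cannot be empty for all large $j$ unless $u_j$ is (gauge-equivalent to) a nonvanishing section with vanishing energy. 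Indeed, if $Z_{\beta_d}(u_j)=\varnothing$, then Corollary \ref{decaycor} with $r(p)\equiv+\infty$ (or rather the elementary observation that $\{|u_j|^2\ge 1-\beta_d\}=M$) together with \eqref{fwcontrol}--\eqref{nabwcontrol} and the maximum principle applied to \eqref{offzsubeq} on all of $M$ shows $1-|u_j|^2\le C\epsilon_j^4$; feeding this back into the equations and using \eqref{xiptwise2} to control $|F_{\nabla_j}|$ gives $e_{\epsilon_j}(u_j,\nabla_j)\le C\epsilon_j^{\alpha}$ pointwise, hence $E_{\epsilon_j}(u_j,\nabla_j)\to 0$ is automatic — so we cannot yet extract a contradiction purely from emptiness of the zero set, which signals that the real content lies in the case $Z_{\beta_d}(u_j)\ne\varnothing$.

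So assume $Z_{\beta_d}(u_j)\ne\varnothing$, and pick $x_j$ with $|u_j(x_j)|^2\le 1-\beta_d$. By Corollary \ref{clearing.out}, for every $r$ with $\epsilon_j<r<\operatorname{inj}(M)$ we get $r^{2-n}\int_{B_r(x_j)}e_{\epsilon_j}(u_j,\nabla_j)\ge\eta(M,\Lambda,\beta_d)>0$. Taking $r=\operatorname{inj}(M)/2$ this already gives $E_{\epsilon_j}(u_j,\nabla_j)\ge c(M)>0$, contradicting $E_{\epsilon_j}(u_j,\nabla_j)\to 0$. Thus the only remaining possibility is that for all large $j$ we are in the vortex-free regime $Z_{\beta_d}(u_j)=\varnothing$; I then need to rule out nonzero-but-small energy in that case. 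Here the point is that critical couples with $Z_{\beta_d}(u)=\varnothing$ and $\epsilon$ small must in fact have $u$ nowhere zero, and then a gauge-theoretic rigidity kicks in: choosing a local (or global, on the bundle restricted away from nothing) representation $\nabla=d-iA$ with $u$ of the form $|u|e^{i\varphi}$, the equations \eqref{ueq}--\eqref{feq} together with the exponential decay push $|u|\to 1$ and $|F_\nabla|\to 0$ at the rate $\epsilon^4$, so the connection becomes nearly flat and the section nearly parallel. A continuity/degree argument — or simply running the estimate of Corollary \ref{decaycor} again to see $e_\epsilon(u,\nabla)\le C\epsilon$ everywhere, hence $E_\epsilon(u,\nabla)\le C\epsilon\operatorname{vol}(M)$, and then a self-improving iteration exploiting that the right-hand sides of \eqref{fwcontrol}--\eqref{nabwcontrol} and \eqref{lapwest} all carry positive powers of $\epsilon$ — forces $1-|u|^2\equiv 0$, hence $F_\nabla\equiv 0$ and $\nabla u\equiv 0$, i.e. $E_\epsilon(u,\nabla)=0$. (On a general bundle $L$ this forces $L$ trivial, which is consistent since on a nontrivial bundle $Z_{\beta_d}$ can never be empty by a Chern class obstruction, so the contradiction in the previous paragraph always applies there.)

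The main obstacle is the self-improvement step in the vortex-free case: showing that a critical couple with $Z_{\beta_d}(u)=\varnothing$ and $E_\epsilon(u,\nabla)$ small must have \emph{exactly} zero energy, rather than merely $O(\epsilon^\alpha)$ energy. The clean way to handle this is to observe that whenever $|u|>1-\beta_d$ on all of $M$, the function $w:=|\nabla u|-\tfrac{1}{\epsilon}(1-|u|^2)$ of Proposition \ref{wcontrolprop} and the discrepancy $\xi_\epsilon$ both satisfy subequations with strictly positive "mass term" $\tfrac{|u|^2}{\epsilon^2}$ and no genuinely inhomogeneous part once one also controls $|F_\nabla|$ via itself; a bootstrap then shows $1-|u|^2\le C_k\epsilon^{2k}$ for every $k$, and since $M$ is compact and everything is smooth in $\epsilon$, the only way this is consistent for the fixed couple $(u,\nabla)$ (not a family!) is $1-|u|^2\equiv 0$. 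Alternatively, and perhaps more transparently, one notes that $\{|u|>1-\beta_d\}=M$ makes $u/|u|$ a global unit section, so after a gauge change $u$ is real and positive; then \eqref{ueq} reads $\nabla^*\nabla u=\tfrac{1}{2\epsilon^2}(1-u^2)u$ with $u>0$, testing against $u-1$ and integrating by parts, combined with $|F_\nabla|\le\tfrac{1}{2\epsilon^2}(1-u^2)+C\epsilon^{-1}$ and a Kato-type inequality, yields $\int_M(1-u^2)^2\le C\epsilon^2\int_M(1-u^2)$, which with the $C^0$-smallness of $1-u^2$ gives $\int_M(1-u^2)^2\le\tfrac12\int_M(1-u^2)^2$ for $\epsilon$ small, hence $u\equiv 1$, hence $\nabla$ flat, hence $E_\epsilon(u,\nabla)=0$. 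I would carry out this second route, as it avoids an infinite bootstrap and makes the dichotomy clean.
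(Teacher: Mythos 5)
Your overall strategy is the right one and agrees with the paper's in spirit: use Corollary \ref{clearing.out} to get a uniform lower bound when the Higgs field has a genuine zero (or nearly-zero) locus, and argue that otherwise the critical pair must be trivial. However, the second half of your argument---showing that vortex-free critical pairs carry exactly zero energy---contains a genuine gap, and this is where the real content of the proposition lies.

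The integral-inequality route you say you would carry out does not close. You claim that testing \eqref{ueq} against $u-1$ yields $\int_M(1-u^2)^2\le C\epsilon^2\int_M(1-u^2)$ and that, "with the $C^0$-smallness of $1-u^2$", this improves to $\int_M(1-u^2)^2\le\tfrac12\int_M(1-u^2)^2$. But the $C^0$-smallness $1-u^2\le C\epsilon^4$ only gives $\int(1-u^2)^2\le C\epsilon^4\int(1-u^2)$, which points in the same direction as your first inequality rather than reversing it; neither forces $\int(1-u^2)^2$ to vanish, only to be $O(\epsilon^k)$ for various $k$. A tower of estimates of the form $\int f^2\le C\epsilon^a\int f$ with $f\ge 0$ small cannot, by itself, conclude $f\equiv 0$. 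The same objection applies to the "infinite bootstrap" alternative: you would obtain $1-|u|^2\le C_k\epsilon^{2k}$ for each fixed $k$, with constants $C_k$ that you have no control over, and this does not pass to $1-|u|^2\equiv 0$ for a \emph{fixed} $\epsilon$.

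The missing idea is a genuinely gauge-theoretic one, and it makes the argument much shorter. If $u$ is nowhere vanishing (not even the stronger $Z_{\beta_d}(u)=\varnothing$ is needed here), then $L$ is trivial and one may gauge so that $u$ is a positive real-valued function with $\nabla=d-iA$. In that gauge $\langle\nabla u,iu\rangle=-u^2A$, so \eqref{feq} becomes the \emph{linear} equation $\epsilon^2 d^*dA+u^2A=0$; integrating against $A$ over the closed manifold $M$ gives $\int_M(\epsilon^2|dA|^2+u^2|A|^2)=0$, hence $A\equiv 0$ since $u>0$. Thus $\nabla$ is the flat product connection. Now at an interior minimum $y_0$ of $u$, the Bochner formula \eqref{moduboch} (with $du(y_0)=0$) yields $0\le\Delta\tfrac12 u^2=-\tfrac{1}{2\epsilon^2}(1-u^2)u^2$, forcing $u(y_0)\ge 1$, hence $u\equiv 1$ and $E_\epsilon(u,\nabla)=0$. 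Note that this argument requires neither the decay estimates of Section \ref{decaysec} nor any smallness of $\epsilon$, nor a contradiction hypothesis; it applies to \emph{any} critical pair with nowhere-vanishing $u$. The only place where $\epsilon<\epsilon_0(M)$ enters the proposition is in applying Corollary \ref{clearing.out} in the complementary case. You should replace your third paragraph with this observation; as it stands, the proposal does not establish the $E_\epsilon=0$ alternative.
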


\begin{remark}\label{lb.nontriv} For future reference, we make the obvious observation that the trivial case $E_{\epsilon}(u,\nabla)=0$ can only occur when the bundle $L$ is trivial.
\end{remark}

\begin{proof}
	As discussed in the appendix, it is straightforward to see that critical points are smooth up to change of gauge.
	We claim that, whenever $E_\epsilon(u,\nabla)>0$, $u$ has to vanish at some point $x_0\in M$.
	Once we have this, Corollary \ref{clearing.out} implies that $r^{2-n}E_\epsilon(u,\nabla,B_r(x_0))$ has a lower bound independent of $\epsilon$ for any $\epsilon<r<\operatorname{inj}(M)$, and the statement follows.
	
	Indeed, if the claim fails, then $u$ is nowhere vanishing, so $L$ must be trivial and we can use the section $\frac{u}{|u|}$ to identify $L$ isometrically with the trivial line bundle $M\times\mathbb C$, equipped with the canonical Hermitian metric.
	Under this identification, $u:M\to\mathbb C$ takes values into positive real numbers. Writing $\nabla=d-iA$ and observing that $\langle\nabla u,iu\rangle=-|u|^2A$, \eqref{feq} becomes
	\begin{align*}
	&\epsilon^2 d^*dA+|u|^2A=0.
	\end{align*}
	Integrating against $A$ we get $\int_M(\epsilon^2|dA|^2+u^2|A|^2)=0$, so $A=0$ and $\nabla$ is the trivial connection.
	At a minimum point $y_0$ for $u$, \eqref{moduboch} gives
	\begin{align*}
	&0\le\frac{1}{2}\Delta|u|^2=|du|^2-\frac{1}{2\epsilon^2}(1-|u|^2)|u|^2=-\frac{1}{2\epsilon^2}(1-u^2)u^2,
	\end{align*}
	which forces $u(y_0)\ge 1$ and thus $u=1$ everywhere, giving the contradiction $E_\epsilon(u,\nabla)=0$.
\end{proof}

Finally, we turn to the uniform upper bound.
In the next statement, $L\to M$ is a Hermitian line bundle with a fixed Hermitian reference connection $\nabla_0$.
We identify any other Hermitian connection $\nabla$ with the real one-form $A$ such that $\nabla s=\nabla_0 s-iA\otimes s$ for all sections $s$.

\begin{prop}\label{u.to.couple}
	Given a smooth section $u:M\to L$, we can find a smooth couple $(u',A')$ such that
	\begin{align}\label{u.to.couple.est}
		&E_\epsilon(u',A')\le C\left(\epsilon^{-2}\operatorname{vol}\Big(\Big\{|u|\le\frac{1}{2}\Big\}\Big)+(1+\epsilon^2\|\nabla_0 u\|_{L^\infty}^2)\int_{\{|u|\le\frac{1}{2}\}}|\nabla_0 u|^2+\epsilon^2\int_M|\omega_0|^2\right)
	\end{align}
	for a universal constant $C$.
\end{prop}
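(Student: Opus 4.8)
The plan is to produce the competitor $(u',A')$ completely explicitly, altering $u$ only near its zero set and choosing the connection so that the pair is \emph{trivial} wherever $|u|$ is bounded away from $0$, so that the whole energy concentrates on $\{|u|\le\tfrac12\}$. Fix once and for all a smooth cutoff $\chi\colon[0,\infty)\to[0,1]$ with $\chi\equiv0$ on $[0,\tfrac14]$ and $\chi\equiv1$ on $[\tfrac12,\infty)$, write $B:=\{|u|<\tfrac12\}$, and set $\alpha:=\langle\nabla_0u,iu\rangle\in\Omega^1(M,\R)$. I would then define
\[
u':=\chi(|u|)\,\frac{u}{|u|},\qquad A':=\chi(|u|)\,\frac{\alpha}{|u|^2},\qquad \nabla':=\nabla_0-iA',
\]
interpreting both quantities as $0$ on $\{|u|<\tfrac14\}$; since $\chi(|u|)$ vanishes on a neighbourhood of $\{u=0\}$, $u'$ and $A'$ are globally smooth, and the curvature form of $\nabla'$ is $\omega'=\omega_0+dA'$.

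The crux is to check that $(u',\nabla')$ has \emph{zero} energy density on the good set $G:=\{|u|\ge\tfrac12\}$. There $u'=u/|u|$ is a unit section, so $W(u')=0$; using $\nabla_0(u/|u|)=|u|^{-3}\alpha\otimes(iu)$ one finds $\nabla'u'=\nabla_0u'-iA'u'=0$ on $G$; and since a nonvanishing parallel section forces the curvature of its connection to vanish, $\omega'\equiv0$ on $G$. Concretely, this is the identity $d(\alpha/|u|^2)=-\omega_0$ on $\{u\neq0\}$, which I would derive from the relation $d\langle\nabla_0u,iu\rangle=\psi(u)-|u|^2\omega_0$ recorded in Section~\ref{preliminary.sec} together with the elementary pointwise identity $\psi(u)=\tfrac{2}{|u|}\,d|u|\wedge\alpha$ on $\{u\neq0\}$ (here $\psi(u)$ is formed from $\nabla_0$, and recall $|\psi(u)|\le|\nabla_0u|^2$ by \eqref{psinormbd}). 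Hence $E_\epsilon(u',A')=\int_B\big(|\nabla'u'|^2+\epsilon^2|\omega'|^2+\epsilon^{-2}W(u')\big)$, and it remains only to bound the three terms on $B$.

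These bounds are localized routine computations. For the potential, $|u'|=\chi(|u|)\le1$ gives $W(u')\le\tfrac14$, so $\epsilon^{-2}\int_MW(u')\le C\epsilon^{-2}\operatorname{vol}(B)$. For the Higgs term, from
\[
\nabla'u'=\chi'(|u|)\,d|u|\otimes\tfrac{u}{|u|}+\chi(|u|)(1-\chi(|u|))\,|u|^{-3}\alpha\otimes(iu),
\]
together with $|\alpha|\le|\nabla_0u|\,|u|$ and the fact that $\chi'(|u|)$ and $\chi(|u|)(1-\chi(|u|))$ are supported in $\{\tfrac14<|u|<\tfrac12\}$ (so $|u|$ is bounded below there), one gets $|\nabla'u'|\le C|\nabla_0u|$ on $B$, hence $\int_M|\nabla'u'|^2\le C\int_B|\nabla_0u|^2$. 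For the curvature, $dA'=\chi'(|u|)\,d|u|\wedge\tfrac{\alpha}{|u|^2}-\chi(|u|)\,\omega_0$ gives $|\omega'|\le C(|\nabla_0u|^2+|\omega_0|)$ on $B$, so
\[
\epsilon^2\int_M|\omega'|^2=\epsilon^2\int_B|\omega'|^2\le C\epsilon^2\int_M|\omega_0|^2+C\epsilon^2\|\nabla_0u\|_{L^\infty}^2\int_B|\nabla_0u|^2.
\]
Summing the three estimates and using $B\subseteq\{|u|\le\tfrac12\}$ yields \eqref{u.to.couple.est}; the constant is universal since $\chi$ is fixed and only the (universal) shape of $W$ on $[0,1]$ is ever used.

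I expect the main obstacle to be the vanishing of the energy on $G$: this is exactly what keeps the right-hand side of \eqref{u.to.couple.est} local to the small set $\{|u|\le\tfrac12\}$, rather than producing a global term like $\int_M|\nabla_0u|^2$, and it hinges on the exact algebraic cancellation $d(\alpha/|u|^2)=-\omega_0$, i.e.\ on the rank-one identity $\psi(u)=\tfrac{2}{|u|}d|u|\wedge\alpha$. Everything else is a straightforward localized estimate, once one has verified that the cutoff renders $u'$ and $A'$ smooth across $\{u=0\}$.
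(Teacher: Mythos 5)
Your proof is correct and takes essentially the same route as the paper's: same competitor $(u',A')=\chi(|u|)\bigl(u/|u|,\ \alpha/|u|^2\bigr)$ (note $\alpha/|u|^2=\langle\nabla_0 w,iw\rangle$ is exactly the paper's $A$ for $w=u/|u|$), same observation that the pair has vanishing energy density where $|u|>\tfrac12$ because $w$ is parallel and hence the connection is flat there, and the same localized bounds on the transition annulus $\{\tfrac14<|u|<\tfrac12\}$. The only difference is cosmetic: you spell out the flatness identity $d(\alpha/|u|^2)=-\omega_0$ via the rank-one formula for $\psi(u)$, whereas the paper simply records $|dA+\omega_0|=0$ on $\{u\neq0\}$.
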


\begin{proof}
	On $\{u\neq 0\}$ we let
	\begin{align*}
	&w:=\frac{u}{|u|},\qquad A\otimes iw:=\nabla_0 w.
	\end{align*}
	The compatibility of $\nabla_0$ with the Hermitian metric on $L$ forces $\langle\nabla_0w,w\rangle=0$, so that $A$ is a real one-form. Equivalently, viewing $w$ as a map from $M$ to the circle bundle $U(L)$ of $L$, which is a principal $S^1$-bundle with induced connection form $\varpi\in\Omega^1(U(L),\mathbb R)$, we have $A=\varpi\circ dw$.
	
	We fix a smooth function $\rho:[0,\infty]\to [0,1]$ with
	\begin{align*}
	&\rho(t)=0\text{ for }t\le\frac{1}{4},\qquad\rho(t)=1\text{ for }t\ge\frac{1}{2}
	\end{align*}
	and we set
	\begin{align*}
	&(u',A'):=\rho(|u|)(w,A)
	\end{align*}
	(the right-hand side is meant to be zero on $\{u=0\}$).
	Writing $F_{\nabla_0}=-i\omega_0$, observe that
	\begin{align*}
		&|F_A|=|dA+\omega_0|=0\quad\text{on }\{u\neq 0\},
	\end{align*}
	so that $e_\epsilon(u',A')=0$ on $\{|u|>\frac{1}{2}\}$. From the estimates $|A|=|\nabla_0w|\le 2|u|^{-1}|\nabla_0 u|$ and $|d|u||\le|\nabla_0 u|$, it follows that also
	\begin{align*}
	|\nabla_0 u'|&\le C|\nabla_0 u|, \\
	|A'|&\le C|\nabla_0 u|, \\
	|dA'|&\le|\rho'(|u|)d|u|\wedge A|+|\omega_0|\le C|\nabla_0 u||d|u||+|\omega_0|,
	\end{align*}
	and the statement follows immediately.
\end{proof}

\begin{proof}[Proof of \eqref{low.and.up.bounds}]
	The method used in \cite[Section~3]{Stern} gives a continuous map $H:\bar D\to W^{1,2}\cap C^0(M,\mathbb C)$ such that $H(y)\equiv y$ for $y\in\partial D$ and
	\begin{align}\label{bounds.from.simplex}
		&\|dH(y)\|_{L^\infty}\le C\epsilon^{-1},\quad\int_{\{|H(y)|\le\frac{3}{4}\}}|dH(y)|^2\le C,\quad\operatorname{vol}\Big(\Big\{|u|\le\frac{1}{2}\Big\}\Big)\le C\epsilon^2
	\end{align}
	for all $y\in\bar D$ (the full Dirichlet energy having a worse bound $\int_M|dH(y)|^2\le C\log\epsilon^{-1}$, which is the natural one in the setting of Ginzburg--Landau). By approximation, we can assume that $H$ takes values in $C^\infty(M,\mathbb C)$, continuously in $y$, and still satisfies the same uniform bounds \eqref{bounds.from.simplex} (possibly increasing $C$ and replacing $\frac{3}{4}$ with $\frac{1}{2}$).
	
	To each section $H(y)$ of the trivial line bundle, Proposition \ref{u.to.couple} assigns in a continuous way an element $F(y)\in X$. From the way $F(y)$ is constructed, it is clear that $F\in\Gamma$. Finally, applying
	Proposition \ref{u.to.couple} with \eqref{bounds.from.simplex} gives
	\begin{align*}
		&\omega_\epsilon(M)\le\max_{y\in\bar D} E_\epsilon(F(y))\le C. \qedhere
	\end{align*}
\end{proof}


\subsection{Minimizers for nontrivial line bundles}\hfill

%

Suppose now that $L$ is a nontrivial line bundle, equipped with a Hermitian metric. Fix a smooth Hermitian connection $\nabla_0$ and identify any other Hermitian connection $\nabla$ with the real one-form $A$ such that
\begin{align*}
	&\nabla=\nabla_0-iA.
\end{align*}
We can define $\widehat X$ and $X$ as in the previous subsection. With this notation, observe that the curvature of $\nabla$ is given by
\begin{align*}
	&F_\nabla=F_{\nabla_0}-idA.
\end{align*}
Hence, writing $F_{\nabla_0}=-i\omega_0$, we have
\begin{align*}
	&E_\epsilon(u,\nabla)=\int_M|\nabla_0 u-A\otimes iu|^2+\epsilon^{-2}\int_M W(u)+\epsilon^2\int_M|\omega_0+dA|^2.
\end{align*}

\begin{definition}
	For a fixed $n<p<\infty$, we define	$\widehat X$ to be the Banach space of couples $(u,A)$, where $u:M\to L$ is an $L^p$ section and $A\in\Omega^1(M,\mathbb R)$, both of class $W^{1,2}$, with the norm
	\begin{align*}
	&\|(u,A)\|:=\|u\|_{L^p}+\|\nabla_0 u\|_{L^2}+\|A\|_{L^2}+\|DA\|_{L^2}.
	\end{align*}
	We let $X:=\{(u,A)\in\widehat{X}:d^*A=0\}$.
\end{definition}

The analogous statements to Remark \ref{retraction} and Propositions \ref{c1} and \ref{palais.smale} hold, with identical proofs (replacing $du$ and $uA$ with $\nabla_0 u$ and $A\otimes u$, respectively).

Arguing as in the proof of Proposition \ref{palais.smale}, it is easy to see that a minimizing sequence of couples for $E_{\epsilon}$ converges---in the appropriate Coulomb gauge---to a global minimizer $(u_{\epsilon},A_{\epsilon})$. We now show that the energy of these minimizers enjoys uniform upper and lower bounds as $\epsilon\to 0$.

\begin{proof}[Proof of \eqref{low.and.up.bounds}]
	The lower bound in \eqref{low.and.up.bounds} follows directly from Proposition \ref{lower.bound}.	
	In order to obtain the upper bound, pick a smooth section $s:M\to L$ transverse to the zero section (see, e.g., \cite[Theorem~IV.2.1]{Kos}) and let $N:=\{s=0\}$, which is a smooth embedded $(n-2)$-submanifold of $M$.
	Proposition \ref{u.to.couple} applied to $\epsilon^{-1}s$ gives a couple $(u_\epsilon',A_\epsilon')$ with
	\begin{align*}
		&E_\epsilon(u_\epsilon',A_\epsilon')\le C\epsilon^{-2}\operatorname{vol}\Big(\Big\{|\epsilon^{-1}s|\le\frac{1}{2}\Big\}\Big)+C\epsilon^2\int_M|\omega_0|^2.
	\end{align*}
	By transversality of $s$, the set $\{|s|\le\frac{\epsilon}{2}\}$ is contained in a $C(s)\epsilon$-neighborhood of $N$, whose volume is bounded by $C(s)\epsilon^2$. We infer that
	\begin{align*}
		&E_\epsilon(u_\epsilon,A_\epsilon)\le E_\epsilon(u_\epsilon',A_\epsilon')\le\epsilon^{-2}\operatorname{vol}\Big(\Big\{|s|\le\frac{\epsilon}{2}\Big\}\Big)\le C. \qedhere
	\end{align*}
\end{proof}

\begin{remark}
	$N$ can be oriented in such a way that $[N]\in H_{n-2}(M,\mathbb R)$ is Poincaré dual to the Euler class $e(L)\in H^2(M,\mathbb R)$ of the line bundle, which equals the first Chern class $c_1(L)$. The fact that the energy of our competitors concentrates along $N$ suggests that, given a sequence of global minimizers $(u_\epsilon,A_\epsilon)$, up to subsequences the corresponding energy concentration varifold is induced by an integral mass-minimizing current whose homology class is Poincaré dual to $c_1(L)$. Theorem \ref{curvthm} provides the natural candidate $\Gamma$, which also satisfies $|\Gamma|\le\mu$.
\end{remark}

\appendix
\section*{Appendix. Interior regularity in the Coulomb gauge}
\renewcommand{\thesection}{A}
\setcounter{thm}{0}
\setcounter{equation}{0}

In this short appendix, we describe the essential ingredients needed to establish local regularity in the Coulomb gauge for finite-energy critical points $(u,A)$ of the ($\epsilon=1$) abelian Higgs energy $E(u,A)$, collecting some estimates which will be of use elsewhere in the paper.

Consider the manifold with boundary $(\bar\Omega^n,g)$ given by a smooth, contractible domain $\Omega^n\subset \mathbb{R}^n$ equipped with a $C^2$ metric $g$, and let $L\cong \mathbb{C}\times \Omega$ be the trivial line bundle over $\Omega$, with the standard Hermitian structure. With respect to the metric $g$, we then define the Yang--Mills--Higgs energies
$$E(u,A):=\int_{\Omega}e(u,A)=\int_{\Omega} |du-iA|^2+|dA|^2+W(u)$$
as in the preceding section. By Proposition \ref{c1} in the preceding section, it is easy to see that a pair $(u,A)$ in $W^{1,2}$ with
\begin{equation}\label{ulinfty}
|u|\leq 1
\end{equation}
is a critical point for $E$ (with respect to smooth perturbations supported in $\Omega$) if and only if the equations
\begin{align}\label{appaeq}
d^*dA&=\langle du-iAu, iu\rangle, \\[5pt]
\label{appueq}
\Delta u&=2\langle i du,A\rangle+|A|^2u-\frac{1}{2}(1-|u|^2)u-i(d^*A)u
\end{align}
are satisfied distributionally in $\Omega$, where all geometric quantities and operators are defined with respect to the metric $g$.

Now, given a pair $(u,A)$ in $W^{1,2}$ satisfying \eqref{appaeq}--\eqref{appueq} and
\begin{equation}\label{appebd}
E(u,A)\leq \Lambda<\infty,
\end{equation}
we can select a \emph{local Coulomb gauge} adapted to $A$ as follows. Denote by $\theta\in W^{2,2}(\Omega,\mathbb{R})$ the unique solution of the Neumann problem
\begin{equation}
\Delta \theta=d^*A\text{ in }\Omega;\quad\frac{\partial \theta}{\partial\nu}=-A(\nu)\text{ on }\partial\Omega
\end{equation}
with zero mean $\int_{\Omega}\theta=0$. Then the gauge-transformed pair
$$(\tilde{u},\tilde{A}):=(e^{i\theta}u,A+d\theta)$$
continues to satisfy \eqref{appaeq}--\eqref{appueq}, with 
$$E(\tilde{u},\tilde{A})=E(u,A)\leq \Lambda,$$
but now with the additional constraints
\begin{equation}\label{coul}
d^*\tilde{A}=0\text{ on }\Omega;\quad\tilde{A}(\nu)=0\text{ on }\partial\Omega.
\end{equation}

For the remainder of the section, we will assume that the pair $(u,A)$ is already in the Coulomb gauge on $\Omega$, so that $A$ satisfies (\ref{coul}). Note that the last term in the equation \eqref{appueq} then vanishes, so that we have
\begin{equation}\label{ueq.coul}
\Delta u=2\langle idu,A\rangle+|A|^2u-\frac{1}{2}(1-|u|^2)u.
\end{equation}
We now establish the local regularity for critical points $(u,A)$ in the Coulomb gauge, giving in particular estimates for $(u,A)$ in $W^{2,q}$ norms.

\begin{prop}\label{w2q.prop} Let $(u,A)$ solve \eqref{appaeq}--\eqref{appueq} in the Coulomb gauge \eqref{coul} on $(\Omega,g)$, with $|u|\leq 1$. If
\begin{equation}\label{e.linfty.bd}
E(u,A;\Omega)\leq \Lambda
\end{equation}
and
\begin{equation}\label{met.control}
\|g\|_{C^2}+\|g^{-1}\|_{C^2}\leq \Lambda,
\end{equation}
then for every compactly supported subdomain $\Omega'\subset\subset \Omega$ and $q\in (1,\infty)$ there exists $C_q(\Lambda,\Omega,\Omega')<\infty$ such that
\begin{equation}
\|u\|_{W^{2,q}(\Omega')}+\|A\|_{W^{2,q}(\Omega')}\leq C_q.
\end{equation}
\end{prop}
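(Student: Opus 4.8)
The plan is to run a standard elliptic bootstrap for the coupled system \eqref{appaeq}, \eqref{ueq.coul} (the latter being the Coulomb-gauge form of \eqref{appueq}), exploiting the co-closed condition $d^*\tilde A = 0$ together with $\tilde A(\nu)=0$ on $\partial\Omega$ to gain one derivative on $A$ at each stage. The starting point is the a priori information we already have: $|u|\le 1$ pointwise and $E(u,A;\Omega)\le\Lambda$, so $du - iAu \in L^2$, $dA\in L^2$, and hence $u\in W^{1,2}$, $A\in W^{1,2}$ on $\Omega$. Because $A$ is in the Coulomb gauge, $\Delta_H A = dd^*A + d^*dA = d^*dA$, and the remark after the definition of $X$ (or a direct Gaffney-type estimate using $d^*A=0$, $A(\nu)=0$) bounds $\|A\|_{W^{1,2}}$ by $C(\Lambda)$; I will record this once at the outset.

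First I would upgrade the integrability of $A$. By Sobolev embedding $W^{1,2}\hookrightarrow L^{2^*}$ with $2^* = \tfrac{2n}{n-2}$ (and any $L^q$, $q<\infty$, when $n=2$), so $A\in L^{2^*}$ and $|A|^2 u\in L^{n/(n-2)}$, $\langle i\,du, A\rangle$ needs a little more care — write $du = (du - iAu) + iAu$, so $du\in L^2 + L^{2^*}\cdot L^{2^*}\subset L^2$; then $\langle i\,du,A\rangle \in L^2\cdot L^{2^*}$ is in some $L^{q_0}$ with $q_0>1$. Feeding the right-hand side of \eqref{ueq.coul} into interior $W^{2,q_0}$ estimates for $\Delta$ (with $C^2$ metric, constants depending on $\|g\|_{C^2}$, $\|g^{-1}\|_{C^2}$) gives $u\in W^{2,q_0}_{loc}$, hence $u\in W^{1,q_1}_{loc}$ for a better exponent $q_1$. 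Simultaneously I would bootstrap \eqref{appaeq}: its right-hand side $\langle du - iAu, iu\rangle = \langle du, iu\rangle - |u|^2 A$ lies in $L^2 + L^\infty\cdot L^{2^*}\subset L^{q}$ for some $q>2$ once $du$ has been improved (using $|u|\le1$), and the elliptic system $\Delta_H A = $ RHS with boundary conditions \eqref{coul} (a standard Hodge/Neumann-type boundary value problem) gives $A\in W^{2,q}_{loc}$, so $A\in W^{1,q'}_{loc}$ with $\tfrac{1}{q'} = \tfrac1q - \tfrac1n$ improving toward $L^\infty$. I would alternate these two improvements. Each round strictly increases the Sobolev exponents (or shows they are already $\infty$), and after finitely many steps one reaches $A\in L^\infty_{loc}$ and $du\in L^{q}_{loc}$ for all $q<\infty$; then the right-hand sides of both \eqref{ueq.coul} and \eqref{appaeq} lie in $L^q_{loc}$ for every $q<\infty$, and a final application of interior elliptic $L^q$ estimates (on a slightly smaller domain, using a cutoff between $\Omega'$ and $\Omega$) yields $u, A\in W^{2,q}_{loc}$ with the asserted bound $C_q(\Lambda,\Omega,\Omega')$.

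The main technical point to handle carefully is the bootstrap for $A$: one must be sure that interior $W^{2,q}$ estimates are available for the connection form despite the fact that only $d^*A = 0$ and $A(\nu)=0$ hold globally, not on arbitrary interior subdomains. The clean way is to note that in the Coulomb gauge $A$ satisfies the \emph{interior} elliptic equation $\Delta_H A = d^*dA = $ RHS of \eqref{appaeq} as a system (in local coordinates, a perturbation of the componentwise Laplacian with lower-order coefficients controlled by $\|g\|_{C^2}$), so purely interior Calderón–Zygmund estimates apply on any $\Omega''$ with $\Omega'\subset\subset\Omega''\subset\subset\Omega$, with no boundary condition needed — the global boundary data were only used to get the initial $W^{1,2}$ bound. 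The only other place requiring attention is the borderline exponent arithmetic when $n$ is small (for $n=2,3$ the embedding $W^{1,2}\hookrightarrow L^q$ is already very strong and the bootstrap terminates almost immediately; for larger $n$ one checks that the exponent sequence $q_{k+1}$ defined by $\tfrac{1}{q_{k+1}} = \tfrac{1}{q_k} - \tfrac1n$ indeed escapes to $\le 0$ in finitely many steps), which is routine. I expect no genuine obstacle beyond organizing this iteration, since all the nonlinear terms are at most quadratic in $(u,A)$ and $|u|\le1$ keeps the Higgs field bounded throughout.
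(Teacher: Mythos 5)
Your bootstrap stalls in high dimensions, and the missing ingredient is precisely the paper's first step: a pointwise $L^\infty$ bound on $A$ obtained by Moser iteration applied to a combined Bochner inequality, before any exponent arithmetic begins.

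To see the gap, track the exponents. Suppose $du\in L^p$, $A\in L^q$, with $|u|\le 1$. The right-hand side of \eqref{ueq.coul} contains $\langle i\,du,A\rangle\in L^{pq/(p+q)}$, so elliptic regularity gives $u\in W^{2,pq/(p+q)}_{loc}$ and hence $du\in L^{p'}$ with $\tfrac1{p'}=\tfrac1p+\tfrac1q-\tfrac1n$. This is an improvement on $p$ only when $q>n$. Starting from the energy data one has $p_0=2$ and $q_0=2^*=\tfrac{2n}{n-2}$, which satisfies $q_0>n$ only for $n\le 3$. Improving $A$ via \eqref{appaeq} gives $\Delta_H A\in L^{\min(p,q)}=L^2$ (this cannot be beaten, since $|\Delta_H A|\le|du-iAu|$ and the energy only controls this in $L^2$), whence $A\in W^{2,2}_{loc}\hookrightarrow L^{q_1}$ with $q_1=\tfrac{2n}{n-4}$ for $n\ge 5$; one has $q_1>n$ only for $n\le 5$. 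For $n\ge 6$ the iteration is genuinely stuck: $q$ cannot be pushed past $n$ because $\Delta A$ never leaves $L^2$, and $p$ cannot be pushed past $2$ because $q\le n$. Your claim that the alternating scheme escapes to $\infty$ ``in finitely many steps'' relies implicitly on the RHS of the $u$-equation being linear in $du$ (that is, on $A$ being bounded), which you have not established. The term $\langle du,A\rangle$ is a supercritical quadratic interaction and your scheme does not tame it.

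The paper circumvents this by combining the Weitzenb\"{o}ck formula for $\Delta\tfrac12|A|^2$ (using $d^*A=0$ and the pointwise bound $|\Delta_H A|\le|du-iAu|$) with the identity for $\Delta\tfrac12|u|^2$, so that the $|du-iAu|^2$ terms cancel and one is left with the scalar subequation $\Delta(|A|^2+|u|^2)\ge -C(|A|^2+|u|^2)-C$. Moser iteration then yields $\|A\|_{L^\infty(\Omega_1)}\le C(\Lambda)$ directly from the $L^2$ data. Once $A$ is bounded, the $u$-equation satisfies the linear-growth estimate $|\Delta u|\le C(|du|+1)$, and the exponent sequence $\tfrac1{q_{k+1}}=\tfrac1{q_k}-\tfrac1n$ that you invoke does terminate in finitely many steps; similarly the $A$-equation then has an $L^\infty$ right-hand side. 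So your bootstrap is the correct second half of the proof, but without the Moser/Bochner step it does not get off the ground in dimensions $n\ge 6$.
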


\begin{proof} To begin, note that \eqref{appaeq}, \eqref{coul}, and standard Bochner--Weitzenb\"{o}ck identities give the (weak) subequation
\begin{align*}
\Delta \frac{1}{2}|A|^2&\geq -\langle \Delta_HA,A\rangle+|DA|^2+\operatorname{Ric}(A,A)\\
&\geq -C(\Lambda)|A|^2-C|du-iAu|
\end{align*}
for $|A|^2$.
On the other hand, as in Section \ref{bochsec}, we also obtain from \eqref{appueq} the relation
$$\Delta \frac{1}{2}|u|^2=|du-iAu|^2-\frac{1}{2}(1-|u|^2)|u|^2,$$
and combining the two (recalling also that $|u|\leq 1$), we find an estimate of the form
\begin{equation}\label{a.u.subeq}
\frac{1}{2}\Delta (|A|^2+|u|^2)\geq -C(|A|^2+|u|^2)-C.
\end{equation}
Applying Moser iteration to \eqref{a.u.subeq}, we see in particular that
\begin{equation}
\|A\|_{L^{\infty}(\Omega_1)}^2\leq C(\Lambda,\Omega_1,\Omega)(1+\|A\|_{W^{1,2}(\Omega)}^2)
\end{equation}
for any $\Omega_1\subset\subset\Omega$. Moreover, by standard estimates for one-forms $A$ satisfying \eqref{coul} (see, e.g., \cite{ISS}), we have the global $L^2$ bound
$$\|A\|_{W^{1,2}(\Omega)}\leq \|dA\|_{L^2(\Omega)}\leq \Lambda^{1/2},$$
which together with the preceding $L^{\infty}$ estimate gives
\begin{equation}
\|A\|_{L^{\infty}(\Omega_1)}\leq C(\Lambda,\Omega_1,\Omega)
\end{equation}
for any subdomain $\Omega_1\subset\subset\Omega$.

Now, fixing some intermediate domain $\Omega'\subset\subset\Omega_1\subset\subset \Omega$ between $\Omega'$ and $\Omega$, the equation \eqref{ueq.coul} together with the $L^{\infty}(\Omega_1)$ estimate for $A$ give pointwise bounds of the form
\begin{equation}\label{lap.u.bd}
|\Delta u|\leq C(\Omega_1,\Omega,\Lambda)(|du|+1)\text{ in }\Omega_1.
\end{equation}
And since 
$$|du|\leq |du-iAu|+|A|\leq e(u,A)+C$$
in $\Omega_1$, we obtain from the energy bound $E(u,A)\leq \Lambda$ and \eqref{lap.u.bd} the simple estimate
$$\|\Delta u\|_{L^2(\Omega_1)}\leq C(\Omega_1,\Omega,\Lambda),$$
and consequently
$$\|u\|_{W^{2,2}(\Omega_2)}\leq C$$
for any $\Omega'\subset\subset\Omega_2\subset\subset\Omega_1$. Returning to the pointwise bound \eqref{lap.u.bd}, we can now employ a simple iteration argument---combining $L^q$ regularity theory with the Sobolev embedding $W^{2,r}\hookrightarrow W^{1,\frac{rn}{n-r}}$---over successive domains between $\Omega'$ and $\Omega$, to arrive at the desired $W^{2,q}$ estimates for $u$. 

Returning finally to the equation \eqref{appaeq} for $A$, in the Coulomb gauge, we see that
$$\Delta_HA=\langle du-iAu,iu\rangle,$$
and it therefore follows from the preceding estimates that
$$\|A\|_{L^{\infty}(\Omega'')}+\|\Delta A\|_{L^{\infty}(\Omega'')}\leq C(\Omega'',\Omega,\Lambda)$$
for some intermediate domain $\Omega'\subset\subset\Omega''\subset\subset\Omega$. In particular, this gives us upper bounds for $\|\Delta A\|_{L^q(\Omega'')}$ for every $q\in (1,\infty)$, and $L^q$ regularity theory therefore gives us the desired estimates for $A$ in $W^{2,q}(\Omega')$.
\end{proof}

Finally, we remark that higher regularity of $u$ and $A$ in the Coulomb gauge follows in a standard way---e.g., via Schauder theory---from the $W^{2,q}$ estimates obtained in the preceding proposition.

\end{document}